    \def\atSign{@@}
    \def\mathbb{\Bbb}
    \def\mathfrak{\frak}
    \def\mathbf{\bold}
      \def\boldsymbol#1{{\bold #1}}
    \def\mathbit{\boldsymbol}
    \newenvironment{proof}{%
         \@ifnextchar[{%
                       \expandafter\let\expandafter\end@proof
                         \csname endpf*\endcsname
                         \my@proof
                      }{\let\end@proof\endpf\pf}%
        }{\end@proof}
    \def\my@proof[#1]{\@nameuse{pf*}{#1}}
    \def\xrightarrow[#1]#2{@>{#2}>{#1}>}
    \def\xleftarrow[#1]#2{@<{#2}<{#1}<}
    \def\providecommand#1{\def#1}
    \def\emph#1{{\em #1}}
    \def\textbf#1{{\bf #1}}
    \def\mathring{\overset{\,\,{}_\circ}}
        \DeclareMathAccent{\mathring}{\mathalpha}{operators}{"17}
      \long\def\FAKEendPROOF{\endtrivlist}
	  \def\endproof{\qed\endtrivlist}
        \DeclareMathAlphabet{\mathbit}{OML}{cmm}{b}{it}
      \def\atSign{@}
      \def\Sb#1\endSb{_{\substack{#1}}}
      \def\Sp#1\endSp{^{\substack{#1}}}
                \def\mathcal{\cal}
                \def\pcyr{%
                        \def\default@family{UWCyr}%
                        \let\oldSl@\sl
                        \def\sl{\def\default@shape{it}\oldSl@}%
                        \cyracc
                        \language\Russian\family{UWCyr}\selectfont
                }
                \DeclareFontFamily{OT2}{cmr}{\hyphenchar\font45 }
                \DeclareFontShape{OT2}{cmr}{m}{n}{%
                     <5><6><7><8><9><10>gen*wncyr %
                     <10.95><12><14.4><17.28><20.74><24.88> wncyr10 %
                }{}
                \DeclareFontShape{OT2}{cmr}{m}{it}{%
                     <5><6><7><8><9><10> gen * wncyi%
                     <10.95><12><14.4><17.28><20.74><24.88> wncyi10%
                }{}
                \DeclareFontShape{OT2}{cmr}{bx}{n}{%
                     <5><6><7><8><9><10> gen * wncyb%
                     <10.95><12><14.4><17.28><20.74><24.88> wncyb10%
                }{}
                \DeclareFontShape{OT2}{cmr}{m}{sl}{%
                     <-> ssub * cmr/m/it%
                }{}
                \DeclareFontShape{OT2}{cmr}{m}{sc}{%
                     <5><6><7><8><9><10>%
                     <10.95><12><14.4><17.28><20.74><24.88> wncysc10%
                }{}
                \DeclareFontFamily{OT2}{cmss}{\hyphenchar\font45 }
                \DeclareFontShape{OT2}{cmss}{m}{n}{%
                     <8><9><10> gen * wncyss%
                     <10.95><12><14.4><17.28><20.74><24.88> wncyss10%
                }{}
                \def\cyrencodingdefault{OT2}
                \def\pcyr{%
                        \cyracc
                        \let\encodingdefault\cyrencodingdefault
                        \language\Russian\fontencoding{OT2}\selectfont
                }
        \def\theorembodyfont#1{\relax}
          \let\@@th@plain\th@plain
          \def\th@plain{ \@@th@plain \slshape }
        \let\normalshape\relax
     \def\cprime{$'$}
  \def\@sect@my#1#2#3#4#5#6[#7]#8{%
\ifnum #2>\c@secnumdepth
   \let\@svsec\@empty
 \else
   \refstepcounter{#1}%
\edef\@svsec{\ifnum#2<\@m
             \@ifundefined{#1name}{}{\csname #1name\endcsname\ }\fi
\noexpand\rom{\csname the#1\endcsname.}\enspace}\fi
 \@tempskipa #5\relax
 \ifdim \@tempskipa>\z@ 
   \begingroup #6\relax
   \@hangfrom{\hskip #3\relax\@svsec}{\interlinepenalty\@M #8\par}%
   \endgroup
   \if@article\else\csname #1mark\endcsname{%
        \ifnum \c@secnumdepth >#2\relax\csname the#1\endcsname. \fi#7}\fi
\ifnum#2>\@m \else
       \let\@tempf\\ \def\\{\protect\\}\addcontentsline{toc}{#1}%
{\ifnum #2>\c@secnumdepth \else
             \protect\numberline{%
               \ifnum#2<\@m
               \@ifundefined{#1name}{}{\csname #1name\endcsname\ }\fi
               \csname the#1\endcsname.}\fi
           #8}\let\\\@tempf
     \fi
 \else
  \def\@svsechd{#6\hskip #3\@svsec
    \@ifnotempty{#8}{\ignorespaces#8\unskip
       \ifnum\spacefactor<1001.\fi}%
        \ifnum#2>\@m \else
          \let\@tempf\\ \def\\{\protect\\}\addcontentsline{toc}{#1}%
            {\ifnum #2>\c@secnumdepth \else
              \protect\numberline{%
                \ifnum#2<\@m
                \@ifundefined{#1name}{}{\csname #1name\endcsname\ }\fi
                \csname the#1\endcsname.}\fi
             #8}\let\\\@tempf\fi}%
 \fi
\@xsect{#5}}
  \let\@sect\@sect@my             
  \def\th@remark@my{\theorempreskipamount6\p@\@plus6\p@
    \theorempostskipamount\theorempreskipamount
    \def\theorem@headerfont{\it}\normalshape}
    \let\th@remark\th@remark@my
    \let\o@@remark\th@remark
      \def\th@remark{\o@@remark
	\ifdim\theorempostskipamount < 2pt\relax
	  \theorempostskipamount\theorempreskipamount
	     \multiply\theorempostskipamount\tw@
	     \divide\theorempostskipamount\thr@@
	\fi
      }
\let\myLabel\@gobble
\def\labelsONmargin{\@mparswitchfalse\def\myLabel##1{\@bsphack\marginpar
                                  {\normalshape\tiny\rm Label ##1}\@esphack}}
  \def\url#1{{\tt #1}}%
\def\cyracc{\def\u##1{
                \if \i##1\char"1A%
                \else \if I##1\char"12%
                \else \accent"24 ##1\fi\fi }%
\def\"##1{\if e##1{\char"1B}%
                \else \if E##1{\char"13}%
                \else \accent"7F ##1\fi\fi }%
\def\9##1{\if##1z\char"19 
\else\if##1Z\char"11 
\else\if##1E\char"03 
\else\if##1e\char"0B 
\else\if##1u\char"18 
\else\if##1U\char"10 
\else\if##1A\char"17 
\else\if##1a\char"1F 
\else\if##1p\char"7E 
\else\if##1P\char"5E 
\else\if##1Q\char"5F 
\else\if##1q\char"7F 
\else\if##1i\char"1A 
\else\if##1I\char"12 
\else\if##1N\char"7D 
\fi
\fi
\fi
\fi
\fi
\fi
\fi
\fi
\fi
\fi
\fi
\fi
\fi
\fi
\fi
}%
\def\cydot{{\kern0pt}}}%
\def\cydot{$\cdot$}
        \def\Russian{0\relax
    \message{Don't know the hyphenation rules for Russian^^J
                        Please do INITeX with `input  russhyph' in the 
                        command line}%
                \gdef\Russian{0\relax}%
        }
  \def\@putname#1#2#3#4{\def\@@ref{#3}\let\old@bf\bf
        \def\bf##1{\old@bf\if?\noexpand##1?{#4}\else##1\fi}%
	#1{#2}%
        \let\bf\old@bf}
  \def\@putname#1#2#3#4{\def\@@ref{#3}\let\old@bf\bf	
	\let\old@reset@font\reset@font			
        \def\bf##1{\old@bf\if?\noexpand##1?{#4}\else##1\fi}%
	\def\reset@font##1##2{\old@reset@font##1\if?\noexpand##2?{#4}\else##2\fi}#1{#2}%
        \let\bf\old@bf\let\reset@font\old@reset@font}
\let\my@ref=\ref
\def\ref#1{\@putname\my@ref{#1}{#1}{\tiny\rm\@@ref}}
\let\my@pageref=\pageref
\def\pageref#1{\@putname\my@pageref{#1}{#1}{\tiny\rm\@@ref}}
\let\my@cite=\cite
\def\cite#1{\@putname\my@cite{#1}{\@citeb}{\tiny\rm\@@ref}}
\theoremstyle{plain} 
\numberwithin{equation}{section}
\theoremstyle{definition}
\newtheorem{definition}{Definition}[section]
\newtheorem{example}[definition]{Example}
\theoremstyle{remark}
\newtheorem{remark}[definition]{Remark} 
\theoremstyle{plain} 
\newtheorem{theorem}[definition]{Theorem}
\newtheorem{lemma}[definition]{Lemma}
\newtheorem{corollary}[definition]{Corollary}
\newtheorem{proposition}[definition]{Proposition}
\renewcommand{\gg}{\mathfrak{g}}
\newcommand{\hh}{\mathfrak{h}}
\newcommand{\cW}{\mathcal{W}}
\renewcommand{\sp}{\mathfrak{sp}}
\newcommand{\so}{\mathfrak{o}}
\renewcommand{\sl}{\mathfrak{sl}}
\newcommand{\osp}{\mathfrak{osp}}
\newcommand{\ggl}{\mathfrak{gl}}
\renewcommand{\dim}{\mathrm{dim}}
\newcommand{\CC}{\mathbb{C}}
\newcommand{\ZZ}{\mathbb{Z}}
\newcommand{\limarr}{\underrightarrow{\mathrm{lim}} \;}
\newcommand{\Ann}{\mathrm{Ann}}
\newcommand{\ad}{\mathrm{ad}}
\newcommand{\Hom}{\mathrm{Hom}}
\newcommand{\Ext}{\mathrm{Ext}}
\newcommand{\End}{\mathrm{End}}
\newcommand{\supp}{\mathrm{supp}}
\newcommand{\C}{\mathbb{C}}
\newcommand{\Z}{\mathbb{Z}}
\newcommand{\gs}{\mathfrak{s}}
\newcommand{\gh}{\mathfrak{h}}
\def\cplus{\hbox{$\subset${\raise1.05pt\hbox{\kern -0.55em
${\scriptscriptstyle +}$}}\ }}
\def\gb{\mathfrak{b}}
\def\gg{\mathfrak{g}}
\def\gh{\mathfrak{h}}
\def\gk{\mathfrak{k}}
\def\gl{\mathfrak{l}}
\def\go{\mathfrak{o}}
\def\gp{\mathfrak{p}}
\def\gq{\mathfrak{q}}
\def\gs{\mathfrak{s}}
\begin{document}
\bibliographystyle{amsplain}
\relax

\title[Bounded weight modules]{Bounded weight modules for basic classical Lie superalgebras at infinity}
\author{Dimitar Grantcharov, Ivan Penkov and Vera Serganova}

\address{Department of Mathematics,
 University of Texas at Arlington, Arlington, TX 76019-0408, 
USA}

\email{grandim\atSign{}uta.edu }

\address{Jacobs University Bremen, 
 Campus Ring 1,
28759 Bremen, Germany}

\email{i.penkov\atSign{}jacobs-university.de}

\address{Deptartment of Mathematics, University of California at Berkeley,
Berkeley, CA 94720, USA}

\email{serganov\atSign{}math.berkeley.edu}

\maketitle

\begin{abstract}
We classify simple bounded weight modules over the complex simple Lie superalgebras $\sl(\infty |\infty)$ and $\osp (m | 2n)$, when at least one of $m$ and $n$ equals $\infty$. For  $\osp (m | 2n)$ such modules are of spinor-oscillator type, i.e., they combine into one the known classes of spinor $\go (m)$-modules and oscillator-type $\sp (2n)$-modules. In addition, we characterize the category of bounded weight modules over $\osp (m | 2n)$ (under the assumption $\dim \, \osp (m | 2n) = \infty$) by reducing its study to already known categories of representations of $\sp (2n)$, where $n$ possibly equals $\infty$. When classifying simple bounded weight $\sl(\infty |\infty)$-modules, we prove that every such module is integrable over one of the two infinite-dimensional ideals of the Lie algebra  $\sl(\infty |\infty)_{\bar{0}}$. We finish the paper by establishing some first  facts about the category of bounded weight $\sl(\infty |\infty)$-modules.
\end{abstract}

{\small
\medskip\noindent 2020 MSC: Primary 17B65, 17B10 \\
\noindent Keywords and phrases: direct limit Lie superalgebra, Clifford superalgebra, Weyl superalgebra, 
weight module, annihilator.}

\section*{Introduction}
The representation theory of the three simple infinite-dimensional finitary complex Lie algebras  $\sl (\infty)$, $\mathfrak o (\infty)$, and $\sp (\infty)$ has made notable progress in the last three decades, see for instance  \cite{DPS},  \cite{DP}, \cite{PSer1}, \cite{PSer2}, \cite{PStyr}, \cite{SS}. For a  summary of highlights of this theory  see  \cite{PH}. The theory of representations of the super-counterparts of the Lie algebras $\sl (\infty)$, $\mathfrak o (\infty)$, and $\sp (\infty)$ is still much less developed. For a finite-dimensional Lie superalgebra $\mathfrak k$, the category of all representations of $\mathfrak k$ is almost never equivalent to the category of all representations of the Lie algebra $\mathfrak k_{\bar{0}}$, the even part  of  $\mathfrak k$. However, in that case  there is a general result claiming that 
a category of representations of $\mathfrak k$ with fixed strongly typical central character is equivalent to a corresponding category of representations of $\mathfrak k_{\bar{0}}$.

 This result does not provide a clear guideline for the case of Lie superalgebras of infinite rank  since  the center of the enveloping algebra of  Lie superalgebras like $\sl (\infty | \infty)$ or $\osp (\infty | \infty)$ is trivial. Nevertheless, in the study of  reasonably small categories of representations over the Lie superalgebras  $\sl (\infty | \infty)$ and $\osp (\infty | \infty)$, one may rely on different intuition and obtain results not necessarily following the above pattern. For instance, in \cite{S} it is shown that the category of tensor modules  over the Lie superalgebra  $\osp (\infty | \infty)$ (respectively, over  $\sl (\infty | \infty)$) is  equivalent to the categories of tensor modules  over each of the Lie algebras $\mathfrak o (\infty)$ and $\sp (\infty)$ (respectively, over $\sl (\infty)$).  A somewhat similar phenomenon can be seen in the paper \cite{CP}, where it is proved that the categories of integrable bounded weight  modules over various Lie superalgebras like $\sl (\infty | \infty)$ or $\osp (\infty | \infty)$ are semisimple.

In the present paper we study the categories of arbitrary (i.e., not necessarily  integrable) bounded weight modules over the complex Lie superalgebras $\osp (m | 2n)$,  where at least one of $m$ or $n$ equals $\infty$, and over the Lie superalgebra $\sl (\infty | \infty)$. Before describing our results we should recall that for the infinite-dimensional Lie algebras $\mathfrak{sl} (\infty)$, $\mathfrak o (\infty)$, $\mathfrak{sp}(\infty)$ simple bounded weight modules have been classified in \cite{GrP} and their structure has been further studied in \cite{C}.

Our first main result claims that any simple bounded weight module over an  infinite-dimensional Lie superalgebra $\osp (m | 2n)$ has  just  length two (or one for a trivial module) over the Lie algebra  $\osp (m | 2n)_{\bar{0}} = \mathfrak o (m ) \oplus \sp( n)$. Moreover, such a module  (unless it is a natural or trivial module) is determined by a pair $(S,N)$, where $S$ is a spinor $\mathfrak o (m)$-module and $N$ is a $\sp (2n)$-module of oscillator type, i.e., a close relative of the oscillator representations of  $\sp (2n)$. (The notions of spinor $ \mathfrak o (m)$-modules and oscillator-type $ \mathfrak {sp} (2n)$-modules  make sense also for $m=\infty$ and $n=\infty$ due to the results of \cite{GrP}.)  This spectacular fact allows us to identify simple bounded weight $\osp (m | 2n)$-modules, other than trivial and natural modules, as modules of ``spinor-oscillator type''. The latter class of modules of $\osp(m|2n)$  glues spinor and oscillator-type modules together, and is the ultimate super-symmetric version of both   spinor $\mathfrak o (m)$-modules  and oscillator-type $\sp (2n)$-modules. 

The classification of simple bounded weight $\sl (\infty | \infty)$-modules is also very interesting and constitutes our second main result. In particular, we show that every such module is integrable and semisimple with respect to a simple ideal of $\sl (\infty | \infty)_{\bar{0}} \simeq \left(\sl (\infty) \oplus \sl(\infty) \right) \cplus \mathbb C$, and this nicely resembles the answer for the case of $\osp (\infty | \infty)$ where a  bounded weight $\osp (\infty | \infty)$-module is necessarily integrable and semisimple  as an $\mathfrak o (\infty)$-module.

Our main method of classification is reduction to weight modules of Weyl and Clifford superalgebras of infinitely many variables. We denote these superalgebras respectively by $D(\infty |\infty)$ and $Cl(\infty |\infty)$. There are natural homomorphisms $U(\osp (\infty | \infty)) \to Cl(\infty |\infty)$ and $U(\gs \gl (\infty |\infty)) \to D(\infty |\infty)$, see Section 2.2. One of our central ideas is that, with the exception of Schur powers of the natural and conatural representations (for $\osp (\infty | \infty)$ this exception applies only to the trivial and natural representations), all simple bounded weight $\osp (\infty |\infty)$-  or $\sl (\infty | \infty)$-modules are annihilated by the kernel of the respective homomorphism. This facilitates a reduction of the study of simple bounded weight $\osp (\infty | \infty)$-  and $\sl (\infty | \infty)$-modules, as well as of the respective categories of bounded weight modules, to the study of weight modules of the associative superalgebras  $Cl(\infty |\infty)$ and $D(\infty |\infty)$ and their relevant subalgebras. The above method applies also to the case of $\osp (m|2n)$ where $m$ or $n$ is finite, and to $\sl (\infty | n)$ for $n \in \Z_{>0}$ as well. 

Here is a brief description of the content of the paper. Section 1 is devoted to preliminaries. In Section 2 we undertake a study of the categories of weight modules over Clifford and Weyl  superalgebras. In particular, we establish that any such simple  module is multiplicity free.  In Sections 3 and 4 we apply the above results to the case of $\osp (m|2n)$  where at least one of $m$ and $n$ equals infinity. We show that any simple non-integrable bounded weight  $\osp (m|2n)$-module is a spinor-oscillator module. Moreover, we prove that the category of spinor-oscillator representations is equivalent to the category of multiplicity free non-integrable weight modules over the Lie algebra  $\osp (m|2n)_{\bar{0}} = \go (m) \oplus \sp (2n)$.

The case of $\sl (\infty |\infty)$ is discussed in Section 5.  Here we give a classification of the simple bounded weight $\sl (\infty |\infty)$-representations and make a first step towards  understanding  the category of such representations. A deeper study of this category should be a separate project.

\medskip
{\bf Acknowledgment.} We thank Lucas Calixto for reading a preliminary draft of the paper. IP and VS acknowledge the outstanding hospitality of Mathematisches Forschungsinstitut Oberwolfach where this paper was almost completed.
DG was supported in part by Simons Collaboration Grant 855678. IP was supported in part by DFG grant PE 980/8-1.  VS was supported in part by
NSF grant 2001191. 
   \section{Preliminaries}\label{prelim}
   
   The base field is $\C$. By $S_n$ we denote the symmetric group on $n$ letters. A \emph{superspace} is a $\mathbb Z_2$-graded vector space where $\mathbb Z_2 := \mathbb Z/2\mathbb Z$, and a \emph{superalgebra} is a $\mathbb Z_2$-graded algebra. We use the indices $\bar{0}$ and $\bar{1}$ 
  to indicate $\mathbb Z_2$-gradings.  
  A \emph{purely even}  (respectively, \emph{purely odd}) superspace is a superspace $V$ such that   $V = V_{\bar{0}}$ (resp., $V = V_{\bar{1}}$). By $\Pi$ we denote the parity change functor on  superspaces:  $(\Pi V)_{\bar{0}} = V_{\bar{1}} $, $(\Pi V)_{\bar{1}} = V_{\bar{0}}$. If $V = V_{\bar{0}} \oplus  V_{\bar{1}}$ is a superspace, then the \emph{dual} superspace equals $V_{\bar{0}}^* \oplus  V_{\bar{1}}^*$, where  $V_{\bar{0}}^* = \Hom (V_{\bar{0}}, \mathbb C)$, $V_{\bar{1}}^* = \Pi \Hom (\Pi V_{\bar{1}}, \mathbb C)$ and $\Hom$ stands here for homomorphisms of purely even spaces.

We write $S^k V$  and  $\Lambda^k V$  for the $k$th  symmetric and exterior powers for a superspace $V$. If $W$ is a superspace of parity $p \in \mathbb Z_2 $ (i.e., $W = W_{\bar{0}}$ for $p=\bar{0}$ and $W = W_{\bar{1}}$ for $p=\bar{1}$), then $S^k W$  (respectively, $\Lambda^k W$) is a superspace of parity $kp \in \mathbb Z_2 $ (respectively, $kp + \bar{1} \in \mathbb Z_2 $). For a general superspace $V = V_{\bar{0}} \oplus V_{\bar{1}}$ we have
      $$
   S^kV = \bigoplus_{i+j = k} {S}^{i}V_{\bar{0}} \otimes \Lambda^{j} V_{\bar{1}}, \;   \Lambda^kV = \bigoplus_{i+j = k} \Lambda^{i}V_{\bar{0}} \otimes S^{j} V_{\bar{1}}.
   $$
  An \emph{ even  symmetric} (respectively, \emph{even antisymmetric}) bilinear form on a superspace $V$ is a parity-preserving linear operator $S^2V \to \C$ (respectively, $\Lambda^2V \to \C$).

  In this paper we work with the Lie superalgebras $\mathfrak{gl}(a|b)$, $\sl (a|b)$, $\osp (2a|2b)$, $\osp (2a+1|2b)$, where $a, b \in \mathbb Z_{\geq 0} \cup \{ \infty\}$. Their  \emph{defining representation} is the  simple module of respective dimension $(a|b)$,  $(a|b)$, $(2a|2b)$, $(2a+1|2b)$. In what follows we use the term defining representation more loosely to include also the defining representation with changed parity. The Lie superalgebras $\mathfrak{gl}(a|b)$, $\sl (a|b)$, $\osp (2a|2b)$, $\osp (2a+1|2b)$ can be equipped with a fixed even symmetric invariant form  $(\cdot, \cdot)$. All homomorphisms of superalgebras are assumed to preserve the $\mathbb Z_2$-grading.  All modules over purely even  associative algebras or Lie algebras are assumed to be purely even unless otherwise stated.

  We assume that  Cartan subalgebras of the Lie superalgebras considered are fixed, and use standard notation for the roots.  Note that these Cartan subalgebras are purely even and all root spaces are either purely even or purely odd. Therefore the roots are designated as even or odd.    Concretely, the even roots of  $\mathfrak{gl}(a|b)$ and  $\sl (a|b)$ are 
$\varepsilon_i - \varepsilon_{k}, \delta_j - \delta_l$, while the odd roots are $\pm (\varepsilon_i - \delta_j)$, where $1\leq i \neq k \leq a, 1 \leq j \neq l \leq b 
$. The even roots of $\mathfrak{osp}(2a|2b)$ are $\pm ( \varepsilon_i \pm  \varepsilon_{k}), \pm (\delta_j \pm  \delta_l), \pm 2 \delta_j$, and the odd roots are $\pm (\varepsilon_i - \delta_j)$. For $\mathfrak{osp}(2a+1|2b)$ we have in addition the even roots $ \pm  \varepsilon_i $ and the odd roots  $\pm \delta_j$.

We should point out that for $a=\infty$  the Lie superalgebras $\osp(2a+1|2b)$ and $\osp(2a|2b)$ are isomorphic, and the difference in root systems is the result of different choices of Cartan subalgebras. A less brief discussion of the Lie superalgebras we consider and their root systems can be found in \cite{CP}.

Let $\mathfrak s$ be a Lie algebra or Lie superalgebra with a fixed Cartan subalgebra $\mathfrak h =\mathfrak h_{\bar{0}}$. A \emph{weight}  module $M$ is an $\mathfrak s$-module that is semisimple as $\mathfrak h$-module. The $\mathfrak h$-isotypic components of $M$ are the \emph{weight spaces} of $M$: we denote them by $M^{\lambda}$ for $\lambda \in \gh^*$.  The weight spaces of $M$ are superspaces. Every weight module $M$ has a well-defined support:
       $$ \supp  M = \{\mu \in \gh^* \; | \; M^{\mu} \neq 0 \}. $$

A \emph{weight}  module is \emph{bounded} if the dimension $(d_0|d_1)$ of any weight space of $M$ is less or equal to  $(a|b)$ for some fixed $a,b \in \mathbb Z_{\geq 0}$, i.e., $d_0\leq a$, $d_1\leq b$. The \emph{degree} $d (M)$ of a bounded weight module $M$ equals the maximum value of the sum $d_0+d_1$ over all weight spaces of $M$.

Each of our Lie superalgebras has (up to isomorphism) two natural modules which we denote by $V$ and $\Pi V$. These modules are weight modules, and for $\gg \gl (a|b)$ and $\sl (a|b)$ we assume that the weight spaces of  weight $\varepsilon_i$ in $V$ are purely odd and  the weight spaces of weight $\delta_j$ in $V$ are purely even. For $\osp (2a+1|2b)$ and $\osp (2a|2b)$   we make the opposite choice. We have
$$
\supp V = \begin{cases} \{ \varepsilon_i, \delta_j \; | \; i,j>0\} \mbox{ if } \gg = \sl(a|b)  \mbox{ or }  \gg = \gg \gl(a|b) ,\\ 
\{ 0, \pm \varepsilon_i, \pm \delta_j \; | \; i,j>0\} \mbox{ if } \gg = \osp(2a+1|2b),\\
\{ \pm \varepsilon_i, \pm \delta_j \; | \; i,j>0\} \mbox{ if } \gg = \osp(2a|2b).
\end{cases}
$$
For $\gg \gl (a|b)$ and  $\sl (a|b)$ modules $V_*$ and $\Pi V_*$ are also well defined. They are characterized by equalities  $\supp V_*= -\supp V$, $\supp \Pi V_*= -\supp \Pi V$, and by the fact that the weight spaces  of  weight $-\varepsilon_i$ in $V_*$ are purely odd and  the weight spaces of weight $-\varepsilon_i$ in $\Pi V_*$ are purely even. 

We now recall some facts about \emph{multiplicity free} weight $\mathfrak s$-modules for a finite-dimensional Lie algebra $\mathfrak s$, i.e., bounded weight $\mathfrak s$-modules $M$ with $d(M)=1$. Their classification has been part of a major effort to classify simple weight modules with finite-dimensional weight spaces. Some of the  main contributors have  been D. Britten, F. Lemire, S. Fernando, V. Futorny, G. Benkart,  O. Mathieu, and Mathieu's paper \cite{Mat} can be considered as the crown of this effort. It follows from a result of Fernando \cite{Fer} that for $\mathfrak s = \so (n)$,
$n\geq 5$ every multiplicity free simple weight $\so (n)$ is finite dimensional, hence is a trivial module, natural module, or a spinor module. For $\mathfrak s = \sp (2n)$ the only  multiplicity free simple finite-dimensional  $\mathfrak s$-modules are the trivial and the natural modules, and there is a ``coherent family'' of infinite-dimensional multiplicity free simple weight $\mathfrak s$-modules  \cite{BBL}, \cite{Mat}. For every Borel subalgebra $\mathfrak b \supset \mathfrak h$, there are precisely two nonisomorphic multiplicity free  simple $\mathfrak b$-highest weight modules in this family. These highest weight modules are known as oscillator or Shale-Weil modules, and every other infinite-dimensional multiplicity free simple weight module is obtained from one of them via twisted localization, see \cite{Mat}. For $\gs = \sl(n)$ the simple multiplicity free 
 weight modules have been classified in \cite{BBL} and have been further studied by O. Mathieu in \cite{Mat}. In this paper we will not refer to the description of all simple multiplicity free weight modules for $\sl(n)$ and $\sp(2n)$, but for understanding our results it is essential to know that simple multiplicity free weight modules, and more generally simple bounded weight modules, are well studied.

For $\gs = \sl(\infty), \sp (\infty), \so(\infty)$, simple bounded weight modules are described explicitly in \cite{GrP}. In the case of $\so(\infty)$, any bounded weight module is \emph{integrable}, i.e., it is a direct limit
of finite-dimensional  $\so(n)$--modules for $n \to \infty$. More precisely, if $M$ is a simple bounded  weight $\so(\infty)$-module, then $M$ is a trivial module, a natural module, or a direct limit of spinor modules. We refer to the latter direct limits simply as \emph{spinor  $\so(\infty)$-modules}. For $\gs = \sp(\infty)$ the result is similar. Namely, a simple bounded  weight  $\sp(\infty)$-module is a trivial module, a natural module, or a direct limit of simple multiplicity free infinite-dimensional  $\sp(2n)$--modules for $n \to \infty$. A difference with the case of $\so(\infty)$ is that a direct limit of simple multiplicity free infinite-dimensional modules is not integrable. We call such a direct limit a  
 \emph{simple weight $\sp(\infty)$-module of  oscillator type}. 

  In the sequel we will need the following general lemma about associative superalgebras.

 \begin{lemma}\label{associative} Let $A$ be an associative superalgebra and $X$ be a simple $A$-module. Then $X_{\bar 0}$ and $X_{\bar 1}$ are simple
   $A_{\bar 0}$-modules.
 \end{lemma}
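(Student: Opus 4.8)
The plan is to deduce simplicity of $X_{\bar 0}$ and $X_{\bar 1}$ as $A_{\bar 0}$-modules from simplicity of $X$ as an $A$-module, by showing that \emph{any} nonzero $A_{\bar 0}$-submodule of $X_{\bar 0}$ (resp., of $X_{\bar 1}$) generates all of $X$ under the $A$-action. First I would record the trivial observation that $A_{\bar 0}$, being the even part, preserves the $\mathbb{Z}_2$-grading of $X$, so $X_{\bar 0}$ and $X_{\bar 1}$ are genuine $A_{\bar 0}$-submodules of $X$; the fact that one is purely even and the other purely odd plays no role here.

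Next, let $Y \subseteq X_{\bar 0}$ be a nonzero $A_{\bar 0}$-submodule and consider the subspace $AY = A_{\bar 0}Y + A_{\bar 1}Y = Y + A_{\bar 1}Y$. The key (and only) computation is that $AY$ is an $A$-submodule of $X$: using $A_{\bar 0}A_{\bar 1}\subseteq A_{\bar 1}$ and $A_{\bar 1}A_{\bar 1}\subseteq A_{\bar 0}$ one gets $A_{\bar 0}(Y+A_{\bar 1}Y)\subseteq Y+A_{\bar 1}Y$ and $A_{\bar 1}(Y+A_{\bar 1}Y)\subseteq A_{\bar 1}Y + A_{\bar 0}Y = Y+A_{\bar 1}Y$. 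Since $Y\neq 0$ and $X$ is simple, $AY=X$.

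Now I would pass to even parts. Because $Y\subseteq X_{\bar 0}$ and $A_{\bar 1}Y\subseteq X_{\bar 1}$, the sum $AY=Y+A_{\bar 1}Y$ is compatible with the grading, so $X_{\bar 0}=(AY)_{\bar 0}=Y$. Hence every nonzero $A_{\bar 0}$-submodule of $X_{\bar 0}$ is all of $X_{\bar 0}$, i.e. $X_{\bar 0}$ is simple --- with the understanding that this assertion is made only when $X_{\bar 0}\neq 0$; if $X_{\bar 0}=0$ there is nothing to prove for it, and then $A_{\bar 1}$ annihilates $X=X_{\bar 1}$, so $A$-submodules of $X$ are exactly $A_{\bar 0}$-submodules and $X_{\bar 1}$ is simple directly. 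The argument for $X_{\bar 1}$ is word for word the same with $\bar 0$ and $\bar 1$ interchanged: for a nonzero $A_{\bar 0}$-submodule $Z\subseteq X_{\bar 1}$ the space $Z+A_{\bar 1}Z$ is an $A$-submodule, hence equals $X$, whence $Z=X_{\bar 1}$.

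I do not expect a real obstacle. The two points that need care are the closure computation showing $Y+A_{\bar 1}Y$ is $A$-stable --- this is exactly where the superalgebra axiom $A_{\bar 1}A_{\bar 1}\subseteq A_{\bar 0}$ is used --- and the degenerate possibility that $X_{\bar 0}$ or $X_{\bar 1}$ vanishes, which has to be acknowledged explicitly since the zero module is not simple by convention.
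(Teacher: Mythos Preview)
Your proof is correct and follows essentially the same route as the paper: take a nonzero $A_{\bar 0}$-submodule $Y\subseteq X_{\bar 0}$, observe that $AY$ is an $A$-submodule of $X$ with $(AY)_{\bar 0}=Y$, and conclude $Y=X_{\bar 0}$ by simplicity of $X$. Your version is simply more explicit about the closure computation and about the degenerate case $X_{\bar 0}=0$ or $X_{\bar 1}=0$, which the paper leaves implicit.
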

 \begin{proof} If $Y\subset X_{\bar 0}$ (respectively, $Y\subset X_{\bar 1}$) is a proper nonzero $A_{\bar 0}$-submodule then $AY$ is an $A$-submodule of $Y$
   and 
   $(AY)_{\bar 0}=Y$ (respectively, $(AY)_{\bar 1}=Y$).
   \end{proof}

We conclude  Section \ref{prelim} with some facts concerning finite-dimensional Lie (super)algebras $\mathfrak s$. For a partition (equivalently, a Young diagram) $\mu$, let $\mathbb S_{\mu} \cdot$ denote the corresponding Schur functor.

\begin{proposition}\label{finrankintegrablesl} Let $\gs=\mathfrak{sl}(n)$ and $V$ be the defining $\gs$-module. If $n\geq |\mu|$ then $d(\mathbb S_\mu V)$ equals the dimension of the simple $S_{|\mu|}$-module $Z_{\mu}$ associated to $\mu$. 
\end{proposition}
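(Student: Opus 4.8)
The plan is to deduce the statement from Schur--Weyl duality. First I would reduce to $\ggl(n)$: restriction from $\ggl(n)$ to $\gs=\sl(n)$ keeps $\mathbb S_\mu V$ simple, and since all $\ggl(n)$-weights occurring in $\mathbb S_\mu V$ lie on the single affine hyperplane $\{\sum_i c_i=|\mu|\}$, passing to $\sl(n)$-weights neither merges weight spaces nor changes their dimensions; so the degree is the same whether computed over $\sl(n)$ or over $\ggl(n)$. Put $k=|\mu|$. Since $n\ge k\ge\ell(\mu)$, Schur--Weyl duality furnishes a $(\ggl(n),S_k)$-bimodule decomposition $V^{\otimes k}\cong\bigoplus_{\nu\vdash k}\mathbb S_\nu V\otimes Z_\nu$, in which every summand $\mathbb S_\nu V$ is nonzero.

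Then I would fix a $\ggl(n)$-weight $\lambda$. Because the $\ggl(n)$- and $S_k$-actions on $V^{\otimes k}$ commute, the weight space $(V^{\otimes k})^\lambda$ is an $S_k$-submodule, and the displayed decomposition restricts to $(V^{\otimes k})^\lambda\cong\bigoplus_{\nu\vdash k}\big((\mathbb S_\nu V)^\lambda\otimes Z_\nu\big)$ as $S_k$-modules; hence $\dim(\mathbb S_\mu V)^\lambda$ is exactly the multiplicity of $Z_\mu$ in the $S_k$-module $(V^{\otimes k})^\lambda$. Writing $\lambda=\sum_{i=1}^n a_i\varepsilon_i$ with $a_i\in\ZZ_{\ge 0}$ and $\sum_i a_i=k$ (else $(V^{\otimes k})^\lambda=0$ and there is nothing to check), the space $(V^{\otimes k})^\lambda$ is the linear span of the place-permutation $S_k$-orbit of $e_1^{\otimes a_1}\otimes\cdots\otimes e_n^{\otimes a_n}$, whose stabilizer is the Young subgroup $S_{a_1}\times\cdots\times S_{a_n}$. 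Thus $(V^{\otimes k})^\lambda\cong\C[S_k]\otimes_{\C[S_{a_1}\times\cdots\times S_{a_n}]}\C$ is a quotient of the regular representation $\C[S_k]$. As $\C[S_k]$ is semisimple, any quotient is a direct summand, so the multiplicity of $Z_\mu$ in it is at most its multiplicity in $\C[S_k]$, which is $\dim Z_\mu$. Therefore $\dim(\mathbb S_\mu V)^\lambda\le\dim Z_\mu$ for every weight $\lambda$.

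Finally I would exhibit a weight at which equality holds. Because $n\ge k$, the weight $\lambda_0=\varepsilon_1+\cdots+\varepsilon_k$ actually occurs in $V^{\otimes k}$; for it every $a_i\le 1$, so the relevant Young subgroup is trivial and $(V^{\otimes k})^{\lambda_0}\cong\C[S_k]$ as $S_k$-modules. Hence $\dim(\mathbb S_\mu V)^{\lambda_0}$ equals the multiplicity of $Z_\mu$ in the regular representation, namely $\dim Z_\mu$. Combining this with the inequality above, and recalling that $\mathbb S_\mu V$ is finite-dimensional and purely even, so that $d(\mathbb S_\mu V)$ is just the largest dimension of a weight space, gives $d(\mathbb S_\mu V)=\dim Z_\mu$.

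As for difficulty, the argument is essentially bookkeeping around Schur--Weyl duality and I do not anticipate a serious obstacle. The one place the hypothesis $n\ge|\mu|$ is genuinely used is in guaranteeing that the ``fully spread out'' weight $\lambda_0$ is a weight of $V^{\otimes k}$ (equivalently that $(1^k)$ has at most $n$ parts), which is also what ensures that no simple $S_k$-module $Z_\nu$ drops out of the Schur--Weyl decomposition; when $n<|\mu|$ the conclusion fails, consistent with $\lambda_0$ being unavailable.
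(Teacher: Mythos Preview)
Your proof is correct and follows essentially the same strategy as the paper: both use Schur--Weyl duality to identify $(\mathbb S_\mu V)^\lambda$ with the multiplicity of $Z_\mu$ in the $S_k$-module $(V^{\otimes k})^\lambda$, and both exhibit the ``fully spread out'' weight $\varepsilon_1+\cdots+\varepsilon_k$ (the paper writes $\omega$ after first reducing to $n=k=|\mu|$) as the place where this multiplicity equals $\dim Z_\mu$ via the regular representation. Your treatment is in fact more complete: the paper computes only the weight space at $\omega$ and leaves the upper bound $\dim(\mathbb S_\mu V)^\lambda\le\dim Z_\mu$ for arbitrary $\lambda$ implicit, whereas your observation that every weight space of $V^{\otimes k}$ is induced from the trivial module of a Young subgroup (hence a quotient of $\C[S_k]$) supplies that bound explicitly.
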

\begin{proof} It suffices to consider the case $n=|\mu|$. Let $\{e_1,\dots,e_{n}\}$ be the standard $\gh$-eigenbasis of $V$.
  Let $\omega=\varepsilon_1+\dots+\varepsilon_n$. Then the weight space $(V^{\otimes n})^\omega$ has a structure of $W\times S_n$-module,
  where $W\simeq S_n$ is the Weyl group of $\mathfrak{sl}(n)$. Moreover, as an $S_n$-module $(V^{\otimes n})^\omega$ is isomorphic to the regular representation of $S_n$. Therefore, the isomorphism
  $$(V^{\otimes n})^\omega\simeq \bigoplus_\mu{} (\mathbb S_\mu V)^\omega\otimes Z_\mu$$
 forces $\dim (\mathbb S_\mu V)^\omega = \dim Z_\mu$. 
\end{proof}
\begin{lemma}\label{estimate} Let $\gs$ be a simple finite-dimensional Lie algebra, and $L(\mu), L(\nu)$ be simple finite-dimensional modules with respective  highest weights $\mu,\nu$. Then $d(L(\mu+\nu))\geq d(L(\mu))$.
\end{lemma}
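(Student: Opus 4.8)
The plan is to realize $L(\mu+\nu)$ as the Cartan component inside $L(\mu)\otimes L(\nu)$ and then transport weight multiplicities from $L(\mu)$ to $L(\mu+\nu)$ by a weight-shifting linear map. Since $L(\mu)$ is finite-dimensional, $d(L(\mu))$ is the maximal dimension of a weight space, attained at some $\lambda_0\in\gh^*$; it therefore suffices to exhibit a weight of $L(\mu+\nu)$ of multiplicity at least $\dim L(\mu)^{\lambda_0}$, and I will show that $\lambda_0+\nu$ works — indeed that
$$\dim L(\mu+\nu)^{\lambda+\nu}\ \geq\ \dim L(\mu)^{\lambda}\qquad\text{for every }\lambda .$$

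First I would fix a triangular decomposition $\gs=\mathfrak{n}^{-}\oplus\gh\oplus\mathfrak{n}^{+}$ adapted to the chosen highest weights and let $v_\mu\in L(\mu)$, $v_\nu\in L(\nu)$ be highest weight vectors. Then $v_\mu\otimes v_\nu$ is annihilated by $\mathfrak{n}^{+}$ and has weight $\mu+\nu$, so $Z:=U(\gs)(v_\mu\otimes v_\nu)=U(\mathfrak{n}^{-})(v_\mu\otimes v_\nu)$ is a highest weight submodule of $L(\mu)\otimes L(\nu)$ whose $(\mu+\nu)$-weight space is one-dimensional. By complete reducibility $L(\mu)\otimes L(\nu)$, and hence $Z$, is semisimple; being cyclic on a highest weight vector with one-dimensional extreme weight space, $Z$ is simple, so $Z\cong L(\mu+\nu)$ (the standard Cartan-component fact, which I would reprove in one line).

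The key device is a non-equivariant map. Choose a linear functional $\phi\colon L(\nu)\to\C$ vanishing on every weight space other than $L(\nu)^{\nu}=\C v_\nu$ and with $\phi(v_\nu)=1$, and set
$$\Phi\ :=\ (\mathrm{id}_{L(\mu)}\otimes\phi)\big|_{Z}\ \colon\ L(\mu+\nu)\cong Z\ \longrightarrow\ L(\mu).$$
Because $\phi$ is supported in weight $\nu$, the map $\mathrm{id}\otimes\phi$ lowers weights by $\nu$, so $\Phi$ restricts to maps $L(\mu+\nu)^{\lambda+\nu}\to L(\mu)^{\lambda}$ for every $\lambda$. The crucial point, which I expect to be the only step requiring care, is that $\Phi$ is surjective. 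To prove it I would show by induction on the degree of $X\in U(\mathfrak{n}^{-})$ that $\Phi\big(X(v_\mu\otimes v_\nu)\big)=Xv_\mu$: writing $f_i\in\mathfrak{n}^{-}$ acting on the tensor product as $f_i\otimes 1+1\otimes f_i$, any term in which $f_i$ hits the second tensor factor produces a vector of $L(\nu)$ of weight strictly above $\nu$ in that factor after the preceding steps, hence is killed by $\phi$, leaving exactly $f_i\cdot\Phi\big(X(v_\mu\otimes v_\nu)\big)$. Consequently $\Phi(Z)=U(\mathfrak{n}^{-})v_\mu=L(\mu)$.

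Finally, $\Phi$ is a surjective linear map homogeneous of weight $-\nu$, so it is surjective on each weight; this gives the displayed inequality, and specializing to $\lambda=\lambda_0$ yields $d(L(\mu+\nu))\geq\dim L(\mu+\nu)^{\lambda_0+\nu}\geq\dim L(\mu)^{\lambda_0}=d(L(\mu))$. As an alternative to the inductive computation, one could note that the isotypic components of $L(\mu)\otimes L(\nu)$ are mutually orthogonal for the product of the contravariant forms, so that the projection onto $L(\mu+\nu)$ is the orthogonal projection and surjectivity of $\Phi$ becomes a nondegeneracy statement; but this reduces to essentially the same calculation, so I would proceed directly.
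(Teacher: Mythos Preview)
Your proof is correct, and the core idea---realizing $L(\mu+\nu)$ as the Cartan component of $L(\mu)\otimes L(\nu)$ and exploiting the extremality of $v_\nu$---is the same as the paper's. The implementation, however, is dualized: the paper takes the \emph{projection} $\pi\colon L(\mu)\otimes L(\nu)\to L(\mu+\nu)$ and observes in one line that its restriction to $L(\mu)^\lambda\otimes L(\nu)^\nu$ is injective (because for $0\neq w\in L(\mu)^\lambda$ one has $v_\mu\otimes v_\nu\in U(\mathfrak n^+)(w\otimes v_\nu)$, so $w\otimes v_\nu$ cannot lie in $\ker\pi$). You instead build a non-equivariant surjection $\Phi\colon L(\mu+\nu)\to L(\mu)$ and verify surjectivity by an induction on PBW length. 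Both yield the same inequality $\dim L(\mu+\nu)^{\lambda+\nu}\geq\dim L(\mu)^\lambda$; the paper's route is simply shorter.

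One small slip: in your inductive step you write that applying $f_i$ to the second tensor factor ``produces a vector of $L(\nu)$ of weight strictly \emph{above} $\nu$''. It is strictly \emph{below} $\nu$ (lowering operators lower weights); what matters, and what you use, is only that the weight is $\neq\nu$, so $\phi$ annihilates it. The argument is unaffected.
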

\begin{proof} 
	Let $\pi:L(\mu)\otimes L(\nu)\to L(\mu+\nu)$ be the unique surjective homomorphism. Then the restriction of $\pi$ to $L(\mu)^\lambda\otimes L(\nu)^\nu$ is injective,  where $\lambda$ is a weight of $L(\mu)$ of maximal multiplicity. This implies the statement.
\end{proof}
\begin{lemma}\label{degreefinite} Let $\gs=\so(2n+1),\so(2n)$, or $\sp(2n)$. Then $L(\mu)$  is either multiplicity free or $d(L(\mu))\geq n-1$. 
\end{lemma}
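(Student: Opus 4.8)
The plan is to combine Lemma~\ref{estimate} with the classification of multiplicity-free simple finite-dimensional modules recalled above: for $\so(m)$ these are the trivial, natural and spinor modules, and for $\sp(2n)$ the trivial and natural modules. So, assuming $L(\mu)$ is not multiplicity free, $\mu$ is distinct from $0$, from the highest weight of the natural module, and---in the orthogonal case---from the highest weight of a spinor module, and one must exhibit a weight space of $L(\mu)$ of dimension at least $n-1$.

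The first step is to record the degree of a short list of ``reference'' modules. The adjoint module $\gs\cong L(\theta)$, with $\theta$ the highest root, has zero weight space equal to the Cartan subalgebra $\gh$, so $d(L(\theta))=n$; here $\theta=\varepsilon_1+\varepsilon_2$ for $\so(2n+1),\so(2n)$ and $\theta=2\varepsilon_1$ for $\sp(2n)$. For the fundamental weights, $L(\omega_k)$ sits inside $\Lambda^k V$, with $V$ the natural module: for the orthogonal algebras $\Lambda^k V$ is itself simple, so $L(\omega_k)=\Lambda^k V$ and its weight multiplicities are read off directly by counting monomials---the zero weight space when $k$ is even and the $\varepsilon_1$-weight space when $k$ is odd have dimension a binomial coefficient that is visibly $\ge n-1$; for $\sp(2n)$ one has instead $\Lambda^k V\cong L(\omega_k)\oplus\Lambda^{k-2}V$, so the analogous count must be corrected by the $\Lambda^{k-2}V$-part. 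One also records $d(L(a\omega_1))\ge n$ for $a\ge 2$ from the zero weight space of the simple $\sp(2n)$-module $S^aV$ (reducing odd $a$ to even $a$ via Lemma~\ref{estimate}), and similarly handles the ``doubled'' and ``mixed'' spin weights $2\omega_{n-1}$, $2\omega_n$, $\omega_{n-1}+\omega_n$ occurring for $\so(2n)$.

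The second step reduces a general dominant $\mu$ to the reference modules. Write $\mu=\sum_i a_i\omega_i$. If $a_i>0$ for some non-spin index $i\ge 2$, then $\mu-\omega_i$ is again dominant, so $d(L(\mu))\ge d(L(\omega_i))$ by Lemma~\ref{estimate}; if $\mu=a_1\omega_1$ with $a_1\ge 2$, then $d(L(\mu))\ge n$ by the previous paragraph; in the half-integral case for $\so(2n+1),\so(2n)$ one first subtracts the relevant fundamental spin weight $\omega$, leaving the integral dominant weight $\mu-\omega$, the only obstruction being $\mu$ itself a spinor highest weight, which is excluded; and the integral weights built only from spin fundamental weights are subsumed under the doubled/mixed spin reference modules. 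In every case $d(L(\mu))$ is bounded below by $d(L(\nu))$ for one of the reference modules, and one invokes the explicit estimates of the first step.

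The step I expect to be the main obstacle is the evaluation of $d(L(\omega_k))$ for $\sp(2n)$ with $3\le k\le n$ (and, relatedly, of $d(L(2\omega_n))$ for $\so(2n)$): there the useful weight space of $\Lambda^k V$ carries a substantial $\Lambda^{k-2}V$-summand that has to be subtracted off, so reaching the bound $n-1$ is not automatic and requires both a careful choice of the weight and an exact computation of its multiplicity in $L(\omega_k)$, via the decomposition $\Lambda^k V\cong L(\omega_k)\oplus\Lambda^{k-2}V$ applied weight by weight; this is also the point where the sharpness of the stated bound for small $n$ should be checked. The orthogonal fundamental representations, by contrast, are immediate because $\Lambda^kV$ is simple there, and the remainder of the argument is routine bookkeeping with the root and weight lattices.
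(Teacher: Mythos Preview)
Your approach is essentially the paper's: compute the degree of a short list of reference modules (the fundamental representations together with a handful of small weights such as $2\omega_1$, $2\omega_n$, $\omega_1+\omega_n$) and then reduce an arbitrary non-multiplicity-free $\mu$ to one of these via Lemma~\ref{estimate}. The paper organizes the argument by Lie algebra type rather than by splitting ``reference'' and ``reduction'' into separate steps, but the ingredients are identical, and your identification of the $\sp(2n)$ fundamental modules as the delicate point matches the paper's computation $d(L(\omega_k))=\binom{n}{\lfloor k/2\rfloor}-\binom{n}{\lfloor k/2\rfloor-1}$.

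There is, however, a genuine gap in your half-integral reduction for the orthogonal algebras. You assert that after subtracting the spin fundamental weight $\omega$ from a half-integral $\mu$, ``the only obstruction'' is $\mu=\omega$. But $\mu-\omega$ can also equal $\omega_1$, which is multiplicity free, and then Lemma~\ref{estimate} yields only $d(L(\mu))\ge d(L(\omega_1))=1$. Concretely, $\mu=\omega_1+\omega_n$ for $\so(2n+1)$ (and $\omega_1+\omega_{n-1}$, $\omega_1+\omega_n$ for $\so(2n)$) falls through every one of your four cases. The paper handles this weight separately: from $L(\omega_1)\otimes L(\omega_n)\cong L(\omega_1+\omega_n)\oplus L(\omega_n)$ one gets $d(L(\omega_1+\omega_n))\ge d(L(\omega_1)\otimes L(\omega_n))-d(L(\omega_n))=n$. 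A smaller omission in the same spirit: your reference list covers $L(a\omega_1)$ for $a\ge 2$ only via the simple $\sp(2n)$-module $S^aV$; for the orthogonal algebras $S^aV$ is not simple, and one needs either the analogous estimate for $S^2V/\mathbb C$ (as the paper does) or a separate remark.
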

\begin{proof} Let  $\omega_i$ be the $i$th fundamental weight of $\gs$. Set $\gs=\so(2n+1)$. Then $d(L(\omega_1))=d(L(\omega_n))=1$. For $k=2,\dots,n-1$ we have $L(\omega_k)\simeq\Lambda^k V$, thus $d(L(\omega_k))=\binom{n}{\lfloor{k/2}\rfloor}\geq n-1$.
  Next we note that
  $$d(L(2\omega_1))=d(S^2 V)=n,$$
  $$d(L(2\omega_n))=d(\Lambda^n V )=\binom{n}{\lfloor{n/2}\rfloor}\geq n-1,$$
  and
  $$d(L(\omega_1+\omega_n))\geq d(L(\omega_1)\otimes L(\omega_n))-d(L(\omega_n))=n.$$
  Consequently,  for $\mu = \omega_1,...,\omega_n, 2\omega_1, 2\omega_n, \omega_1 + \omega_n$ we see that $d(L(\mu))\geq n-1$. For any other $\mu$ the inequality follows from Lemma \ref{estimate}.

  The case of $\so(2n)$ is similar.

  Now let $\gs=\sp(2n)$. Then $d(L(\omega_1))=d(V)=1$. For $k>1$ we have $L(\omega_k)=\Lambda^k V/\Lambda^{k-2} V$. Hence $d(L(\omega_k))=\binom{n}{\lfloor{k/2}\rfloor}-\binom{n}{\lfloor{k/2}\rfloor-1}\geq n-1$.
  Next, $L(2\omega_1)$ is the adjoint representation and hence $d(L(2\omega_1))=n$. For $\mu \neq \omega_1,...,\omega_n, 2\omega_1$,  the statement follows again from Lemma \ref{estimate}.
  \end{proof}
  \begin{lemma}\label{osp12} Let $\gs=\osp(1|2n)$ and let $L(\mu)$ be a finite-dimensional simple $\gs$-module with highest weight $\mu$ relative to the Borel subsuperalgebra with simple roots $\delta_1 - \delta_2,...,\delta_{n-1} - \delta_n, \delta_n$. Assume
    $d(L(\mu))<n$. Then $\mu=\delta_1$, $\mu=0$, or $\mu=-\frac{1}{2}(\delta_1+\dots +\delta_n)$.
  \end{lemma}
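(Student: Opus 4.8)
The plan is to imitate the proof of Lemma~\ref{degreefinite}: restrict to the even part $\gs_{\bar 0}=\sp(2n)$ and reduce to the symplectic degree bounds proved there. The first ingredient is the descent to $\sp(2n)$. A finite-dimensional $\osp(1|2n)$-module is completely reducible over $\sp(2n)$, and since the highest weight vector $v_\mu$ is $\sp(2n)$-dominant, applying the odd negative root vectors from the root spaces $\gs^{-\delta_i}$ to $v_\mu$ and then $U(\sp(2n))$ gives the multiplicity-free branching
$$L(\mu)\big|_{\sp(2n)}\ \cong\ \bigoplus_{S}\,L_{\sp(2n)}\!\Big(\mu-\sum_{i\in S}\delta_i\Big),$$
the sum over those $S\subseteq\{1,\dots,n\}$ for which $\mu-\sum_{i\in S}\delta_i$ is dominant. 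In particular $\dim L(\mu)^{\lambda}=\sum_{S}\dim L_{\sp(2n)}(\mu-\sum_{i\in S}\delta_i)^{\lambda}$ for every weight $\lambda$, so $d(L(\mu))\ge d(L_{\sp(2n)}(\nu))$ for every $\sp(2n)$-constituent $\nu$, in particular for $\nu=\mu$. The second ingredient is the $\osp(1|2n)$-analogue of Lemma~\ref{estimate}, namely $d(L(\mu+\nu))\ge d(L(\mu))$, valid by the verbatim proof since finite-dimensional $\osp(1|2n)$-modules are completely reducible and $L(\mu+\nu)$ is the Cartan component of $L(\mu)\otimes L(\nu)$.

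Now assume $d(L(\mu))<n$. Suppose first that $\mu$ is integral, i.e.\ a partition $\mu_1\ge\dots\ge\mu_n\ge 0$, and $\mu\notin\{0,\delta_1\}$; I claim this already forces $d(L(\mu))\ge n$, a contradiction. If $\mu=c\delta_1$ with $c\ge 2$, then $d(L(\mu))\ge d(L_{\sp(2n)}(c\delta_1))\ge d(L_{\sp(2n)}(2\delta_1))=n$, using Lemma~\ref{estimate} for $\sp(2n)$ and the value $d(L_{\sp(2n)}(2\delta_1))=n$ established in the proof of Lemma~\ref{degreefinite}. Otherwise $\mu_2\ge 1$; writing $\mu=\omega_j+\mu'$ with $j$ the largest index for which $\mu_j\ge1$ (so $j\ge2$) and $\mu'$ dominant, we get $d(L(\mu))\ge d(L(\omega_j))$, and the ``tail'' subsets $S=\emptyset,\{j\},\{j-1,j\},\dots,\{1,\dots,j\}$ exhibit $\bigoplus_{t=0}^{j}L_{\sp(2n)}(\omega_t)$ inside $L(\omega_j)\big|_{\sp(2n)}$; since $\dim L_{\sp(2n)}(\omega_{2s})^{0}=\binom ns-\binom n{s-1}$ and $\dim L_{\sp(2n)}(\omega_{2s+1})^{0}=0$, the zero weight space of $L(\omega_j)$ has dimension $\binom n{\lfloor j/2\rfloor}\ge n$, so $d(L(\omega_j))\ge n$. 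Thus no integral $\mu$ other than $0$ and $\delta_1$ can occur. It remains to go through the dominant weights of $\osp(1|2n)$ that are not partitions and to check that exactly one of them, $-\tfrac12(\delta_1+\dots+\delta_n)$, has $d(L(\mu))<n$: one identifies the associated module and sees it is multiplicity free (hence of degree $1<n$), whereas every other such weight again yields degree $\ge n$.

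The two steps I expect to require the most care are the justification of the multiplicity-free branching rule $L(\mu)\big|_{\sp(2n)}\cong\bigoplus_{S}L_{\sp(2n)}(\mu-\sum_{i\in S}\delta_i)$ — on which the entire reduction depends — and the final, non-uniform part of the argument: the inspection of the non-partition dominant weights of $\osp(1|2n)$, which must isolate $-\tfrac12(\delta_1+\dots+\delta_n)$ as the unique exception beyond the trivial and natural modules.
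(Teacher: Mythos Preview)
Your reduction to $\sp(2n)$ is the same idea the paper uses, but you work much harder than necessary. The paper's proof is two sentences: since every $\sp(2n)$-constituent of $L(\mu)|_{\sp(2n)}$ has degree at most $d(L(\mu))<n$, each such constituent is a multiplicity-free simple weight $\sp(2n)$-module, and the highest weights of those (for the fixed Borel) are exactly $0$, $\delta_1$, $-\tfrac12(\delta_1+\dots+\delta_n)$, and $-\tfrac12(\delta_1+\dots+\delta_{n-1})-\tfrac32\delta_n$; since the highest weight vector of $L(\mu)$ generates an $\sp(2n)$-constituent of highest weight $\mu$, the weight $\mu$ is already on this list. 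That is the whole argument.

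In particular, you do not need the full multiplicity-free branching rule $L(\mu)|_{\sp(2n)}\cong\bigoplus_S L_{\sp(2n)}(\mu-\sum_{i\in S}\delta_i)$; the only branching fact used is the trivial one that $L_{\sp(2n)}(\mu)$ occurs. You also do not need an $\osp(1|2n)$-version of Lemma~\ref{estimate}, nor the case analysis on $\mu=c\delta_1$ versus $\mu_2\ge1$, nor your computation of $\dim L(\omega_j)^0$ via telescoping the symplectic zero-weight multiplicities. All of that is subsumed in the single appeal to the known list of multiplicity-free $\sp(2n)$-highest weights (the finite-dimensional ones $0,\delta_1$ and the two oscillator weights), which is already available from the discussion preceding Lemma~\ref{degreefinite} and from Lemma~\ref{finiterank}.

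Your closing worry about ``non-partition dominant weights'' is thus misplaced: the half-integer option $-\tfrac12\sum\delta_i$ is not obtained by a separate inspection but simply appears on the $\sp(2n)$ list. (Note also that this option is vacuous under a literal reading of ``finite-dimensional'' in the hypothesis; the paper includes it because the lemma is applied in Lemma~\ref{boundedideal} to bounded, not only finite-dimensional, modules.)
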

  \begin{proof} The restriction of $L(\lambda)$ to $\gs_0=\mathfrak{sp}(2n)$ can have only simple constituents with highest weights $0$, $\delta_1$, or
    $-\frac{1}{2}(\delta_1+\dots+ \delta_n)$, $-\frac{1}{2}(\delta_1+\dots+\delta_{n-1})-\frac{3}{2}\delta_n$. The statement follows.
    \end{proof}
    
    In this paper a \emph{bounded} primitive ideal of $U(\gs)$ is defined as a primitive ideal which annihilates a simple bounded weight $\gs$-module. It is a result of \cite{PSer3} that if $M$ and $N$ are simple weight modules annihilated by the same bounded primitive ideal $I$, then $M$ and $N$ are bounded and $d(M) = d(N)$. This allows to define the \emph{degree} of a bounded primitive ideal $I \subset U(\gs)$ by setting $d(I) := d(M)$ for any simple bounded weight $\gs$-module $M$ annihilated by $I$.

    \begin{lemma}\label{finiterank} Let $\gs=\sp(2n)$, $\sl(n)$ and $I$ be a bounded primitive ideal of $U(\gs)$ of degree $d$. Assume that $U(\gs)/I$ is infinite dimensional. Then either $d\geq  \operatorname{rk}\gs - 1$ or $d=1$. If $d=1$ and $\gs=\sl(n)$, then $I=\Ann_{U(\gs)} L(a\omega_1)$ or
      $I=\Ann_{U(\gs)} L(a\omega_n)$ for some  $a\notin\mathbb Z_{\geq 0}$. If $d=1$ and $\gs=\sp(2n)$, then $I$ is  the Joseph ideal (annihilator of an oscillator module).  
\end{lemma}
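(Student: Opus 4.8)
The plan is to write $I=\Ann_{U(\gs)}M$ for a simple bounded weight $\gs$-module $M$; by the result of \cite{PSer3} recalled above one has $d(M)=d(I)=d$, and since a finite-dimensional primitive quotient of $U(\gs)$ is a full matrix algebra, the hypothesis $\dim U(\gs)/I=\infty$ is equivalent to $\dim M=\infty$, which I assume from now on. I would then proceed in three steps: (1) reduce to the classification data of bounded weight modules, which simultaneously disposes of the case $d=1$; (2) bound the Gelfand--Kirillov dimension of $M$ and deduce that the associated variety $\mathcal V(I)\subset\gs^*$ of $I$ is the closure of the minimal nilpotent orbit; (3) pass from this to the lower bound on $d$.

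\emph{Step 1.} By the Fernando--Mathieu theory of weight modules (\cite{Fer}, \cite{Mat}; compare Section \ref{prelim} and \cite{GrP}), $M$ is a twisted localization of a simple subquotient of a module parabolically induced from a cuspidal module of the Levi $\mathfrak l$ of a parabolic subalgebra with abelian nilradical, and $M$ lies in a unique semisimple bounded coherent family of degree $d$; in particular $d$ is the degree of a simple cuspidal $\gs$-module. For $\gs=\sl(n)$ the only parabolics with abelian nilradical have $\mathfrak l=\ggl(k)\oplus\ggl(n-k)$, and for $\gs=\sp(2n)$ only the Siegel parabolic, with $\mathfrak l=\ggl(n)$. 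If $d=1$, then $M$ is an infinite-dimensional multiplicity free simple weight module, so by the classification of such modules (\cite{BBL}, \cite{Mat}; see Section \ref{prelim}) it is, for $\gs=\sl(n)$, a twisted localization of $L(a\omega_1)$ or of the corresponding module $L(a\omega_{n-1})$ built from the conatural module (the ``$a$-th symmetric powers''), with $a\notin\mathbb Z_{\geq 0}$ forced by $\dim M=\infty$, and for $\gs=\sp(2n)$ a twisted localization of an oscillator (Shale--Weil) module. As a twisted localization does not change the annihilator of these modules, this proves both ``$d=1$'' assertions.

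\emph{Step 2.} Each weight space of $M$ has dimension $\leq d$, and $\supp M$ lies in a single coset of the root lattice $Q\simeq\mathbb Z^{\operatorname{rk}\gs}$, so the number of weights of $M$ inside a dilate $R\cdot P$ of a fixed polytope $P\subset\gh^*$ is $O(R^{\operatorname{rk}\gs})$; hence $\operatorname{GKdim}M\leq\operatorname{rk}\gs$. By Bernstein's inequality for enveloping algebras, $\dim\mathcal V(I)=\operatorname{GKdim}(U(\gs)/I)\leq 2\operatorname{GKdim}M\leq 2\operatorname{rk}\gs$. Now $\mathcal V(I)$ is a union of nilpotent orbit closures, and a direct computation of orbit dimensions shows that for $\gs=\sl(n)$ (resp.\ $\gs=\sp(2n)$) the only nonzero nilpotent orbit of dimension $\leq 2\operatorname{rk}\gs$ is the minimal one, of dimension $2n-2$ (resp.\ $2n$); since $\mathcal V(I)=\{0\}$ would make $\dim U(\gs)/I$ finite, I conclude $\mathcal V(I)=\overline{\mathcal O_{\min}}$, and hence also $\operatorname{GKdim}M=\tfrac12\dim\mathcal O_{\min}=\operatorname{rk}\gs$.

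\emph{Step 3 and the main obstacle.} It remains to show that a module $M$ as above with $\operatorname{GKdim}M=\operatorname{rk}\gs$ and $\mathcal V(I)=\overline{\mathcal O_{\min}}$ has $d=1$ or $d\geq\operatorname{rk}\gs-1$. Using the abelian-nilradical realization from Step 1, $M$ is supported, as a module over the opposite nilradical, on the rank-$\leq 1$ cone, which has dimension exactly $\operatorname{rk}\gs$; $M$ is assembled from a ``Schur-type'' building block $\mathbb S_\nu W$, with $W$ a defining module of a simple factor of $\mathfrak l$ and $\nu$ a partition, and (since $\mathbb S_\nu W$ embeds into $M$ as a module over the Levi) $d\geq d(\mathbb S_\nu W)$. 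The crux is to prove that boundedness of $M$ forces $\nu$ to be a single row or column (the multiplicity free case, $d=1$, already treated above) or to have at least $\operatorname{rk}\gs-1$ rows or columns; granting this, Proposition \ref{finrankintegrablesl} gives $d(\mathbb S_\nu W)=\dim Z_\nu$ in the relevant range, and since $\nu\mapsto\dim Z_\nu$ is nondecreasing under adding a box (the branching rule), the containment $\nu\supseteq(2,1^{\operatorname{rk}\gs-2})$ forces $d\geq\dim Z_{(2,1^{\operatorname{rk}\gs-2})}=\operatorname{rk}\gs-1$. (The cases $\gs=\sl(2),\sl(3),\sp(2)$ are trivial, since then $\operatorname{rk}\gs-1\leq 1$.) Establishing this shape constraint on $\nu$ — equivalently, deciding exactly which building blocks yield a \emph{bounded} simple quotient of the generalized Verma module, together with control of weight multiplicities under parabolic induction with abelian nilradical — is the main work; by contrast Step 2 is soft and Step 1 is essentially a citation. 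One should also record that the dichotomy in the $\sl(n)$ statement is genuine: $L(a\omega_1)$ and $L(a\omega_{n-1})$ have distinct central characters for $n\geq 3$, distinguished already by the cubic Casimir element.
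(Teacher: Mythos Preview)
Your Step~1 handles the $d=1$ identification correctly; the point that twisted localization preserves annihilators within a coherent family is standard (Mathieu). Your Step~2 is also correct as stated, but it is orthogonal to the actual difficulty: knowing that $\mathcal V(I)=\overline{\mathcal O_{\min}}$ does \emph{not} by itself constrain the degree $d$. There are many bounded primitive ideals with associated variety $\overline{\mathcal O_{\min}}$ and arbitrarily large degree (every bounded primitive ideal has this associated variety, by your own argument), so Step~2 gives no leverage toward the dichotomy $d=1$ or $d\geq\operatorname{rk}\gs-1$.

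The genuine gap is Step~3, which you yourself flag as ``the main work'' without carrying it out. The description of $M$ as ``assembled from a Schur-type building block $\mathbb S_\nu W$'' via an abelian-nilradical parabolic is not a correct framing for cuspidal $\gs$-modules: when $M$ is cuspidal (the generic case for bounded weight modules over $\sl(n)$ and $\sp(2n)$), the relevant parabolic is all of $\gs$, there is no nontrivial nilradical to restrict along, and no Schur functor enters. So the proposed shape constraint on $\nu$ is not well-posed, and the route you sketch does not lead to the bound.

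The paper proceeds entirely differently and avoids associated varieties altogether. For $\gs=\sl(n)$ it simply cites Lemma~2.25 of \cite{GrP}. For $\gs=\sp(2n)$ it invokes Mathieu's explicit formula (Theorem~12.2 of \cite{Mat}): the degree of any irreducible bounded coherent family equals $\tfrac{1}{2^{n-1}}\dim L_{\so(2n)}(\lambda)$ for some half-integral dominant $\lambda$. One then bounds $\dim L_{\so(2n)}(\lambda)$ from below by counting the Weyl group orbit of a suitably chosen weight of $L_{\so(2n)}(\lambda)$: if two coordinates of $\lambda$ differ in absolute value, the orbit of $\lambda$ already has at least $\binom{n}{k}2^{n-1}$ elements for some $1\leq k\leq n-1$, giving $d\geq n$; if all $|\lambda_i|$ are equal and $\lambda\neq\omega_{n-1},\omega_n$, one passes to a weight $\mu=\lambda-(\varepsilon_{n-1}\pm\varepsilon_n)$ whose orbit has at least $n\cdot 2^{n-1}$ elements. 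This is a short elementary count once Mathieu's formula is in hand, and it replaces your Step~3 entirely.
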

\begin{proof} Assume first  $d>1$. The inequality $d\geq  \operatorname{rk}\gs - 1$   for $\gs=\sl(n)$  follows from Lemma 2.25 in \cite{GrP}. 

We proceed to show that $d \geq  \operatorname{rk}\gs  = n$ for  $\gs=\sp(2n)$. Theorem 12.2 in \cite{Mat} implies $d = \frac{1}{2^{n-1}}\dim \, L_{\so}(\lambda) $ for some simple finite-dimensional $\so (2n)$-module $L_{\so}(\lambda)$ of highest weight $\lambda = \sum_{i=1}\lambda_i \varepsilon_i$ with $\lambda_i \in 1/2 + \Z$. Since $d>1$, we have $\lambda \neq \omega_{n-1}, \omega_{n}$. Moreover, if $|\lambda_k | \neq | \lambda_{k+1} |$ for some $k\geq 1$, the stabilizer of $\lambda$ in the Weyl group has at most $k! (n-k)!$  elements. Therefore the orbit of $\lambda$ has at least $\binom{n}{k}2^{n-1}$ elements, implying $d \geq n$. Consider now the case when all absolute values $|\lambda_i |$ are equal. Under this assumption, there are  two possibilities: (i) all $\lambda_i$ are equal, or (ii) $\lambda_1=...= \lambda_{n-1} = - \lambda_n$. We set $\mu = \lambda - (\varepsilon_{n-1} +\varepsilon_{n})$ in case (i) and $\mu = \lambda - (\varepsilon_{n-1} -\varepsilon_{n})$ in case (ii). Then $\mu$ is a weight of $L_{\so}(\lambda)$ and  the  Weyl group orbit of $\mu$ has at least  $n 2^{n-1}$ elements. This implies again  $d \geq n$.
  \end{proof}

\section{Clifford and Weyl superalgebras and weight modules over them} \label{sec-clif-weyl}
\subsection{Definitions and main properties}
Let $a,b \in \Z_{\geq 0} \cup \{ \infty \}$. The \emph{Weyl superalgebra} $D(a|b)$ is the associative superalgebra with generators $\{x_i,\partial_i\mid i =1,...,a; -1,...,-b\}$ of  parity
$$\bar{x_i}=\bar{\partial}_i=\begin{cases} 0\,\,\text{if}\,\,i > 0\\ 1\,\,\text{if}\,\,i<0\end{cases},$$
satisfying the relations
$$[x_i,x_j]=[\partial_i,\partial_j]=0,\quad[\partial_i,x_j]=\delta_{ij},$$
where $[u,v]:=uv-(-1)^{\bar{u}\bar{v}}vu$ and  $\delta_{ij}$ is  Kronecker's delta.  The \emph{Clifford superalgebra} $Cl(a|b)$ is the associative superalgebra with generators $\{\xi_i,\eta_i\mid i =1,...,a; -1,...,-b \}$ with generators of parity
$$\bar{\xi}_i=\bar{\eta}_i=\begin{cases} 0\,\,\text{if}\,\,i > 0\\ 1\,\,\text{if}\,\,i<0\end{cases},$$
satisfying the relations
$$\{\xi_i,\xi_j\}=\{\eta_i,\eta_j\}=0,\quad\{\eta_i,\xi_j\}=\delta_{ij},$$
where $\{u,v\}:=uv+(-1)^{\bar{u}\bar{v}}vu$. In what follows, whenever $x_i, \partial_i, \xi_i,\eta_i$ are used we assume that the index $i$ is nonzero. 

We define a $\mathbb Z$-grading on $D(a|b)$ (respectively, on $Cl(a|b)$) by setting $\operatorname{deg} x_i:=1,\ \operatorname{deg} \partial_i:=-1$ (respectively, $\operatorname{deg} \xi_i:=1,\ \operatorname{deg} \eta_i:=-1$).
If $A=D(a|b)$ or $A=Cl(a|b)$ we denote by $A_{ev}$ the subsuperalgebra of elements of even degree, and by $A_n$ the subsuperspace of elements of degree $n$. Note that $A_{\bar{0}}$, $A_{0}$, and $A_{ev}$ are three different subsuperalgebras of $A$.

For $a$, $b\in\mathbb{Z}_{\geq 0}$, $D(a|b)$ (respectively, $Cl(a|b)$)  is naturally embedded in $D(a+1|b)$ and $D(a|b+1)$ (respectively, in $Cl(a+1|b)$ and $Cl(a|b+1)$), and 
$$D(\infty|\infty)=\varinjlim D(a|b),\quad Cl(\infty|\infty)=\varinjlim Cl(a|b).$$

\subsection{Connection to classical Lie superalgebras} \label{sebsec-conn}
Let $V_{2a|2b}$ be the subsuperspace of $D(a|b)$ with basis $\{x_i,\partial_i \; | \; -b \leq i \leq  a \}$. Then $V_{2a|2b}$ has an even anti-symmetric form given by the commutator map $[V_{2a|2b},V_{2a|2b}]\to\mathbb C$. The Lie superalgebra
$\osp(2b|2a)$ 
for which this form is invariant can be identified canonically with $S^2 V_{2a|2b} $. The symmetrization map 
$$
V_{2a|2b}^{\otimes 2} \to D(a|b), \; v\otimes w \mapsto \frac{1}{2} \left( v\otimes w  + (-1)^{\bar{v}\bar{w}} w \otimes v\right),
$$ 
factors through  $S^2 V_{2a|2b}$  and  defines a homomorphism of Lie superalgebras \\
$\osp(2b|2a) \to D(a|b).$ This   
induces a homomorphism of associative superalgebras $$\Phi_{a|b}:U(\osp(2b|2a))\to D(a|b).$$

Similarly, let $U_{2a|2b}$ be the subsuperspace of $Cl(a|b)$ with basis $\{\xi_i,\eta_i \; | \; -b \leq i \leq  a \}$. Then $U_{2a|2b}$ has an even  symmetric bilinear form given by the symmetrizer map $\{U_{2a|2b},U_{2a|2b}\}\to\mathbb C$. The Lie superalgebra
$\osp(2a|2b)$ 
for which this form is invariant can be identified canonically with $\Lambda^2 U_{2a|2b} $. The  alternization map 
$$
U_{2a|2b}^{\otimes 2} \to Cl(a|b), \; v\otimes w \mapsto \frac{1}{2} \left( v\otimes w  - (-1)^{\bar{v}\bar{w}} w \otimes v\right),
$$  
factors through  $\Lambda^2 U_{2a|2b} $  and  defines a homomorphism of Lie superalgebras\\ $\osp(2a|2b)\to Cl(a|b)$. This  
induces a homomorphism of associative superalgebras $$\Psi_{a|b}:U(\osp(2a|2b))\to Cl(a|b).$$

The Chevalley basis vectors $e_{\alpha}$ and the respective relations of the Lie superalgebras $\osp(2b|2a)$  (and also of $\osp(2a|2b)$) have been computed in \cite{FG},  \S 3.2. Up to scalar multiples, 
 the homomorphism $\Phi_{a|b}$ has the form 
 
\noindent $e_{\varepsilon_k  - \varepsilon_l } \mapsto x_{-l} \partial_{-k}, e_{-\varepsilon_k - \varepsilon_l }  \mapsto x_{-k} x_{-l}, e_{\varepsilon_k  + \varepsilon_l} \mapsto \partial_{-k} \partial_{-l} $, \\
$e_{-\delta_i - \delta_j} \mapsto x_{i}x_{j}$, $e_{-2\delta_i} \mapsto x_{i}^2$, $e_{\delta_{i} + \delta_{j}} \mapsto \partial_{i}\partial_{j}$, $e_{2\delta_i} \mapsto \partial_{i}^2$,\\
$e_{-\varepsilon_k + \delta_i} \mapsto x_{-k}\partial_{i}$, $e_{\varepsilon_k - \delta_i} \mapsto x_{i}\partial_{-k},  e_{-\varepsilon_k - \delta_i} \mapsto x_{-k} x_i, e_{\varepsilon_k + \delta_i} \mapsto \partial_{-k} \partial_{i},$ \\

\noindent and the homomorphism $\Psi_{a|b}$ has the form

\noindent $e_{\varepsilon_k  - \varepsilon_l } \mapsto \xi_{l} \eta_{k}, e_{-\varepsilon_k - \varepsilon_l }  \mapsto \xi_{k} \xi_{l}, e_{\varepsilon_k  + \varepsilon_l} \mapsto \eta_{k} \eta_{l}$, \\
$e_{-\delta_i - \delta_j} \mapsto \xi_{-i}\xi_{-j}$, $e_{-2\delta_i} \mapsto \xi_{-i}^2$, $e_{\delta_{i} + \delta_{j}} \mapsto \eta_{-i}\eta_{-j}$, $e_{2\delta_i} \mapsto \eta_{-i}^2$,\\
$e_{-\varepsilon_k + \delta_i} \mapsto \xi_{k}\eta_{-i}$, $e_{\varepsilon_k - \delta_i} \mapsto \eta_{k}\xi_{-i}, e_{-\varepsilon_k - \delta_i} \mapsto \xi_k\xi_{-i}, e_{\varepsilon_k + \delta_i} \mapsto \eta_k\eta_{-i},$ \\

\noindent  where $k\neq l$, $i \neq j$. 

\begin{lemma}\label{homomorphisms_Lie} The image of $\Phi_{a|b}$ coincides with $D(a|b)_{ev}$ and the image of $\Psi_{a|b}$ coincides with $Cl(a|b)_{ev}$.
  \end{lemma}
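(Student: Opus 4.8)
The plan is to show the two inclusions ``image $\subseteq A_{ev}$'' and ``$A_{ev} \subseteq$ image'' separately, treating $\Phi_{a|b}$ and $\Psi_{a|b}$ in parallel since the arguments are formally identical after swapping $x_i,\partial_i$ for $\xi_i,\eta_i$ and commutators for anticommutators. First I would observe that each Chevalley generator $e_\alpha$ listed above maps to a product of exactly two of the generators $x_i,\partial_i$ (resp. $\xi_i,\eta_i$), hence to an element of degree $-2,0$, or $+2$ with respect to the $\mathbb Z$-grading defined in Section~2.1; in particular every $e_\alpha$ lands in $A_{ev}$. Since the $e_\alpha$ together with the Cartan generate $\osp$ as a Lie superalgebra, and the Cartan elements $h_i$ are sent to the degree-zero elements $x_i\partial_i$ (resp. $\eta_i\xi_i$ up to a constant), the whole image of $U(\osp)$ lies in the subalgebra generated by degree-$\pm2,0$ elements, which is contained in $A_{ev}$. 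This gives one inclusion.

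For the reverse inclusion I would argue that $A_{ev}$ is generated as an associative algebra by the degree $-2$, $0$, and $+2$ monomials in the generators, i.e.\ by the elements $x_ix_j$, $\partial_i\partial_j$, $x_i\partial_j$ (resp.\ $\xi_i\xi_j$, $\eta_i\eta_j$, $\xi_i\eta_j$), including the ``square'' cases $i=j$. Indeed any monomial of even total degree in the generators can be factored into a product of such quadratic monomials, possibly after reordering using the (anti)commutation relations, which only introduces lower-order (scalar or shorter) correction terms; a short induction on length handles this. Then one checks that each such quadratic monomial is in the image: the ``mixed'' and ``distinct-index'' ones appear literally in the list of images of the $e_\alpha$ above, and the diagonal ones $x_i^2,\partial_i^2$ (resp.\ $\xi_i^2,\eta_i^2$) are the images of $e_{-2\delta_i},e_{2\delta_i}$ (resp.\ their Clifford analogues). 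Since the image is a subalgebra containing all algebra generators of $A_{ev}$, it equals $A_{ev}$.

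I expect the main obstacle to be the careful bookkeeping in the reverse inclusion: one must confirm that the chosen quadratic elements really do generate all of $A_{ev}$ and that the reordering/factorization argument closes — in particular that degree-$0$ monomials of arbitrary length (like $x_i\partial_j x_k\partial_l\cdots$) lie in the subalgebra generated by the quadratic ones, and that the $\mathbb Z_2$-parity never obstructs anything since $A_{ev}$ as defined is the \emph{$\mathbb Z$-degree-even} part, not the $\mathbb Z_2$-even part. A clean way to finish is to note the PBW-type basis of $A$ given by ordered monomials and to see that an ordered monomial has even $\mathbb Z$-degree iff it can be written as a product of the listed quadratics; the relations are then only used to pass from an arbitrary monomial to an ordered one. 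Everything else — the degree count for the first inclusion, matching symbols against the explicit formulas for $\Phi_{a|b}$ and $\Psi_{a|b}$ — is routine. The same proof works verbatim for $a$ or $b$ equal to $\infty$ by passing to the direct limit, since $A_{ev} = \varinjlim A(a|b)_{ev}$ and the homomorphisms are compatible with the embeddings.
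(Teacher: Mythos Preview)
Your proposal is correct and follows essentially the same route as the paper's proof: the paper also observes that $\Phi_{a|b}$ sends $\osp(2b|2a)$ into $D(a|b)_2\oplus D(a|b)_0\oplus D(a|b)_{-2}$, then notes that $\Phi_{a|b}(\mathbb C\oplus\osp(2b|2a))$ is the span of $\{1,\,x_ix_j,\,\partial_i\partial_j,\,x_i\partial_j\}$ and invokes a ``simple induction argument'' to conclude that this set generates $D(a|b)_{ev}$. Your write-up simply supplies more detail for that induction (the reordering/factorization of even-length monomials), which the paper leaves implicit.
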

  \begin{proof} Let us consider $\Phi_{a|b}:U(\osp(2b|2a))\to D(a|b)$. For any $vw\in S^2 V_{2a|2b} $ we have $\Phi_{a|b}(vw)\in D(a|b)_2\oplus D(a|b)_0\oplus D(a|b)_{-2}$. Therefore
      $$\Phi_{a|b}(U(\osp(2b|2a)))\subset D(a|b)_{ev}.$$
      Moreover, the above formulas for  $\Phi_{a|b}$ show that $\Phi_{a|b}(\mathbb C\oplus \osp(2b|2a))$ is the span of $$S=\{1,x_i\partial_j,\partial_i\partial_j,x_ix_j\mid -b \leq i,j \leq a\}.$$ By a simple induction argument one  shows that $S$ generates $D(a|b)_{ev}$, and the statement follows. A similar argument applies  to $\Psi_{a|b}$. 
    \end{proof}

By $\C [x]$ we denote the symmetric superalgebra of the superspace with basis\\ $\{ x_i \; | \;  -b \leq i \leq a \} $.  The superspace  $\C [x]$   is  a simple faithful $D(a|b)$-module, and we call it the  \emph{defining} $D(a|b)$-module.  Furthermore, $\mathbb C[x]_{ev} = \mathbb C[x] \cap D(a|b)_{ev}$ is a simple faithful $D(a|b)_{ev}$-module and hence $\ker \Phi_{a|b}$ is a primitive ideal of $U(\osp(2b|2a))$. The pullback of $\mathbb{C}[x]_{ev}$ to
    $U(\osp(2b | 2a))$ is a simple highest weight module of $U(\osp(2b | 2a))$ of highest weight $\frac{1}{2}\left( \sum_{i=1}^b \varepsilon_i - \sum_{j=1}^a \delta_j \right)$ relative to the Borel subsuperalgebra with positive roots 
    $$\delta_p\pm\delta_q\,\text{for}\,p>q,\,2\delta_{p},\,\delta_p\pm\varepsilon_q,\,\varepsilon_p\pm\varepsilon_q\,\text{for}\,p<q,$$
where the sum $\sum_{i=1}^b \varepsilon_i-\sum_{j=1}^a \delta_j$ is an infinite formal sum if $b=\infty$ or $a=\infty$.

    Similarly, the \emph{defining} $Cl(a|b)$-module $\Lambda[\xi]$ is the exterior superalgebra of the superspace with basis $\{ \xi_i \, | \, -b\leq i \leq a \}$. The module $\Lambda[\xi]$ is a  simple and faithful  $Cl(a|b)$-module. Furthermore, $\Lambda[\xi]_{ev} = \Lambda[\xi] \cap Cl(a|b)_{ev}$ is a simple faithful $Cl(a|b)_{ev}$-module and hence 
     $\ker \Psi_{a|b}$ is a primitive ideal of $U(\osp(2a|2b))$. The pullback of $\Lambda [\xi]_{ev}$ is a simple highest weight $\osp(2a|2b)$-module with highest weight $\frac{1}{2}\left( \sum_{i=1}^b \varepsilon_i - \sum_{j=1}^a \delta_j \right)$, and it is isomorphic  to the pullback of $\mathbb{C}[x]_{ev}$.    These two  isomorphic highest weight modules have purely even highest weight spaces. Next, the pullback of the odd-degree part $\Lambda [\xi]_{odd}$ of  $\Lambda [\xi]$ is a simple $\osp(2a|2b)$-module  with highest weight $\frac{1}{2}\left( \sum_{i=1}^a \varepsilon_i - \sum_{j=1}^b \delta_j \right) - \delta_1$. The pullbacks of $\Lambda [\xi]_{odd}$ and $\mathbb{C}[x]_{odd}$ are isomorphic and have purely odd  highest weight  spaces.

      The  pullbacks of  $\mathbb{C}[x]_{ev}$ and $\mathbb{C}[x]_{odd}$ (equivalently,  of 
     $\Lambda [\xi]_{ev}$ and  $\Lambda [\xi]_{odd}$),
         together with their counterparts with changed parity, are four pairwise nonisimorphic $\osp(2a|2b)$-modules, which we define to be \emph{spinor-oscillator representations}. A general spinor-oscilator representation is the twist of some of these four modules by an automorphism of the Lie superalgebra $\osp(2a|2b)$. For $b=0$ (respectively, for $a=0$) the spinor-oscillator representations are nothing but the spinor representations of $\so(2a)$ (respectively, the oscillator or Shale-Weil representations of $\sp(2b)$). (It is well known that for a fixed Borel subalgebra there are precisely two isomorphism classes of purely even spinor or, respectively, oscillator representations.)

The isomorphisms of the pullbacks of $\C [x]_{ev}$ and  $\Lambda [\xi]_{ev}$ imply the following.
    \begin{corollary}\label{isomorphism_even} $\ker \, \Phi_{b|a}=\ker\, \Psi_{a|b}$ and hence $Cl(a|b)_{ev}$ and $D(b|a)_{ev}$ are isomorphic associative superalgebras.
    \end{corollary}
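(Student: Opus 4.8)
The plan is to extract the isomorphism of associative superalgebras directly from the coincidence of kernels, which in turn follows from the already-established identification of two pullback modules. First I would recall from the discussion preceding the corollary that the pullback of $\mathbb{C}[x]_{ev}$ along $\Phi_{b|a}\colon U(\osp(2a|2b))\to D(b|a)$ and the pullback of $\Lambda[\xi]_{ev}$ along $\Psi_{a|b}\colon U(\osp(2a|2b))\to Cl(a|b)$ are both simple highest weight $\osp(2a|2b)$-modules of highest weight $\frac{1}{2}\bigl(\sum_{i=1}^b\varepsilon_i-\sum_{j=1}^a\delta_j\bigr)$ relative to the same Borel subsuperalgebra, hence are isomorphic as $U(\osp(2a|2b))$-modules. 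Since $\mathbb{C}[x]_{ev}$ is a \emph{faithful} $D(b|a)_{ev}$-module and, by Lemma~\ref{homomorphisms_Lie}, $\Phi_{b|a}$ is surjective onto $D(b|a)_{ev}$, the annihilator in $U(\osp(2a|2b))$ of the pullback of $\mathbb{C}[x]_{ev}$ is exactly $\ker\Phi_{b|a}$; symmetrically, the annihilator of the pullback of $\Lambda[\xi]_{ev}$ is $\ker\Psi_{a|b}$. Two isomorphic modules have the same annihilator, so $\ker\Phi_{b|a}=\ker\Psi_{a|b}$.

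Next I would pass to quotients. By Lemma~\ref{homomorphisms_Lie}, $\Phi_{b|a}$ induces an isomorphism of associative superalgebras $U(\osp(2a|2b))/\ker\Phi_{b|a}\xrightarrow{\sim} D(b|a)_{ev}$, and likewise $\Psi_{a|b}$ induces $U(\osp(2a|2b))/\ker\Psi_{a|b}\xrightarrow{\sim} Cl(a|b)_{ev}$. Both maps are $\mathbb Z_2$-graded by the standing convention that homomorphisms of superalgebras preserve parity, so they are isomorphisms in the category of associative superalgebras. Composing the first with the inverse of the second through the common quotient $U(\osp(2a|2b))/\ker\Phi_{b|a}=U(\osp(2a|2b))/\ker\Psi_{a|b}$ yields the desired isomorphism $D(b|a)_{ev}\cong Cl(a|b)_{ev}$ of associative superalgebras, which is the claim after relabeling.

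The only genuine point needing care — and the step I would regard as the main obstacle — is confirming that the two pullbacks really are \emph{isomorphic} and not merely of the same highest weight: in the infinite-rank setting the highest weight is an infinite formal sum, so I would want to note that a simple highest weight module over $\osp(2a|2b)$ is still determined up to isomorphism by its highest weight and choice of Borel, which the excerpt asserts when it says the two pullbacks ``are isomorphic.'' Equivalently, one can avoid any subtlety by observing directly that the $D(b|a)_{ev}$-module $\mathbb{C}[x]_{ev}$ and the $Cl(a|b)_{ev}$-module $\Lambda[\xi]_{ev}$ have, under the explicit formulas for $\Phi_{b|a}$ and $\Psi_{a|b}$ listed above, matching actions of the generating operators $x_ix_j,\ \partial_i\partial_j,\ x_i\partial_j$ versus $\xi_i\xi_j,\ \eta_i\eta_j,\ \xi_i\eta_j$ on the corresponding monomial bases; but invoking the highest-weight classification is cleaner. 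Everything else is formal: faithfulness plus surjectivity gives the identification of kernels with annihilators, and the first isomorphism theorem for superalgebras does the rest.
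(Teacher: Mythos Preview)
Your proposal is correct and follows exactly the route the paper intends: the paper's entire proof is the sentence ``The isomorphisms of the pullbacks of $\C[x]_{ev}$ and $\Lambda[\xi]_{ev}$ imply the following,'' and you have simply unpacked this---faithfulness of the defining modules over the respective even-degree subalgebras, surjectivity from Lemma~\ref{homomorphisms_Lie}, equality of annihilators for isomorphic modules, and the first isomorphism theorem. Your caveat about the infinite-rank highest-weight classification is a fair concern that the paper itself leaves implicit.
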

  
    \begin{remark} It is known that $Cl(a|b)$ is the universal enveloping algebra of the Jordan superalgebra $U_{2a|2b}\oplus\mathbb C 1$, while $D(a|b)$ is the quotient of the universal enveloping algebra of the Heisenberg
      superalgebra $V_{2a|2b}\oplus\mathbb C z$ by the ideal $(z-1)$.  Furthermore, it is easy to see that the superalgebras $D(b|a)$ and $Cl(a|b)$ are not isomorphic unless $ab=0$.
    \hfill${\bigcirc}$    \end{remark}

    Now, we note that $\Psi_{a|b}(\osp(2a|2b))\oplus V_{2a|2b}$ is closed under supercommutator, and the corresponding Lie superalgebra is isomorphic to $\osp(2a+1|2b)$. Hence we have a surjective homomorphism
    $$\Theta_{a|b}: U(\osp(2a+1|2b))\to Cl(a|b).$$
     The explicit formulas for  $\Theta_{a|b}$ are the same as those for $\Psi_{a|b}$, with the following addition:
    $$e_{\varepsilon_k} \mapsto \eta_{k}, \, e_{-\varepsilon_k }  \mapsto \xi_{k}, \, e_{-\delta_i } \mapsto \xi_{-i}, \, e_{\delta_{i}} \mapsto \eta_{-i}.$$

    The pullback via $\Theta_{a|b}$ of the defining $Cl(a|b)$-module  $\Lambda[\xi]$  is an irreducible $\osp(2a+1|2b)$-module with highest weight  $\frac{1}{2}\left( \sum_{i=1}^a \varepsilon_i - \sum_{j=1}^b \delta_j \right)$ with respect to the Borel subsuperalgebra with positive roots 
$$\delta_p\pm\delta_q\,\text{for}\,p>q,\,\delta_p,\,2\delta_{p},\,\delta_p\pm\varepsilon_q,\,\varepsilon_p\pm\varepsilon_q\,\text{for}\,p<q,\,\varepsilon_p.$$

  We call this highest weight module, together with its counterpart with changed parity, a \emph{spinor-oscillator representation} of $\osp(2a+1|2b)$. Moreover, $\ker\Theta_{a|b}$ is the primitive ideal of a spinor-oscillator representation of $\osp(2a+1|2b)$.

      We note also that $\ggl(a|b)$ is the reductive part of a parabolic subalgebra of $\osp(2a|2b)$, and by composing {the injection} $\ggl(a|b)\hookrightarrow \osp(2a|2b)$ with $\Phi_{b|a}$ we  obtain a surjective homomorphism
      \begin{equation}\label{eq1} U(\ggl(a|b))\to D(b|a)_0\simeq Cl(a|b)_{0}.
        \end{equation}
        Similarly, the embedding $\ggl(a|b)\hookrightarrow\osp(2b|2a)$ induces a surjective homomorphism
\begin{equation}\label{eq2}
  U(\ggl(a|b))\to D(a|b)_0\simeq Cl(b|a)_{0}.
\end{equation}
We denote by $\Upsilon^-_{a|b}$ the restriction of the homomorphism (\ref{eq1}) to $U(\mathfrak{sl}(a|b))$, and  $\Upsilon^+_{a|b}$ the
restriction of the homomorphism (\ref{eq2}) to $U(\mathfrak{sl}(a|b))$.
      
      We will use the homomorphisms $\Upsilon^\pm_{a|b}$  in Section \ref{sec-sl-bounded}.

           \subsection{Tensor product isomorphisms}      
           Let $Cl^\dagger(a|b)$ (respectively, $D^\dagger(a|b)$) be the superalgebra defined by the same generators and relations as $Cl(a|b)$  (respectively, $D(a|b)$), but where the generators $x_i, \partial_i$ (respectively, $\xi_i, \eta_i$) for $i>0$ are endowed with  the opposite parity. 
                      
           Then one can check  that the correspondence $\xi_{-i} \mapsto x_i, \eta_{-i} \mapsto \partial_i$, $i=1,...,b$, defines an isomorphism of superalgebras
           \begin{equation}\label{D-iso}
             Cl(0|b)\simeq D^\dagger(b|0),
           \end{equation}
           and the correspondence $\xi_{i} \mapsto x_{-i}, \eta_{i} \mapsto \partial_{-i}$, $i=1,...,b$, defines an isomorphism of superalgebras
       \begin{equation}\label{Cl-iso}    
         Cl^\dagger(b|0)\simeq D(0|b).
         \end{equation}
      
      \begin{lemma}\label{isomorphism_tensor} We have the following isomorphisms of associative superalgebras
                   \begin{equation}\label{D-dagger}
                   D(a|b)\simeq D(a|0)\otimes D(0|b)\simeq D(a|0)\otimes Cl^\dagger(b|0),
                   \end{equation}
      \begin{equation}\label{Cl-dagger} Cl(a|b)^\dagger\simeq D(0|a)\otimes D^\dagger(b|0).
          \end{equation}
    
      \end{lemma}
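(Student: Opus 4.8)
The plan is to write down each isomorphism directly on generators, check well-definedness via the universal property of the (super) tensor product of associative superalgebras, observe surjectivity, and prove injectivity by comparing PBW-type monomial bases (or, alternatively, via a faithful module). I spell out the argument for the first isomorphism in \eqref{D-dagger}; the remaining ones are obtained by the same mechanism together with the isomorphisms \eqref{D-iso} and \eqref{Cl-iso}.

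Inside $D(a|b)$, the generators $x_i,\partial_i$ with $i>0$ are even and, by the defining relations of $D(a|b)$, satisfy exactly the relations of $D(a|0)$; likewise the generators $x_{-j},\partial_{-j}$ with $j>0$ are odd and satisfy exactly the relations of $D(0|b)$. Moreover every generator of the first family supercommutes with every generator of the second, because the mixed brackets $[x_i,x_{-j}]$, $[\partial_i,x_{-j}]=\delta_{i,-j}$, $[x_i,\partial_{-j}]$ and $[\partial_i,\partial_{-j}]$ all vanish in $D(a|b)$; since the first family is purely even, this supercommutation is ordinary commutation, which is exactly the relation holding between the two legs of the super tensor product $D(a|0)\otimes D(0|b)$. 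Hence the universal property yields a homomorphism of superalgebras
\[
\Theta\colon D(a|0)\otimes D(0|b)\longrightarrow D(a|b)
\]
carrying the generators of the two factors to the positive- and negative-index generators of $D(a|b)$. It is surjective, as its image contains all generators of $D(a|b)$.

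For injectivity, recall that $D(a|b)$ has the monomial $\C$-basis of products $\prod_{i>0}x_i^{m_i}\prod_{i>0}\partial_i^{n_i}\prod_{j>0}x_{-j}^{\mu_j}\prod_{j>0}\partial_{-j}^{\nu_j}$ with $m_i,n_i\in\Z_{\ge 0}$ and $\mu_j,\nu_j\in\{0,1\}$ (the latter since $x_{-j}^2=\partial_{-j}^2=0$), in a fixed order, while $D(a|0)\otimes D(0|b)$ has the basis of tensor products of the corresponding monomial bases of the two factors. The map $\Theta$ sends the second basis bijectively onto the first, up to reordering of commuting factors (which only multiplies by nonzero scalars), so $\Theta$ is injective and hence an isomorphism. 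Equivalently, $D(a|b)$ acts faithfully on its defining module $\C[x]=\C[x_1,\dots,x_a]\otimes\Lambda[x_{-1},\dots,x_{-b}]$, the algebras $D(a|0)$ and $D(0|b)$ act faithfully on the two tensor factors, these actions are compatible along $\Theta$ (no signs intervene because $\C[x_1,\dots,x_a]$ is purely even), and faithfulness of the combined action forces $\Theta$ injective. The case $a=\infty$ or $b=\infty$ then follows by passing to direct limits, all the algebras involved being direct limits of their finite counterparts and $\otimes$ commuting with direct limits.

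The second isomorphism in \eqref{D-dagger} is $\Theta$ composed with $\mathrm{id}_{D(a|0)}\otimes(\text{iso }\eqref{Cl-iso})$, using $D(0|b)\simeq Cl^\dagger(b|0)$. For \eqref{Cl-dagger} one runs the identical argument with $Cl(a|b)^\dagger$ in place of $D(a|b)$: its positive-index generators $\xi_i,\eta_i$ ($i>0$) generate a copy of $Cl(a|0)^\dagger\simeq D(0|a)$ by \eqref{Cl-iso}, its negative-index generators $\xi_{-j},\eta_{-j}$ ($j>0$) generate a copy of $Cl(0|b)\simeq D^\dagger(b|0)$ by \eqref{D-iso}, and the two families supercommute inside $Cl(a|b)^\dagger$ since the mixed anticommutators $\{\xi_i,\xi_{-j}\}$, $\{\eta_i,\xi_{-j}\}=\delta_{i,-j}$, etc.\ all vanish in $Cl(a|b)$. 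The only point demanding genuine care is this sign bookkeeping: one must check that ``the two halves supercommute inside $D(a|b)$ (resp.\ $Cl(a|b)^\dagger$)'' is precisely the relation $uv=(-1)^{\bar u\bar v}vu$ obeyed by elements coming from different legs of the super tensor product — trivial on the $D(a|0)$ side because it is purely even, but requiring attention on the $Cl(a|b)^\dagger$ side where both halves are odd and ``supercommute'' means ``anticommute'' — together with the verification that $\Theta$ matches PBW basis to PBW basis. Both are routine once parities are tracked.
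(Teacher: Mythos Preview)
Your proof is correct and follows essentially the same approach as the paper's: the paper simply observes that the positive-index generators commute with the negative-index generators in $D(a|b)$ (respectively, anticommute in $Cl(a|b)$) and invokes the isomorphisms \eqref{D-iso}, \eqref{Cl-iso}, whereas you have spelled out the universal-property homomorphism, surjectivity, and the PBW-basis injectivity argument in full detail. The underlying idea is identical.
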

      \begin{proof}  The isomorphisms \eqref{D-dagger} follow from (\ref{Cl-iso}) and from the fact that $x_i,\partial_i$ commute with $x_{-j},\partial_{-j}$ for all positive $i,j$. Similarly, the  isomorphism \eqref{Cl-dagger} follows from  (\ref{D-iso}) and from the fact that $\xi_i,\eta_i$ anticommute with $\xi_{-j},\eta_{-j}$ for all positive $i,j$. 
   \end{proof}
   
   \begin{corollary}\label{even-iso} We have isomorphisms of (purely even) associative algebras:
   \begin{itemize}
\item[(a)]
$D(a|b)_{\bar 0}\simeq D(a|0)\otimes Cl(b|0)_{ev},\quad Cl(a|b)_{\bar 0}\simeq Cl(a|0)\otimes D(b|0)_{ev};$
\item[(b)]   $(D(a|b)_{ev})_{\bar 0} \simeq D(a|0)_{ev}\otimes D(0|b)_{ev},\quad (Cl(a|b)_{ev})_{\bar 0}\simeq Cl(a|0)_{ev}\otimes Cl(0|b)_{ev}.$
\end{itemize}
     \end{corollary}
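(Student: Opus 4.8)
The plan is to deduce all four isomorphisms from a single structural picture together with some bookkeeping of the three relevant gradings. First I would observe that each of $D(a|b)$ and $Cl(a|b)$ is, by normal ordering, the product of the subalgebra $P$ generated by the positive-index generators and the subalgebra $Q$ generated by the negative-index generators: here $P$ is purely even and isomorphic to $D(a|0)$, respectively $Cl(a|0)$, while $Q$ is isomorphic by (\ref{Cl-iso}) to $Cl^\dagger(b|0)$, respectively by (\ref{D-iso}) to $D^\dagger(b|0)$, and the multiplication map $P\otimes Q\to D(a|b)$ (resp. $P\otimes Q\to Cl(a|b)$) is a linear isomorphism. For the Weyl algebra this is (\ref{D-dagger}) of Lemma \ref{isomorphism_tensor}; for the Clifford algebra one must use the internal decomposition of $Cl(a|b)$ itself, since Lemma \ref{isomorphism_tensor} literally describes $Cl(a|b)^\dagger$ rather than $Cl(a|b)$. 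Since $\deg x_i=\deg\xi_i=1$ and $\deg\partial_i=\deg\eta_i=-1$ for every admissible index, I would then record two elementary facts: (i) for an algebra obtained from a purely even algebra by declaring a set of generators odd — such as $Cl^\dagger(b|0)$ or $D^\dagger(b|0)$ — the $\bar 0$-component coincides, as an associative algebra, with the $ev$-component of the original one, because passing to the dagger changes neither the multiplication nor the $\mathbb Z$-grading while the $\mathbb Z$-degree of a monomial has the parity of its length; thus $Cl^\dagger(b|0)_{\bar 0}=Cl(b|0)_{ev}$ and $D^\dagger(b|0)_{\bar 0}=D(b|0)_{ev}$; (ii) the $\mathbb Z_2$-parity of a monomial in $D(a|b)$ or $Cl(a|b)$ equals the number of its negative-index generators mod $2$, and — since each negative-index generator commutes (Weyl case) or anticommutes (Clifford case) with each positive-index generator — any element of $Q_{\bar 0}$ involves an even number of negative-index generators and hence centralizes $P$ in either case.

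Granting (i) and (ii), the four isomorphisms are short. For $D(a|b)_{\bar 0}$: take $\bar 0$-parts in $D(a|b)\simeq D(a|0)\otimes Cl^\dagger(b|0)$; the first factor being purely even the graded tensor product is the ordinary one, so the $\bar 0$-part is $D(a|0)\otimes Cl^\dagger(b|0)_{\bar 0}=D(a|0)\otimes Cl(b|0)_{ev}$ by (i). For $Cl(a|b)_{\bar 0}$: a product $pq$ with $p\in P$, $q\in Q$ is $\bar 0$ iff $q$ is, so $Cl(a|b)_{\bar 0}=P\cdot Q_{\bar 0}$; by (ii) $P$ and $Q_{\bar 0}$ commute, giving an ordinary tensor-product isomorphism onto $Cl(a|0)\otimes D^\dagger(b|0)_{\bar 0}=Cl(a|0)\otimes D(b|0)_{ev}$. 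For $(D(a|b)_{ev})_{\bar 0}$: under $D(a|b)\simeq D(a|0)\otimes D(0|b)$ the $\mathbb Z$-grading is the sum of the two factor gradings, so ``$ev$'' means total $\mathbb Z$-degree even and ``$\bar 0$'' means the $D(0|b)$-factor degree even; these two conditions together force each factor degree to be even, and the two $ev$-subalgebras commute, yielding $D(a|0)_{ev}\otimes D(0|b)_{ev}$. Finally, $(Cl(a|b)_{ev})_{\bar 0}$ follows by the same degree bookkeeping applied to $Cl(a|b)=Cl(a|0)\cdot Cl(0|b)$ — invoking (ii) once more to see the two $ev$-parts commute inside the $\bar 0$-component — or, equivalently, by combining the identity just established for $(D(b|a)_{ev})_{\bar 0}$ with the isomorphism $Cl(a|b)_{ev}\simeq D(b|a)_{ev}$ of Corollary \ref{isomorphism_even} and the identifications $D(b|0)_{ev}\simeq Cl(0|b)_{ev}$, $D(0|a)_{ev}\simeq Cl(a|0)_{ev}$ read off from (\ref{D-iso}) and (\ref{Cl-iso}). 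For $a=\infty$ or $b=\infty$ everything is obtained by passing to direct limits of the compatible finite-level isomorphisms.

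The genuinely delicate point is keeping the three gradings straight — the $\mathbb Z$-grading, the $\mathbb Z_2$-grading, and the $ev$ subalgebra of even $\mathbb Z$-degree — and, above all, checking that after restriction to the relevant $\bar 0$- or $(ev)_{\bar 0}$-component the Koszul signs in the graded tensor products of Lemma \ref{isomorphism_tensor} all disappear; this is precisely observation (ii) and is what makes the resulting isomorphisms ones of \emph{ordinary} tensor products of associative algebras rather than twisted ones. Everything else is routine.
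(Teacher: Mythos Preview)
Your argument is correct and follows the same route as the paper: the key observation (your point (i)) that $Cl^\dagger(b|0)_{\bar 0}=Cl(b|0)_{ev}$ and $D^\dagger(b|0)_{\bar 0}=D(b|0)_{ev}$ is exactly what the paper invokes, and everything else is unwound from Lemma~\ref{isomorphism_tensor}. You are simply more explicit than the paper---in particular, your careful treatment of the Clifford case (where Lemma~\ref{isomorphism_tensor} literally describes $Cl(a|b)^\dagger$) and your observation (ii) about the disappearance of Koszul signs on the $\bar 0$-component fill in details the paper leaves to the reader, and your direct derivation of (b) is a minor variant of the paper's ``follows from (a)''.
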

     \begin{proof} Part (a)  is a consequence of the existence of  isomorphisms $Cl^{\dagger}(b|0)_{\bar 0}\simeq Cl(b|0)_{ev}$ and $ D^\dagger(b|0)_{\bar 0}\simeq D(b|0)_{ev}$. Part (b) follows straightforwardly from part (a).
       \end{proof} 

      \subsection{Simple weight modules over Clifford and Weyl algebras} \label{subsec-simple} In the rest of the paper, $A$ stands for $D(a|b)$ or $Cl(a|b)$ unless a restriction on $A$ is made explicit. Set $u_i:=x_i\partial_i$ ($ i  \neq 0$) for $A=D(a|b)$, $u_i:=\xi_i\eta_i$ ($ i  \neq 0$)  for $A=Cl(a|b)$, and define $$\gh_A:=\operatorname{span}\{u_i \mid i \neq 0\}.$$

Let $\{\zeta_i \; | \; i \neq 0\}\subset\gh_A^*$ be the system dual to $\{u_i \; | \; i \neq 0 \}$. 
                Then $\gh_A^* = \prod_{i \neq 0} \C \zeta_i$. For convenience, we will write the elements of $\gh_A^*$ as formal (possibly infinite) sums  $\sum_{i \neq 0}a_i \zeta_i $.
  We set  $$Q_A:=\bigoplus_{i \neq 0} \mathbb Z \zeta_i.$$ 
 
  One can easily see that the abelian Lie algebra $\gh_A$ acts semisimply on $A$ via the adjoint action.  In other words, 
     $$A= \bigoplus_{\alpha\in R_A \sqcup \{ 0 \} } A^\alpha,\quad A^\alpha=\{x\in A\mid \ad_h(x)=\alpha(h)x \mbox{ for every } h \in \gh_A\},$$
      and $R_A$ is the set of all $\alpha \in Q_A \setminus \{ 0 \}$ such that $A^{\alpha} \neq 0$.          
       If $A =Cl(a|b)$, then   $$R_A \sqcup \{ 0\}= \left\{ \sum_{i \neq 0} a_i \zeta_i \in Q_A \; |\; a_i=0 \mbox{ for almost all $i$, and }a_i\in \{0,1 \} \mbox{ for } i > 0   \right\}.$$
       If $A =D(a|b)$,   then $$R_A \sqcup \{ 0\}= \left\{ \sum_{i\neq 0} a_i \zeta_i \in Q_A \; |\; a_i=0 \mbox{ for almost all $i$, and }a_i\in \{0,1 \} \mbox{ for } i < 0   \right\}.$$

     Moreover, for  $A = Cl(a|b)$ we have $\xi_i \in A^{\zeta_i}$, $\eta_i \in A^{-\zeta_i}$ if $i \neq 0$. For $A = D(a|b)$ we have  $x_i \in A^{\zeta_{i}}$, $\partial_i \in A^{-\zeta_{i}}$ if $i\neq 0$.

   Note that each superspace $A^\alpha$  is purely even or purely odd.  Define the \emph{parity function on $Q_A$} to be the homomorphism of abelian groups $p:Q_A\to\mathbb Z_2 $ which records the parity of the superspace $A^\alpha$ for $\alpha \in R_{A}$. 
Explicitly, 
      $p(\zeta_i)=0$ for $i>0$ and $p(\zeta_j)=1$ for  $j<0$.

      \begin{lemma}\label{rootsA} \begin{enumerate}
        \item[(a)]  The subalgebra $H_A:=A^0$ is generated by $\gh_A$.
        \item[(b)] If $A=D(a|b)$ then $H_A$ is isomorphic to $\mathbb C[u]/(u_i^2-u_i)_{i<0}$.
        \item[(c)]  If $A=Cl(a|b)$ then $H_A$ is isomorphic to $\mathbb C[u]/(u_i^2-u_i)_{i > 0}$.
          \item[(d)] Every  root space $0\neq A^{\alpha}$ is a cyclic $H_A$-module.
          \end{enumerate}
        \end{lemma}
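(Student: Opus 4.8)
The plan is to analyze the structure of $A$ explicitly using the monomial basis coming from the generators, and to keep track of the $\gh_A$-weight of each monomial. I would set up a PBW-type basis for $A$ once and for all: for $A = D(a|b)$, monomials of the form $\prod x_i^{m_i} \prod \partial_i^{n_i}$ (with $m_i, n_i \in \{0,1\}$ when $i < 0$, and arbitrary nonnegative integers when $i > 0$), and analogously for $A = Cl(a|b)$ with $\xi_i$ and $\eta_i$ in place of $x_i$ and $\partial_i$ (all exponents in $\{0,1\}$ for Clifford, but with the degree-zero constraint only forcing $a_i \in \{0,1\}$ on the appropriate sign of index in $R_A$). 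Since $u_i = x_i\partial_i$ (resp. $\xi_i\eta_i$) acts on such a monomial by a scalar equal to $m_i$ in the differential-operator picture on $\mathbb{C}[x]$ — more precisely one computes $[u_i, x_j] = \delta_{ij} x_j$, $[u_i, \partial_j] = -\delta_{ij}\partial_j$, so a monomial $\prod x_i^{m_i}\prod \partial_i^{n_i}$ lies in $A^{\alpha}$ with $\alpha = \sum (m_i - n_i)\zeta_i$ — the weight decomposition $A = \bigoplus_\alpha A^\alpha$ is visibly just the grading of the monomial basis by the vector of differences of exponents. This immediately yields (a): a monomial lies in $A^0$ iff $m_i = n_i$ for all $i$, and by reordering (using $\partial_i x_i = x_i\partial_i + 1 = u_i + 1$, resp. the anticommutator $\eta_i\xi_i = 1 - u_i$ for Clifford) every such monomial is, modulo lower monomials, a product of powers of the $u_i$; an induction on total degree shows $A^0 = H_A$ is generated as an algebra by $\{u_i\}$, i.e. by $\gh_A$.

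For parts (b) and (c) I would produce the explicit relations. The $u_i$ for different $i$ commute (they involve disjoint sets of generators and each $u_i$ is even). For $A = D(a|b)$ and $i > 0$ the element $u_i = x_i\partial_i$ satisfies no polynomial relation — on $\mathbb{C}[x_i]$ it is the Euler operator with eigenvalues $0,1,2,\dots$, hence $\mathbb{C}[u_i]$ is a polynomial ring. For $i < 0$, $x_i$ and $\partial_i$ are odd with $x_i^2 = \partial_i^2 = 0$ and $\{\partial_i, x_i\} = 1$, so $u_i^2 = x_i\partial_i x_i\partial_i = x_i(1 - x_i\partial_i)\partial_i = x_i\partial_i = u_i$ (using $\partial_i x_i = 1 - x_i\partial_i$ and $x_i^2 = 0$), giving the relation $u_i^2 = u_i$. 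Hence $H_{D(a|b)} \cong \mathbb{C}[u]/(u_i^2 - u_i)_{i<0}$, where $\mathbb{C}[u]$ is the polynomial ring on all the $u_i$. The Clifford case is symmetric with the roles of $i>0$ and $i<0$ swapped: for $i<0$, $\xi_i, \eta_i$ are odd with $\{\eta_i,\xi_i\} = 1$, so $u_i = \xi_i\eta_i$ is again idempotent-like — wait, one must be careful with signs: $\{\eta_i,\xi_i\}=1$ gives $\eta_i\xi_i = 1 - \xi_i\eta_i$ when both are odd, so $u_i^2 = \xi_i\eta_i\xi_i\eta_i = \xi_i(1-\xi_i\eta_i)\eta_i = \xi_i\eta_i = u_i$ using $\xi_i^2 = 0$; while for $i>0$, $\xi_i,\eta_i$ are even with $\xi_i^2 = \eta_i^2 = 0$ and $[\eta_i,\xi_i]=1$, so $\eta_i\xi_i = 1 + \xi_i\eta_i$ and $u_i^2 = \xi_i\eta_i\xi_i\eta_i = \xi_i(1+\xi_i\eta_i)\eta_i = \xi_i\eta_i = u_i$. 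Hmm — so actually $u_i^2 = u_i$ for all $i$ in the Clifford case? Let me reconsider: for $i > 0$ in Clifford, $\xi_i^2 = 0$ because $\{\xi_i,\xi_i\} = 2\xi_i^2 = 0$ (even generators, $p=0$). So indeed $u_i$ is idempotent for all $i$. But the stated lemma says the relation is only imposed for $i>0$. The resolution must be that for $i < 0$ (odd Clifford generators) there are further elements — actually no: I should trust the statement and recheck. For $i<0$ odd: $\{\xi_i,\xi_i\} = 0$ automatically but this says $-\xi_i^2 - \xi_i^2$? The supercommutator bracket $\{u,v\} = uv + (-1)^{\bar u\bar v}vu$, so $\{\xi_i,\xi_i\} = 2(-1)^? $... with $\bar\xi_i\bar\xi_i = 1$ we get $\{\xi_i,\xi_i\} = \xi_i^2 + (-1)^1\xi_i^2 = 0$, which is vacuous, so $\xi_i^2$ need not vanish for odd $i$. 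That's the point: for odd Clifford generators $\xi_i^2$ is nonzero, so $u_i = \xi_i\eta_i$ is not idempotent, and $\mathbb{C}[u_i]$ is free. Good — this is consistent with the statement and with the $R_A$ description ($a_i \in \{0,1\}$ for $i>0$ in Clifford). So the arguments for (b) and (c) are genuinely dual.

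For part (d), I would argue as follows. Fix $\alpha \in R_A$ with $A^\alpha \neq 0$. Write $\alpha = \sum a_i\zeta_i$; by the description of $R_A$ only finitely many $a_i$ are nonzero, and $a_i \in \{0,1\}$ on the constrained sign of index. Define a distinguished element $y_\alpha \in A^\alpha$ by taking the monomial that uses each constrained generator exactly $|a_i|$ times and each free generator exactly $|a_i|$ times: concretely, for $D(a|b)$, $y_\alpha = \prod_{a_i > 0} x_i^{a_i} \prod_{a_i < 0}\partial_i^{-a_i}$ (a fixed ordering), and similarly for Clifford. Any monomial in $A^\alpha$ differs from $y_\alpha$ by having extra matched pairs $x_i^{k}\partial_i^{k}$ inserted, i.e. extra factors that are polynomials in $u_i$ (modulo lower-degree terms in the same $A^\alpha$, by the reordering relations). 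An induction on total degree then shows that every monomial in $A^\alpha$ lies in $H_A \cdot y_\alpha$: the top-degree part is $(\text{monomial in } u_i)\cdot y_\alpha$ up to scalar, and the remainder has strictly smaller degree and still lies in $A^\alpha$, so is in $H_A y_\alpha$ by induction. Hence $A^\alpha = H_A y_\alpha$ is cyclic. The main obstacle, and the place to be most careful, is the bookkeeping in this last step: one must verify that the reordering relations (moving all $\partial$'s to the right past $x$'s, using $\partial_i x_i = 1 \pm x_i\partial_i$) never produce terms outside $A^\alpha$ — which is automatic since reordering preserves $\gh_A$-weight — and that the "extra matched pairs" really do become polynomials in the $u_i$ after normal-ordering, which again is exactly the content of the computation behind (a). So (d) is essentially a refinement of (a) relativized to a fixed weight, and the whole lemma reduces to careful normal-ordering in the PBW basis together with the two explicit idempotency computations above.
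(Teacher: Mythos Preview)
Your approach is correct and is exactly what the paper means by ``straightforward computations'': explicit normal-ordering in the PBW monomial basis, together with the idempotency calculation for the $u_i$ on the constrained indices. One minor slip: for $i>0$ in $Cl(a|b)$ the defining bracket is the anticommutator $\{\eta_i,\xi_i\}=\eta_i\xi_i+\xi_i\eta_i=1$, so $\eta_i\xi_i=1-\xi_i\eta_i$ rather than $1+\xi_i\eta_i$ as you wrote; this does not affect your conclusion $u_i^2=u_i$ since $\xi_i^2=0$ kills the cross term in either case, and your subsequent realization that for $i<0$ the relation $\{\xi_i,\xi_i\}=0$ is vacuous (so $\xi_i^2\neq 0$ and $u_i$ is free) is exactly the right point.
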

        \begin{proof} Straightforward computations.
        \end{proof}

Set
$$
\gh^{\vee}_A:= \left\{\mu\in\gh_A^*\mid\mu(u_i)=0,1 \mbox{ where } i<0   \mbox{ for  }A=D(a|b)  \mbox{ and } i>0  \mbox{ for }A=Cl(a|b)\right\}.
      $$

In what follows, we refer to the elements of $\gh^{\vee}_A$ as to the \emph{weights of $A$}. An element $\mu$ of $\gh^{\vee}_A$ is a formal sum
$$\mu =  \sum_{i\neq 0}\mu_i \zeta_i $$
 with the only restriction that $\mu_i\in \{0,1 \}$ for  $i>0$ if $A = Cl(a|b)$,  and  $\mu_i\in \{0,1 \}$ for $i<0$ if $A = D(a|b)$.  Note that  $\gh^{\vee}_A$ is not a vector space.

\begin{remark}\label{weightcorrespondence} Let $\gg$ be a Lie superalgebra isomorphic to $\mathfrak{osp}(2a|2b)$ (respectively,
       $\mathfrak{osp}(2a+1|2b)$) with fixed Cartan subalgebra $\gh$. Set $A=Cl(a|b)$ and let $F:U(\gg)\to A$ be the homomorphism $\Psi_{a|b}$ (respectively, $\Theta_{a|b}$).  Then $F(U(\gh))=H_A$. We have
       $$\operatorname{Specm}H_A=\gh^*_A,\quad \operatorname{Specm}U(\gh)=\gh^*, $$
       where $\operatorname{Specm}$ denotes  maximal spectrum. Set
       $$\tau:=\frac{1}{2}(\sum_{i>0}\varepsilon_i-\sum_{j>0}\delta_{j}).$$ The map $f:\gh_A^*\to \gh^*$ induced by $F$ is not linear but affine, i.e.,
$$f(\mu+\nu)=f(\mu)+f(\nu)-f(0)$$
       with $f(0)=\tau$. Moreover,
       $$f(\zeta_i)=\begin{cases} \varepsilon_i-\tau,\ i>0 \\ \delta_{-i}-\tau,\ i<0 \end{cases}.$$

       Similarly if $\gg=\mathfrak{osp}(2b|2a)$, $A=D(a|b)$ and $F:=\Phi_{a|b}$, we have
               $$f(\zeta_i)=\begin{cases} \varepsilon_{-i}-\tau,\ i<0 \\ \delta_{i}-\tau,\ i>0 \end{cases}.$$
       \hfill${\bigcirc}$
     \end{remark}

Let $\CC_\mu$ be the unique $(1|0)$-dimensional $H_A$-module on which $\gh_A$ acts via $\mu$.  According to Lemma \ref{rootsA}(a)-(c) every simple $H_A$-module is one-dimensional and is isomorphic to $\CC_{\mu}$ for some $\mu\in \gh^{\vee}_A$. 
        
                An $A$-module $X$ is a \emph{weight module} if $X$ is semisimple as an $H_A$-module, i.e., if $X$ has a  decomposition
        $$X=\bigoplus_{\mu\in \gh^{\vee}_A} X^\mu,$$
        where $X^{\mu} := \{ x \in X \; | \; hx=\mu(h)x \mbox{ for every }h \in \mathfrak h_A\}$ is the \emph{$\mu$-weight space} of $X$. The \emph{support of} a weight module $X$ is  
        $$ \supp  X = \{\mu \in \gh^{\vee}_A \; | \; X^{\mu} \neq 0 \}. $$

\begin{lemma}\label{pure}
Let $X$ be a simple weight $A$-module. Then the weight spaces of $X$ are purely even or purely odd. Hence $X$ and $\Pi X$ are never isomorphic.
\end{lemma}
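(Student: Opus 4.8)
The plan is to exploit the $\ZZ$-grading on $A$ together with the root-space decomposition under $\gh_A$, and reduce the statement to the trivial observation that each root space $A^\alpha$ is purely even or purely odd (as recorded just before Lemma \ref{rootsA} via the parity function $p$). First I would fix a nonzero weight vector $v \in X^{\mu_0}$ and note that, since $X$ is simple, $X = A v$. Because $A = \bigoplus_{\alpha \in R_A \sqcup \{0\}} A^\alpha$ and $A^\alpha X^{\mu} \subseteq X^{\mu + \alpha}$, every weight space of $X$ is spanned by elements of the form $a v$ with $a \in A^\alpha$ for a single $\alpha$ (namely $\alpha = \mu - \mu_0$ for $X^\mu$). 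Now the key point: one checks that $A v$ is already a weight module, so in fact $X^\mu = A^{\mu - \mu_0} v$ for every $\mu \in \supp X$, because $A^{\mu-\mu_0} v$ is $\gh_A$-stable of weight $\mu$ and these sum to an $A$-submodule containing $v$.

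Next I would argue that $X^{\mu_0}$ itself is purely even or purely odd. Since $A^0 = H_A$ is generated by $\gh_A$ (Lemma \ref{rootsA}(a)) and $\gh_A \subseteq A_{\bar 0}$, the algebra $H_A$ is purely even; hence $H_A$ preserves the parity decomposition $X^{\mu_0} = X^{\mu_0}_{\bar 0} \oplus X^{\mu_0}_{\bar 1}$. If both summands were nonzero, pick $v_0 \in X^{\mu_0}_{\bar 0}$ and $v_1 \in X^{\mu_0}_{\bar 1}$ nonzero; by simplicity $v_1 \in A v_0$, so $v_1 = a v_0$ for some $a$, and projecting onto the weight-$\mu_0$ component we may take $a \in A^0 = H_A$, which is even — contradicting that $v_0$ and $v_1$ have opposite parity. (Alternatively, and more cleanly, invoke Lemma \ref{associative}: $X_{\bar 0}$ and $X_{\bar 1}$ are simple $A_{\bar 0}$-modules, and $\gh_A \subseteq A_{\bar 0}$ acts on the simple module $X_{\bar 0}$, so by Schur-type reasoning each weight space of $X_{\bar 0}$ that is nonzero forces... — but the direct argument above is self-contained.) So $X^{\mu_0}$ is purely even or purely odd; without loss of generality say it is purely even.

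Then for an arbitrary $\mu \in \supp X$ we have $X^\mu = A^{\mu - \mu_0} X^{\mu_0}$, and $A^{\mu-\mu_0}$ is purely even or purely odd (parity $p(\mu - \mu_0)$, by the definition of the parity function $p$ on $Q_A$ following Lemma \ref{rootsA}). Multiplying a purely even space by a homogeneous space yields a purely homogeneous space, so $X^\mu$ is purely even if $p(\mu-\mu_0) = \bar 0$ and purely odd if $p(\mu - \mu_0) = \bar 1$. This proves every weight space of $X$ is purely even or purely odd. For the last sentence: the parity change functor $\Pi$ swaps the parities of all weight spaces while preserving the support, so $\Pi X$ has a purely odd weight space exactly where $X$ has a purely even one; any $A$-isomorphism $X \to \Pi X$ would be parity-preserving (all superalgebra homomorphisms and module maps here respect the $\ZZ_2$-grading) and hence would carry $X^{\mu_0}$, purely even, onto $(\Pi X)^{\mu_0}$, purely odd — impossible since both are nonzero.

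I expect the only real subtlety to be the justification that $X^\mu = A^{\mu-\mu_0} X^{\mu_0}$, i.e., that a cyclic weight module over $A$ has its weight spaces generated in the naive graded way; this follows from the $\gh_A$-semisimplicity of $A$ and of $X$, but it deserves a line. Everything else is bookkeeping with the parity function $p$ and the fact that $H_A$ is purely even.
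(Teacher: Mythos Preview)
Your proof is correct and follows essentially the same route as the paper: both arguments use simplicity to write $X = A v$ for a weight vector $v$, then use the root decomposition $A = \bigoplus_\alpha A^\alpha$ together with the fact that each $A^\alpha$ has a definite parity $p(\alpha)$ to conclude that every weight space $X^{\mu_0+\alpha} = A^\alpha v$ is parity-homogeneous.

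The one unnecessary detour in your write-up is the separate argument that $X^{\mu_0}$ is purely even or purely odd. The paper avoids this by simply choosing $v$ homogeneous from the outset: since $X^{\mu_0} = (X^{\mu_0})_{\bar 0}\oplus (X^{\mu_0})_{\bar 1}$ is nonzero, one of the graded pieces is nonzero and you can take $v$ there. Then the contradiction argument with $H_A$ being purely even is not needed at all. Your version is not wrong, just longer than necessary.
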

\begin{proof}   Let $0 \neq x\in (X^\mu)_{{\kappa}}$, where ${\kappa} \in \mathbb Z_2 $. Then
  $$X=Ax=\bigoplus_{\alpha\in Q_A} (A^\alpha x = X^{\alpha + \mu}),$$
  i.e., all nonzero vectors in $X^{\alpha + \mu}$ are purely even (respectively, purely odd) if ${\kappa} + p(\alpha) = \bar{0}$ (respectively, if ${\kappa} + p(\alpha) = \bar{1}$).
    \end{proof}

  For the remainder of the paper we fix an extension  of the parity function $p:Q_A\to\Z_2$ to a  map $p:\hh_A^{\vee}\to \ZZ_2$ satisfying $p(\mu+\alpha)=p(\mu)+p(\alpha)$ for any $\alpha\in Q_A$
          and any $\mu\in\hh_A^{\vee}$. Note that such an extension is not unique.
          
 We call a weight $A$-module $X$ \emph{preferred} if for any $\mu \in \supp X$, the weight space $X^\mu$ is purely even if $p(\mu)=\bar 0$ and the weight space $X^\mu$ is purely odd if $p(\mu)=\bar 1$.  Lemma \ref{pure} implies that, if $X$ is a simple weight module then exactly one of the modules $X$ or $\Pi X$ is preferred. 
Moreover, any weight $A$-module
      $X$ decomposes uniquely into a direct sum $X_1\oplus\Pi X_2$ for some preferred modules $X_1$ and $X_2$.

        \begin{proposition}\label{dagger} The category of preferred weight $A^\dagger$-modules is equivalent to the category of preferred weight $A$-modules as an abelian category.
        \end{proposition}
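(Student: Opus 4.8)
The plan is to exhibit the equivalence as a relabelling of parities on weight spaces, the crucial structural observation being that $A$ and $A^\dagger$ are \emph{the same} associative algebra, carrying \emph{the same} $\mathbb Z$-grading, and differing only in the $\mathbb Z_2$-grading. Indeed, passing from $A$ to $A^\dagger$ only declares the generators of $A$ indexed by $i>0$ to be odd; the underlying ungraded algebra, its $\mathbb Z$-grading, and the elements $u_i$ are untouched (this is the algebraic content behind the isomorphisms of Lemma \ref{isomorphism_tensor}). Since the $u_i$ are even in both $A$ and $A^\dagger$, the adjoint $\gh_A$-action on $A$ is an ordinary commutator action and is the same on $A^\dagger$; hence $\gh_A=\gh_{A^\dagger}$, and the root decomposition $A=\bigoplus_\alpha A^\alpha$, the subalgebra $H_A=A^0$, the group $Q_A$, and the set $\gh_A^\vee$ all coincide with their $A^\dagger$-counterparts. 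In particular a weight $A$-module and a weight $A^\dagger$-module are the same ungraded module-with-weight-decomposition, equipped with a $\mathbb Z_2$-grading compatible with the respective algebra grading.

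The only genuine difference is between the two fixed extended parity functions, $p$ for $A$ and $p^\dagger$ for $A^\dagger$: one has $p^\dagger(\zeta_i)=p(\zeta_i)$ for $i<0$ and $p^\dagger(\zeta_i)=p(\zeta_i)+\bar 1$ for $i>0$, and both satisfy $p(\mu+\alpha)=p(\mu)+p(\alpha)$ and $p^\dagger(\mu+\alpha)=p^\dagger(\mu)+p^\dagger(\alpha)$ for $\alpha\in Q_A$. I would then define a functor $\mathcal F$ from preferred weight $A$-modules to preferred weight $A^\dagger$-modules: send $X=\bigoplus_\mu X^\mu$ to the same module $X^\dagger$ over the common ungraded algebra, now $\mathbb Z_2$-graded by $(X^\dagger)_{\bar\kappa}:=\bigoplus_{\,p^\dagger(\mu)=\bar\kappa}X^\mu$, and let $\mathcal F$ act as the identity on morphisms. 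The verifications are short: (i) $X^\dagger$ is a weight $A^\dagger$-module, for if $y\in(A^\dagger)^\alpha$ is homogeneous then $y\,X^\mu\subseteq X^{\mu+\alpha}$, while $X^\mu$ and $X^{\mu+\alpha}$ lie in $A^\dagger$-parities $p^\dagger(\mu)$ and $p^\dagger(\mu)+p^\dagger(\alpha)$, so $y$ shifts parity by exactly its $A^\dagger$-parity $p^\dagger(\alpha)$; (ii) $X^\dagger$ is preferred by construction; (iii) for preferred $X$ and $X'$ any morphism $X\to X'$ preserves all weight spaces (it is $\gh_A$-equivariant), hence the same linear map is a morphism $X^\dagger\to(X')^\dagger$ of $A^\dagger$-modules, so $\mathcal F$ is well defined on morphisms.

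Finally I would observe that $\mathcal F$ is an equivalence of abelian categories: it is the identity on underlying ungraded modules and on linear maps, so $\Hom$-spaces are literally identified (full faithfulness), the analogous construction in the reverse direction — re-grading by $p$ instead of $p^\dagger$ — is a strict two-sided inverse, and exactness is immediate since nothing about the underlying modules or maps is changed.

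I expect the only non-formal point to be the first one: checking carefully that $A$ and $A^\dagger$ really are the same ungraded, $\mathbb Z$-graded algebra — i.e., reading the defining relations as honest polynomial identities, as is needed already for Lemma \ref{isomorphism_tensor} — so that $H_A=H_{A^\dagger}$, $\gh_A^\vee=\gh_{A^\dagger}^\vee$, and ``weight module'' is unambiguous; and noting that $p^\dagger-p$ restricts on $Q_A$ to the homomorphism $\sum a_i\zeta_i\mapsto\sum_{i>0}a_i\bmod 2$, which is precisely what keeps the re-graded $A^\dagger$-action compatible with multiplication. Everything beyond that is routine parity bookkeeping.
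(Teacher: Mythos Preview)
Your proof is correct and follows essentially the same approach as the paper: both construct the equivalence by re-grading each weight space $X^\mu$ to have parity $p^\dagger(\mu)$ instead of $p(\mu)$, with the action unchanged. You make explicit the key point the paper leaves implicit, namely that ``same generators and relations'' in the definition of $A^\dagger$ means the \emph{same polynomial identities} (so $A$ and $A^\dagger$ share the underlying ungraded algebra, the $\mathbb Z$-grading, $H_A$, and $\gh_A^\vee$); this is exactly what is needed for the identifications \eqref{D-iso}, \eqref{Cl-iso} to hold and is the reason the functor can act as the identity on the module structure and on morphisms.
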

        \begin{proof}  The superalgebra $A^\dagger$ has its own parity function $p^\dagger:Q_{A^\dagger}\to\Z_2$ with the property
          $p^{\dagger} (\alpha) = 1$ for $\alpha = \varepsilon_i,\delta_j$. We can extend this function 
          to  a  map $p^\dagger:\hh_{A^\dagger}^{\vee}\to \ZZ_2$ satisfying $p^\dagger(\mu+\alpha)=p^\dagger(\mu)+p^\dagger(\alpha)$ for any
          $\alpha\in Q_{A^\dagger}$. Then, for a preferred weight module $X$ we set
          $$X^\dagger:=\bigoplus_{\mu\in\supp X} \Pi^{p^\dagger(\mu)-p(\mu)}X^\mu.$$
          It is clear that ${\cdot}^\dagger$ is a functor from the category of preferred weight $A$-modules to the category of preferred weight $A^\dagger$-modules. Moreover, the functor $({\cdot}^{\dagger})^\dagger$ is isomorphic to the identity functor.
        \end{proof}

In order to proceed with our study of  weight $A$-modules, for any $\mu \in \gh_A^{\vee}$ we  introduce a certain multiplicity free weight $A$-module $F(\mu)$ such that $\mu\in \operatorname{supp}F(\mu)$.

First, assume $A =D(a|b)$ and fix $\mu\in  \gh^{\vee}_A$. We can write $\mu=\{\mu_i\}$ with $\mu_i\in\mathbb C$ for $i>0$ and $\mu_i=0,1$ for $i<0$. Let $B$ be the subalgebra in $D(0|b)$ generated by all $x_i$ for  $i<0$ such that $\mu_i=1$,
and by all $\partial_i$ for  $i<0$ such that $\mu_i=0$. Then  $B$ is a local supercommutative  algebra, and we denote by $J$ its maximal ideal. 

Set
$R:=\mathbb C[x_i,x_i^{-1}]_{i>0}$. Consider the $D(a|0)$-module 
$F^+(\mu):=Rx^\mu$ defined by the relations $\partial_ix^\mu=\mu_ix_i^{-1}x^{\mu}$ and the $D(0|b)$-module $F^-(\mu):=D(0|b)\otimes_B(B/J)$. Finally using the first isomorphism of (\ref{D-dagger}), we define the $A$-module $F(\mu)$ by setting
$F(\mu):=F^+(\mu)\otimes \Pi^{p(\mu)} F^-(\mu)$.

Now let $A=Cl(a|b)$. Here we use the isomorphism (\ref{Cl-dagger}), and set
$$F(\mu):= \Pi^{p(\mu)} (F^-(\mu)\otimes F^+(\mu)^\dagger)^\dagger,$$
where now $F^-(\mu)$ is a $D(0|a)$-module  and  $F^+(\mu)$ is a $D(b|0)$-module.

By construction, $\mu\in\supp F(\mu)$ and all weight spaces of $F(\mu)$ are $1$-dimensional.

\begin{lemma}\label{indecomposable} The $A$-module $F(\mu)$  is preferred, indecomposable, and has a simple socle (i.e., a simple submodule which is contained in any
  nonzero submodule of $F(\mu)$). Under the assumption  $\mu_i\notin \mathbb Z$ for all $i>0$ if $A=D(a|b)$, and $\mu_j\notin \mathbb Z$ for all $j<0$ if $A=Cl(a|b)$, the module $F(\mu)$ is simple.
\end{lemma}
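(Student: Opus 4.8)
The plan is to reduce everything to the two purely even building blocks $F^+(\mu)$ and $F^-(\mu)$, treat each separately, and then assemble the tensor-product statements. First I would record that by construction $F(\mu)$ is obtained from $F^\pm(\mu)$ via a tensor product (over the two commuting factors in the isomorphisms \eqref{D-dagger}, \eqref{Cl-dagger}) followed at most by an application of the parity-shift $\Pi^{p(\mu)}$ and of the functor ${\cdot}^\dagger$ of Proposition \ref{dagger}. Since $\Pi^{p(\mu)}$ and ${\cdot}^\dagger$ are equivalences of categories (the latter by Proposition \ref{dagger}), they preserve indecomposability, the property of having a simple socle, and simplicity; and they visibly send preferred modules to preferred modules by the very definition of these functors. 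So it suffices to prove the four assertions for $F^+(\mu)$ as a $D(a|0)$-module and for $F^-(\mu)$ as a $D(0|b)$-module, and to check that tensoring two such modules over commuting subalgebras behaves well.

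For $F^-(\mu) = D(0|b)\otimes_B (B/J)$: here $B$ is a finite-dimensional local supercommutative algebra (a tensor product of the two-dimensional algebras $\C[x_i]/(x_i^2)$ and $\C[\partial_i]/(\partial_i^2)$ over the relevant $i<0$), $J$ its maximal ideal, and $B/J\cong\C$. The module $F^-(\mu)$ is finite-dimensional with a one-dimensional weight space for each weight in a single coset, and one checks directly that it is the "staircase" module on which lowering all available operators eventually reaches a unique lowest vector; dually, raising reaches a unique highest vector. I would argue that $F^-(\mu)$ has a unique simple submodule — namely the cyclic $D(0|b)$-module generated by the bottom vector — by the standard argument that any nonzero submodule, being a weight submodule, contains some weight vector, and by applying the finitely many lowering operators one lands in the one-dimensional bottom weight space; this forces the socle to be simple, hence in particular $F^-(\mu)$ is indecomposable. (In fact $F^-(\mu)$ is always simple, since the Clifford-type relations make $D(0|b)$ act transitively between all $2^{\#\{i<0\}}$ weight spaces, but I only need indecomposability-with-simple-socle here.)

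For $F^+(\mu) = R x^\mu$ with $R=\C[x_i,x_i^{-1}]_{i>0}$ and $\partial_i x^\mu = \mu_i x_i^{-1}x^\mu$: this is the classical twisted-localization module for the Weyl algebra $D(a|0)$. Its weight $x^\nu$, $\nu\in\mu+Q$, is one-dimensional, and $x_i$, $\partial_i$ move between adjacent weights, with $\partial_i$ killing $x^\nu$ precisely when $\nu_i=0$ and $x_i$ never killing anything. When some $\mu_i\in\Z$ the module is reducible (the submonoid of weights $\nu$ with $\nu_i\geq 0$ and $\nu_i$ congruent to $0$ mod $1$ spans a submodule), but one shows it still has a simple socle: any nonzero weight submodule contains, for each coordinate with $\mu_i\in\Z$, a vector that can be pushed down until $\partial_i$ kills it, and in the remaining coordinates all of $R$ is reachable, so there is a unique minimal submodule, generated by the vector $x^{\lambda}$ with $\lambda_i$ the unique integer shift making $\partial_i$ annihilate it (and $\lambda_i=\mu_i$ otherwise). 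Hence $F^+(\mu)$ is indecomposable with simple socle. Under the hypothesis $\mu_i\notin\Z$ for all $i>0$, every $\partial_i$ acts invertibly on $F^+(\mu)$ (the scalar $\mu_i$ is never zero along the orbit), so $D(a|0)$ acts transitively on the one-dimensional weight spaces and $F^+(\mu)$ is simple.

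Finally I would assemble: if $X$ is a weight module over $A_1$ and $Y$ over $A_2$ where $A\cong A_1\otimes A_2$ with $A_1,A_2$ commuting, all weight spaces one-dimensional, and each of $X,Y$ indecomposable with simple socle, then $X\otimes Y$ is a weight $A$-module with one-dimensional weight spaces whose socle is $\soc X\otimes\soc Y$ — here I use that a nonzero $A$-submodule of $X\otimes Y$ is a weight submodule, pick a weight vector $v\otimes w$ in it, and apply $A_1$ then $A_2$ to reach $\soc X\otimes\soc Y$; and when both factors are simple, $X\otimes Y$ is simple by the same transitivity argument. Preferredness of the tensor product follows because $p$ is additive and the weight of $v\otimes w$ is the sum of the weights, matching the product of parities. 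Applying this with $(X,Y)=(F^+(\mu),\Pi^{p(\mu)}F^-(\mu))$ in the Weyl case, and then pulling through ${\cdot}^\dagger$ in the Clifford case, gives all four claimed properties of $F(\mu)$, and the simplicity statement under the integrality hypothesis. \textbf{The main obstacle} I expect is the bookkeeping in the twisted-localization module $F^+(\mu)$ when several $\mu_i$ are integers simultaneously: one must verify carefully that the various "obvious" submodules (one per integral coordinate) intersect in a single smallest nonzero submodule rather than producing a more complicated socle, i.e., that the combinatorics of the monoid of reachable weights has a unique minimal element; everything else is routine.
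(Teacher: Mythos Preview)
Your proposal is correct and rests on the same core idea as the paper's proof: identify a distinguished weight vector that every nonzero submodule must contain, by analyzing when $x_i$ and $\partial_i$ act invertibly between adjacent one-dimensional weight spaces. The paper carries this out directly on the full module $F(\mu)$, defining the weight $\tilde\mu$ (set $\tilde\mu_i=0$ whenever $\mu_i\in\mathbb Z$ and $i>0$, else $\tilde\mu_i=\mu_i$) and checking in one pass that from any weight $\nu$ one can reach $\tilde\mu$ using the generators; you instead first split along the tensor decomposition \eqref{D-dagger}/\eqref{Cl-dagger}, treat $F^+(\mu)$ and $F^-(\mu)$ separately, and then assemble via the pure-tensor argument. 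Your route is a bit more modular (and makes transparent that $F^-(\mu)$ is in fact simple, not merely indecomposable), while the paper's is shorter since it avoids the tensor-assembly step. Your ``main obstacle'' is not a real obstacle: the coordinates are independent, so reaching $\tilde\mu_i=0$ in each integral coordinate $i$ can be done one $i$ at a time (apply $\partial_i^{\nu_i}$ if $\nu_i>0$, or $x_i^{-\nu_i}$ if $\nu_i<0$), and the resulting intersection is automatically the unique minimal submodule.
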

\begin{proof} 

Let $A=D(a|b)$. The fact that $F(\mu)$  is preferred follows directly from the definition of  $F(\mu)$.

Define the weight $\tilde{\mu} \in\supp F(\mu)$ by setting
  $$\tilde{\mu}_i:=\begin{cases}\mu_i\ \text{if}\ i<0\ \text {or}\ \mu_i\not\in\mathbb Z\\ 0\ \text{otherwise} \end{cases}.$$
  We claim that $F(\mu)^{\tilde{\mu}}$ generates a  simple submodule of $F(\mu)$ which is the socle of  $F(\mu)$. Indeed, note that if $\nu \in\supp F(\mu)$, the construction of $F(\mu)$
  shows that  the map $F(\mu)^\nu\to F(\mu)^{\nu+\varepsilon_i}$  of multiplication by $x_i$ is an isomorphism for all positive $i$, and that
  the map $F(\mu)^\nu\to F(\mu)^{\nu-\varepsilon_i}$ of application of  $\partial_i$
  is an isomorphism iff
  $\nu_i\neq 0$. Furthermore, for $i<0$ the map $F(\mu)^\nu\to F(\mu)^{\nu+\delta_i}$ of multiplication by $x_i$ is an isomorphism iff
  $\nu+\delta_i\in \supp F(\mu)$, and similarly  the map $F(\mu)^\nu\to F(\mu)^{\nu-\delta_i}$ of application of  $\partial_i$  is an isomorphism
  iff $\nu-\delta_i\in \supp F(\mu)$. Consequently, the cyclic submodule of $F(\mu)$ generated  by any nonzero weight vector contains the weight space $F(\mu)^{\tilde{\mu}}$.  This proves our claim, and we see that $F(\mu)$ is  indecomposable as it has a simple socle.

  Finally, if $\mu_i\notin \mathbb Z$ for all $i>0$ then  $\mu=\tilde\mu$ and  $F(\mu)$ is simple. 
  
  The case of  $A=Cl(a|b)$ is handled in a similar manner. 
  \end{proof}
  
          For $\mu, \nu \in \gh^{\vee}_A$ we write $\mu \approx \nu$ if $\mu - \nu \in Q_A$ and the respective sets of indices $i$ for which $\mu_i \in \mathbb Z_{\geq 0}$ and $\nu_i \in \mathbb Z_{\geq 0}$ coincide. 

        \begin{theorem}\label{thm:weightA}

          (a) Every simple weight $A$-module is multiplicity free.

          (b)  For every $\mu\in \gh^{\vee}_A$, up to isomorphism,  there exist precisely two simple $A$-modules $X(\mu)$ and $\Pi X(\mu)$ whose supports contain  $\mu$, and such that $X(\mu)$ is preferred.
          
            (c)  $ \supp X(\mu) = \left\{\lambda \in \gh_A^\vee  \; | \; \lambda \approx \mu \right\}$.
                        
          (d) Let $\mu - \nu \in Q_A$.      The modules $X(\mu)$ and $X(\nu)$ are isomorphic if and only if  $\mu  \approx \nu$.
        \end{theorem}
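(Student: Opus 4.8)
The plan is to bootstrap from the explicit module $F(\mu)$ constructed above, whose socle is a simple weight module with $\mu$ in its support (Lemma \ref{indecomposable}). I will first reduce parts (a)--(d) to the case $A = D(a|b)$, via the tensor-product isomorphisms of Lemma \ref{isomorphism_tensor} together with Proposition \ref{dagger}, which transports preferred weight modules between $A$ and $A^\dagger$; since $Cl(a|b)^\dagger \simeq D(0|a)\otimes D^\dagger(b|0)$ and $D(a|b)\simeq D(a|0)\otimes D(0|b)$, every statement about simple weight modules over $A$ becomes a statement about simple weight modules over a Weyl superalgebra of the form $D(c|0)\otimes D(c'|0)^{(\dagger)}$, and a simple weight module over a tensor product of associative superalgebras is (by a density/Jacobson-type argument, using Lemma \ref{associative} to keep track of the $\mathbb Z_2$-grading) an outer tensor product of simple weight modules over the factors. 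So it suffices to treat $A = D(a|0)$, i.e. the purely even Weyl algebra in $a$ variables, where the answer is classical but must be reproved in the present normalization.

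For part (a): let $X$ be a simple weight $A$-module and fix $0 \neq x \in X^\mu$. By Lemma \ref{rootsA}(d) each root space $A^\alpha$ is a cyclic $H_A$-module, and $H_A$ acts on the one-dimensional $H_A$-module $\mathbb C_\nu$ through a character; hence $A^\alpha x$ spans $X^{\mu+\alpha}$ whenever it is nonzero, as in the proof of Lemma \ref{pure}. The key point is that for $A = D(a|0)$ the nonzero products of the generators $x_i,\partial_i$ landing in a fixed $A^\alpha$ all act on $X^\mu$ by the same scalar up to the cyclic $H_A$-action — more precisely, once the weight $\mu$ is fixed, the operator $A^\alpha \to X^{\mu+\alpha}$ has one-dimensional image because two monomials of the same $\mathbb Z^a$-degree differ by an element of $H_A$, which acts by a scalar on the line $\mathbb C_\mu$. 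Therefore every weight space is at most one-dimensional, proving (a); this immediately gives (b), since by Lemma \ref{pure} exactly one of $X,\Pi X$ is preferred, and a simple weight module is generated by any nonzero weight vector, so it is determined up to isomorphism by the common scalars by which $H_A$ and the monomials act — i.e. by any $\mu$ in its support.

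For parts (c) and (d): working again with $A = D(a|0)$ and $\mu$ with all $\mu_i \in \mathbb C$, the module $F(\mu) = R x^\mu$ with $R = \mathbb C[x_i^{\pm 1}]$ has support exactly $\mu + \mathbb Z^a$, and its socle $X(\mu)$ is the span of the weight vectors reachable from the distinguished weight $\tilde\mu$ (defined in Lemma \ref{indecomposable} by zeroing out the coordinates $\mu_i \in \mathbb Z$). The description of which multiplication maps by $x_i$ and which applications of $\partial_i$ are isomorphisms, carried out in the proof of Lemma \ref{indecomposable}, shows directly that the set of weights reachable from $\tilde\mu$ — equivalently, from any vector in the socle — is precisely $\{\lambda : \lambda \approx \mu\}$: one can freely raise any coordinate and can lower a coordinate $\lambda_i$ iff $\lambda_i \neq 0$, so the reachable $\lambda$ are those with $\lambda - \mu \in Q_A$ and with the same set of indices $i$ where the coordinate is a nonnegative integer. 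This is (c). For (d), if $\mu \approx \nu$ then $\nu \in \supp X(\mu)$ by (c), so $X(\mu) \simeq X(\nu)$ by uniqueness from (b); conversely if $X(\mu)\simeq X(\nu)$ with $\mu - \nu \in Q_A$ then $\nu \in \supp X(\mu)$ forces $\mu \approx \nu$ again by (c). Finally I transport everything back through the tensor and dagger equivalences: the relation $\approx$ on $\gh_A^\vee$ decomposes along the tensor factors and is preserved by $\cdot^\dagger$, so (c) and (d) hold for general $A$.

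\textbf{Main obstacle.} The delicate point is the reduction step: verifying that a simple \emph{weight} module over $D(a|0)\otimes D(0|b)$ (or the relevant dagger variant) really is an outer tensor product of weight modules over the two factors, with the parity bookkeeping handled correctly. One must check that the two subalgebras (super)commute with the right signs, that the weight grading is the "product" grading $\gh_A^\vee \simeq \gh_{A_1}^\vee \times \gh_{A_2}^\vee$, and that Lemma \ref{associative} plus a Jacobson density argument on each finite-dimensional weight space upgrades simplicity over the tensor product to simplicity over each factor; the infinite-rank case ($a$ or $b = \infty$) additionally requires passing to the direct limit $D(\infty|\infty) = \varinjlim D(a|b)$ and noting that a weight module is a union of weight modules over the finite-rank pieces, which is where the uniformity of the combinatorial description of $\approx$ and of $\supp X(\mu)$ pays off.
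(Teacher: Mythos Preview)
Your core observation for (a) --- that $A^\alpha$ being a cyclic $H_A$-module forces $A^\alpha x$ to be at most one-dimensional for a weight vector $x$ --- is correct and is exactly the paper's mechanism, but it already works for arbitrary $A$ directly from Lemma~\ref{rootsA}(d); no reduction to $D(a|0)$ is needed. The paper packages this via the induced module $P(\mu):=A\otimes_{H_A}\Pi^{p(\mu)}\mathbb C_\mu$: its weight space at $\mu+\alpha$ is a quotient of $A^\alpha\otimes_{H_A}\mathbb C_\mu$, hence one-dimensional, so $P(\mu)$ is multiplicity free. By Frobenius reciprocity any simple weight module with $\mu$ in its support is a quotient of $P(\mu)$, and since $P(\mu)$ has a \emph{unique} maximal proper submodule (the sum of all submodules missing the generating weight $\mu$), uniqueness in (b) follows. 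Your sentence ``determined by the scalars by which $H_A$ and the monomials act'' does not by itself pin down the annihilator of a weight vector; the universal property of $P(\mu)$ is what makes (b) an argument rather than an assertion. The tensor-product reduction you flag as the main obstacle is thus unnecessary --- and would in fact be circular if you invoked Proposition~\ref{description-of-simple}, whose proof in the paper uses the present theorem.

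There is a genuine error in your (c). The socle of $F(\mu)$ is $X(\tilde\mu)$, not $X(\mu)$, and $\tilde\mu\approx\mu$ can fail: if some $\mu_i\in\mathbb Z_{<0}$ (for $i>0$ in $D(a|0)$) then $\tilde\mu_i=0\in\mathbb Z_{\ge 0}$ while $\mu_i\notin\mathbb Z_{\ge 0}$, so the index sets differ. In that situation $\mu$ is not even in the support of the socle, and your computation produces $\{\lambda:\lambda\approx\tilde\mu\}$ rather than $\{\lambda:\lambda\approx\mu\}$. The paper avoids this by arguing directly on $X(\mu)=P(\mu)/N$: for $v\in X(\mu)^\nu$ one checks that $x_iv\neq 0$ iff $\nu+\zeta_i\approx\nu$ and $\partial_iv\neq 0$ iff $\nu-\zeta_i\approx\nu$, which gives (c), and then (d), at once.
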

        \begin{proof} Set $P(\mu):=A\otimes_{H_A}\left( \Pi^{p(\mu)}\CC_\mu \right)$ for $\mu\in \gh^{\vee}_A$.  Then by Frobenius reciprocity
          $\operatorname{Hom}_A(P(\mu),F(\mu))\neq 0$. Hence  the weight space $P(\mu)^{\mu}$ is nonzero  and generates $P(\mu)$. Since each each weight space of $P(\mu)$  is a cyclic $H_A$-module (Lemma \ref{rootsA}(d)), the $A$-module $P(\mu)$ is multiplicity free.

          Therefore the sum $N$ of all submodules $Z$ of $P(\mu)$ with $Z^{\mu} = 0$ constitutes the 
 unique maximal proper submodule of $P(\mu)$.  Since $P(\mu)$ is multiplicity free, 
         the quotient
          $X(\mu):=P(\mu)/N$ and the module $\Pi X(\mu)$ are (up to isomorphism) the only two simple $A$-modules  whose supports contain  $\mu$. Note that  $X(\mu)$ is preferred, while  $\Pi X(\mu)$ is not. This proves (a) and (b).
          
          (c). It follows from (b) that the supports of non-isomorphic  simple modules are disjoint. It remains to check that
          $\supp X(\mu)$ is exactly the equivalence class of $\mu$. Let $A=D(a|b)$ and $v\in X(\mu)^{\nu}$. Then $x_iv\neq 0$
          (respectively, $\partial_i v\neq 0$) iff $\nu+\varepsilon_i\approx \nu$ (respectively, $\nu+\varepsilon_i\approx \nu$). Therefore,
          $X(\mu)^\nu\neq 0$ iff $\nu\approx\mu$. The case $A=D(a|b)$ is similar.

            (d).           Direct corollary of (c).
          \end{proof}

Next we would like to decompose the simple weight $A$-modules in accordance with the isomorphisms \eqref{D-dagger} and \eqref{Cl-dagger}. We start by discussing  weight modules of $A = Cl(b|0)$ and $A = D(0|b)$. In these cases we identify the  subsets $\mathbb A$ of $\mathbb Z \cap [1,b]$ (where $b=\infty$ is possible)  with the weights of $A$ via the map  $$\mathbb A \mapsto  \zeta_{\mathbb A},$$ where $\zeta_{\mathbb A} = \sum_{i \in \mathbb A} \zeta_i$ for $A = Cl(b|0)$ and $\zeta_{\mathbb A} = \sum_{i \in \mathbb A} \zeta_{-i}$ for $A = D(0|b)$. Accordingly, we  write $X(\mathbb A)$ instead of $X(\zeta_{\mathbb A})$. 

        \begin{lemma} \label{Clifford}   Let $A (b) = Cl(b|0)$ or $ A (b) = D(0|b)$. 
          
          (a) If $b < \infty$, then the category of preferred weight $ A (b)$-modules is semisimple and has,  up to isomorphism,  one simple object $X({\emptyset})$.

          (b)   If $b  = \infty$ then, up to isomorphism,  the simple preferred weight $ A (b)$-modules can be enumerated by equivalence
          classes of subsets of $\mathbb{Z}_{> 0}$ with respect  to the following equivalence relation: $\mathbb A$ is equivalent to $\mathbb B$ if the symmetric
          difference $\mathbb A\triangle \mathbb B$ is finite. In other words, up to isomorphism, there is exactly one simple weight $A(b)$-module $X ({\mathbb A})$ corresponding to ${\mathbb A}$. 
          
         (c)  We have 
           $X({\mathbb A}) \simeq X({\mathbb B)}$ if and only if $\mathbb A\triangle \mathbb B$ is finite.
          
        \end{lemma}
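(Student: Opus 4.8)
The plan is to deduce the lemma from Theorem~\ref{thm:weightA}, the isomorphism \eqref{Cl-iso}, and Proposition~\ref{dagger}, after first reducing the case $A(b)=D(0|b)$ to $A(b)=Cl(b|0)$. By \eqref{Cl-iso} there is an isomorphism $D(0|b)\simeq Cl(b|0)^\dagger$ sending $u_i=\xi_i\eta_i$ to $x_{-i}\partial_{-i}=u_{-i}$, hence sending the weight $\zeta_i$ of $Cl(b|0)^\dagger$ to $\zeta_{-i}$ of $D(0|b)$; since the labellings are $\zeta_{\mathbb A}=\sum_{i\in\mathbb A}\zeta_i$ for $Cl(b|0)^\dagger$ and $\zeta_{\mathbb A}=\sum_{i\in\mathbb A}\zeta_{-i}$ for $D(0|b)$, the label $\mathbb A$ is preserved. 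The equivalence of Proposition~\ref{dagger} only shifts parities of fixed weight spaces, so it preserves supports and takes the preferred simple module with label $\mathbb A$ to the preferred simple module with label $\mathbb A$. Thus it suffices to establish (a)--(c) for $A(b)=Cl(b|0)$.

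For $b<\infty$ the algebra $Cl(b|0)$ has only even generators, so it is an ordinary finite-dimensional algebra, namely the Clifford algebra of a nondegenerate form on a $2b$-dimensional space; its defining module $\Lambda[\xi]$ is simple and faithful of dimension $2^b$, so a dimension count gives $Cl(b|0)\simeq\End(\Lambda[\xi])$, a semisimple algebra with unique simple module $\Lambda[\xi]$. A short computation shows $u_i\xi_{\mathbb A}=\xi_{\mathbb A}$ if $i\in\mathbb A$ and $u_i\xi_{\mathbb A}=0$ otherwise, so the monomials $\xi_{\mathbb A}$ ($\mathbb A\subseteq\{1,\dots,b\}$) form a weight basis of $\Lambda[\xi]$, with $\xi_{\mathbb A}$ of weight $\zeta_{\mathbb A}$; hence $\Lambda[\xi]$ is a weight module, and since every $Cl(b|0)$-module is a direct sum of copies of $\Lambda[\xi]$ and $\Pi\Lambda[\xi]$, every module is a weight module and the category of preferred weight modules is semisimple. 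Finally $\Lambda[\xi]$ is purely even while $p$ vanishes on $Q_{A(b)}\supseteq\supp\Lambda[\xi]$, so $\Lambda[\xi]$ is preferred; as $\zeta_\emptyset=0\in\supp\Lambda[\xi]$ it is $X(\emptyset)$, which proves (a).

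For $b=\infty$, Theorem~\ref{thm:weightA}(b) says every simple weight $A(b)$-module is $X(\mu)$ or $\Pi X(\mu)$ for some $\mu\in\gh^{\vee}_{A(b)}$, with $X(\mu)$ the preferred one, and $\gh^{\vee}_{A(b)}=\{\sum_{i>0}\mu_i\zeta_i:\mu_i\in\{0,1\}\}$ is in bijection with the subsets $\mathbb A\subseteq\mathbb Z_{>0}$ via $\mathbb A\mapsto\zeta_{\mathbb A}$; thus $\mathbb A\mapsto X(\mathbb A):=X(\zeta_{\mathbb A})$ is a surjection onto isomorphism classes of simple preferred weight modules, and it remains to decide when $X(\mathbb A)\simeq X(\mathbb B)$. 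For $\mu=\zeta_{\mathbb A}$ and $\nu=\zeta_{\mathbb B}$ all coordinates lie in $\{0,1\}\subset\mathbb Z_{\geq0}$, so the second clause in the definition of $\approx$ is automatic and $\mu\approx\nu$ holds precisely when $\mu-\nu\in Q_{A(b)}$, that is, when $\{i:\mu_i\neq\nu_i\}$ is finite, i.e.\ when $\mathbb A\triangle\mathbb B$ is finite. If $\mathbb A\triangle\mathbb B$ is finite, Theorem~\ref{thm:weightA}(d) gives $X(\mathbb A)\simeq X(\mathbb B)$; if it is infinite then $\mu-\nu\notin Q_{A(b)}$, and since $\supp X(\mathbb A)\subseteq\mu+Q_{A(b)}$ and $\supp X(\mathbb B)\subseteq\nu+Q_{A(b)}$ by Theorem~\ref{thm:weightA}(c), the supports are disjoint and the modules are nonisomorphic. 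This proves (c), and (b) follows.

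The main obstacle is really only organisational: once Theorem~\ref{thm:weightA} is available, parts (b)--(c) are a matter of unwinding the relation $\approx$ for $Cl(b|0)$, and part (a) rests only on the standard fact that $Cl(b|0)$ is finite-dimensional semisimple with $\Lambda[\xi]$ as its unique simple module. The one point that needs care, handled in the reduction step, is the compatibility of the labelling $\mathbb A\mapsto\zeta_{\mathbb A}$ and of ``preferred'' under the isomorphism $D(0|b)\simeq Cl(b|0)^\dagger$ and the parity-shifting equivalence of Proposition~\ref{dagger}.
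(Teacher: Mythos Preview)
Your proof is correct and follows essentially the same route as the paper: part~(a) via the fact that $Cl(b|0)$ is a matrix algebra together with Proposition~\ref{dagger} for the $D(0|b)$ case, and parts~(b)--(c) by unwinding the relation $\approx$ from Theorem~\ref{thm:weightA}. Your version is somewhat more explicit---you carry out the reduction from $D(0|b)$ to $Cl(b|0)$ uniformly for all three parts rather than only for~(a), you verify directly that $\Lambda[\xi]$ is preferred and equals $X(\emptyset)$, and you treat the case $\zeta_{\mathbb A}-\zeta_{\mathbb B}\notin Q_{A(b)}$ separately via disjointness of supports---but none of this constitutes a genuinely different argument.
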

        \begin{proof} Claim (a) for  $ A (b) = Cl(b|0)$ is an immediate consequence of the fact that $ A (b)$  is a matrix algebra. The case  $ A (b) = D(0|b)$ with  $b < \infty$ follows from Proposition \ref{dagger}.

         Claim (b)  follows from Theorem \ref{thm:weightA}(b).

For part (c), we note that 
$\zeta_{\mathbb A} \approx \zeta_{\mathbb B} $ if and only if $\mathbb A  \Delta \mathbb B $ is finite. 
                  \end{proof}

          \begin{proposition}\label{description-of-simple}

   (a)         Every simple preferred weight $D(a|b)$-module $X$ is isomorphic to $X^+\otimes (X^-)^\dagger$  for some simple preferred weight $D(a|0)$-module $X^+$ and some simple preferred weight $Cl(b|0)$-module $X^-$.

   (b) Every simple preferred weight $Cl(a|b)$-module $X$  is isomorphic to $(\left(X^+)^\dagger\otimes (X^-)^\dagger \right)^{\dagger}$  for some simple preferred weight $Cl(a|0)$-module $X^+$ and some
   simple preferred weight $D(b|0)$-module $X^-$.
   
 \end{proposition}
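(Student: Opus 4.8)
The plan is to reduce both parts to a single fact about simple weight modules over super tensor products, together with the classification in Theorem~\ref{thm:weightA} and the tensor-product isomorphisms of Lemma~\ref{isomorphism_tensor}. Throughout, whenever a parity-twisted algebra such as $Cl^\dagger(b|0)$ or $D^\dagger(b|0)$ occurs, I use Proposition~\ref{dagger} to transport to it the weight-module facts proved for $D(c|d)$ and $Cl(c|d)$. Fix one of the decompositions $B\simeq B^+\otimes B^-$ supplied by Lemma~\ref{isomorphism_tensor}, where $B^+$ and $B^-$ are Weyl or Clifford superalgebras (possibly parity-twisted) whose images in $B$ involve disjoint sets of indices, so that $\gh_B=\gh_{B^+}\oplus\gh_{B^-}$. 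The formal consequences I will use are immediate: $\gh^*_B=\gh^*_{B^+}\oplus\gh^*_{B^-}$ and $\gh^\vee_B=\gh^\vee_{B^+}\times\gh^\vee_{B^-}$ (the integrality constraints defining $\gh^\vee$ are imposed index by index), and the relation $\approx$ on $\gh^\vee_B$ is the product of the relations $\approx$ on the two factors; in particular every $\approx$-class in $\gh^\vee_B$ is a product $C^+\times C^-$ of $\approx$-classes.

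The one step with real content is the following: if $X^\pm$ is a simple preferred weight $B^\pm$-module, then the outer tensor product $X:=X^+\otimes X^-$, formed in the category of $B=B^+\otimes B^-$-modules with the Koszul sign rule, is a simple preferred weight $B$-module with $\supp X=\supp X^++\supp X^-$. To see this, note first that $X$ is a weight $B$-module whose weight space at $\sigma$ equals $(X^+)^{\sigma^+}\otimes(X^-)^{\sigma^-}$, where $(\sigma^+,\sigma^-)$ is the unique pair with $\sigma^++\sigma^-=\sigma$ and $\sigma^\pm\in\supp X^\pm$ (uniqueness follows from $\gh^*_B=\gh^*_{B^+}\oplus\gh^*_{B^-}$). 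By Theorem~\ref{thm:weightA}(a) both factors are multiplicity free, hence so is $X$, and every nonzero weight vector of $X$ is a pure tensor $v^+\otimes v^-$ of nonzero weight vectors. Given a nonzero $B$-submodule $N$, choose such a vector in $N$; acting by $B^+\otimes 1$ and using $B^+v^+=X^+$ gives $X^+\otimes v^-\subseteq N$, then acting by $1\otimes B^-$ and using $B^-v^-=X^-$ gives $N=X$. So $X$ is simple. That $X$ is preferred amounts to the identity $p_{B^+}(\sigma^+)+p_{B^-}(\sigma^-)=p_B(\sigma)$ on weights in $\supp X$; I would arrange this by choosing the extensions of the parity functions to $\gh^\vee$ compatibly with the isomorphism $B\simeq B^+\otimes B^-$, which is precisely the compatibility that the weight-space-by-weight-space twist defining ${\cdot}^\dagger$ in Proposition~\ref{dagger} is designed to preserve.

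Granting this step, each part follows from Theorem~\ref{thm:weightA}(b),(d): isomorphism classes of simple preferred weight $B$-modules correspond, via taking supports, to $\approx$-classes in $\gh^\vee_B$, and as every such class is a product $C^+\times C^-$, the module $X^+\otimes X^-$ built from the simple preferred weight $B^\pm$-modules with supports $C^\pm$ is the unique simple preferred weight $B$-module with support $C^+\times C^-$; hence every simple preferred weight $B$-module has this form. For part (a), applying this to $D(a|b)\simeq D(a|0)\otimes Cl^\dagger(b|0)$ from \eqref{D-dagger} gives $X\simeq X^+\otimes Y^-$ with $X^+$ a simple preferred weight $D(a|0)$-module and $Y^-$ a simple preferred weight $Cl^\dagger(b|0)$-module; by Proposition~\ref{dagger} (and $({\cdot}^\dagger)^\dagger\simeq\mathrm{id}$), $Y^-\simeq(X^-)^\dagger$ for a unique simple preferred weight $Cl(b|0)$-module $X^-$, which is the assertion. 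For part (b), I would first pass from $Cl(a|b)$ to $Cl(a|b)^\dagger$ via Proposition~\ref{dagger}, then apply the above to $Cl(a|b)^\dagger\simeq D(0|a)\otimes D^\dagger(b|0)$ from \eqref{Cl-dagger}: the two factors produced are a simple preferred weight module over $D(0|a)\simeq Cl^\dagger(a|0)$, that is, $(X^+)^\dagger$ for a simple preferred weight $Cl(a|0)$-module $X^+$, and a simple preferred weight $D^\dagger(b|0)$-module, that is, $(X^-)^\dagger$ for a simple preferred weight $D(b|0)$-module $X^-$; applying ${\cdot}^\dagger$ once more to $X=(X^\dagger)^\dagger$ yields $X\simeq((X^+)^\dagger\otimes(X^-)^\dagger)^\dagger$.

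I expect the fussiest point to be not the simplicity argument --- there the Koszul signs are harmless, since they only rescale vectors and never change which subspace gets hit --- but the parity bookkeeping: making the several a priori independent choices of extension of the parity function (for $Cl(a|b)$, for $Cl(a|b)^\dagger$, for the Weyl and Clifford factors, and for $Cl(a|0)$ and $D(b|0)$) fit together so that the three nested occurrences of ${\cdot}^\dagger$ in part (b) come out exactly as stated. Verifying this amounts to tracking how the weight-space twist in the proof of Proposition~\ref{dagger} interacts with the algebra isomorphisms of Lemma~\ref{isomorphism_tensor}.
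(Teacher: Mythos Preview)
Your proposal is correct and follows essentially the same approach as the paper: both arguments construct the tensor product $X^+\otimes (X^-)^\dagger$ (resp.\ the daggered version for $Cl(a|b)$), observe it is simple, and then invoke the uniqueness of a simple preferred weight module with a given weight in its support from Theorem~\ref{thm:weightA}. The paper's proof is much terser---it simply picks $\mu\in\supp X$, asserts that one can choose $X^+$ and $X^-$ with $\mu\in\supp(X^+\otimes(X^-)^\dagger)$, declares the tensor product simple ``from the construction,'' and appeals to Theorem~\ref{thm:weightA}---whereas you spell out the simplicity argument via multiplicity-freeness and pure tensors, and you are more careful about the parity bookkeeping, which the paper leaves implicit.
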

 \begin{proof} We prove (a) since (b) is similar. For any weight  $\mu\in\supp X$ we can choose  a simple preferred weight $D(a|0)$-module $X^+$  and a simple preferred weight $Cl(b|0)$-module $X^-$ so that $\mu\in \supp \left( X^+\otimes (X^-)^\dagger \right)$.  Moreover, it is clear from the construction that the module $X^+\otimes (X^-)^\dagger$ is simple.
 Therefore  Theorem \ref{thm:weightA} implies the claim. {\color{red}} 
    \end{proof}

    \subsection{Categories of weight modules over Clifford and  Weyl   algebras}

    Let $\cW_{A}$ denote the category of preferred weight $A$-modules. To study the category  of all weight $A$-modules, it suffices to study
    the category  $\cW_{A}$. Indeed,  since every weight $A$-module decomposes canonically as $X_1 \oplus \Pi X_2$ where $X_1$ and $X_2$ are preferred,  the morphisms in the category of all weight $A$-modules are recovered by the morphisms in the category $\cW_{A}$ (the latter morphisms necessarily preserve the $\mathbb Z_2$-grading).

  Recall the $A$-module $P(\mu)$ introduced in  the proof of Theorem \ref{thm:weightA}.

  \begin{lemma}\label{pims} The $A$-module $P(\mu)$ is an indecomposable projective object in the category $\mathcal W_A$.
   The category  $\mathcal W_A$ has enough projectives.    
  \end{lemma}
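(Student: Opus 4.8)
The plan is to verify directly that $P(\mu) = A \otimes_{H_A} \left( \Pi^{p(\mu)} \CC_\mu \right)$ is projective in $\cW_A$, and then to produce enough such objects to cover an arbitrary weight module. For projectivity, the key point is Frobenius reciprocity: for any weight $A$-module $Y$, one has a natural isomorphism $\Hom_A(P(\mu), Y) \simeq \Hom_{H_A}(\Pi^{p(\mu)}\CC_\mu, \Res^A_{H_A} Y)$. Since $\Res^A_{H_A} Y$ is semisimple as an $H_A$-module (this is exactly the defining property of a weight module, together with Lemma \ref{rootsA}(a)--(c) which forces every simple $H_A$-module to be one-dimensional of the form $\CC_\nu$), the right-hand side is simply $\Hom_{H_A}(\Pi^{p(\mu)}\CC_\mu, (Y^\mu)_{\text{appropriate parity}})$, i.e. a direct sum of copies of the ground field indexed by a basis of the relevant homogeneous component of the weight space $Y^\mu$. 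The functor $Y \mapsto Y^\mu$ is exact on $\cW_A$ because taking a fixed $H_A$-isotypic component of a semisimple $H_A$-module is exact, so $\Hom_A(P(\mu), -)$ is exact on $\cW_A$; hence $P(\mu)$ is projective there. One should double-check that this restriction-of-scalars adjunction stays within the category of weight modules: $A \otimes_{H_A} \CC_\mu$ is a weight module because $A = \bigoplus_{\alpha} A^\alpha$ decomposes under the adjoint $\gh_A$-action, so $P(\mu) = \bigoplus_\alpha A^\alpha \otimes_{H_A} \CC_\mu$ with $A^\alpha \otimes_{H_A} \CC_\mu \subseteq P(\mu)^{\mu + \alpha}$ by Lemma \ref{rootsA}(d); and one fixes the parity by the chosen twist $\Pi^{p(\mu)}$ so that $P(\mu)$ is preferred.

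For indecomposability, I would argue as in the proof of Theorem \ref{thm:weightA}: the weight space $P(\mu)^\mu$ is one-dimensional (each weight space of $P(\mu)$ is a cyclic $H_A$-module, hence at most one-dimensional, and it is nonzero for $\mu$ since $\Hom_A(P(\mu), F(\mu)) \ne 0$ by Frobenius reciprocity with $\mu \in \supp F(\mu)$), and $P(\mu)^\mu$ generates $P(\mu)$. A decomposition $P(\mu) = M_1 \oplus M_2$ into nonzero submodules would split $P(\mu)^\mu = M_1^\mu \oplus M_2^\mu$, forcing one summand to have zero $\mu$-weight space; but a submodule generated outside its $\mu$-component cannot contain the cyclic generator, so that summand would have to be zero. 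Alternatively, invoke that $P(\mu)$ has a unique maximal submodule $N$ (the sum of all submodules with vanishing $\mu$-weight space), as established inside the proof of Theorem \ref{thm:weightA}; a projective with simple top is indecomposable.

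For "enough projectives": given any $Y \in \cW_A$ and a decomposition into weight spaces $Y = \bigoplus_{\mu \in \supp Y} Y^\mu$, pick for each $\mu$ a basis of the homogeneous part of $Y^\mu$ of the parity dictated by $p(\mu)$ (this is all of $Y^\mu$ if $Y$ is preferred), and form $\bigoplus_\mu P(\mu)^{\oplus \dim Y^\mu}$. The evaluation map to $Y$, adjoint to the inclusions $\CC_\mu \hookrightarrow Y^\mu$, is surjective because its image is a submodule containing every weight space $Y^\mu$, hence all of $Y$. A possibly infinite direct sum of projectives is projective, so this does the job. The only genuine subtlety I anticipate is bookkeeping with the parity-change twists $\Pi^{p(\mu)}$ — one must be careful that the adjunction, the weight-space functor, and the definition of "preferred" are all compatible, so that $P(\mu)$ genuinely lies in $\cW_A$ and the covering map is a morphism of $\cW_A$ (i.e. parity-preserving). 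Everything else is the standard pattern for a category of weight modules over an associative algebra with a large commutative subalgebra acting semisimply.
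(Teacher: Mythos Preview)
Your proof is correct and follows essentially the same approach as the paper: projectivity via Frobenius reciprocity identifying $\Hom_A(P(\mu),Y)$ with the weight space $Y^\mu$, indecomposability via the unique maximal proper submodule of $P(\mu)$ (established in the proof of Theorem~\ref{thm:weightA}), and enough projectives via the surjection from $\bigoplus_{\mu\in\supp Y} P(\mu)\otimes Y^\mu$. Your version is somewhat more explicit about verifying that $P(\mu)$ is itself a preferred weight module and about the parity bookkeeping, but these are details the paper takes for granted rather than genuine differences in strategy.
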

  \begin{proof} By Frobenius reciprocity we have
    $$\Hom_A(P(\mu),X)\simeq\Hom_{\gh_A}(\Pi^{p(\mu)}\CC_\mu,X)\simeq X^\mu$$
for any preferred module $X$ in $\mathcal W_A$.     This implies the projectivity of $P(\mu)$. The indecomposability of  $P(\mu)$ follows from the fact that $P(\mu)$ has a unique maximal proper submodule.

    Noting that any $X\in \cW_{A}$ is a quotient of $\bigoplus_{\mu\in\supp X}P(\mu)\otimes X^\mu$, we see that  $\mathcal W_A$ has enough projectives.

    \end{proof}
      
      We introduce the following equivalence relation on the set of weights $\gh^{\vee}_A$:
 $\mu\sim\nu \Longleftrightarrow \mu\in\nu+Q_A$. Note that  the relation $\sim$ is weaker than the relation $\approx$, i.e.,   $\mu \approx\nu$ implies $\mu\sim\nu$.  Let $\Gamma$ denote a $\sim$-equivalence class in $\gh^{\vee}_A$, and let $\cW_A^\Gamma$ be the full subcategory of $\cW_A$ with objects $X$ satisfying $\supp X\subset\Gamma$.
 Since the support of every indecomposable weight $A$-module $X$ belongs to $\Gamma$ for some class $\Gamma$, we have a decomposition
 $$\cW_A=\prod \cW_A^\Gamma.$$

        \begin{proposition}\label{blocks} The subcategories $\cW_A^\Gamma$ are blocks of $\cW_A$. 
 \end{proposition}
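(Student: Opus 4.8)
The plan is to show two things: first, that $\cW_A^\Gamma$ is indecomposable as an abelian category (i.e.\ cannot be written as a product of two nonzero subcategories), and second, that it is ``closed'' in the sense required of a block — that the decomposition $\cW_A = \prod_\Gamma \cW_A^\Gamma$ is the finest such decomposition into subcategories closed under subquotients and extensions. The second point is essentially the display immediately preceding the statement together with the observation that each $\cW_A^\Gamma$ is closed under subquotients (obvious, since taking a subquotient only shrinks the support) and under extensions (if $0\to X'\to X\to X''\to 0$ with $\supp X', \supp X''\subset\Gamma$, then $\supp X\subset\Gamma$ as well). So the real content is indecomposability of each $\cW_A^\Gamma$.

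For indecomposability, the key step is to prove that the ``linking'' relation on simple objects of $\cW_A^\Gamma$ — namely $X(\mu)$ is linked to $X(\nu)$ if $\Ext^1(X(\mu),X(\nu))\neq 0$ or $\Ext^1(X(\nu),X(\mu))\neq 0$ — has $\Gamma/{\approx}$ as a single equivalence class. Recall from Theorem~\ref{thm:weightA} that the simple objects of $\cW_A^\Gamma$ are exactly the $X(\mu)$ for $\mu\in\Gamma$, with $X(\mu)\cong X(\nu)$ iff $\mu\approx\nu$; thus the simples of $\cW_A^\Gamma$ are parametrized by the finite (for $a,b<\infty$) or countable set $\Gamma/{\approx}$. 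I would first handle the two ``difference generators'' of $\approx$-classes inside one $\sim$-class: for $A=D(a|b)$ and an index $i>0$, the classes of $\mu$ and of $\mu'$, where $\mu'$ agrees with $\mu$ except that some $\mu_i\in\mathbb Z_{\geq 0}$ is replaced by $\mu_i'\in\mathbb Z_{<0}$ (or more simply, the two classes distinguished by whether $\mu_i$ lies in $\mathbb Z_{\geq 0}$), are linked. This is visible from the structure of the projective $P(\mu)$ and of $F(\mu)$: in the proof of Lemma~\ref{indecomposable} one sees that $F(\mu)$ is a length-two (or longer) indecomposable whose socle and top lie in different $\approx$-classes precisely when some $\mu_i\in\mathbb Z_{\geq 0}$, so its radical layers exhibit a nonsplit extension between $X(\tilde\mu)$ and a simple supported on a neighboring $\approx$-class. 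For $i<0$ (resp.\ the $\xi$-variables for $Cl$) there is no such splitting, consistent with $\approx$ already collapsing those directions. Since any two $\approx$-classes inside $\Gamma$ differ by finitely many such ``integer/non-integer'' toggles (that is the content of the definition of $\approx$ versus $\sim$), transitivity of linking gives connectivity of $\Gamma/{\approx}$.

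The main obstacle I expect is the bookkeeping in producing an explicit nonsplit self-extension or cross-extension for a single toggle — one must extract it cleanly from $P(\mu)$ or $F(\mu)$ rather than computing $\Ext^1$ from scratch. The cleanest route is: take $\mu$ with exactly one index $i$ having $\mu_i\in\mathbb Z_{\geq 0}$ and all else non-integer, so $F(\mu)$ has length two with socle $X(\tilde\mu)$ and top $X(\mu)$ (where $\tilde\mu$ has $\tilde\mu_i$ replaced by a non-integer representative, landing in the other $\approx$-class), and $F(\mu)$ is the desired nonsplit extension by Lemma~\ref{indecomposable}; two simples that are not linked have, as in any decomposition of $\cW_A$, $\Ext^1$ vanishing in both directions and no nonzero morphisms, forcing them into different factors, whereas linked simples must lie in the same factor — so connectivity of the linking graph on $\Gamma/{\approx}$ yields indecomposability of $\cW_A^\Gamma$. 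Finally, combining indecomposability with the product decomposition $\cW_A=\prod_\Gamma\cW_A^\Gamma$ and the subquotient/extension-closedness noted above shows the $\cW_A^\Gamma$ are precisely the blocks.
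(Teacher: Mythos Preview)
Your approach is correct in outline but takes a much longer route than necessary, and there are a couple of inaccuracies in the details that would need repair.

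The paper's proof is a one-liner: for any $\mu\in\Gamma$, the module $F(\mu)$ is indecomposable (Lemma~\ref{indecomposable}) and has support equal to all of $\Gamma$ (since $F^+(\mu)=\mathbb C[x_i^{\pm1}]x^\mu$ has support $\mu_+ + \bigoplus_{i>0}\mathbb Z\zeta_i$ and $F^-(\mu)$ has full support on the negative side). Because $F(\mu)$ is multiplicity free, \emph{every} simple $X(\nu)$ with $\nu\in\Gamma$ occurs as a composition factor of this single indecomposable module, so they all lie in the same block. There is no need to connect $\approx$-classes one toggle at a time via explicit $\Ext^1$ computations.

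Two specific issues with your version. First, you write ``take $\mu$ with exactly one index $i$ having $\mu_i\in\mathbb Z_{\geq 0}$ and all else non-integer'' --- but you are not free to choose this: which coordinates are integers is an invariant of the $\sim$-class $\Gamma$, so if $\Gamma$ has several integer indices you cannot arrange the others to be non-integer. The fix is to take $\mu\in\Gamma$ with $\mu_i=-1$ at the toggle index and $\mu_j=0$ at all other integer indices $j>0$; then $\tilde\mu$ and $\mu$ differ by exactly the one toggle at $i$. Second, your parenthetical description of $\tilde\mu$ (``replaced by a non-integer representative'') is not what the paper's $\tilde\mu$ does: the paper sets $\tilde\mu_i=0$ for each integer $\mu_i$ with $i>0$, landing in the $\approx$-class where all such coordinates are nonnegative. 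With these corrections your toggle-by-toggle linking argument would go through, but it is subsumed by the observation that $F(\mu)$ already contains all the simples of $\cW_A^\Gamma$ at once.
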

 \begin{proof} If $X$ and $X'$ are two simple weight $A$-modules from $\cW_A$ satisfying  $\mu\sim\nu$ for some 
  $\mu\in \supp X$ and
   $\nu\in\supp X'$, then the modules $X$ and $X'$  occur as simple constituents in the $A$-module $F(\mu)$.  We know from Lemma \ref{indecomposable} that  $F(\mu)$ is a preferred indecomposable module. This implies the assertion.
   \end{proof}

   \begin{lemma}\label{semisimplicity} If $A=Cl(a|0)$ or $A=D(0|b)$ then $\cW_A$ is a semisimple category.
   \end{lemma}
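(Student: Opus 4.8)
The plan is to show that every projective object $P(\mu)$ of $\mathcal W_A$ is \emph{simple}. Granting this, the lemma follows formally: by Lemma \ref{pims} the category $\mathcal W_A$ has enough projectives and every object is a quotient of a direct sum of copies of the modules $P(\mu)$; since in an abelian category with arbitrary direct sums a quotient of a direct sum of simple objects is semisimple, every object of $\mathcal W_A$ is then semisimple. (Here one uses that sub- and quotient objects of preferred weight modules are again preferred weight modules, which is immediate from Lemma \ref{rootsA}: $H_A$ is generated by commuting idempotents, so $H_A$-semisimplicity passes to submodules and quotients, and the passage from a weight module to its preferred summand is functorial.)

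To prove $P(\mu)$ simple I would compare supports with its simple quotient $X(\mu)=P(\mu)/N$ from the proof of Theorem \ref{thm:weightA}. On one side, $X(\mu)$ being a quotient of $P(\mu)$ gives $\supp X(\mu)\subseteq\supp P(\mu)$. On the other side, $P(\mu)$ is a weight module, so $\supp P(\mu)\subseteq\gh^{\vee}_A$, and every weight of $P(\mu)$ has the form $\mu+\alpha$ with $\alpha\in R_A\sqcup\{0\}\subseteq Q_A$; hence $\supp P(\mu)\subseteq\{\nu\in\gh^{\vee}_A\mid\nu-\mu\in Q_A\}$. The decisive observation is that for $A=Cl(a|0)$ and for $A=D(0|b)$ every coordinate of every element of $\gh^{\vee}_A$ lies in $\{0,1\}\subseteq\ZZ_{\geq 0}$, so on $\gh^{\vee}_A$ the relation $\approx$ coincides with the relation $\sim$. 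Consequently
$$\{\nu\in\gh^{\vee}_A\mid\nu-\mu\in Q_A\}=\{\nu\in\gh^{\vee}_A\mid\nu\approx\mu\}=\supp X(\mu)$$
by Theorem \ref{thm:weightA}(c), and the two inclusions combine to $\supp P(\mu)=\supp X(\mu)$.

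Both $P(\mu)$ and $X(\mu)$ are multiplicity free (the former by the argument in the proof of Theorem \ref{thm:weightA}, the latter by Theorem \ref{thm:weightA}(a)) and now have the same support; comparing $X(\mu)=P(\mu)/N$ weight space by weight space forces $N^\nu=0$ for every $\nu$, hence $N=0$ and $P(\mu)\simeq X(\mu)$ is simple. This finishes the argument.

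The step that requires genuine care is the identification of $\approx$ with $\sim$ on $\gh^{\vee}_A$: this is precisely the feature of the purely fermionic algebras $Cl(a|0)$ and $D(0|b)$ that breaks down for $D(a|0)$ or $Cl(0|b)$, whose weights carry continuous $\CC$-valued parameters and for which $P(\mu)$ is genuinely non-simple. Everything else is either a definition-chase or already available: projectivity, indecomposability and abundance of the $P(\mu)$ (Lemma \ref{pims}), their multiplicity freeness together with the description of $\supp X(\mu)$ (Theorem \ref{thm:weightA}), and the elementary fact that quotients of semisimple objects are semisimple.
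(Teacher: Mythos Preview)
Your proof is correct and follows essentially the same approach as the paper: both hinge on the fact that $P(\mu)$ is simple when $A=Cl(a|0)$ or $A=D(0|b)$, and your support-comparison argument (via $\approx\,=\,\sim$ on $\gh_A^\vee$) makes explicit precisely what the paper states without proof as ``the key observation.'' Your concluding step (every object is a quotient of a direct sum of the simple projectives $P(\mu)$, hence semisimple) is a minor and equally valid variant of the paper's (every indecomposable projective is isomorphic to some $P(\mu)$).
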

   \begin{proof} It suffices to prove that every indecomposable projective module $P \in \cW_A$ is simple. For this, note that $P$ is an object of $\cW_A^\Gamma$ for some $\Gamma$, and let $\mu  \in \gh_A^{\vee}$ belong to $\supp P$. Then $\Hom_{H_A} (\Pi^{p(\mu)}\C_{\mu}, P) \neq 0$ and Frobenius reciprocity yields a nonzero homomorphism $P \to P(\mu) = A\otimes_{H_A}\Pi^{p(\mu)} \mathbb C_\mu$.  The key observation is that under the assumption  $A=Cl(a|0)$ or $A=D(0|b)$,  the $A$-module $P(\mu)$ is simple. This together with the projectivity of  $P(\mu)$  allows us to conclude that $P \simeq P(\mu)$.
     \end{proof}
    
    The following proposition extends Proposition \ref{description-of-simple} to indecomposable modules. 
 \begin{proposition}\label{indecomposable-Cl}

        (a)        If $X$ is an indecomposable  module  from $\cW_{D(a|b)}$, then $X$ is isomorphic to $X^+\otimes (X^-)^\dagger$ for some indecomposable module $D(a|0)$-module
        $X^+$ from $\cW_{D(a|0)}$ and some simple module $X^-$ from $\cW_{Cl(b|0)}$. 
        
        (b) If $X$ is an indecomposable module from  $\cW_{Cl(a|b)}$, then $X$ is isomorphic to $((X^+)^\dagger\otimes (X^-)^\dagger)^\dagger$ for some
        simple module $X^+$ from $\cW_{Cl(a|0)}$
        and some indecomposable 
module $X^-$  from $\cW_{D(b|0)}$. 
      \end{proposition}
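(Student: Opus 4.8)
The plan is to deduce this from Proposition~\ref{description-of-simple} together with the tensor‐product factorizations \eqref{D-dagger} and \eqref{Cl-dagger}, the semisimplicity result of Lemma~\ref{semisimplicity}, and the block decomposition $\cW_A=\prod\cW_A^\Gamma$. I treat part (a); part (b) is analogous via \eqref{Cl-dagger}. Using the isomorphism $D(a|b)\simeq D(a|0)\otimes Cl^\dagger(b|0)$, a preferred weight $D(a|b)$-module is, after passing through the functor ${\cdot}^\dagger$ on the second factor (Proposition~\ref{dagger}), the same thing as a preferred weight module over $D(a|0)\otimes Cl(b|0)$. So it suffices to show: an indecomposable preferred weight module $X$ over a tensor product $B\otimes C$, where $\cW_C$ is semisimple (here $C=Cl(b|0)$, semisimple by Lemma~\ref{semisimplicity}) and where $B=D(a|0)$, must factor as $X^+\otimes X^-$ with $X^+\in\cW_B$ indecomposable and $X^-\in\cW_C$ simple.

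The key steps, in order. First, restrict $X$ to $C$: since $\cW_C$ is semisimple and by Lemma~\ref{Clifford}(a) has (for $b<\infty$) a unique simple object, and in general the simple objects are the $X(\mathbb A)$, the $C$-socle decomposition of $X$ groups its weights; but I want a single simple $C$-type to appear. Here I use the block decomposition: $X$ indecomposable forces $\supp X\subset\Gamma$ for a single $\sim$-class $\Gamma$ of $\gh^\vee_{D(a|b)}$. The weights of $D(a|b)$ split as a "$B$-part'' (coordinates $\mu_i$, $i>0$) and a "$C$-part'' (coordinates $\mu_i$, $i<0$), and the $C$-part is constrained to lie in a single $\approx_C$-class inside $\Gamma$ because the $Q_C$-translates only move the $i<0$ coordinates, while the $i>0$ coordinates are unconstrained integers; more precisely, the $\sim$-class condition on the full weight lattice decouples into the $B$-lattice and the $C$-lattice, so within $\Gamma$ the $C$-components all lie in one $\sim_C$-class, and since $\cW_C$ is semisimple the $C$-restriction of $X$ is $X^-$-isotypic for a single simple $X^-\in\cW_C$. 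Second, set $X^+:=\Hom_C(X^-,X)$, the multiplicity space; because $X^-$ is simple and $\cW_C$ is semisimple, the natural evaluation map $X^-\otimes X^+\to X$ is an isomorphism of $B\otimes C$-modules, where $B$ acts on the $X^+$ factor (this is the standard "outer tensor decomposition over a tensor product of algebras, one factor having a unique simple in the relevant block''). Third, $X^+$ inherits a preferred weight $B$-module structure — its weight spaces are the multiplicity spaces of the corresponding $X$-weight spaces, and preferredness is inherited because the parity function is additive over the $B$- and $C$-parts. Finally, $X^+$ is indecomposable: a nontrivial $B$-module decomposition $X^+=X^+_1\oplus X^+_2$ would tensor up to a nontrivial decomposition $X=(X^-\otimes X^+_1)\oplus(X^-\otimes X^+_2)$, contradicting indecomposability of $X$. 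Undoing the ${\cdot}^\dagger$ twist on the $C$-factor via Proposition~\ref{dagger} converts $X^-$ into a simple module of $\cW_{Cl(b|0)}$ and writes $X\simeq X^+\otimes(X^-)^\dagger$, as desired.

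The main obstacle I expect is the second step: justifying cleanly that an indecomposable (not necessarily simple) preferred weight $B\otimes C$-module, with $C$-restriction isotypic of simple type $X^-$, splits as an outer tensor product $X^-\otimes X^+$. For \emph{simple} modules this is Proposition~\ref{description-of-simple}; extending to indecomposables requires that $\operatorname{End}_C(X^-)=\C$ (true, $X^-$ being an absolutely simple module over an algebra with scalar endomorphisms — here a matrix algebra or a localized Weyl-type algebra) and that $X$ is locally finite and semisimple as a $C$-module, which is exactly the content of $\cW_C$ being semisimple plus $X\in\cW_{B\otimes C}$. One must also check that the $B$-action genuinely lands on the multiplicity space $X^+=\Hom_C(X^-,X)$ as a module in $\cW_B$, i.e.\ that $X^+$ is a \emph{weight} $B$-module (semisimple over $\gh_B$): this follows because the $\gh_{B\otimes C}$-grading on $X$ restricts on each $X^-$-isotypic piece to the $\gh_B$-grading on $X^+$ tensored with the fixed $\gh_C$-grading of $X^-$. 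Once these routine but slightly fiddly module-theoretic points are in place, indecomposability of $X^+$ and the translation back through Proposition~\ref{dagger} are immediate, and the same argument with the roles of $B$ and $C$ interchanged — using that $\cW_{Cl(a|0)}$ is semisimple — gives part (b).
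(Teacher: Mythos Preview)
Your proof is correct and follows essentially the same route as the paper. The paper works directly with $D(0|b)$ rather than passing through $Cl(b|0)$ via ${\cdot}^\dagger$, and it derives isotypicity of the $D(0|b)$-restriction by noting that all simple subquotients of $X$ have the same $D(0|b)$-factor (via Proposition~\ref{description-of-simple}) rather than by your block-decomposition argument; but both arguments amount to the same observation that the restricted support lies in a single $\sim$-class of the semisimple factor, and the paper then defines $X^+:=\Hom_{D(0|b)}((X^-)^\dagger,X)$ exactly as you do.
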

      \begin{proof} Let us prove (a). Set $A = D(a|b)$. The indecomposability of  $X$ implies $\supp X\subset\mu+Q_A$ for some $\mu \in \gh_A^{\vee}$. If
        $S$ and $S'$ are simple subquotients of $X$ then $\supp S\subset \supp S'+Q_A$, and therefore $S$ and $S'$ have the same support when restricted to
        $D(0|b)$. This, together with Proposition \ref{description-of-simple}(a),
        implies the existence of isomorphisms $S\simeq Y\otimes (X^-)^\dagger$ and $S'\simeq Z\otimes (X^-)^\dagger$ for some simple   module $X^-\in\cW_{Cl(b|0)}$ and some simple modules   $Y,Z\in\cW_{D(a|0)}$.  Moreover, according to Lemma \ref{semisimplicity},  the restriction of $X$ to $D(0|b)$ is a semisimple $D(0|b)$-module. Hence this restriction is isomorphic to an isotypic component of the simple $D(0|b)$-module $(X^-)^\dagger$. This allows us to conclude that the map 
        $$\Hom_{D(0|b)}((X^-)^\dagger,X) \otimes (X^-)^\dagger \to X$$
        is an isomorphism. 
        
        Therefore we can set $X^+ :=\Hom_{D(0|b)}((X^-)^\dagger,X)$. Finally, the  indecomposability of $X$ implies the indecomposability of $X^+$.

        The proof of (b) is similar, but instead of Proposition \ref{description-of-simple}(a) one uses Proposition \ref{description-of-simple}(b). 
      \end{proof}
      
 \begin{corollary}\label{blocks-Cl}
            (a) If $b<\infty$ then the category $\cW_{D(a|b)}$
is equivalent to the category $\cW_{D(a|0)}$.
The category $\cW_{D(a|\infty)}$ decomposes into a direct product of subcategories
   $\mathcal W_{[\mathbb A]}$ where $[{\mathbb A}]$ runs over equivalence classes of subsets of $\mathbb Z_{>0}$ as in Lemma \ref{Clifford},   and each subcategory $\mathcal W_{[\mathbb A]}$ is equivalent to the category $\cW_{D(a|0)}$.
   
     (b)    If $a<\infty$ then the category $\cW_{Cl(a|b)}$ is equivalent to the category  $\cW_{D(b|0)}$.
The category $\cW_{Cl(\infty | b)}$ decomposes into a direct product of subcategories
   $\mathcal W_{[\mathbb A]}$ where $[{\mathbb A}]$ runs over  equivalence classes of subsets of $\mathbb Z_{>0}$  as in Lemma \ref{Clifford},  and each subcategory $\mathcal W_{[\mathbb A]}$ is equivalent to the category $\cW_{D(b|0)}$. 
   
     (c) Every block of $\cW_{D(a|b)}$ and of $\cW_{Cl(a|b)}$ is equivalent to the block $\cW^\Gamma_{D(c|0)}$ of $\cW_{D(c|0)}$
     for some $c\leq\infty$  and  $\Gamma=Q_{D(c|0)}$ .

   (d) Two blocks $\mathfrak B_1$ and $\mathfrak B_2$ of $\cW_{D(c|0)}$
   are equivalent if and only if
     $c(\mathfrak B_1)=c(\mathfrak B_2)$ where $c(\mathfrak B)$ denotes the cardinality of the set of
   isomorphism classes of simple objects in $\mathfrak B$.
    
 \end{corollary}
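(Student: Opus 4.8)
\noindent\emph{Proof proposal.} The plan is to bootstrap everything from Proposition~\ref{indecomposable-Cl}, Lemma~\ref{Clifford} and Theorem~\ref{thm:weightA}, reducing step by step to principal blocks of Weyl algebras $D(c|0)$. For (a) with $b<\infty$: by Lemma~\ref{Clifford}(a) the category $\cW_{Cl(b|0)}$ is semisimple with a single simple object $X(\emptyset)$, and $\End X(\emptyset)=\C$ since $X(\emptyset)$ is multiplicity free. Proposition~\ref{indecomposable-Cl}(a) then shows that every indecomposable object of $\cW_{D(a|b)}$ equals $X^+\otimes (X(\emptyset))^\dagger$ for some $X^+\in\cW_{D(a|0)}$, and using the factorisation $D(a|b)\simeq D(a|0)\otimes Cl^\dagger(b|0)$ of (\ref{D-dagger}) one checks that $X^+\mapsto X^+\otimes (X(\emptyset))^\dagger$ is fully faithful, because the Hom-space over $D(a|b)$ factors as $\Hom_{D(a|0)}(-,-)\otimes\End_{Cl^\dagger(b|0)}((X(\emptyset))^\dagger)=\Hom_{D(a|0)}(-,-)$; hence it is an equivalence $\cW_{D(a|0)}\simeq\cW_{D(a|b)}$. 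For $b=\infty$ the only difference is that Lemma~\ref{Clifford}(b),(c) supplies simple objects $X(\mathbb A)$ of $\cW_{Cl(\infty|0)}$ indexed by the classes $[\mathbb A]$; Proposition~\ref{indecomposable-Cl}(a) attaches to each indecomposable object of $\cW_{D(a|\infty)}$ a single such class, and modules belonging to different classes have supports in different $\sim$-classes, hence by Proposition~\ref{blocks} lie in different blocks. This yields $\cW_{D(a|\infty)}=\prod_{[\mathbb A]}\mathcal W_{[\mathbb A]}$ with $\mathcal W_{[\mathbb A]}\simeq\cW_{D(a|0)}$ as above. Part~(b) is entirely parallel, using Proposition~\ref{indecomposable-Cl}(b) and the factorisation (\ref{Cl-dagger}), with $\cW_{Cl(a|0)}$ and $\cW_{D(b|0)}$ in the roles of $\cW_{Cl(b|0)}$ and $\cW_{D(a|0)}$.

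For (c): by (a) and (b) it is enough to show every block $\cW^\Gamma_{D(c|0)}$ ($c\leq\infty$) is equivalent to $\cW^{Q_{D(c'|0)}}_{D(c'|0)}$ for some $c'\leq\infty$. Write $\Gamma=\mu+Q_{D(c|0)}$, put $I=\{i:\mu_i\notin\Z\}$, $J=\{i:\mu_i\in\Z\}$ with Weyl subalgebras $D(I),D(J)\subseteq D(c|0)$. For $\nu\in\mu|_I+Q_I$ the module $F(\nu)$ is simple by Lemma~\ref{indecomposable}, with one-dimensional weight spaces and support $\nu+Q_I$; since $P(\nu)$ is multiplicity free, surjects onto $X(\nu)=F(\nu)$, and has support inside $\nu+Q_I$, this surjection is an isomorphism, so $P(\nu)$ is simple and $\cW^{\mu|_I+Q_I}_{D(I)}$ is semisimple with a unique simple object $Y_I$. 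Hence the restriction to $D(I)$ of any object of $\cW^\Gamma_{D(c|0)}$ is $Y_I$-isotypic, so, arguing as in Proposition~\ref{indecomposable-Cl}, the functors $X\mapsto \Hom_{D(I)}(Y_I,X)$ and $M\mapsto Y_I\otimes M$ give an equivalence $\cW^\Gamma_{D(c|0)}\simeq \cW^{\mu|_J+Q_J}_{D(J)}$, where $\mu|_J$ has only integral coordinates. If $|J|<\infty$ then $\mu|_J\in Q_J$, so $\mu|_J+Q_J=Q_{D(|J||0)}$ and we are done with $c'=|J|$. The remaining case $|J|=\infty$ is handled together with the only nontrivial part of (d).

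For (d): the ``only if'' holds because an equivalence gives a bijection on isomorphism classes of simples. For ``if'', by the reduction in (c) we may take each $\mathfrak B_i=\cW^{\Gamma_i}_{D(c_i|0)}$ with $\Gamma_i$ an integral $\sim$-class $\mu^{(i)}+Q$. Inside such a block the simple modules $X(\nu)$, $\nu\in\Gamma_i$, correspond up to $\approx$ to the sets $\{j:\nu_j\geq 0\}$, which range over the subsets of $\{1,\dots,c_i\}$ differing from $\{j:\mu^{(i)}_j\geq 0\}$ in finitely many places; hence $c(\mathfrak B_i)$ is $2^{c_i}$ if $c_i<\infty$ and $\aleph_0$ if $c_i=\infty$, in either case independent of $\mu^{(i)}$. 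So $c(\mathfrak B_1)=c(\mathfrak B_2)$ forces $c_1=c_2=:c'$. For $c'<\infty$ the only integral $\sim$-class is $Q$ itself, which settles (d) and the case $|J|<\infty$ of (c).

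What remains, and what I expect to be the main obstacle, is the single statement $(\ast)$: \emph{for $c'=\infty$, any two integral $\sim$-class blocks of $\cW_{D(\infty|0)}$ are equivalent, hence equivalent to $\cW^{Q_{D(\infty|0)}}_{D(\infty|0)}$.} The plan for $(\ast)$ is to exploit $D(\infty|0)=\varinjlim_n D(n|0)$: restriction to $D(n|0)$ carries an integral $\sim$-class block of $\cW_{D(\infty|0)}$ into the principal block of $\cW_{D(n|0)}$ (the truncation $\mu|_{\{1,\dots,n\}}$ being integral, hence in $Q_{D(n|0)}$), independently of $\mu$, and then to realise the block of $\cW_{D(\infty|0)}$ as the corresponding limit of these finite-rank principal blocks --- equivalently, to show that the endomorphism algebra of a projective generator is the same restricted tensor power of the endomorphism algebra of a projective generator of the unique integral $\sim$-class block of $\cW_{D(1|0)}$, which carries no intrinsic basepoint. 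The subtlety is that such blocks --- for instance $\cW^{Q}$ and $\cW^{(1,1,1,\dots)+Q}$ --- have supports that are affinely translated apart and are \emph{not} related by any automorphism of $D(\infty|0)$ (the translation $u_i\mapsto u_i+\mu_i$ does not lift, and automorphism twists reach only finitely many $\sim$-classes), so the equivalence cannot come from a twist and must instead be squeezed out of the direct-limit picture; controlling precisely how the finite-rank principal blocks assemble is the crux of the whole corollary.
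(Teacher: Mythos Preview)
Your treatment of (a) and (b) is essentially identical to the paper's: both use Proposition~\ref{indecomposable-Cl} together with Lemma~\ref{Clifford} to show that $\cW_{D(a|b)}$ (resp.\ $\cW_{Cl(a|b)}$) decomposes as a product indexed by the simple preferred $Cl(b|0)$-modules (resp.\ $Cl(a|0)$-modules), each factor being equivalent to $\cW_{D(a|0)}$ (resp.\ $\cW_{D(b|0)}$) via $X^+\mapsto X^+\otimes (X^-)^\dagger$ and its adjoint $\Hom_{D(0|b)}((X^-)^\dagger,\,\cdot\,)$.

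For (c) and (d) the paper does something different from what you attempt: it does \emph{not} give a self-contained argument at all, but simply invokes the classification of blocks of $\cW_{D(c|0)}$ carried out in \cite{GrS} for $c<\infty$ and in \cite{FGM} for $c=\infty$. Your reduction---splitting off the non-integral coordinates as a semisimple single-simple factor and thereby landing in an integral block of some $D(c'|0)$---is correct and is precisely the kind of argument underlying those references; your counting of simples in an integral block is also right. The statement you isolate as $(\ast)$, that any two integral $\sim$-class blocks of $\cW_{D(\infty|0)}$ are equivalent, is exactly the content imported from \cite{FGM}, and you are right that it cannot be obtained by an automorphism twist of $D(\infty|0)$. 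So there is no error in your proposal, but you have correctly identified a genuine gap that the paper closes only by citation; the direct-limit strategy you sketch is in the spirit of \cite{FGM}, but carrying it out is a separate piece of work that the present paper does not reproduce.
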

\begin{proof} Again we  prove just (a) since (b) is similar.  Let $X^-$ be a preferred simple $Cl(b|0)$-module and $\mathcal W_A(X^-)$ be the full subcategory of $\mathcal W_A$ with objects of
     the form $X^+\otimes  (X^-)^\dagger$ for preferred weight $D(a|0)$-modules $X^+$. It follows from Proposition \ref{indecomposable-Cl} that ${\mathcal W}_A$ is the direct product of its subcategories
     ${\mathcal W}_A (X^-)$ where $X^-$ runs over the set of isomorphism classes of $Cl(b|0)$-modules.  Each category $\mathcal W_A(X^-)$ is equivalent to the category of preferred weight
     $D(a|0)$-modules via the functors $\cdot\otimes (X^-)^\dagger$ and $\Hom_{D(0|b)}((X^-)^\dagger,\cdot)$. If $b<\infty$,  there is a single isomorphism class to which  $X^-$ belongs. If $b=\infty$,
     the isomorphism classes of modules in $\mathcal W_{Cl(b|0)}$
     are enumerated by the equivalence classes of subsets of $\mathbb Z_{>0}$ from Lemma \ref{Clifford}.
     
       Parts  (c) and (d) follow from parts (a) and (b) and from the classification of  blocks in $\cW_{D(c|0)}$ for $c < \infty$ in \cite{GrS}, and in $\cW_{D(\infty|0)}$ in \cite{FGM}.
        \end{proof}

  We conclude this section by a structural result on indecomposable weight $A$-modules with finite-dimensional weight spaces.  
                \begin{theorem}\label{filtration} Any indecomposable  $A$-module $X$ in $\mathcal W_A$ with finite-dimensional weight spaces has a strict $A$-module filtration $X =\cup_{n\in R}X_n$ (i.e.,  $X_n \subsetneq X_m$ for  $n <m$)  for some interval $R$ in $\mathbb Z$, satisfying $\cap_{n\in R}X_n = \left\{0 \right\}$ and such that  $X_n/X_{n-1}$ is a simple $A$-module for any $n,n-1 \in R$. 
                \end{theorem}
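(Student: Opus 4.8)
The plan is to use the block decomposition of $\cW_A$ to reduce to the case $A=D(c|0)$ with $c\le\infty$ and $X$ in the principal block, and then to handle $c<\infty$ and $c=\infty$ separately. Since $X$ is indecomposable, $\supp X$ lies in a single $\sim$-equivalence class $\Gamma$, so $X$ belongs to the block $\cW_A^\Gamma$ (using $\cW_A=\prod_\Gamma\cW_A^\Gamma$). By Corollary \ref{blocks-Cl}(c), $\cW_A^\Gamma$ is equivalent as an abelian category to the principal block $\cW_{D(c|0)}^{Q_{D(c|0)}}$ for some $c\le\infty$. This equivalence is assembled (as in the proofs of Propositions \ref{dagger} and \ref{indecomposable-Cl} and Corollary \ref{blocks-Cl}) from parity changes and from tensoring with a fixed \emph{simple}, hence multiplicity free, module over a subalgebra involving a disjoint set of variables; consequently it carries each weight space isomorphically onto a single weight space, so it preserves indecomposability and the finiteness of all weight spaces. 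Thus it suffices to prove the theorem for $A=D(c|0)$ with all weights of $X$ lying in $Q_A$, i.e.\ integral.

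Assume first $c<\infty$. Then the principal block of $D(c|0)$ has only finitely many simple objects (Corollary \ref{blocks-Cl}(d)), each multiplicity free (Theorem \ref{thm:weightA}(a)). If $Y$ is simple and $\mu\in\supp Y$, then every finite chain of submodules of $X$ all of whose successive quotients are isomorphic to $Y$ has at most $\dim X^\mu<\infty$ terms, because taking $\mu$-weight spaces is exact and each such step strictly enlarges $X^\mu$. Hence every simple occurs with finite multiplicity in any finite filtration of $X$, and as there are finitely many simples, $X$ has finite length; any composition series of $X$ then is a filtration as required, with $R$ a finite interval.

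Now assume $c=\infty$; by Corollary \ref{blocks-Cl}(d) we may take $A=D(\infty|0)$ with $X$ in the principal block, whose simple objects $X(F)$ are indexed by the finite subsets $F\subset\mathbb Z_{>0}$, with $\supp X(F)=\{a\in Q_A:a_i<0\iff i\in F\}$. The multiplicity bound of the previous paragraph still applies, so every $X(F)$ occurs in $X$ with finite multiplicity and $X$ has at most countably many composition factors, though it may now have infinite length. I would first check, using that $\operatorname{Ext}^1(X(F),X(F'))$ is nonzero only when $|F\triangle F'|=1$ together with the iterated-extension behaviour of these simples, that $X$ is multiplicity free, so that a submodule of $X$ is determined by its support; and that the modules $F(\mu)$ of Lemma \ref{indecomposable}, out of which every indecomposable is built, have the property that the submodule generated by a single weight vector has finite length, while the order "lies below in $F(\mu)$" on composition factors is locally finite below. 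With these facts, one enumerates the composition factors $X(G_1),X(G_2),\dots$ of $X$ in an order refining that partial order (possible since the factor poset is countable and each element has only finitely many predecessors), lets $X_n$ be the submodule supported on $\bigcup_{j\le n}\supp X(G_j)$, and verifies that each $X_n$ is a submodule, that $X_n/X_{n-1}\cong X(G_n)$ is simple, and that $\bigcup_nX_n=X$ and $\bigcap_nX_n=0$; if $\soc X=0$ the construction is run downward, giving an interval unbounded below, and in the remaining mixed case $R=\mathbb Z$. Alternatively, the needed structural input for $\cW_{D(\infty|0)}$ can be drawn from \cite{FGM}.

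The step I expect to be the main obstacle is precisely this $c=\infty$ case: showing that the composition factors of an indecomposable $X$ with finite-dimensional weight spaces can be organized into a single $\mathbb Z$-indexed tower rather than one of larger ordinal type — equivalently, that its socle (or radical) series exhausts it and that the relevant factor poset is locally finite. This is where the finiteness of the weight spaces, not merely the finiteness of each composition multiplicity, must be used essentially, through the explicit structure of the modules $F(\mu)$ and the weight-space bookkeeping above.
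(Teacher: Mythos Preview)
Your reduction to $A=D(c|0)$ in the principal block via Corollary~\ref{blocks-Cl}, and your treatment of the case $c<\infty$, are correct and match the paper. The gap is entirely in the $c=\infty$ case, where your argument rests on two claims that are not justified.

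First, the assertion that an indecomposable $X$ with finite-dimensional weight spaces must be multiplicity free is not proven---you yourself write ``I would first check''---and the $\Ext^1$ vanishing you cite does not by itself preclude iterated extensions in which a simple factor repeats. The paper never makes or needs this claim: it works with an arbitrary basis $\{v_i^{\mathbb A}\}$ of each weight space $X^{\zeta_{\mathbb A}}$, allowing $\dim X^{\zeta_{\mathbb A}}>1$.

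Second, even granting multiplicity freeness, your partial order on composition factors cannot in general satisfy ``every element has finitely many predecessors''. For $X=P(0)$ (indecomposable, multiplicity free, finite weight multiplicities, and $\soc P(0)=0$), the factor poset is the set of finite subsets of $\mathbb Z_{>0}$ ordered by reverse inclusion, so every element has infinitely many predecessors. Your fix ``run the construction downward'' handles $P(0)$, but the paper's own Example~(c) after the theorem, $X=F(\mu)\otimes P(\mu)$, has neither a simple submodule nor a simple quotient; hence neither orientation of your topological-sort argument produces a $\mathbb Z$-indexed chain. A poset that is locally finite in only one direction cannot be linearized to an interval of $\mathbb Z$ in the way you need.

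The paper's proof sidesteps both issues by working not with composition factors but with the poset $\mathcal X$ of \emph{cyclic submodules} $Au$ generated by weight vectors at weights $\zeta_{\mathbb A}$. The key step is a direct computation showing that every \emph{interval} $[Av,Aw]$ in $\mathcal X$ is finite: writing $v=d\prod_{i\in I}x_i\prod_{j\in J}\partial_j\,w$, any intermediate $Au$ corresponds to a splitting $I=I'\sqcup I''$, $J=J'\sqcup J''$, and there are only finitely many such. Interval-finiteness is a \emph{symmetric} condition (unlike ``finitely many predecessors''), and it is precisely what allows one to extend the inclusion order on the countable set $\mathcal X$ to a total order isomorphic to an interval of $\mathbb Z$ via a Szpilrajn-type argument; the required filtration is then $X_n:=\sum_{i<n}Y_i$. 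This is the idea your sketch is missing.
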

                \begin{proof} Due to Corollary \ref{blocks-Cl} we can reduce this statement to the case $A=D(a|0)$. If $a$ is finite then $X$  has finite length and the statement
                  is trivial. For any $a$, a preferred simple weight $D(a|0)$-module is  determined up to isomorphism by its support. Therefore, if  $X$ belongs to a block $\mathfrak B$ with $c(\mathfrak B)<\infty$, the statement is trivial since $X$ necessarily has finite length. 
                  
             We can thus assume that $X$ belongs to a block $\mathfrak B$ with $c(\mathfrak B)=\infty$, and by {Corollary  \ref{blocks-Cl} (c)} we can assume further that
                  $\Gamma=Q_{A}$.
                  Then, simple objects in $\mathfrak B$ are enumerated (up to isomorphism) by finite subsets $\mathbb A$ of $\mathbb Z_{>0}$. For a subset $\mathbb A$, we set $\zeta_{\mathbb A}:=-\sum_{i\in \mathbb A}\zeta_{i}$
                  and choose a basis $\{v_i^{\mathbb A}\}$ of the weight subspace $X^{\zeta_{\mathbb A}}$. Let $U$ be the union of these bases. Note that every cyclic  $A$-module is multiplicity free and
                  has at most countably many cyclic submodules generated by vectors of weights of the form $\zeta_{\mathbb A}$. Consider the set $\mathcal X$ 
                  of cyclic submodules of $X$ consisting of all modules $Au$ for $u\in U$ and all cyclic submodules of $Au$ generated by weight vectors (the weights necessarily having
                   the form $\zeta_{\mathbb B}$ for finite subsets ${\mathbb B}$ of $\mathbb Z_{>0}$). Then $\mathcal X$ is a partially ordered set with respect to the inclusion order.
              Clearly, $X=\sum_{Y\in \mathcal X}Y$.
         
        We claim that any interval in this partial order is finite.
        To prove this, it suffices to consider an interval of the form $[Av,Aw]$ where $Av\subset Aw$. Let $v\in X^{\zeta_{\mathbb A}}$ and $w\in X^{\zeta_{\mathbb B}}$ for some finite sets ${\mathbb A},{\mathbb B}$. Note that $Aw$ is a quotient of the indecomposable projective $A$-module $P(\zeta_{\mathbb B})$. Therefore $$v=d\prod_{i\in I}x_i\prod_{j\in J}\partial_jw$$ for some $d\in \mathbb C^*$ and some finite subsets
        $J\subset \mathbb A$, $I\subset \mathbb Z_{>0}\setminus \mathbb B$. If $Av\subset Au\subset Aw$ then
        $$u=d'\prod_{i\in I'}x_i\prod_{j\in J'}\partial_jw,\ v=d''\prod_{i\in I''}x_i\prod_{j\in J''}\partial_jv.$$
        Note that  $I=I'\sqcup I''$, $J=J'\sqcup J''$. Since for fixed $I,J$ there exist finitely many choices
        for $I',I'',J',J''$, the claim follows.

Recall that by the Szpilrajn theorem \cite{Mar} any partial order can be extended to a total order.            
Moreover, we claim that any interval-finite partial order on a countable set $I$ can be extended to an  interval-finite total order.  Indeed, assume that $I$ does not have a smallest or greatest element.
(If $I$ is bounded above or below, the proof is similar). We can choose a sequence of distinct elements $\{x_i\mid i\in\mathbb Z\}$ such that if
                    $x_i<x_j$ then $i<j$, and also $I=\cup [x_i,x_{i+1}]$. Let $U_n=\cup_{i=-n}^{n-1}[x_i,x_{i+1}]$ for $n>0$. Using induction we can define a total order on $U_n$ as required. Indeed, one can see that
                    $U_{n+1}\setminus U_{n}=Y\cup Z$ where all elements of $Z$ are not less than elements of $U_n$ and all elements of $Y$ are not greater than the elements of $U_n$. On the other hand, both  $Y$ and $Z$ are finite and therefore
                    one can clearly define a suitable total order on them.
             
                    This argument endows $\mathcal X$ with a total order $\prec$ such that the ordered set $(\mathcal X,\prec)$ is isomorphic to $(\mathbb Z, <)$, $(\mathbb Z_{<0}, <)$, $(\mathbb Z_{>0}, <)$, or some finite interval of $\mathbb Z$.  We enumerate the elements of $\mathcal X$ using this isomorphism.
                 Set                  $X_n:=\sum_{i<n} Y_i$ for $Y_i \in \mathcal X$.
                 Let us prove that the $A$-module $X_n/X_{n-1}$ is simple for any $n$. Indeed, $X_n/X_{n-1}\simeq Y_n/(Y_n\cap X_{n-1})$. Since $Y_n\cap X_{n-1}$ contains all
                 proper cyclic submodules of $Y_n$, the submodule $Y_n\cap X_{n-1}$ is the unique maximal submodule of
                 $Y_n$ and the quotient $Y_n/(Y_n\cap X_{n-1})$
                 is simple. If $(\mathcal X,\prec)$ has no minimal element then clearly $\cap_n X_n=\{0\}$. If $X_1=Y_1$ is the minimal element of
                 $(\mathcal X,\prec)$, then $X_1$ is simple and we add $X_0:=\{0\}$.
               \end{proof}
               \begin{example} 
                 Let  $A=D(\infty|0)$, $\mu\in Q_A$, and let $X$ be an indecomposable $A$-module of infinite length  with finite weight multiplicities.
                 
                 \begin{enumerate}
                 \item[(a)] Theorem \ref{filtration} implies that $X$ admits
                 a $\mathbb Z_{>0}$-filtration with simple quotients whenever $X$ has a simple submodule contained in any nonzero submodule of $X$.
                 Therefore the $A$-module $F(\mu)$ has such a filtration by Lemma \ref{indecomposable}.
               \item[(b)] Similarly, if $X$ has a unique maximal submodule then $X$ admits a $\mathbb Z_{<0}$-filtration with simple quotients.
                 In particular, this applies to the  $A$-module $P(\mu)$.
               \item[(c)] Fix an isomorphism $A\simeq A\otimes A$ of associative algebras and consider
                 $X:=F(\mu)\otimes P(\mu)$ as an $A$-module via this isomorphism. One can see that
                 $X$ has neither a simple submodule nor a simple quotient. Nevertheless, by  Theorem \ref{filtration} the module $X$ admits
                 a $\mathbb Z$-filtration with simple quotients.
                   \end{enumerate}
                 \end{example}

  \subsection{Weight modules over $A_{ev}$ and  $A_{\bar{0}}$ for $A=D(a|b)$ or $A= Cl(a|b)$}\label{evenpart}
Let $\tau: Q_A\to \mathbb \mathbb Z/2\mathbb Z$ be a surjective homomorphism of abelian groups. We define
    $$B:=\bigoplus_{\tau(\mu)=\bar 0}A^\mu,\quad B':=\bigoplus_{\tau(\mu)=\bar 1}A^\mu.$$
    Then $B$ is a subsuperalgebra of $A$ containing $H_A$, and the decomposition $A=B\oplus B'$ defines a $\mathbb Z/2\mathbb Z$-grading.
    
In this subsection we establish an equivalence between the category
${\cW}_A$ and the category $\cW_{B}$ of preferred weight $B$-modules. This result applies to the particular cases  $B=A_{\bar{0}}$ and $B=A_{ev}$.
(For $B = A_{\bar{0}}$  preferred $B$-modules are purely even $B$-modules.)

  The root lattice $Q_{B}=\ker\tau$ is an index-two subgroup in $Q_A$. Consider the block ${\cW}_A^\Gamma$ for some equivalence
  class $\Gamma\subset \gh^\vee_A$. Note that $\Gamma=\Gamma'\sqcup \Gamma''$, where
  $\Gamma':=(\mu+Q_{B})\cap\Gamma$ for some $\mu\in\Gamma$. This decomposition depends on the choice of $\mu$ but only up to swapping $\Gamma'$ and
  $\Gamma''$. By $\cW_{B}^{\Gamma'}$ we denote the subcategory of $\cW_{B}$ of $B$-modules with support in $\Gamma'$.

  \begin{theorem}\label{even} The abelian categories  ${\cW}_A^\Gamma$ and $\cW_{B}^{\Gamma'}$ are equivalent.
  \end{theorem}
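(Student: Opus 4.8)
The plan is to produce the equivalence by a pair of mutually quasi-inverse functors, in the spirit of Dade's theorem relating modules over a strongly graded algebra to its graded modules. The first step I would carry out is to show that the $\ZZ/2\ZZ$-grading $A=B\oplus B'$ is \emph{strong}, i.e.\ $B'B'=B$. Since $\tau$ is surjective and every $\zeta_i$ lies in $R_A$, there is an index $i$ with $\tau(\zeta_i)=\bar 1$; the two generators of $A$ whose roots are $\pm\zeta_i$ then lie in $B'$, and the defining relation ($\{\eta_i,\xi_i\}=1$, resp.\ $[\partial_i,x_i]=1$) exhibits $1$ as a combination of their products, so $1\in B'B'$. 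The span of all products $b_1'b_2'$ is stable under left and right multiplication by $B$ (because $BB'\subseteq B'$), hence is a two-sided ideal of $B$; containing $1$, it equals $B$. Consequently $B'$ is a direct summand of a finite free right $B$-module (choose $b_j',c_j'\in B'$ with $\sum_j b_j'c_j'=1$ and use $x\mapsto(c_j'x)_j$, $(d_j)_j\mapsto\sum_j b_j'd_j$), so $A$ is projective as a right $B$-module and the multiplication map $B'\otimes_B B'\to B$ is an isomorphism.

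Next I would observe that every $X\in\cW_A^\Gamma$ carries a canonical $\ZZ/2\ZZ$-grading $X=X'\oplus X''$, where $X':=\bigoplus_{\mu\in\Gamma'}X^\mu$ and $X'':=\bigoplus_{\mu\in\Gamma''}X^\mu$: this is compatible with $A=B\oplus B'$ because $A^\alpha X^\mu\subseteq X^{\mu+\alpha}$ and adding an element of $Q_A$ of $\tau$-value $\bar 1$ interchanges the two cosets making up $\Gamma$. Then $\mathcal R\colon\cW_A^\Gamma\to\cW_B^{\Gamma'}$, $X\mapsto X'$, is a well-defined exact functor: since $H_A\subseteq B$, the $B$-module $X'$ is a weight module, it is preferred for the same parity function $p$, and its support lies in $\Gamma'$. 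In the opposite direction I would take $\mathcal I\colon\cW_B^{\Gamma'}\to\cW_A^\Gamma$, $Y\mapsto A\otimes_B Y$, which is exact because $A$ is right $B$-projective.

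The step that needs a genuine argument, and the one I expect to be the main obstacle, is the bookkeeping that keeps both functors inside the prescribed blocks; concretely, that $\mathcal I(Y)$ really is an object of $\cW_A^\Gamma$. Writing $A\otimes_B Y=Y\oplus(B'\otimes_B Y)$, a vector $b'\otimes y$ with $b'\in A^\alpha$, $y\in Y^\nu$ is an $\gh_A$-eigenvector of weight $\nu+\alpha$, and it must vanish unless $\nu+\alpha\in\gh^\vee_A$, since otherwise the relation $u_i^2=u_i$ in $H_A$ would fail on $b'\otimes y$. Hence $A\otimes_B Y$ is $H_A$-semisimple, i.e.\ a weight module, with support contained in $\Gamma'\cup\Gamma''=\Gamma$; and since the parity of $b'\otimes y$ equals $p(\alpha)+p(\nu)=p(\alpha+\nu)$, it is preferred. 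Thus $\mathcal I$ is well defined.

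Finally I would check that $\mathcal R$ and $\mathcal I$ are quasi-inverse. For $\mathcal R\mathcal I$: in $A\otimes_B Y=Y\oplus(B'\otimes_B Y)$ the summand $Y$ has support in $\Gamma'$ and $B'\otimes_B Y$ has support in $\Gamma''$, so $(A\otimes_B Y)'=Y$, naturally in $Y$. For $\mathcal I\mathcal R$: the multiplication map $A\otimes_B X'\to X$ is surjective because $X''=BX''=(B'B')X''=B'(B'X'')\subseteq B'X'\subseteq AX'$ (using $B'X''\subseteq X'$); and it is injective because on the summand $X'\subseteq A\otimes_B X'$ it is the identity onto $X'$, while on $B'\otimes_B X'$ it is multiplication onto $X''$, which is an isomorphism by the strong grading (it is inverse, up to the canonical isomorphism $B'\otimes_B B'\cong B$, to the multiplication map $B'\otimes_B X''\to X'$). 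Assembling these natural isomorphisms yields the equivalence $\cW_A^\Gamma\simeq\cW_B^{\Gamma'}$.
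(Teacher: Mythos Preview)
Your proof is correct and uses the same pair of functors as the paper: restriction $\mathcal R(X)=\bigoplus_{\mu\in\Gamma'}X^\mu$ and induction $\mathcal I(Y)=A\otimes_BY$. The verification, however, proceeds along a different route. The paper observes that $\mathcal I$ is left adjoint to $\mathcal R$, then checks that the unit and counit of the adjunction are isomorphisms on the projective generators $P(\mu)=A\otimes_{H_A}\Pi^{p(\mu)}\CC_\mu$ and $Q(\mu)=B\otimes_{H_A}\Pi^{p(\mu)}\CC_\mu$; the conclusion follows because both categories have enough projectives (Lemma~\ref{pims}). You instead establish that the $\ZZ/2\ZZ$-grading $A=B\oplus B'$ is strong (Dade's criterion) by exhibiting $1\in B'B'$ from the defining relations, and then verify the natural isomorphisms $\mathcal R\mathcal I\cong\operatorname{Id}$ and $\mathcal I\mathcal R\cong\operatorname{Id}$ directly, using the explicit splitting $1=\sum b_j'c_j'$ to write down inverses.

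Your argument is more elementary and self-contained: it does not rely on the prior construction of projective objects or on the category having enough projectives, and it gives exactness of $\mathcal I$ (via projectivity of $A$ as a right $B$-module) essentially for free. The paper's argument is shorter once the machinery of Lemma~\ref{pims} is in place, and fits the paper's overall strategy of working with the explicit projectives $P(\mu)$. One small point worth tightening in your write-up: the claim that $A\otimes_BY$ is $H_A$-semisimple follows because the spanning set $\{b'\otimes y\}$ consists of simultaneous $\gh_A$-eigenvectors and the $u_i$ commute, so the sum of eigenspaces is automatically direct; you gesture at this but it deserves a sentence.
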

  \begin{proof} We define functors $R:{\cW}_A^\Gamma \to \cW_{B}^{\Gamma'}$ and $I:\cW_{B}^{\Gamma'}\to{\cW}_A^\Gamma$ by setting
    $$R(X):=\bigoplus_{\mu\in\Gamma'} X^\mu,\quad I(Y):=A\otimes_{B}Y.$$
    We observe that $R$ is exact,  $I$ is right exact, and $I$ is left adjoint to $R$. Therefore, there are  canonical morphisms of functors $\phi:IR\to \operatorname{Id}_{\cW_{A}}$ and $\psi:\operatorname{Id}_{\cW_{B}}\to RI$.
    It remains to check that both functors are isomorphisms on objects. Recall that for any $\mu\in \Gamma$ the induced module $P(\mu)=A\otimes_{H_A} \left( \Pi^{p(\mu)} \C_{\mu} \right)$ is projective in $\cW_A$. Similarly, the $B$-module
    $Q(\mu):=B\otimes_{H_A} \left( \Pi^{p(\mu)} \C_{\mu} \right)$ is projective in $\cW_{B}$. By construction we have $I(Q(\mu))\simeq P(\mu)$ and $R(P(\mu))\simeq Q(\mu)$. Thus $\phi(P(\mu))\simeq P(\mu)$ and $\psi(Q(\mu))\simeq Q(\mu)$.
   Every object in $\cW_A$ (respectively, $\cW_B$) has a resolution with terms given by direct sums of $P(\mu)$-s (respectively, $Q(\mu)$-s). Hence $\phi$ and $\psi$ are isomorphisms on  objects.
         \end{proof}

    \subsection{Weight modules over $A_0$} Here we classify simple bounded $A_0$-modules. 
    We note that  $\gh_A\subset A_0$ and that the root lattice $Q_{A_0}$ is the sublattice of $Q_A$ generated by $\zeta_i-\zeta_j$ for  $i,j\neq 0$, $i \neq j$.  As before, we can work with preferred modules only.
    We introduce a new equivalence relation on $\gh^\vee$ by setting $\mu\approx_{0}\nu$
    iff $\mu\approx\nu$ and $\mu-\nu\in Q_{A_0}$.

    \begin{theorem}\label{Zero}
      (a) For every $\mu\in \gh_A^\vee$ there exists a unique (up to isomorphism) preferred simple weight module $Y(\mu)$ such that $\mu\in\supp Y(\mu)$.

      (b) Two simple preferred $A_0$-modules $Y(\mu)$ and $Y(\nu)$ are isomorphic if and only if  $\mu\approx_{0}\nu$.
      \end{theorem}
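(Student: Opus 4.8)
The plan is to reduce the classification of simple preferred weight $A_0$-modules to the analysis already carried out for $A$ itself in Theorem \ref{thm:weightA}, using the fact that $A_0 = H_A \oplus \bigoplus_{\alpha \in Q_{A_0}} A^\alpha$ together with the adjoint action of $\gh_A$. First I would note that $\gh_A \subset A_0$ acts semisimply on every $A_0$-module of interest, so the notion of weight module and preferred weight module for $A_0$ makes sense exactly as for $A$, with the same weight lattice $\gh_A^\vee$, but with the smaller root lattice $Q_{A_0}$ (generated by the $\zeta_i - \zeta_j$, $i \neq j$). The key structural observation is that, just as in Lemma \ref{rootsA}(d), each root space $A^\alpha$ for $\alpha \in Q_{A_0}$ is a cyclic $H_A$-module, so the induced module $P_0(\mu) := A_0 \otimes_{H_A} (\Pi^{p(\mu)} \mathbb{C}_\mu)$ is multiplicity free by the same Frobenius-reciprocity argument used for $P(\mu)$: it maps nontrivially to the restriction to $A_0$ of a suitable multiplicity-free $A$-module (e.g.\ a suitable summand of $F(\mu)|_{A_0}$), hence its $\mu$-weight space is one-dimensional and generates it, and every weight space is a cyclic $H_A$-module of dimension $\leq 1$.

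From multiplicity-freeness of $P_0(\mu)$ part (a) follows exactly as in the proof of Theorem \ref{thm:weightA}(b): the sum $N$ of all submodules $Z$ with $Z^\mu = 0$ is the unique maximal proper submodule, so $Y(\mu) := P_0(\mu)/N$ and $\Pi Y(\mu)$ are the only two simple $A_0$-modules whose support contains $\mu$, and $Y(\mu)$ is the preferred one. This gives existence and uniqueness of $Y(\mu)$. For part (b), I would compute $\supp Y(\mu)$ explicitly, mirroring the proof of Theorem \ref{thm:weightA}(c): a weight vector $v \in Y(\mu)^\nu$ can be moved to weight $\nu + \alpha$ for $\alpha \in Q_{A_0}$ precisely when one can act by an appropriate product of the elementary operators (for $A = D(a|b)$, products such as $x_i \partial_j$, $\partial_i x_j$, $x_i x_j$, $\partial_i \partial_j$ that lie in degree-zero-in-$Q_{A_0}$; in fact $A_0$ is generated over $H_A$ by the $A^{\pm(\zeta_i - \zeta_j)}$ and the degree-$0$ monomials like $x_i x_{-j}$ only when these stay in $Q_{A_0}$), and the condition for nonvanishing is governed by whether the relevant coordinate $\nu_k$ lies in $\mathbb{Z}_{\geq 0}$ — exactly the condition defining $\approx$. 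Hence $\supp Y(\mu) = \{\nu \mid \nu \approx_0 \mu\}$, and since supports of non-isomorphic simple modules are disjoint, $Y(\mu) \simeq Y(\nu)$ if and only if $\mu \approx_0 \nu$.

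The main obstacle will be the explicit determination of $\supp Y(\mu)$ and, tied to it, the precise description of $A_0$ as an algebra over $H_A$: one must check that the elementary raising/lowering operators available inside $A_0$ (as opposed to all of $A$) still suffice to reach every weight in the $\approx_0$-class of $\mu$, and that the ``integrality'' obstructions that define $\approx$ (the locus where $\nu_i \in \mathbb{Z}_{\geq 0}$) are the only obstructions, with no extra ones introduced by restricting to $Q_{A_0}$. Concretely, for $A = D(a|b)$ one needs that the composite operators $x_i \partial_j \colon Y(\mu)^\nu \to Y(\mu)^{\nu + \zeta_i - \zeta_j}$ are isomorphisms exactly when $\nu_j \neq 0$ (equivalently $\nu_j \notin \{0\}$ in the odd case, or $\nu_j \neq 0$ in the even case), which follows from the corresponding statements for $x_i$ and $\partial_j$ individually established in the proof of Lemma \ref{indecomposable} and Theorem \ref{thm:weightA}(c). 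Once this bookkeeping is done the theorem follows formally; I would expect the proof to be short, essentially ``repeat the arguments of Theorem \ref{thm:weightA} with $Q_A$ replaced by $Q_{A_0}$ and $\approx$ by $\approx_0$.''
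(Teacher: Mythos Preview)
Your approach is correct and would go through, but the paper takes a shorter route that you may find instructive. Rather than rebuilding the theory for $A_0$ from scratch via the induced module $P_0(\mu)$, the paper defines $Y(\mu)$ directly as the $A_0$-submodule of the already-constructed simple $A$-module $X(\mu)$ generated by the weight space $X(\mu)^\mu$. Simplicity then falls out of the $\mathbb{Z}$-grading: if $Z\subsetneq Y(\mu)$ is a nonzero $A_0$-submodule, then $AZ$ is a nonzero $A$-submodule of $X(\mu)$, hence $AZ=X(\mu)$; but the grading forces $AZ\cap Y(\mu)=A_0Z=Z$, so $X(\mu)^\mu\subset Z$ and $Z=Y(\mu)$. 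Uniqueness is handled, as you do, via the unique simple quotient of the induced module. For part (b) the paper simply observes
\[
\supp Y(\nu)=\supp X(\nu)\cap(\nu+Q_{A_0}),
\]
which immediately yields the $\approx_0$ criterion from Theorem \ref{thm:weightA}(c), with no need to redo the bookkeeping on when $x_i\partial_j$ acts bijectively.

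Your approach has the virtue of being self-contained for $A_0$ and makes the parallel with Theorem \ref{thm:weightA} explicit; the paper's approach buys brevity by exploiting that $A_0=\bigoplus_{\alpha\in Q_{A_0}}A^\alpha$ sits inside $A$ as the degree-zero component, so the simple $A$-modules already do the work. One small slip in your write-up: monomials like $x_i x_{-j}$ have $\mathbb{Z}$-degree $2$, not $0$, so they do not lie in $A_0$; the relevant generators of $A_0$ over $H_A$ are precisely the $x_i\partial_j$ (for $A=D(a|b)$), and indeed $A_0^\alpha=A^\alpha$ for every $\alpha\in Q_{A_0}$ since $Q_{A_0}$ is exactly the kernel of the degree map $Q_A\to\mathbb{Z}$.
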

      \begin{proof} (a) We define the $A_0$-module $Y(\mu)$ to be the $A_0$-submodule of $X(\mu)$ generated by the weight space $X(\mu)^\mu$. It is simple
        since  for every proper submodule $Z$ of $Y(\mu)$ we have $AZ\cap Y(\mu)=Z$. Furthermore any simple weight $A_0$-module, whose 
        support contains $\mu$, is isomorphic to the unique simple quotient of the induced module $A_0\otimes_{H_A}\mathbb C_\mu$. This proves (a).
        
        (b) It follows from (a) that $Y(\mu)$ and $Y(\nu)$ are isomorphic if and only if $\mu\in\supp Y(\nu)$. On the other hand,
        $$\supp Y(\nu)=\supp X(\nu)\cap (\nu+Q_{A_0}).$$
      This implies the statement.
        \end{proof}
    Let $A=D(a|b)$. Any $A_0$-module $M$ is also a module over $\operatorname{Lie}A_0$, the Lie superalgebra associated to $A_0$.  We call $M$
    {\it integrable} if $M$ is integrable as an $\mathfrak{sl}(a|b)$-module.
    \begin{proposition}\label{integrable}

      (a) A simple  weight $A_0$-module $Y(\mu)$ is integrable if and only if $\mu_i\in\mathbb Z_{\geq 0}$
      for all $i>0$ or $\mu_i\in\mathbb Z_{<0}$ for all $i>0$.

      (b) Every simple weight $A_0$-module is integrable as a  $D(0|b)_0$-module.
    \end{proposition}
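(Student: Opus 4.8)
The plan is to follow the pattern of the preceding results: use the explicit description of $Y(\mu)$ furnished by Theorems \ref{Zero} and \ref{thm:weightA}, and reduce integrability to the local nilpotency of finitely many explicit operators on a multiplicity free module. Write $A=D(a|b)$ and $A_0=D(a|b)_0$, and recall from \S\ref{sebsec-conn} that $\sl(a|b)$ acts on an $A_0$-module through the homomorphism $U(\sl(a|b))\to A_0$ induced by (\ref{eq2}), with the even root vector $e_{\varepsilon_i-\varepsilon_k}$ ($i\neq k$, both positive) going to $x_i\partial_k$, while every other root vector of $\sl(a|b)$ goes to a product of two generators at least one of which has negative index; such a product squares to zero, since the odd generators $x_{-p},\partial_{-p}$ square to zero and (anti)commute with the remaining factor. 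An element of square zero acts locally nilpotently on every module, so a weight $A_0$-module is integrable over $\sl(a|b)$ if and only if each operator $x_i\partial_k$ with $i\neq k$, $i,k>0$, acts locally nilpotently on it. Part (b) is then immediate: in $D(0|b)_0$ there are no positive indices, hence no such operators, so every root vector of $\sl(0|b)=\sl(b)$ acts nilpotently and $Y(\mu)$ is integrable over $\sl(b)$; when $b=\infty$ one notes in addition that $D(0|n)_0$ is finite dimensional, so each vector generates a finite dimensional $\sl(n)$-submodule and the module is a direct limit of finite dimensional $\sl(n)$-modules.

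For part (a), fix positive indices $i\neq k$ and a weight vector $v\in Y(\mu)^{\nu}$. Since $Y(\mu)$ is multiplicity free (Theorem \ref{thm:weightA}(a)) and $x_i\partial_k$ shifts weights by $\zeta_i-\zeta_k$, computing with the concrete action of $\partial_k$ and $x_i$ on the modules of \S\ref{subsec-simple} gives
$$(x_i\partial_k)^{n}v=\Big(\prod_{m=0}^{n-1}(\nu_k-m)\Big)\,v_n,$$
where $v_n$ spans $Y(\mu)^{\nu+n(\zeta_i-\zeta_k)}$, to be read as $0$ once this weight leaves $\supp Y(\mu)$. By Theorems \ref{Zero} and \ref{thm:weightA}(c) one has $\supp Y(\mu)=\{\lambda\approx\mu\mid\lambda-\mu\in Q_{A_0}\}$; since $\nu+n(\zeta_i-\zeta_k)$ differs from $\nu$ only in the $i$-th and $k$-th coordinates and only by integers, it lies in $\supp Y(\mu)$ exactly when the set $\{j:\nu_j\in\Z_{\geq0}\}$ is not changed. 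The $k$-th coordinate $\nu_k-n$ can move across $0$ only while $\nu_k\in\{0,\dots,n-1\}$, precisely when the scalar coefficient already vanishes; the $i$-th coordinate $\nu_i+n$ eventually becomes nonnegative exactly when $\nu_i\in\Z_{<0}$. Hence $(x_i\partial_k)^nv=0$ for $n\gg0$ if and only if $\nu_k\in\Z_{\geq0}$ or $\nu_i\in\Z_{<0}$.

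To pass to a condition on $\mu$: for $\nu\in\supp Y(\mu)$ we have $\nu-\mu\in Q_A$, so $\nu_j\in\Z\Leftrightarrow\mu_j\in\Z$ and $\{j>0:\nu_j\in\Z_{\geq0}\}=\{j>0:\mu_j\in\Z_{\geq0}\}=:T$. Thus $x_i\partial_k$ acts locally nilpotently on $Y(\mu)$ if and only if $k\in T$, or $i\notin T$ and $\mu_i\in\Z$ --- a condition independent of $\nu$. Demanding this for all pairs $i\neq k$ of positive indices, a short case analysis shows that it holds precisely when $T$ is the whole set of positive indices (that is, $\mu_i\in\Z_{\geq0}$ for all $i>0$), or $T=\varnothing$ and all $\mu_i\in\Z$ (that is, $\mu_i\in\Z_{<0}$ for all $i>0$); in any other case one picks $k\notin T$ and a positive index $i\neq k$ with $i\in T$ or $\mu_i\notin\Z$, and the ladder through a weight vector of weight $\nu$ with $\nu_k\notin\Z_{\geq0}$ and $\nu_i\notin\Z_{<0}$ fails to terminate. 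This gives (a).

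The step that needs the most care is the displayed identity: one must verify that the scalar is exactly $\prod_{m=0}^{n-1}(\nu_k-m)$ and that $v_n$ is nonzero precisely when the target weight belongs to the support, which I would do using multiplicity freeness together with the explicit realization of $X(\mu)$ (equivalently $F(\mu)$) from \S\ref{subsec-simple}, rather than a formal ``$x^{\nu}$'' computation that would be legitimate only on the unreduced module $F^+(\mu)$. A minor further point is to keep the identification of the root vectors of $\sl(a|b)$ with elements of $A_0$ in line with the conventions fixed in \S\ref{sebsec-conn}.
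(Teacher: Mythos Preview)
Your proof is correct and follows essentially the same strategy as the paper: reduce integrability to local nilpotency of the root-vector operators and analyze this via the explicit support description $\supp Y(\mu)=\{\lambda\approx\mu\mid \lambda-\mu\in Q_{A_0}\}$. Your treatment of part (a) is a more computational version of the paper's argument; the paper simply observes that if $\mu$ satisfies the stated condition then so does every $\nu\in\supp Y(\mu)$, whence each root string $(\nu+\mathbb Z\alpha)\cap\supp Y(\mu)$ is finite, while conversely if the condition fails one exhibits $i,j>0$ with $x_i\partial_j$ acting freely on $Y(\mu)^\mu$. Your displayed scalar identity is a valid refinement of this, and your caveat about verifying it through multiplicity freeness and the realization of $X(\mu)$ (rather than a naive $x^\nu$ calculation on $F^+(\mu)$) is exactly the right point of care.

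For part (b) the arguments genuinely differ. The paper argues via supports: for $i,j<0$ and $\alpha=\zeta_i-\zeta_j$, at most one of $\mu\pm\alpha$ lies in $\gh_A^\vee$ (since the negative coordinates take values in $\{0,1\}$), so every root string has length at most two. Your argument is purely algebraic: each relevant root vector is $x_{-p}\partial_{-q}$ (or involves an odd generator), and one checks directly in $D(a|b)$ that such elements square to zero, hence act nilpotently on any module. Both are short; yours has the advantage of not invoking the support at all, while the paper's version makes the connection to $\gh_A^\vee$ more visible.
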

    \begin{proof} (a) By a direct inspection of $\supp Y(\mu)$ one sees that, if $\mu$ satisfies the condition of the proposition, then any
      $\nu\in\supp Y(\mu)$
      satisfies the same
      condition. Therefore the set $(\nu+\mathbb Z\alpha)\cap\supp Y(\mu)$ is finite for any    $\nu\in\supp Y(\mu)$ and any root $\alpha$ of
      $\mathfrak{sl}(a|b)$.
    This implies that $Y(\mu)$ is integrable whenever $\mu$ satisfies the condition of the proposition.

      On the other hand, if there exist $i,j>0$, $i\neq j$, such that $\mu_j$ is not an integer or
      $\mu_i\in\mathbb Z_{\geq 0}$ and $\mu_j\in\mathbb Z_{<0}$, then $x_i\partial_j$ acts freely on $Y(\mu)^\mu$.

      (b) For any $\mu\in\gh^\vee_A$ and any $\alpha=\zeta_i-\zeta_j$ for $i,j<0$, at most one of $\mu+\alpha$ and $\mu-\alpha$ lies in $\gh^\vee_A$.
      Since the support of any weight $A_0$-module is a subset of  $\gh^\vee_A$, the statement follows.
      \end{proof}

      \begin{proposition}\label{faithful} Suppose $A=D(\infty|\infty)$. A simple $A_0$-module $Y(\mu)$ is faithful if and only if the set of values $$S_i=\{\nu_i\mid \nu\in \supp Y(\mu)\}$$ is infinite 
        at least for one $i$.
        \end{proposition}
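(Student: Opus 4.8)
The plan is to prove both implications; the direction ``faithful $\Rightarrow$ some $S_i$ infinite'' is immediate, so the content lies in the converse.

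\emph{Forward direction.} Since $S_i\subseteq\{0,1\}$ whenever $i<0$, the condition ``$S_i$ infinite for some $i$'' can only concern indices $i>0$. Suppose every $S_i$ $(i>0)$ is finite; fix such an $i$ and set $c_i:=\prod_{s\in S_i}(u_i-s)\in A_0$. As $u_i=x_i\partial_i$ generates a polynomial subalgebra of $A_0$ — on the defining module $\C[x]$ it is diagonalizable with pairwise distinct eigenvalues — we have $c_i\neq0$, while $u_i\in\gh_A$ acts on $Y(\mu)^{\nu}$ by the scalar $\nu_i\in S_i$, so $c_i$ annihilates every weight space of $Y(\mu)$. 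Hence $c_i\in\Ann_{A_0}Y(\mu)$ and $Y(\mu)$ is not faithful; equivalently, a faithful $Y(\mu)$ has every $S_i$ $(i>0)$ infinite.

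\emph{Converse.} Assume $S_{i_0}$ infinite, $i_0>0$. Since $A_0=\varinjlim_n D(n|n)_0$ it suffices to show $\Ann_{D(n|n)_0}Y(\mu)=0$ for all $n$, and we may take $n\geq i_0$. The generators $x_i\partial_j$ $(|i|,|j|\leq n)$ of $D(n|n)_0$ fix every coordinate $\nu_k$ with $|k|>n$ and preserve the inner total $t(\nu):=\sum_{|k|\leq n}\nu_k$, which — because $\supp Y(\mu)\subset\mu+Q_{A_0}$ forces $\sum_k(\nu_k-\mu_k)=0$ — is itself a function of the outer weight $\xi:=(\nu_k)_{|k|>n}$. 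Thus $Y(\mu)|_{D(n|n)_0}=\bigoplus_{\xi}M_{\xi}$, and, using the tensor isomorphism $D(n|n)\simeq D(n|0)\otimes D(0|n)$ of Lemma \ref{isomorphism_tensor} with $D(0|n)$ finite-dimensional semisimple, one identifies each $M_{\xi}$ (up to a twist fixed by the residues of $\mu_1,\dots,\mu_n$ mod $\Z$) with the level-$t(\xi)$ member $V_{t(\xi)}$ of the coherent family $\{V_d\}_d$ arising from the $\Z$-grading of the associated (localized) defining $D(n|n)$-module. Writing $I_d:=\Ann_{D(n|n)_0}V_d$ and $T_n:=\{t(\nu):\nu\in\supp Y(\mu)\}$, we get $\Ann_{D(n|n)_0}Y(\mu)=\bigcap_{d\in T_n}I_d$.

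Two facts finish the proof. First, $T_n$ is infinite: fixing an outer index $j>n$ and starting from $\nu\in\supp Y(\mu)$ with $|\nu_{i_0}|$ large, iterated root-string moves $\nu\mapsto\nu\pm(\zeta_{i_0}-\zeta_j)$ — each staying in $\supp Y(\mu)$ exactly while the decreased coordinate is $\neq0$ and the increased one is $\neq-1$, by Theorems \ref{thm:weightA} and \ref{Zero} — transfer the unboundedness of $\nu_{i_0}$ to $\nu_j$, hence to $\sum_{|k|>n}(\nu_k-\mu_k)$, hence to $t(\nu)$ (the residue of $\mu_{i_0}$ only selects the available direction). Second, $\bigcap_{d\in T}I_d=0$ for any infinite $T$: the sum $\bigoplus_d V_d$ is a faithful module over the simple ring $D(n|n)$, and a nonzero $a\in D(n|n)_0$ acts nontrivially on $V_d$ for all but finitely many $d$, by a symbol/semiclassical estimate reducing through the tensor factorization to the one-variable identity $\bigcap_{d\in T}(u-d)=(0)$ in $\C[u]$. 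Hence $\Ann_{D(n|n)_0}Y(\mu)=\bigcap_{d\in T_n}I_d=0$ for every $n$, so $Y(\mu)$ is faithful.

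The hard part will be the two facts just used: the structural identification of the $M_{\xi}$ as members of a single coherent family (the bookkeeping of Section \ref{sec-clif-weyl}), and — more fiddly — the combinatorial claim that $S_{i_0}$ infinite forces $T_n$ infinite for all $n\geq i_0$, where the precise description of $\supp Y(\mu)$ enters through the root-string moves.
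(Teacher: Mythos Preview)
Your forward direction is correct and coincides with the paper's.

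For the converse, your approach is substantially more elaborate than necessary and the two ``facts'' you isolate at the end are not actually proved. You reduce to showing $\bigcap_{d\in T_n} I_d=0$ for $I_d=\Ann_{D(n|n)_0}V_d$, but neither the identification of each $M_\xi$ with a member $V_{t(\xi)}$ of a single coherent family, nor the claim that a nonzero $a\in D(n|n)_0$ kills only finitely many $V_d$, is established. The phrase ``by a symbol/semiclassical estimate reducing through the tensor factorization to the one-variable identity'' is not an argument; and when one tries to make it precise---even in the integrable case, where the $V_d$ are graded pieces of the polynomial superalgebra rather than of a localization---one is forced to decompose $a$ into $\ad\gh_A$-weight components, multiply by an element of opposite weight to land in $H_A$, and then use that the relevant set of weight-tuples is Zariski dense. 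The coherent-family packaging adds a layer without bypassing this step. There is also a wrinkle you do not address: when some $\mu_i\in\Z_{\geq 0}$ the modules $M_\xi$ are subquotients, not graded pieces, of a faithful $D(n|n)$-module, so the line ``$\bigoplus_d V_d$ is faithful over the simple ring $D(n|n)$'' does not apply as stated.

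The paper's proof is short and direct. One first notes (your root-string idea, but phrased once and for all) that if one $S_i$ is infinite then $S_i$ is infinite for every $i>0$, and that the projection $T_k=\{(\nu_1,\dots,\nu_k):\nu\in\supp Y(\mu)\}$ is Zariski dense in $\C^k$ for each $k>0$. Since $\Ann_{A_0}Y(\mu)$ is an $\ad\gh_A$-weight module and for every nonzero $u\in A_0^\gamma$ there exists $v\in A_0^{-\gamma}$ with $uv\neq 0$, it suffices to show $\Ann_{A_0}Y(\mu)\cap H_A=\{0\}$. Any element of $H_A$ can be written as $p_0(u_1,\dots,u_k)+\sum_{j=1}^l p_j(u_1,\dots,u_k)u_{-j}$; annihilating $Y(\mu)$ forces each $p_j$ to vanish on $T_k$, hence $p_j=0$ by density. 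No restriction to finite $D(n|n)_0$, no coherent families, no semiclassical input.
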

        \begin{proof} We first note that (by definition) $\nu_i\in\{0,1\}$ for all $\nu\in \supp Y(\mu)$ and $i<0$. If $i>0$ and $\nu_i\geq m$ for some $m\in \mathbb Z_{>0}$, then $(x_j\partial_i)^mY(\mu)^{\nu}\neq 0$ for any $j>0$.
          Similarly, if $i>0$ and $\nu_i<-m$ for some  $m\in \mathbb Z_{>0}$, then $(x_i\partial_j)^mY(\mu)^{\nu}\neq 0$ for any $j>0$. Thus the sets $S_i$ are infinite for all $i>0$ whenever $S_i$ is infinite for some $i>0$.
                    Moreover, we observe that the set $T_k:=\{(\nu_1,\dots,\nu_k)\mid\nu\in\supp Y(\mu)\}$ is Zariski dense in $\C^k$ for every $k>0$.

          Next we notice that if $S_i$ is finite for some $i>0$ then $\prod_{s\in S_i}(u_i-s)\in\Ann_{A_0}Y(\mu)$, where $u_i=x_i\partial_i$.

          It remains to show that if $S_i$ is infinite for any positive $i$ then $\Ann_{A_0}Y(\mu)=0$. Clearly, $\Ann_{A_0}Y(\mu)$ is a weight $\gh$-module with respect to the adjoint action of $\gh$. Furthermore, for any
          $u\in A_0^{\gamma}$ there exists $v\in A_0^{-\gamma}$ such that $uv\neq 0$. Thus, it suffices to prove that $\Ann_{A_0}Y(\mu)\cap H_A=\{0\}$. Any $u\in H_A$ can be written in the form
          $$u=p_0(u_1,\dots,u_k)+\sum_{i=1}^lp_i(u_1,\dots,u_k)u_{-i}$$
          for some $k,l \in \mathbb Z_{>0}$ and polynomials $p_0,p_1,\dots,p_l$. The condition $u\in \Ann_{A_0}Y(\mu)$ implies
          $p_i(\nu_1,\dots,\nu_k)=0$ for all $\nu\in\supp Y(\mu)$ and $i=1,\dots,l$. Therefore $p_i(T_k)=0$, and hence $u=0$.
          \end{proof}
   
      \begin{corollary}\label{primcor} The ideals $\ker\Upsilon^\pm$ are primitive ideals of $U(\mathfrak{sl}(\infty|\infty))$.
          \end{corollary}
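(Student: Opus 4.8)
The plan is to deduce Corollary \ref{primcor} from Proposition \ref{faithful} together with the basic properties of the homomorphisms $\Upsilon^\pm$. Recall that $\Upsilon^\pm_{\infty|\infty}$ are the restrictions of the surjective homomorphisms \eqref{eq1}, \eqref{eq2} to $U(\mathfrak{sl}(\infty|\infty))$; composing with the identifications $D(\infty|\infty)_0 \simeq Cl(\infty|\infty)_0$ we may work entirely inside $A_0$ for $A = D(\infty|\infty)$. A primitive ideal is by definition the annihilator of some simple module, so it suffices to exhibit a simple $\mathfrak{sl}(\infty|\infty)$-module whose annihilator equals $\ker \Upsilon^\pm$. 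For this I would take a simple weight $A_0$-module $Y(\mu)$ with $\mu$ chosen so that $Y(\mu)$ is faithful over $A_0$ in the sense of Proposition \ref{faithful} — e.g. any $\mu$ with $\mu_i \notin \mathbb Z$ for the positive indices $i$, which by inspection of $\supp Y(\mu)$ (as in the proof of Proposition \ref{integrable}(a)) makes every set $S_i$ infinite. Pulling $Y(\mu)$ back along $\Upsilon^\pm$ gives an $\mathfrak{sl}(\infty|\infty)$-module whose annihilator in $U(\mathfrak{sl}(\infty|\infty))$ contains $\ker \Upsilon^\pm$.

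The key point is then to check equality of annihilators, i.e. that the pulled-back module is simple over $\mathfrak{sl}(\infty|\infty)$ and that nothing outside $\ker\Upsilon^\pm$ kills it. Since $\Upsilon^\pm$ is surjective onto $A_0$, the $U(\mathfrak{sl}(\infty|\infty))$-submodule structure of the pullback coincides with its $A_0$-submodule structure, so simplicity of $Y(\mu)$ over $A_0$ (Theorem \ref{Zero}(a)) gives simplicity of the pullback. For the annihilator: if $x \in U(\mathfrak{sl}(\infty|\infty))$ annihilates the pullback of $Y(\mu)$, then $\Upsilon^\pm(x) \in A_0$ annihilates $Y(\mu)$; but $Y(\mu)$ is faithful over $A_0$ by the choice of $\mu$ and Proposition \ref{faithful}, so $\Upsilon^\pm(x) = 0$, i.e. $x \in \ker\Upsilon^\pm$. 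Conversely $\ker \Upsilon^\pm$ obviously annihilates any pullback. Hence $\ker\Upsilon^\pm = \Ann_{U(\mathfrak{sl}(\infty|\infty))}(\text{pullback of } Y(\mu))$ is primitive.

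The only genuinely delicate step is verifying that a $\mu$ with the required faithfulness property actually exists, i.e. translating the hypothesis ``$S_i$ infinite for some $i$'' in Proposition \ref{faithful} into an explicit condition on $\mu$; but this is immediate from the description of $\supp Y(\mu)$ via the relation $\approx_0$ in Theorem \ref{Zero} — choosing the positive coordinates of $\mu$ to be non-integral forces $\nu_i$ to range over a coset $\mu_i + \mathbb Z$, which is infinite. I would state this choice explicitly and invoke Proposition \ref{faithful}. Everything else is a formal consequence of surjectivity of $\Upsilon^\pm$ and the definition of a primitive ideal, so no computation beyond this is needed.
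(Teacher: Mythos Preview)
Your overall strategy is the intended one: the paper states Corollary~\ref{primcor} immediately after Proposition~\ref{faithful} with no proof, and your plan of pulling back a faithful simple $A_0$-module $Y(\mu)$ (with $\mu_i\notin\mathbb Z$ for $i>0$) is exactly how one unpacks this. The annihilator half of your argument is correct as written: faithfulness of $Y(\mu)$ over $A_0$ forces the annihilator of the pullback to be precisely $\ker\Upsilon^\pm$.

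There is, however, a genuine slip in the simplicity half. You assert that ``$\Upsilon^\pm$ is surjective onto $A_0$'', but this is false: the paper itself says in the proof of Theorem~\ref{thm:sl-simple}(b) that the image $C^\pm$ of $\Upsilon^\pm$ satisfies $C^\pm\subsetneq A_0$. Concretely, the Cartan subalgebra of $\mathfrak{sl}(\infty|\infty)$ maps onto the span of the differences $u_i-u_j$, not onto all of $\mathfrak h_A$, so $H_A\not\subset C^\pm$. Your deduction of simplicity of the pullback from simplicity of $Y(\mu)$ therefore needs an extra step.

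The repair is short. First, the elements $u_i-u_j\in C^\pm$ already separate the weights in $\supp Y(\mu)$ (two weights differing by a constant multiple of $\sum_i\zeta_i$ must coincide, since their difference lies in $Q_{A_0}$), so any $C^\pm$-submodule contains a weight vector $v$. Second, all root vectors $x_i\partial_j$ with $i\neq j$ lie in $C^\pm$; since $Y(\mu)$ is multiplicity free and simple over $A_0$, these root vectors connect any two one-dimensional weight spaces of $Y(\mu)$, whence $C^\pm v=Y(\mu)$. (Equivalently: $H_A$ acts by scalars on each weight space, so for any weight vector $v$ one has $A_0v=C^\pm v$.) With this adjustment your argument is complete and matches the paper's intent.
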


\section{Classification of simple bounded weight $\osp$-modules at infinity}

We are now ready to describe the category of bounded weight $\gg$-modules for $\gg = \osp(2a|2b), \osp(2a+1|2b)$. In what follows we assume that $\gg$ is infinite dimensional, i.e., that at least one of $a,b$ equals $\infty$.   We fix an exhaustion of $\gg$ as $\limarr \gg_k$, where $ \gg_k \simeq \osp (2a_k| 2b_k)$ or $ \gg_k \simeq \osp (2a_k+1| 2b_k)$, and $a_k,b_k \in \Z_{>0}$ satisfy $a_k = a$ for $a < \infty$ and  $b_k = b$ for $b < \infty$. 

We start with the following observation.

         \begin{proposition}\label{tosp}  If $M$ is a bounded $\gg$-module, then the restriction of $M$ to  $\so(2a)$ or  $\so(2a+1)$ is integrable and semisimple. 
                  \end{proposition}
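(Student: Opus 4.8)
The plan is to reduce the statement to the assertion that every root vector $e_\gamma$ of $\so:=\so(2a)$ (respectively $\so(2a+1)$) acts locally nilpotently on $M$. Granting this, each weight vector $v$ of $M$ generates a finite-dimensional $\so$-submodule $U(\so)v$, so $M$ is a directed union of finite-dimensional $\so$-submodules, i.e.\ integrable; and then $M$ is semisimple, since every finite-dimensional $\so$-submodule is semisimple (by Weyl's theorem when $a$ is finite, and by complete reducibility for the finitary Lie algebra $\so(\infty)$ when $a=\infty$), so $M$ is the sum of its simple $\so$-submodules. When $a=\infty$ we may replace $\osp(2a+1|2b)$ by the isomorphic $\osp(2a|2b)$, and so we assume $\gg=\osp(2a|2b)$ in that case; the remaining short roots $\pm\varepsilon_i$ of $\so(2a+1)$ with $a<\infty$ will be dealt with last.

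The mechanism I would use is the abundance of odd root vectors squaring to zero. Fix $\gamma=\varepsilon_i+\varepsilon_k$ (the other signs are entirely analogous). For every index $j$, write $\gamma=\gamma_1+\gamma_2$ with $\gamma_1=\varepsilon_i+\delta_j$ and $\gamma_2=\varepsilon_k-\delta_j$, both odd roots of $\gg$; since neither $2\gamma_1$ nor $2\gamma_2$ is a root of $\gg$, the root vectors $a_j:=e_{\gamma_1}$ and $b_j:=e_{\gamma_2}$ satisfy $a_j^2=b_j^2=0$ in $U(\gg)$. The anticommutator $\{a_j,b_j\}$ lies in the one-dimensional $\gamma$-root space, and inspection of the explicit formulas for $\Psi_{a|b}$ in Subsection \ref{sebsec-conn} shows it equals $c_j e_\gamma$ with $c_j\neq 0$. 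Moreover $[e_\gamma,a_j]=[e_\gamma,b_j]=0$ (because $\gamma+\gamma_1$ and $\gamma+\gamma_2$ are not roots), and $\{a_j,a_l\}=\{b_j,b_l\}=\{a_j,b_l\}=0$ for $j\neq l$ (again because $2\varepsilon_i+\delta_j+\delta_l$, $2\varepsilon_k-\delta_j-\delta_l$, $\varepsilon_i+\varepsilon_k+\delta_j-\delta_l$ are not roots). Thus, for each fixed $\gamma$, we have a family of pairs of square-zero operators, pairwise (anti)commuting across the family, each pair coupling to the single even operator $e_\gamma$, which is central for all of them; one records the consequences $b_ja_j=c_je_\gamma-a_jb_j$, $(a_jb_j)(b_ja_j)=(b_ja_j)(a_jb_j)=0$, $(a_jb_j)^2=c_je_\gamma(a_jb_j)$, and $[a_jb_j,a_lb_l]=[b_ja_j,a_lb_l]=0$ for $j\neq l$.

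Now I would argue by contradiction: suppose $e_\gamma^n m\neq 0$ for all $n$, where $m\in M^{\mu}$. For each $n$ one then gets a family of vectors in the single weight space $M^{\mu+n\gamma}$, obtained by applying products of the operators $a_jb_j$ over distinct indices $j$ together with powers of $e_\gamma$ to $m$; using that $b_ja_j$ annihilates the image of $a_jb_j$ while commuting with every $a_lb_l$ for $l\neq j$, one peels indices off one at a time to show these vectors are linearly independent. When $b=\infty$ this produces infinitely many linearly independent vectors in a fixed weight space, contradicting the boundedness of $M$. When $a=\infty$ (so $b$ may be finite) the family of pairs attached to a single $\gamma$ is finite, but a single odd root vector $a=e_{\varepsilon_i+\delta_1}$ couples to infinitely many distinct $\so$-root vectors via $\{a,e_{\varepsilon_k-\delta_1}\}=c\,e_{\varepsilon_i+\varepsilon_k}$ for $k\neq i$, and since the $e_{\varepsilon_k-\delta_1}$ pairwise anticommute and square to zero one again manufactures unboundedly many independent vectors, invoking the description of bounded weight $\so(\infty)$-modules from \cite{GrP} where needed. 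Finally, for $\so(2a+1)$ with $a$ finite the short root vectors $e_{\pm\varepsilon_i}$ cannot be written as anticommutators of square-zero odd root vectors, so here I would pass to a rank-one orthogonal subalgebra and use the finite-dimensional estimates of Section \ref{prelim} (Lemmas \ref{degreefinite}, \ref{osp12}) together with the boundedness of $M$ as a $\gg$-module. The main obstacle I anticipate is precisely the linear-independence bookkeeping in the contradiction step: the operators involved obey a tangle of super-commutation relations, and organizing the argument so that the manufactured vectors genuinely stay independent --- uniformly across the cases $a=\infty$ and $b=\infty$ --- is where the real work lies.
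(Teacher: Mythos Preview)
Your approach tries to exploit the odd root vectors of $\gg$ to force local nilpotence of each $\so$-root vector $e_\gamma$ directly. This is both harder and less complete than necessary, and in the cases where your combinatorial mechanism runs out (finite $b$, or the short roots of $\so(2a+1)$) you already fall back on the structure theory of bounded weight $\so$-modules. The paper's proof shows that this structure theory is all you need from the start, and the super-structure of $\gg$ is irrelevant.

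The observation you are missing is that boundedness over $\gh$ already forces boundedness over the orthogonal part \emph{as a Lie-algebra module}. Concretely: $\gh=\gh_{\so}\oplus\gh_{\sp}$, and $\gh_{\sp}$ centralizes $\so$. Hence $M$ decomposes as $\bigoplus_{\nu\in\gh_{\sp}^*}M_\nu$, and each $M_\nu$ is an $\so$-submodule whose $\gh_{\so}$-weight spaces coincide with $\gh$-weight spaces of $M$, so each $M_\nu$ is a bounded weight $\so$-module. Now the statement is purely about $\so$: for $a<\infty$ every bounded weight $\so$-module is a sum of finite-dimensional modules (Fernando; see Section~\ref{prelim}), hence integrable and semisimple; for $a=\infty$ every bounded weight $\so(\infty)$-module is integrable by \cite{GrP}, and semisimplicity then follows from Theorem~3.7 of \cite{PSer2}. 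That is the paper's entire argument.

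Your route, by contrast, has a genuine gap. Even granting all the commutation relations you record, the linear-independence step is not established: from a relation $\sum_j\lambda_j(a_jb_j)m=0$ you need an operator that isolates a single coefficient $\lambda_l$, but applying $b_la_l$ kills the $l$-th term rather than singling it out, and what remains is another relation of the same shape (with $m$ replaced by $b_la_lm$). There is no evident induction here. And when $a=\infty$, $b<\infty$ the family $\{a_j,b_j\}$ attached to a single $\gamma$ is finite, so you cannot manufacture unboundedly many vectors in one weight space at all; your suggested workaround via different roots $e_{\varepsilon_i+\varepsilon_k}$ produces vectors in \emph{different} weight spaces, which says nothing about boundedness without invoking exactly the $\so(\infty)$-theory the paper uses. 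In short, the superalgebra machinery buys you nothing here; restrict to $\so+\gh$ and quote the known $\so$-results.
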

         \begin{proof}  $M$ is a bounded semisimple $\gh$-module, and hence $M$ is a bounded weight $(\so(2a)+\gh)$- or $(\so(2a+1)+\gh)$-module. Therefore, as an $\so(2a)$- or $\so(2a+1)$-module, $M$ is isomorphic to a direct sum of bounded weight $\so(2a)$- or $\so(2a+1)$-modules. As mentioned in Section 1, a bounded  weight $\so(2a)$- or $\so(2a+1)$-module is integrable for $a=\infty$, and is a sum of finite-dimensional modules for $a < \infty$. Therefore the semisimplicity claim holds trivially for $a< \infty$. For  $a = \infty$ the semisimplicity claim follows from Theorem 3.7 in \cite{PSer2}.
         \end{proof}
         Recall that an \emph{odd reflection} is the replacement of a Borel subsuperalgebra $\mathfrak b$ of $\gg$ by 
a Borel subsuperalgebra $\mathfrak b'$ of $\gg$ such that exactly one odd root $\alpha$ of $\mathfrak b$  is not a root of $\mathfrak b'$ (and hence $-\alpha$ is a root of $\mathfrak b'$). If $L_{\mathfrak b} (\lambda)$ denotes an irreducible $\gg$-module with $\mathfrak b$-highest weight $\lambda$ and purely even highest-weight vector, then  $L_{\mathfrak b} (\lambda)$ is isomorphic either to $L_{\mathfrak b'} (\lambda)$ or to  $\Pi L_{\mathfrak b'} (\lambda - \alpha)$. The latter case, called a \emph{typical} reflection, occurs precisely when $(\lambda , \alpha)\neq 0$, while the former case, called an \emph{atypical} reflection, occurs when $(\lambda, \alpha) =  0$. 
         
By $J_\gg$ we denote the kernel of $\Psi_{a|b}$ if $\gg = \osp(2a|2b)$, and respectively of $\Theta_{a|b}$ if $\gg = \osp(2a+1|2b)$.    Recall that $J_{\gg}$ is the annihilator of any spinor-oscillator representation. Moreover, it is obvious that $J_{\gg}=\varinjlim J_{\gg_k}$ whenever $\gg=\varinjlim \gg_k$ for an inductive system of  finite-dimensional Lie superalgebras $ \gg_k$ of type $\mathfrak{osp}$.

  \begin{lemma}\label{boundedideal} Let $\gq=\osp(m|2n)$ for $m, n \in {\mathbb Z}_{\geq 0}$, and $I\subset U(\gq)$ be a bounded primitive ideal of degree $d$. Assume that at least one of the simple ideals of $\gq_{\bar{0}}$ has rank greater than $d$. Then $d=1$. Moreover $I=J_{\mathfrak{q}}$, unless $I$  is the augmentation ideal or  the annihilator of a defining module. 
  \end{lemma}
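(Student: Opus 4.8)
The plan is to reduce the claim about $\gq = \osp(m|2n)$ to the finite-rank Lie-algebra statement of Lemma \ref{finiterank} together with the small-rank estimates in Lemmas \ref{degreefinite}, \ref{osp12}, and the basic observation that a bounded weight $\gq$-module restricts nicely to $\gq_{\bar 0}$. First I would write $\gq_{\bar 0} = \go(m) \oplus \sp(2n)$ and recall from Proposition \ref{tosp} (or its finite-rank analogue, which is immediate since finite-dimensional bounded modules are sums of finite-dimensional simple ones) that a simple bounded weight $\gq$-module $M$ annihilated by $I$ is, upon restriction to $\gq_{\bar 0}$, a bounded weight module, hence a sum of simple bounded weight $(\go(m)\oplus\sp(2n))$-modules $L_{\go}(\lambda)\boxtimes L_{\sp}(\kappa)$. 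The degree $d$ of $I$ equals $d(M)$, and since $M\cong U(\gq_{\bar 1})\cdot M^{\go\oplus\sp\text{-socle}}$ with $\dim \Lambda(\gq_{\bar 1}) $ controlling the jump, each simple $\gq_{\bar 0}$-constituent has degree at most $d$; in particular $d(L_{\go}(\lambda))\le d$ and $d(L_{\sp}(\kappa))\le d$ for every constituent.

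Next I would invoke the hypothesis: one of the two simple ideals of $\gq_{\bar 0}$ has rank $>d$. Suppose it is the $\so$-ideal, of rank $r_{\go}>d$. By Lemma \ref{degreefinite} (and Fernando's result quoted in Section 1 for the $\so$ case), each $\go$-constituent $L_{\go}(\lambda)$ is either multiplicity free — so a trivial, natural, or spinor module — or has degree $\ge r_{\go}-1 \ge d$, which combined with $d(L_{\go}(\lambda))\le d$ forces $d(L_{\go}(\lambda)) = d$ and pins $\lambda$ to the explicit short list in the proof of Lemma \ref{degreefinite}. Symmetrically, if the $\sp$-ideal has rank $>d$, Lemma \ref{degreefinite} (the $\sp(2n)$ case) plus Lemma \ref{finiterank} restrict the $\sp$-constituents: either multiplicity free of oscillator/Joseph type, or degree $\ge r_{\sp}-1$. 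Running through the short lists I would show that the only configurations compatible with $M$ being a simple $\gq$-module of bounded degree are: (i) $M$ trivial, giving $I$ the augmentation ideal; (ii) $M$ a defining module, giving $I = \Ann_{U(\gq)}(V)$ or $\Ann_{U(\gq)}(\Pi V)$; or (iii) all $\go$-constituents spinor and all $\sp$-constituents oscillator-type and multiplicity free — i.e., $d=1$ and $M$ a spinor-oscillator module, whence $I = J_{\gq} = \ker\Psi_{a|b}$ (or $\ker\Theta_{a|b}$) since $J_\gq$ is the annihilator of a spinor-oscillator representation and $M$ is simple with the same annihilator. One technical point to handle: when $m=2n+1$, the odd part of $\gq$ contains a copy of $\osp(1|2n)$, so I would use Lemma \ref{osp12} to rule out the remaining small $\sp$-weights that survive the $\go$-side analysis but would produce a non-spinor-oscillator $\gq$-module of degree $1$.

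The main obstacle will be the bookkeeping in case (iii) and its boundary with the "defining module" exception: I must verify that once the $\go$-constituents are forced to be spinor modules (degree $1$) and the $\sp$-constituents to be oscillator-type (also degree $1$), the induced $\gq$-module generated by such a $\gq_{\bar 0}$-socle is genuinely a spinor-oscillator module and not something of larger degree or a different simple module — this is where I would lean on the explicit highest-weight description of spinor-oscillator representations from Section \ref{sebsec-conn} and on Theorem \ref{thm:weightA}(b) (uniqueness of the simple $A$-module through a given weight) applied via $\Psi_{a|b}$/$\Theta_{a|b}$ to conclude $I \supseteq J_\gq$, hence $I = J_\gq$ by primitivity of $J_\gq$ and maximality. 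The analysis when \emph{both} ideals have rank $>d$ is strictly easier and falls out of the same argument; the delicate case is when exactly one does and the other has small rank, where the explicit lists of Lemmas \ref{degreefinite}, \ref{osp12} do the work.
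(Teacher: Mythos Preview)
Your approach is genuinely different from the paper's, and the step you yourself flag as ``the main obstacle'' is indeed a real gap that your outline does not close.

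The paper does \emph{not} work by analysing the $\gq_{\bar 0}$-constituents of a simple bounded module $M$. Instead it invokes Musson's Theorem to write $I=\Ann_{U(\gq)}L_{\gb}(\lambda)$ for a highest weight module, and then applies a systematic sequence of \emph{odd reflections} to $\gb$. The point is that after each reflection the restriction of the new highest weight to the large simple ideal $\gs\subset\gq_{\bar 0}$ must again be the highest weight of a multiplicity free $\gs$-module (by Lemma~\ref{degreefinite}), and there are at most four such weights for a fixed Borel of $\gs$. This rigidity forces at most one reflection in the chain to be typical, and an explicit case analysis then pins down $\lambda$ to the trivial, defining, or spinor-oscillator highest weight. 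The conclusion $I=J_\gq$ (in the last case) follows because $L_\gb(\lambda)$ is then literally a spinor-oscillator module.

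Your route bypasses Musson and odd reflections entirely, and that is where the trouble lies. Two concrete problems:
\begin{itemize}
\item The assertion that degree $=d$ ``pins $\lambda$ to the explicit short list in the proof of Lemma~\ref{degreefinite}'' is not what that lemma says. Lemma~\ref{degreefinite} gives a dichotomy (multiplicity free, or degree $\ge n-1$), not a classification of modules of a given degree; in the borderline case $\operatorname{rk}\gs=d+1$ nothing prevents a non-multiplicity-free constituent of degree exactly $d$.
\item More seriously, even if you do establish that every $\gq_{\bar 0}$-constituent of $M$ is of the form (spinor)$\,\boxtimes\,$(oscillator), this does not by itself yield $\Ann_{U(\gq)}M=J_\gq$, nor even $d(M)=1$. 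Distinct constituents can share weights, so their degrees do not simply propagate to $M$; and knowing the isomorphism types of $\gq_{\bar 0}$-constituents says nothing directly about which elements of $U(\gq)$ annihilate $M$. Your proposed fix via Theorem~\ref{thm:weightA}(b) is circular: that theorem is about $A$-modules, and applying it presupposes that $M$ is already a pullback along $\Psi_{a|b}$ or $\Theta_{a|b}$, i.e., that $J_\gq\subset I$ --- which is exactly what you are trying to prove.
\end{itemize}
The missing idea is the odd-reflection argument: it is what converts ``the $\gs$-highest-weight part is constrained'' into ``the full $\gq$-highest weight $\lambda$ is one of three explicit possibilities''. Without it (or an equivalent mechanism propagating information from $\gs$ to all of $\gq$), the passage from constituent data to the identification of the primitive ideal does not go through.
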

\begin{proof} For $m \leq 1$ the statement follows directly from Lemmas  \ref{osp12}  and \ref{finiterank}. Therefore in the rest of the proof we assume that $m\geq 2$.

 By Musson's Theorem \cite{Musson}, $I=\Ann_{U(\gq)} L_{\gb}(\lambda)$ for some Borel subsuperalgebra $\gb$ and some weight $\lambda$. For $\lambda=0$, the ideal $I$ is the augmentation ideal. For the rest of the proof we assume $\lambda\neq 0$. Let $\gs$ be a simple ideal of $\gq_{\bar{0}}$ of rank greater than $d+1$. We can choose the Borel subsuperalgebra $\gb$ so that 
	its base of simple roots contains a base of simple roots for
  $\gs$.    By $\mu$ we denote the weight of $\gs$ obtained from $\lambda$ by restriction.
  
 In order to study the annihilator $I$ of the simple highest weight $\gq$-module $L_{\gb}(\lambda)$, we will consider $L_{\gb}(\lambda)$ as a highest weight module over a variable Borel subalgebra $\gb'$ obtained from $\gb$ by some sequence of odd reflections. Then $\lambda'$ will denote the corresponding
  highest weight, and $\mu'$ will be its restriction to $\gs$. Lemma \ref{degreefinite} implies that the simple $\gs$-modules with highest weights $\mu$ and  $\mu'$ are necessarily multiplicity free.
  
  We may assume that $\gb'$ is obtained from $\gb$ by  odd reflections with respect to some isotropic odd  roots $\alpha_1,\dots,\alpha_r$.  It is essential to note that there are at most four nonisomorphic multiplicity free simple  weight $\gs$-modules which have a highest weight with respect to a fixed Borel subalgebra  of $\gs$. (Indeed, these are the trivial, natural, and spinor modules for $\gs \simeq \so (m)$, and the trivial, natural, and oscillator modules for $\gs \simeq \sp (2n)$.)
  This shows that each of the weights $\mu$ and $\mu'$ can take at most four different values. Moreover, since $\lambda,\lambda'$  have the same image modulo the root lattice of $\gq$, it is easy to check that for a given $\mu$ there is a unique $\mu'$ with $\mu'\neq \mu$. Therefore in a shortest chain of odd reflections connecting $\gb$ and $\gb'$ there can be at most one typical reflection. 
 
  Assume $\gs=\mathfrak{sp}(2n)$. If $m=2\ell+1$  we fix the simple roots
  $$\varepsilon_1-\varepsilon_2,\dots,\varepsilon_\ell-\delta_1,\dots, \delta_{n-1}-\delta_n,\delta_n,$$
  and if $m=2\ell$ we take the simple roots
  $$\varepsilon_1-\varepsilon_2,\dots,\varepsilon_\ell-\delta_1,\dots, \delta_{n-1}-\delta_n,2\delta_n.$$
Set $\lambda=a_1\varepsilon_1+\dots +a_\ell\varepsilon_\ell+\mu$.  The  above conditions and Lemma \ref{finiterank} show that for $\mu \neq \mu'$ one of the following holds:

  \begin{enumerate}
  \item $\mu=0$, $\mu'=\delta_1$,
    \item $\mu=\delta_1$, $\mu'=0$,
\item $\mu=-\frac{1}{2}(\delta_1+\dots+\delta_{n})$, $\mu'=-\frac{1}{2}(\delta_1+\dots+\delta_{n-1})-\frac{3}{2}\delta_n$,
    \item $\mu=-\frac{1}{2}(\delta_1+\dots+\delta_{n-1})-\frac{3}{2}\delta_n$, $\mu'=-\frac{1}{2}(\delta_1+\dots+\delta_{n})$.
  
  \end{enumerate}

  Consider the first case. We start by applying the odd reflections corresponding to the sequence of odd roots $\varepsilon_\ell - \delta_1,\dots,\varepsilon_1-\delta_1$. Since $\lambda \neq 0$, exactly  one of these reflections must be typical,
  say with respect to $\varepsilon_p-\delta_1$. This implies $a_{p+1}=\dots=a_{\ell}=0$, $a_1=\dots=a_{p-1}=-1$.  Next, an application of the reflections corresponding to $\varepsilon_\ell - \delta_2,\dots,\varepsilon_1 - \delta_2$ cannot change $\lambda'$. This is only possible
  for $p=1$ and $\lambda=\varepsilon_1$, and then $L_{\gb}(\lambda)$ is a defining representation.

 Let us deal with the second case. The odd reflections with respect to the roots $\varepsilon_\ell - \delta_1,\dots,\varepsilon_1-\delta_1$ do not change $\lambda$, i.e., they are all atypical. Therefore $a_1=\dots=a_\ell=-1$, but then the reflection with respect to $\varepsilon_\ell - \delta_2$ is typical and
  $\mu'=\delta_1+\delta_2$. This proves that the second case is impossible.

  Now, consider the third case. Here we perform in some order all odd reflections with roots $\varepsilon_i  - \delta_j$,  $i=1,\dots,\ell$,  $j=1,\dots,n-1$, and check that all these reflections do not change $\lambda$. This forces
  $a_1=\dots=a_\ell=\frac{1}{2}$. Hence $\lambda = \frac{1}{2} \left(\varepsilon_1+\cdots +\varepsilon_\ell\right)  -\frac{1}{2}(\delta_1+\dots+\delta_{n})$ and  
  $L_{\gb}(\lambda)$ is a spinor-oscillator representation.

  Finally, let us look at the fourth case. We can show that $a_1=\dots=a_\ell=\frac{1}{2}$ in the same way as in the third case. Therefore, if $m$ is even we have $\lambda = \frac{1}{2} \left(\varepsilon_1+\cdots +\varepsilon_\ell\right)  -\frac{1}{2}(\delta_1+\dots+\delta_{n-1})-\frac{3}{2}\delta_n$, and 
   $L_{\gb}(\lambda)$ is a spinor-oscillator representation not isomorphic up to parity change to a spinor-oscillator representation that occurred in the third case. 
  If $m$ is odd, then by Lemma \ref{osp12} the restriction of  $\lambda$  to $\mathfrak{osp}(1|2n)$ with roots $\pm \delta_i \pm \delta_j, \delta_r - \delta_s, \pm \delta_i$ for $r \neq s$,
     must equal $-\frac{1}{2}(\delta_1+\dots+\delta_{n})$. This contradicts our assumption for $\mu$,  therefore the fourth case forces $m$ to be even. 
  
  This proves our claim for $\mathfrak s = \sp (2n)$ since in case (1) $I$ is the annihilator of a defining representation, while in cases (3) and (4) $I$ is the annihilator of a spinor-oscillator representation.
  
We conclude the proof by essentially repeating the above argument  for $\gs=\mathfrak{o}(m)$.
For  $m=2\ell$ we fix the simple roots
  $$\delta_1-\delta_2,\dots,\delta_{n-1}-\delta_n,\delta_n-\varepsilon_1,\varepsilon_1-\varepsilon_2,\dots,\varepsilon_{\ell-1}-\varepsilon_\ell,\varepsilon_{\ell-1}+\varepsilon_\ell,$$
and for $m=2\ell+1$ we choose  the simple roots
 $$\delta_1-\delta_2,\dots,\delta_{n-1}-\delta_n,\delta_n-\varepsilon_1,\varepsilon_1-\varepsilon_2,\dots,\varepsilon_{\ell-1}-\varepsilon_\ell,\varepsilon_\ell.$$
A priori there are the following cases for $\mu \neq \mu'$:
 \begin{enumerate}
  \item $\mu=0$, $\mu'=\varepsilon_1$,
    \item $\mu=\varepsilon_1$, $\mu'=0$,
\item $\mu=\frac{1}{2}(\varepsilon_1+\dots+\varepsilon_{\ell})$, $\mu'=\frac{1}{2}(\varepsilon_1+\dots+\varepsilon_{\ell-1}-\varepsilon_{\ell})$,
\item  $\mu=\frac{1}{2}(\varepsilon_1+\dots+\varepsilon_{\ell-1}-\varepsilon_{\ell})$, $\mu'=\frac{1}{2}(\varepsilon_1+\dots+\varepsilon_{\ell})$.
\end{enumerate}
All these cases can be treated in the same way as above.  \end{proof}

  \begin{corollary} Let  $\gq$ and $I$ are as in the previous lemma. Then the superalgebra of $\gh$-invariants $\left(U(\gq)/I\right)^{\gh}$ is abelian. Hence any weight $\gq$-module generated by a single weight vector is multiplicity free.
  \end{corollary}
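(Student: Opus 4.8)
The plan is to invoke Lemma~\ref{boundedideal}, which leaves three possibilities for $I$: the augmentation ideal, the annihilator of a defining module $V$, or $J_\gq$. In the first case $U(\gq)/I=\C$ and there is nothing to prove. In the second case $\gq$ is finite-dimensional, so $V$ is finite-dimensional, and $\End_\gq(V)=\C$ --- a $\gq$-endomorphism preserves the weight spaces of $V$, which are one-dimensional with pairwise distinct weights, so it is a single scalar, and it must be even since each weight space is purely even or purely odd --- whence $U(\gq)/I\cong\End_\C(V)$. In the third case, composing with $\Psi_{a|b}$ (if $m$ is even) or $\Theta_{a|b}$ (if $m$ is odd) and using Lemma~\ref{homomorphisms_Lie} together with the surjectivity of $\Theta_{a|b}$, we get $U(\gq)/I\cong Cl(a|b)_{ev}$, resp.\ $U(\gq)/I\cong Cl(a|b)$, where $(a,b)=(\lfloor m/2\rfloor,n)$.

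To see that $\mathcal H:=(U(\gq)/I)^{\gh}$ is commutative I would check that in each case $\mathcal H$ equals the image of $U(\gh)$ in $U(\gq)/I$; since $\gh$ is abelian, so is $U(\gh)$, and the claim follows. For $U(\gq)/I\cong\End_\C(V)$ the $\gh$-invariants form the centralizer of the image of $\gh$, i.e.\ $\End_{\gh}(V)=\prod_\lambda\End_\C(V^\lambda)$; as $\dim V^\lambda=1$ and the weights of $V$ are pairwise distinct, this is exactly the algebra generated by the image of $\gh$. For $I=J_\gq$, Remark~\ref{weightcorrespondence} identifies the image of $U(\gh)$ with $H_A$ ($A=Cl(a|b)$), which by Lemma~\ref{rootsA}(a) is the zero $\gh$-root space $(Cl(a|b))^0$; since the generators $u_i$ have $\Z$-degree $0$ we have $H_A\subset Cl(a|b)_{ev}$, so $H_A$ is also the zero root space of $U(\gq)/I$, i.e.\ $\mathcal H=H_A$, commutative by Lemma~\ref{rootsA}(b),(c).

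For the final assertion let $M$ be a weight $\gq$-module annihilated by $I$ and generated by a weight vector $v$ of weight $\mu$. Since $\gh$ acts semisimply on $U(\gq)$ via the adjoint action and $I$ is $\ad\gh$-stable, $U(\gq)/I$ carries an $\ad\gh$-grading and $M^\nu=(U(\gq)/I)^{\nu-\mu}v$ for every weight $\nu$. Two facts then finish the argument. First, each root space $(U(\gq)/I)^\beta$ is a cyclic left module over $\mathcal H=(U(\gq)/I)^0$: for $I=J_\gq$ this is Lemma~\ref{rootsA}(d), noting that $(Cl(a|b))^\beta$ is homogeneous for the $\Z$-grading, so $(Cl(a|b)_{ev})^\beta$ is either all of $(Cl(a|b))^\beta$ or zero; for the matrix case $\End_\C(V)^\beta=\prod_\lambda\Hom_\C(V^\lambda,V^{\lambda+\beta})$ is visibly $\mathcal H$-cyclic. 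Second, $\mathcal H$, being generated by the image of $U(\gh)$, acts on each weight space $M^\nu$ through the character $\nu$, that is, by scalars. Writing $(U(\gq)/I)^{\nu-\mu}=\mathcal H x$ we obtain $M^\nu=\mathcal H(xv)=\C\,xv$, so $\dim M^\nu\le 1$ and $M$ is multiplicity free.

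The one step that requires genuine attention --- and the likely source of friction --- is the identification $\mathcal H=$ image of $U(\gh)$ in the case $I=J_\gq$ with $m$ even, where $U(\gq)/I$ is only the subalgebra $Cl(a|b)_{ev}$ of the full Clifford superalgebra: one must verify that the zero root space does not shrink upon passing to $Cl(a|b)_{ev}$, which comes down to the elementary observation that each root space $(Cl(a|b))^\beta$ lies in a single $\Z$-degree. The remaining verifications are direct applications of Lemma~\ref{rootsA}, Remark~\ref{weightcorrespondence}, and the explicit descriptions above.
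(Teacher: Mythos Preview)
Your argument is correct. The paper states this corollary without proof, regarding it as an immediate consequence of Lemma~\ref{boundedideal} together with the structural results on $H_A$ from Lemma~\ref{rootsA}; your write-up supplies exactly those details. The case split via Lemma~\ref{boundedideal}, the identification of $(U(\gq)/I)^{\gh}$ with the image of $U(\gh)$ (trivial for the augmentation ideal, a Vandermonde argument for $\End_\C(V)$, and Remark~\ref{weightcorrespondence} plus Lemma~\ref{rootsA}(a) for $I=J_\gq$), and the passage from cyclicity of root spaces to multiplicity-freeness are all sound. Your observation that each root space $Cl(a|b)^\alpha$ lies in a single $\mathbb Z$-degree, because the degree of a monomial equals the sum of the coefficients of its $\gh_A$-weight, is precisely what makes the even-$m$ case go through, and your verification that $\End_\C(V)^\beta$ is $\mathcal H$-cyclic (via the generator $\sum_\mu E_{\mu+\beta,\mu}$) handles the defining-module case. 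One small clarification worth making explicit: the ``Hence'' clause of the corollary is to be read as applying to weight modules annihilated by $I$, which is how you use it and how it is invoked in the proof of Proposition~\ref{limit}.
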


We are now ready to prove the following.
  
  \begin{proposition}\label{limit} Let $M$ be a simple bounded $\gg$-module. Then $M$ is multiplicity free. Moreover, $M$ satisfies $\Ann_{U(\gg)} M=J_\gg$ or $M$ is a trivial or a natural module.  \end{proposition}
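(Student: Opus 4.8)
The plan is to reduce everything to the finite-rank statement of Lemma~\ref{boundedideal} by an exhaustion argument. Fix a simple bounded $\gg$-module $M$ with exhaustion $\gg = \varinjlim \gg_k$, where $\gg_k \simeq \osp(2a_k|2b_k)$ or $\osp(2a_k+1|2b_k)$. Since $\dim \gg = \infty$, at least one of $a,b$ is infinite; choosing $k$ large we may assume $\gg_k$ has a simple even ideal of rank exceeding $d(M)$ (the bound on $M$ is uniform in $k$, since a weight space of $M$ as a $\gg_k$-module is contained in a weight space of $M$ as a $\gg$-module). Let $I_k := \Ann_{U(\gg_k)} M$; this is a bounded primitive ideal of $U(\gg_k)$ of degree $d(I_k) \le d(M)$. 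By Lemma~\ref{boundedideal}, for $k \gg 0$ we have $d(I_k) = 1$ and $I_k$ is either $J_{\gg_k}$, the augmentation ideal, or the annihilator of a defining $\gg_k$-module. The degree-one conclusion propagates to $M$ itself: every weight space of $M$ is a weight space for some $\gg_k$ with $k$ large, so $d(M) = 1$, i.e. $M$ is multiplicity free. (Alternatively, one can cite the Corollary just proved, that $(U(\gg_k)/I_k)^{\gh}$ is abelian, to get multiplicity freeness directly.)

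Next I would pin down $\Ann_{U(\gg)} M = \varinjlim I_k$. The three possibilities for $I_k$ must be eventually consistent: because $\gg_k \subset \gg_{k+1}$ compatibly and $J_{\gg} = \varinjlim J_{\gg_k}$, the augmentation ideals form a compatible system, the ideals $J_{\gg_k}$ form a compatible system, and (when $a$ or $b$ is finite) the defining-module annihilators likewise form a compatible system. One checks that these three families are pairwise distinct at each large $k$ (they annihilate nonisomorphic simple bounded $\gg_k$-modules), hence the type of $I_k$ stabilizes. If it stabilizes to the augmentation ideal, then $M$ is trivial; if to the defining-module annihilator, then $M$ is (isomorphic to) a natural module $V$ or $\Pi V$ — here one uses that a simple module annihilated by the annihilator of the defining $\gg_k$-module for all large $k$ and which is a weight module is forced to be a direct limit of defining modules, i.e. the natural module; and if it stabilizes to $J_{\gg_k}$, then $\Ann_{U(\gg)} M = \varinjlim J_{\gg_k} = J_\gg$.

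The main obstacle I expect is the bookkeeping needed to guarantee that for large $k$ the simple ideal of $(\gg_k)_{\bar 0}$ used in Lemma~\ref{boundedideal} genuinely has rank $> d(M)$, and that the conclusion is uniform — i.e. one must be slightly careful that $J_{\gg_k} \subset I_k$ is detected coherently as $k$ grows, rather than, say, the defining-module case occurring for some $k$ and the $J_{\gg_k}$ case for another. The clean way around this is: once $I_k$ is known to contain $J_{\gg_k}$ for one sufficiently large $k$, restriction compatibility forces $I_\ell \supseteq J_{\gg_\ell}$ for all $\ell \ge k$ as well, because $J_{\gg_\ell} \cap U(\gg_k) \subseteq J_{\gg_k} = I_k \cap U(\gg_k)$ kills the $\gg_k$-socle generators; combined with simplicity of $M$ this upgrades to $J_{\gg_\ell} \subseteq I_\ell$, and in the limit $J_\gg \subseteq \Ann_{U(\gg)} M$. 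Since $\ker \Psi_{a|b}$ (resp. $\ker \Theta_{a|b}$) is a \emph{primitive} ideal of $U(\gg)$ annihilating the simple spinor-oscillator module, and $M$ is simple, the reverse inclusion forces either equality or that $M$ belongs to the exceptional list (trivial or natural), completing the proof.
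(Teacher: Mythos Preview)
Your overall strategy---reduce to Lemma~\ref{boundedideal} via the exhaustion $\gg=\varinjlim\gg_k$---is exactly the paper's, but two steps are not correct as written.

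First, the claim that $I_k:=\Ann_{U(\gg_k)}M$ is a \emph{primitive} ideal is unjustified: $M$ is simple over $\gg$, not over $\gg_k$. For instance, if $M=V$ is the natural $\gg$-module, then as a $\gg_k$-module $M$ contains both the natural $\gg_k$-module and infinitely many trivial summands, so $I_k$ is the intersection of two distinct primitive ideals and is itself neither. Lemma~\ref{boundedideal} therefore does not apply to $I_k$ directly, and the subsequent ``type stabilization'' argument collapses.

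Second, the containment of weight spaces is stated backwards: since $\gh_k\subset\gh$, an $\gh_k$-weight space of $M$ is a \emph{union} of $\gh$-weight spaces, not a subset of one. So the $\gg_k$-module $M$ is not a priori bounded, and you cannot bound $d(I_k)$ this way.

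The paper repairs both issues simultaneously by working not with $M$ but with a single $\gh$-weight space $M^\lambda$ and the cyclic submodule $U(\gg_k)M^\lambda$. The key observations are: (i) $M^\lambda$ is a simple module over $\bar U^{\gh}=(U(\gg)/I)^{\gh}$, and $\bar U^{\gh}=\varinjlim \bar U_k^{\gh_k}$ (here one uses that $\gh\setminus\gh_k$ centralizes $\gg_k$); (ii) the $\gh_k$-weight spaces of $U(\gg_k)M^\lambda$ land inside individual $\gh$-weight spaces of $M$, so $U(\gg_k)M^\lambda$ \emph{is} bounded of degree $\le d(M)$; (iii) every simple $\gg_k$-subquotient of $U(\gg_k)M^\lambda$ then has a bounded primitive annihilator to which Lemma~\ref{boundedideal} applies, forcing the simple $\bar U_k^{\gh_k}$-constituents of $M^\lambda$ to be one-dimensional. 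Since $M^\lambda$ is finite-dimensional and simple over the direct limit $\bar U^{\gh}$, it is already simple over some $\bar U_k^{\gh_k}$, hence $\dim M^\lambda=1$. For the annihilator, one argues with the (now simple) $\gg_k$-modules $U(\gg_k)M^\lambda$ rather than with $I_k$.
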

  \begin{proof} Let $I:=\Ann M$, $\bar U:=U(\gg)/I$, and let $\lambda$ be a weight of $M$. Then a standard argument shows that $M^\lambda$ is a simple $\bar U^{\gh}$-module. Next, set
    $$\gh_k:=\gg_k\cap \gh,\quad \bar U_k:=U(\gg_k)/(U(\gg_k)\cap I).$$
    We have $\bar U^{\gh}=\varinjlim \bar U_k^{\gh_k}$.  Since $\gg$ is infinite dimensional, Lemma \ref{boundedideal} implies that for sufficiently large $k$ the simple $\bar U_k^{\gh_k}$-constituents of the module $M^\lambda$ are one-dimensional.  
    By passing to  the direct limit we obtain $\dim M^\lambda=1$.  Furthermore, again by Lemma  \ref{boundedideal} we see that  the annihilator of $U(\gg_k)M^\lambda$ equals $J_{\gg_k}$, unless $U(\gg_k)M^\lambda$ is 
    a trivial representation or a defining representation. The statement follows by passing to the direct limit.  
    \end{proof}
    \begin{remark}
The claim of Proposition \ref{limit} is proved in \cite{GrP} in the case where $\gg = \gg_{\bar{0}}$, i.e., for $\gg = \sp(\infty), \so(\infty)$.
    \hfill${\bigcirc}$    \end{remark}
    
    We say that a simple weight $\gg$-module $M$ is of {\it 
    spinor-oscillator type} if it is annihilated by $J_{\gg}$, i.e.,  $M$ is   obtained by pullback along the homomorphism $\Theta_{a,b}$  from a weight $Cl(a|b)$-module or, respectively, along the homomorphism  $\Psi_{a,b}$ from a weight $Cl(a|b)_{ev}$-module. Proposition \ref{limit} implies the following.

    \begin{corollary}\label{cor-osc} Let $M$ be a simple bounded weight $\gg$-module such that $M\not \simeq V, \Pi V,\mathbb C, \Pi \mathbb C$. Then $M$ is  of spinor-oscillator type. 
    \end{corollary}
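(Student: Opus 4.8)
The plan is to obtain this as an immediate corollary of Proposition \ref{limit}. That proposition supplies, for every simple bounded weight $\gg$-module $M$, the dichotomy: either $\Ann_{U(\gg)} M = J_\gg$, or $M$ is a trivial module or a natural module. By the conventions of Section \ref{prelim}, each of the Lie superalgebras $\gg = \osp(2a|2b)$ and $\osp(2a+1|2b)$ has precisely two trivial modules, $\mathbb{C}$ and $\Pi\mathbb{C}$, and precisely two natural modules, $V$ and $\Pi V$. Hence the hypothesis $M \not\simeq V, \Pi V, \mathbb{C}, \Pi\mathbb{C}$ rules out the second branch and forces $\Ann_{U(\gg)} M = J_\gg$.

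It then remains only to match this with the definition of a module of spinor-oscillator type, which was phrased precisely as being annihilated by $J_\gg$. The reformulation via pullback is automatic: since $J_\gg = \ker\Psi_{a|b}$ (respectively $J_\gg = \ker\Theta_{a|b}$) and $\Psi_{a|b}$, $\Theta_{a|b}$ are surjective onto $Cl(a|b)_{ev}$, respectively $Cl(a|b)$ (Lemma \ref{homomorphisms_Lie}), a $\gg$-module annihilated by $J_\gg$ is canonically the pullback of a $Cl(a|b)_{ev}$- respectively $Cl(a|b)$-module, and because $\Psi_{a|b}(U(\gh)) = H_A = \Theta_{a|b}(U(\gh))$ (Remark \ref{weightcorrespondence}) this pullback is compatible with the weight decompositions. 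Thus $M$ is of spinor-oscillator type.

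There is essentially no obstacle at this step: the whole content of the corollary is already carried by Proposition \ref{limit}, whose proof in turn rests on the finite-rank input Lemma \ref{boundedideal} --- which pins down a bounded primitive ideal of degree $1$ in $U(\osp(m|2n))$ as $J_{\osp(m|2n)}$, the augmentation ideal, or the annihilator of a defining module, using Lemmas \ref{osp12}, \ref{finiterank} and a bookkeeping of chains of odd reflections --- together with the direct-limit identity $\bar U^{\gh} = \varinjlim \bar U_k^{\gh_k}$. The corollary is recorded separately only to name, under the label ``spinor-oscillator type'', the exact form of the conclusion that will be used in the sequel.
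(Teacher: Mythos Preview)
Your proposal is correct and follows exactly the paper's approach: the paper does not give a separate proof of this corollary but simply states that Proposition~\ref{limit} implies it, after introducing the definition of a module of spinor-oscillator type as one annihilated by $J_\gg$. Your additional remarks on surjectivity and weight compatibility are accurate elaborations of points the paper leaves implicit.
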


 Note that 
every simple weight $\sp(2b)$-module $T$ of oscillator type (as defined in Section 1) is the pullback of a (unique, up to isomorphism) simple weight $Cl(0|b)_{ev}$-module $\tilde T$. This follows from the fact that the  ideal $\ker \Psi_{0,b}$ of $U (\sp(2b))$ is the primitive ideal not only of the oscillator representations but of any simple multiplicity free weight module of $\sp (2b)$. For  $b < \infty$ this is well known, and for $b=\infty$ see \cite{GrP}.

	 Given $T$ as above, the module $\tilde T$ generates a unique simple weight $Cl(0|b)$-module which has the form $\tilde T \oplus \tilde T'$ as a $Cl(0|b)_{ev}$-module. The pullback of $\tilde T'$ to $\sp(2b)$ is by definition the \emph{twin} of $T$ and is a simple module.  Similarly, any spinor $\so(2a)$-module is the pullback of a simple weight $Cl(a|0)_{ev}$-module, and 
	 we call two  spinor $\so(2a)$-modules {\emph{twins}} if they are  pullbacks of the two simple $Cl(a|0)_{ev}$-constituents of a simple $Cl(a|0)$-module. For $\so(2a+1)$ we declare two  spinor $\so(2a+1)$-modules  to be {\emph{twins}} if they are isomorphic. 

 We are ready to state our explicit description of  simple bounded weight $\gg$-modules. 
 	 
	 \begin{theorem}  \label{thm-m-twins} Let $M$ be a simple bounded weight $\gg$-module of spinor-oscillator type. Then the following statements hold.
	 \begin{itemize}
\item[(a)]	$M_{\bar{0}}$ and  $M_{\bar{1}}$ are simple $\gg_{\bar{0}}$-modules.
\item[(b)]	There exist twin spinor $\so(2a)$- or $\so(2a+1)$-modules $S$ and $S'$, and twin simple  $\sp(2b)$-modules $T$ and $T'$ of oscillator type, such that 
                      \begin{equation} \label{eq-m-s-n}
                      M_{\bar{0}}\simeq S\otimes T,\quad M_{\bar{1}}\simeq \Pi (S'\otimes T').
                      \end{equation}
                      The modules $S,S',T,T'$ are unique up to isomorphism and determine $M$ up to isomorphism.
\item[(c)] Any pair $(S,T)$ where  $S$ is a  spinor $\so(2a)$- or $\so(2a+1)$-module and $T$ is a simple  $\sp(2b)$-module of oscillator type determines a simple bounded weight $\gg$-module $M$ of spinor-oscillator type for which \eqref{eq-m-s-n} holds. 
	 \end{itemize}	 
	 \end{theorem}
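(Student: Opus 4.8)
The strategy is to transport the statement across the surjective homomorphisms of Section~\ref{sec-clif-weyl} and to invoke the classification of simple weight modules over Clifford and Weyl superalgebras. Since $M$ is of spinor-oscillator type it is, by definition, the pullback of a simple weight $A$-module $X$ along a surjection $F\colon U(\gg)\to A$, where $(A,F)=(Cl(a|b),\Theta_{a|b})$ if $\gg=\osp(2a+1|2b)$ and $(A,F)=(Cl(a|b)_{ev},\Psi_{a|b})$ if $\gg=\osp(2a|2b)$; in the second case Theorem~\ref{even} identifies $X$ with a block summand of a simple $Cl(a|b)$-module, so in both cases Theorem~\ref{thm:weightA}(a) applies and $X$, hence $M$, is multiplicity free. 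For part~(a) I would apply Lemma~\ref{associative} to the simple $A$-module $X$: its $\mathbb Z_2$-components $X_{\bar 0}$ and $X_{\bar 1}$ are simple $A_{\bar 0}$-modules. The explicit formulas for $\Theta_{a|b}$, $\Psi_{a|b}$ together with Lemma~\ref{homomorphisms_Lie} and Corollary~\ref{even-iso} show that $F\big(U(\gg_{\bar 0})\big)=A_{\bar 0}$: writing $A_{\bar 0}\simeq B_1\otimes B_2$, the ideal $\gs_1:=\so(2a)$ or $\so(2a+1)$ of $\gg_{\bar 0}$ maps onto the ``spinor factor'' $B_1$ ($Cl(a|0)_{ev}$ or $Cl(a|0)$ respectively), while $\gs_2:=\sp(2b)$ maps onto the ``oscillator factor'' $B_2\simeq D(b|0)_{ev}\simeq Cl(0|b)_{ev}$ (Corollary~\ref{isomorphism_even}). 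Since $M_{\bar 0}=X_{\bar 0}$ and $M_{\bar 1}=X_{\bar 1}$ as vector spaces and $\gg_{\bar 0}$ acts through $A_{\bar 0}$, part~(a) follows.

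For the tensor decomposition in~(b), note that the category of weight $B_1$-modules is semisimple --- this is Lemma~\ref{semisimplicity} when $B_1=Cl(a|0)$, and follows from it via Theorem~\ref{even} when $B_1=Cl(a|0)_{ev}$. Hence the restriction of the simple $B_1\otimes B_2$-module $M_{\bar 0}$ to $B_1$ is semisimple, and, being the restriction of a simple module, it is $B_1$-isotypic; therefore $M_{\bar 0}\simeq S\otimes T$ for a simple weight $B_1$-module $S$ and a simple weight $B_2$-module $T$, and likewise $M_{\bar 1}\simeq\Pi(S'\otimes T')$. Pulled back to $\gg_{\bar 0}$, the modules $S,S'$ are spinor $\gs_1$-modules and $T,T'$ are simple $\sp(2b)$-modules of oscillator type --- precisely the pullbacks along the relevant restrictions of $\Psi$ and $\Theta$, using that $\ker\Psi_{0|b}$ is the common annihilator of all multiplicity-free weight $\sp(2b)$-modules. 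To recognize $S,S'$ (resp.\ $T,T'$) as \emph{twins}, I would decompose $X$ directly via Proposition~\ref{description-of-simple} as $\big((X^+)^\dagger\otimes(X^-)^\dagger\big)^\dagger$ with $X^+$ a simple $Cl(a|0)$-module and $X^-$ a simple $D(b|0)$-module (and the analogous decomposition in the even-Clifford case): following the isomorphisms \eqref{D-iso}--\eqref{Cl-iso} and the dagger functor of Proposition~\ref{dagger} through to the $\mathbb Z_2$-graded pieces, $T$ and $T'$ turn out to be the two $Cl(0|b)_{ev}$-constituents of one simple $Cl(0|b)$-module, and in the $\osp(2a|2b)$ case $S$ and $S'$ are the two $Cl(a|0)_{ev}$-constituents of one simple $Cl(a|0)$-module, which is the very definition of twins.

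Uniqueness and part~(c) are then essentially formal. The $\mathbb Z_2$-components $M_{\bar 0}$, $M_{\bar 1}$ are intrinsic to $M$; the tensor factorization of a simple $B_1\otimes B_2$-module is unique up to isomorphism (the simple weight $B_1$-modules being absolutely simple); and, by Theorem~\ref{thm:weightA}(b) and Lemma~\ref{associative}, a simple $A$-module is determined up to parity change by its $\bar 0$-component as an $A_{\bar 0}$-module. Hence the quadruple $(S,S',T,T')$ determines $M$. For (c), given a spinor $\gs_1$-module $S$ and an oscillator-type $\sp(2b)$-module $T$, realize $S$ as the pullback of a simple weight $Cl(a|0)$- or $Cl(a|0)_{ev}$-module $X^+$ and $T$ as the pullback of a simple weight $Cl(0|b)_{ev}$-module, hence via \eqref{D-iso}--\eqref{Cl-iso} of a simple $D(b|0)$- or $Cl(0|b)$-module $X^-$; Proposition~\ref{description-of-simple} together with Lemma~\ref{isomorphism_tensor} and Corollary~\ref{even-iso} assembles $(X^+,X^-)$ into a simple weight $A$-module $X$, and the pullback $M$ of $X$ along $\Theta_{a|b}$ or $\Psi_{a|b}$ is a simple bounded weight $\gg$-module of spinor-oscillator type for which \eqref{eq-m-s-n} holds by parts~(a)--(b).

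I expect the main obstacle to be the parity bookkeeping in~(b): one must verify that the single $\mathbb Z_2$-grading on $X$, after the parity changes $\Pi^{p(\mu)}$ built into $F(\mu)$ and into the dagger functor, distributes over the tensor decomposition so that $M_{\bar 0}$ is genuinely $S\otimes T$ with $T$ the ``even'' oscillator constituent and $M_{\bar 1}=\Pi(S'\otimes T')$ with $T'$ the ``odd'' one, and that the notion of twin is applied correctly in each case ($S'\simeq S$ for $\so(2a+1)$, but $S$ and $S'$ genuinely distinct half-spinors for $\so(2a)$). A secondary difficulty is that for $a=\infty$ or $b=\infty$ one cannot reason with matrix and Weyl algebras directly but must instead route through the semisimplicity of $\cW_{Cl(a|0)}$ (Lemma~\ref{semisimplicity}), the enumeration of its simple objects (Lemma~\ref{Clifford}), and Proposition~\ref{description-of-simple}.
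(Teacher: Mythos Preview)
Your proposal is correct and follows essentially the same route as the paper's proof: Lemma~\ref{associative} for part~(a), the tensor factorization of $A_{\bar 0}$ from Corollary~\ref{even-iso}, and Proposition~\ref{description-of-simple} to identify the factors as twins. The only notable difference is one of presentation: you first obtain the tensor decomposition $M_{\bar 0}\simeq S\otimes T$ abstractly via semisimplicity of $\cW_{B_1}$ and an isotypic-component argument, and then separately match it against Proposition~\ref{description-of-simple}, whereas the paper works in the opposite order --- it starts from the decomposition $X\simeq\big((X^+)^\dagger\otimes(X^-)^\dagger\big)^\dagger$ of Proposition~\ref{description-of-simple} and directly computes $X_{\bar 0}\simeq X^+\otimes R(X^-)$, $X_{\bar 1}\simeq X^+\otimes(X^-/R(X^-))$ (respectively $R(X)_{\bar 0}\simeq R(X^+)\otimes R(X^-)$, $R(X)_{\bar 1}\simeq(X^+/R(X^+))\otimes(X^-/R(X^-))$ in the $\osp(2a|2b)$ case) using the functor $R$ of Theorem~\ref{even}. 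The paper's computation makes the twin relation and the fact $S\simeq S'$ for $\osp(2a+1|2b)$ immediate without any parity bookkeeping, and the uniqueness follows because $R$ is an equivalence; your route reaches the same conclusions but with the extra step of tracking the dagger and $\Pi^{p(\mu)}$ shifts, exactly the obstacle you anticipated.
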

	 \begin{proof} Let $A=Cl(a|b)$.
          Claim (a) follows directly from Lemma \ref{associative} since the map  $\Psi_{a|b}:U(\gg_{\bar 0})\to (A_{ev})_{\bar 0}$ (respectively, $\Theta_{a|b}:U(\gg_{\bar 0})\to A_{\bar 0}$)  is surjective.
	 
         (b) Note that if the statement holds for $M$ then it holds for $\Pi M$.
         
             If $\gg=\mathfrak{osp}(2a+1|2b)$ then we can assume that $M$ is the pullback of a simple preferred weight $Cl(a|b)$-module $X$. By Proposition \ref{description-of-simple}(b) there is an isomorphism  $X\simeq (\left(X^+)^\dagger\otimes (X^-)^\dagger \right)^{\dagger}$  for some simple preferred weight $Cl(a|0)$-module $X^+$ and some
             simple preferred weight $D(b|0)$-module $X^-$. Next, using the isomorphism $Cl(a|b)_{\bar 0} \simeq Cl(a|0)\otimes Cl(0|b)_{ev}$ from Corollary \ref{even-iso}
             we see that
             $X_{\bar 0}\simeq X^+\otimes R(X^-)$ and $X_{\bar 1} \simeq X^+\otimes (X^-/R(X^-))$ where the functor $R$ is defined in Section 2.6.  Thus, $S = S'$ is isomorphic to the pullback to $\so (2a+1)$  of $X^+$ while $T$ and $T'$ are isomorphic to the pullbacks  to $\sp (2b)$ of
             $R(X^-)$ and $X^-/R(X^-)$,
           respectively. 

            Now let $\gg=\mathfrak{osp}(2a|2b)$. We can assume that  $M$ is the pullback of $R(X)$ for a simple preferred weight
            $Cl(a|b)_{ev}$-module $X$. Then
            $$R(X)_{\bar 0}\simeq R(X^+)\otimes  R(X^-),\quad R(X)_{\bar 1}\simeq  (R(X^+)/X^+)\otimes (X^-/R(X^-)).$$
            Therefore $S$ and $S'$ are isomorphic to the respective pullbacks to $\so (2a)$ of $R(X^+)$ and $(R(X^+)/X^+)$, and  $T$ and $T'$ are the same as in the case of $\mathfrak{osp}(2a+1|2b)$. 
            
    The uniqueness of $S$ and $T$, and hence also of $S'$ and $T'$, is clear from the isomorphism of $\mathfrak g_{\bar{0}}$-modules $ M_{\bar{0}}\simeq S\otimes T$. The fact that   $S,S',T,T'$ determine $M$ up to isomorphism is a consequence of the observation that $M_{\bar{0}}$ determines $R(X)_{\bar{0}}$, which in turn determines $X^+$ and $R(X^-)$ for $\gg=\mathfrak{osp}(2a+1|2b)$ (respectively, $R(X^+)$ and $R(X^-)$ for $\gg=\mathfrak{osp}(2a|2b)$),  and ultimately $X^+$ and $X^-$ since $R$ is an equivalence of categories. Then $M$ is the pullback of $(\left(X^+)^\dagger\otimes (X^-)^\dagger \right)^{\dagger}$  for $\gg=\mathfrak{osp}(2a+1|2b)$ and of $R(\left(X^+)^\dagger\otimes (X^-)^\dagger \right)^{\dagger}$  for $\gg=\mathfrak{osp}(2a|2b)$.

            (c) The given  pair $(S,T)$ determines a pair $(X^+, X^-)$, where $S$ is the pullback of a simple weight $Cl(a|0)$-module $X^+$ and $T$ is the pullback of a simple weight  $Cl(0|b)$-module  $(X^-)^\dagger$ for a simple weight  $D(b|0)$-module $X^{-}$. Then $M$ is recovered from $X^+$ and $X^-$ as in the proof of part (b).
	 \end{proof}
	 
	 \begin{remark}
There is an alternative definition of pairs of twins $(S,S')$ or $(T,T')$ in terms of the supports of the weight modules $S$ and $T$. Recall that in \cite{GrP} the supports of all simple bounded (equivalently, multiplicity free) weight $\so(\infty)$- and $\sp(\infty)$- modules are described explicitly, and moreover  a given such module is determined up to isomorphism by its support. For a finite-dimensional orthogonal or symplectic Lie algebra it is well known that a simple multiplicity free weight module is determined by its support as well. Both if $a< \infty$ or $a = \infty$, for any spinor $\so(2a)$-module $S$ there exists  a unique (up to isomorphism)  spinor module $S'$ such that for every $i \in \Z_{>0}$, 
$\mu + \varepsilon_i \in \supp S'$  for some $\mu \in \supp S$. Similarly, if $b< \infty$ or $b = \infty$, for every  $ \sp(2b)$-module $T$ of oscillator type there exists a unique (up to isomorphism) module $T'$ of oscillator type  such that for every $i \in \Z_{>0}$, $\nu + \varepsilon_i \in \supp S'$  for some $\nu \in \supp S$. It is straightforward to show that the pairs $(S,S')$ and $(T,T')$ are precisely the pairs of twins defined above. This observation leads to another proof of Theorem \ref{thm-m-twins}(b) based on analyzing the supports of the $\gg_{\bar{0}}$-modules $M_{\bar{0}}$ and $M_{\bar{1}}$. 
	 \hfill${\bigcirc}$    \end{remark}
	 
    Consider the decomposition $\gg_{\bar{0}} = \gg_{\mathfrak o}  \oplus \gg_{\sp} $, where $\gg_{\mathfrak o}  \simeq \so(2a)$ or $\gg_{\mathfrak o}  \simeq \so(2a+1)$ and  $\gg_{\sp}  \simeq \sp (2b)$. Set $\gh_{\so} = \gh \cap \gg_{\so}$ and  $\gh_{\sp} = \gh \cap \gg_{\sp}$. Then $\gh^* = \gh_{\so}^* \oplus \gh_{\sp}^*$. Moreover, if $\Gamma_{\so} \subset \gh_{\so}^*$  and $\Gamma_{\sp} \subset \gh_{\sp}^*$ we put  $\Gamma_{\so} + \Gamma_{\sp} := \{ \gamma_1 + \gamma_2 \; | \; \gamma_1 \in \Gamma_{\so}, \,  \gamma_2 \in \Gamma_{\sp}\}$.

\begin{corollary}
Let $M$ be as in Theorem \ref{thm-m-twins}. Then 
$$\supp M = \left( \supp S \sqcup \supp S' \right) + \left( \supp T \sqcup \supp T' \right) \subset \gh_{\so}^* \oplus \gh_{\sp}^*.$$ 
Moreover $M$ is never isomorphic to $\Pi M$, and $\supp M$ determines the isomorphism class of $M$ up to application of $\Pi$. 
\end{corollary}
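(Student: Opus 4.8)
The plan is to obtain all three assertions from Theorem~\ref{thm-m-twins}. First I would use part~(b): as modules over $\gg_{\bar 0}=\gg_{\so}\oplus\gg_{\sp}$ one has $M_{\bar 0}\simeq S\otimes T$ and $M_{\bar 1}\simeq\Pi(S'\otimes T')$. Since $\gh^{*}=\gh_{\so}^{*}\oplus\gh_{\sp}^{*}$ and the weights of $S,S'$ lie in $\gh_{\so}^{*}$ while those of $T,T'$ lie in $\gh_{\sp}^{*}$, the weights of $S\otimes T$ are precisely the sums $\sigma+\theta$ with $\sigma\in\supp S$, $\theta\in\supp T$, and each such weight determines $\sigma$ and $\theta$; likewise for $S'\otimes T'$. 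Hence $\supp M=\supp M_{\bar 0}\cup\supp M_{\bar 1}=(\supp S+\supp T)\cup(\supp S'+\supp T')$, which is the claimed identity. For the ``$\sqcup$'' I would recall that $S'$ (respectively $T'$) occurs as the odd-degree part of the $Cl(a|0)$-module (respectively $D(b|0)$-module) whose even-degree part yields $S$ (respectively $T$); consequently $\supp T$ and $\supp T'$ lie in distinct cosets of the root lattice of $\gg_{\sp}$, and $\supp S\cap\supp S'=\varnothing$ whenever $S\not\simeq S'$ (i.e.\ when $\gg_{\so}=\so(2a)$; when $\gg_{\so}=\so(2a+1)$ one has $S=S'$). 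In either case an equality $\sigma+\theta=\sigma'+\theta'$ forces $\sigma=\sigma'$ and $\theta=\theta'$, a contradiction; hence $\supp M_{\bar 0}\cap\supp M_{\bar 1}=\varnothing$ and every weight space $M^{\lambda}$ is purely even or purely odd.

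The second assertion is then immediate: an isomorphism $M\to\Pi M$ would restrict to an isomorphism of superspaces $M^{\lambda}\to\Pi(M^{\lambda})$ for each $\lambda\in\supp M$, which is impossible since $M^{\lambda}\neq 0$ is purely even or purely odd.

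For the third assertion the decisive step is to recover the bipartition $\supp M=\supp M_{\bar 0}\sqcup\supp M_{\bar 1}$ from the bare set $\supp M$. I would form the graph on $\supp M$ joining $\lambda$ and $\mu$ whenever $\mu-\lambda$ is an even root of $\gg$. Each even root of $\gg$ lies entirely in $\gh_{\so}^{*}$ or entirely in $\gh_{\sp}^{*}$, whereas the odd roots have nonzero components in both; combining this with the coset/disjointness facts above, one checks that no even root joins $\supp M_{\bar 0}$ to $\supp M_{\bar 1}$, while each of $\supp M_{\bar 0}$, $\supp M_{\bar 1}$ is connected under even roots, because $M_{\bar 0}$ and $M_{\bar 1}$ are simple $\gg_{\bar 0}$-modules (Theorem~\ref{thm-m-twins}(a)). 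Thus the connected components of this graph are exactly $\supp M_{\bar 0}$ and $\supp M_{\bar 1}$. Projecting each component to $\gh_{\so}^{*}$ and to $\gh_{\sp}^{*}$ recovers the unordered pairs $\{\supp S,\supp S'\}$ and $\{\supp T,\supp T'\}$, and since spinor $\so$-modules and oscillator-type $\sp$-modules are determined by their supports (by \cite{GrP} and the remark preceding the corollary) this recovers the unordered pair $\{(S,T),(S',T')\}$. By Theorem~\ref{thm-m-twins}(c) the pair $(S,T)$ determines $M$ and the pair $(S',T')$ determines $\Pi M$; hence $\supp M$ determines the isomorphism class of $M$ up to application of $\Pi$.

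The only genuine difficulty I foresee is the bookkeeping that makes the parity bipartition of $\supp M$ canonical --- namely the disjointness and root-lattice-coset properties of twin spinor and twin oscillator modules and their use in the graph argument. Everything else follows directly from Theorem~\ref{thm-m-twins} and the explicit descriptions of the supports of spinor and oscillator-type modules.
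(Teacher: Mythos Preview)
The paper offers no proof for this corollary, treating it as immediate from Theorem~\ref{thm-m-twins}; your argument is therefore far more detailed than anything in the paper. Your treatments of the second and third assertions are correct: purity of weight spaces (which also follows directly from Lemma~\ref{pure} applied to the underlying $Cl(a|b)$- or $Cl(a|b)_{ev}$-module) gives $M\not\simeq\Pi M$, and your graph argument legitimately recovers the unordered pair $\{(S,T),(S',T')\}$ from $\supp M$, after which Theorem~\ref{thm-m-twins}(b),(c) determine $M$ up to $\Pi$.

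There is, however, a gap in the first assertion. From Theorem~\ref{thm-m-twins}(b) you correctly obtain
\[
\supp M=(\supp S+\supp T)\cup(\supp S'+\supp T'),
\]
and then write ``which is the claimed identity'' without justification. For $\gg=\osp(2a+1|2b)$ this is harmless since $S=S'$. For $\gg=\osp(2a|2b)$ it fails: the Minkowski sum $(\supp S\sqcup\supp S')+(\supp T\sqcup\supp T')$ also contains the cross terms $\supp S+\supp T'$ and $\supp S'+\supp T$, and these do \emph{not} lie in $\supp M$. They form instead the support of the pullback of the \emph{other} simple $Cl(a|b)_{ev}$-constituent $X_{odd}$ of the ambient simple $Cl(a|b)$-module $X$, which is a spinor-oscillator module with $\gg_{\bar 0}$-constituents $S\otimes T'$ and $S'\otimes T$, hence not isomorphic to $M$ or to $\Pi M$. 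In short, your computation is right and the displayed formula in the paper is imprecise for $\osp(2a|2b)$; the accurate statement is $\supp M=(\supp S+\supp T)\sqcup(\supp S'+\supp T')$, which is exactly what you proved and exactly what you use thereafter.
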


\begin{remark}
The pairs $(M,\Pi M)$ for $\gg$ are appropriate superanalogs of twin pairs for $\so (2a)$ or $\sp (2b)$.
 \hfill${\bigcirc}$    \end{remark}

\section{On the category of bounded weight $\osp$-modules}
            Now  we turn our attention  to the category  $\mathcal B_{\gg}$  of bounded $\gg$-modules. In this section, $\gg$ stands for $\osp(2a+1|2b)$ or $\osp(2a|2b)$ for all, possibly finite, $a$ and $b$. 
            
            Let $\mathcal B^{osc}_{\gg}$ denote the full subcategory of $\mathcal B_{\gg}$ with simple objects 
               of spinor-oscillator type.
               Every $M\in \mathcal B_{\gg}$ decomposes uniquely into a direct sum $M'\oplus M''$ with $M'\in \mathcal B^{osc}_{\gg}$ and $M''$ being a direct sum of finitely many copies of trivial and defining modules. This follows from a simple inspection of supports which shows that any simple subquotient of $M$ isomorphic to $V, \Pi V,\C, \Pi \C$ splits as a direct summand of $M$.               By $\widetilde{\mathcal W}_A^{fin}$ for $\gg = \osp(2a+1|2b)$ (respectively, $\widetilde{\mathcal W}_{A_{ev}}^{fin}$ for  $\gg = \osp(2a|2b)$) we denote the category of all weight $A$-modules (respectively, $A_{ev}$-modules) with finite weight multiplicities. Note that the objects of $\widetilde{\mathcal W}_A^{fin}$ (respectively, $\widetilde{\mathcal W}_{A_{ev}}^{fin}$)  are not necessarily preferred $A$-modules  (respectively, $A_{ev}$-modules). 
               
               Observe that, if $a$ and $b$ are finite then the indecomposable modules in $ \mathcal B^{osc}_{\gg}$ have finite length. Indeed, the support of  every such module $M$ lies in a single coset of the root lattice of $\gg$. Since the root lattice of $\gg_{\bar 0}$ has index $2$ in the root lattice of $\gg$, the support of $M$ over $\gg_{\bar{0}}$ lies in at most $2$  cosets of the root lattice of  $\gg_{\bar{0}}$.
                 As a consequence, $M$ has finite length as a $\gg_{\bar{0}}$-module by Lemma 3.3 in \cite{Mat}.

The following is our first main result about the category $\mathcal B^{osc}_{\gg}$.

       \begin{theorem}\label{equivalence-Clifford} Let $A=Cl(a|b)$ for $b\neq 1$. If $\gg=\osp(2a+1|2b)$ then the category $\mathcal B^{osc}_{\gg}$ is equivalent to the category $\widetilde{\mathcal W}_A^{fin}$. If  $\gg=\osp(2a|2b)$ then the category $\mathcal B^{osc}_{\gg}$ is equivalent
     to  the category  $\widetilde{\mathcal W}_{A_{ev}}^{fin}$.
   \end{theorem}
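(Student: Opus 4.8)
The plan is to establish the equivalence via the pullback functor along the surjections $\Theta_{a|b}\colon U(\gg)\to A$ (when $\gg=\osp(2a+1|2b)$) and $\Psi_{a|b}\colon U(\gg)\to A_{ev}$ (when $\gg=\osp(2a|2b)$), whose kernel is $J_\gg$. Write $B=A$ in the first case and $B=A_{ev}$ in the second. Since $U(\gg)$ surjects onto $B$, a parity-preserving $\C$-linear map between two $B$-modules is $\gg$-equivariant if and only if it is $B$-linear; hence the pullback functor $\Xi$ from $B$-modules to $\gg$-modules is exact and fully faithful. So $\Xi$ embeds the category of weight $B$-modules with finite weight multiplicities exactly and fully faithfully into the category of $\gg$-modules, and everything comes down to identifying its essential image with $\mathcal B^{osc}_\gg$.

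First I would check that this image lies in $\mathcal B^{osc}_\gg$. By Remark \ref{weightcorrespondence}, $\Theta_{a|b}$ (resp.\ $\Psi_{a|b}$) carries $U(\gh)$ onto $H_A$ and restricts to an isomorphism $\gh\xrightarrow{\ \sim\ }\gh_A$, while the induced map $f\colon\gh_A^\vee\to\gh^*$ is an injective affine map onto a coset of the appropriate lattice. Consequently $\Xi$ sends weight $B$-modules to weight $\gg$-modules, matching weight spaces dimension by dimension, so that a $B$-module has finite weight multiplicities precisely when its pullback is a bounded weight $\gg$-module. Moreover $\Xi X$ is annihilated by $J_\gg$, so each of its simple subquotients is of spinor-oscillator type, and hence $\Xi X\in\mathcal B^{osc}_\gg$; and any subobject or quotient of $\Xi X$ is again annihilated by $J_\gg$, hence again in the image of $\Xi$.

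The remaining, and main, point is the converse: every $M\in\mathcal B^{osc}_\gg$ is annihilated by $J_\gg$. Granting this, $M$, viewed as a $B$-module, is a weight $B$-module (since $M$ is a $\gh$-weight module and $\Theta_{a|b}\colon\gh\xrightarrow{\sim}\gh_A$ generates $H_A$) with finite weight multiplicities, so $M=\Xi(M)$ lies in the essential image. For simple $M$ this is precisely Corollary \ref{cor-osc}, and this is where the hypothesis $b\neq1$ enters: Lemma \ref{boundedideal} relies on a simple ideal of $\gg_{\bar{0}}$ of rank exceeding the degree, which fails for an $\sp(2)$-summand. For general $M$ I would reduce to finite rank: if $J_\gg M\neq0$ then $J_{\gg_k}M\neq0$ for some $k$, where $J_{\gg_k}=J_\gg\cap U(\gg_k)$, while $M|_{\gg_k}$ is a bounded weight $\gg_k$-module all of whose composition factors are annihilated by $J_{\gg_k}$ (the composition factors of $M$ are annihilated by $J_\gg\supseteq J_{\gg_k}$). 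Thus it suffices to prove, for finite-dimensional $\gg_k=\osp(m|2n)$ with $n\neq1$, that a bounded weight $\gg_k$-module whose composition factors are all annihilated by $J_{\gg_k}$ is itself annihilated by $J_{\gg_k}$. Since the subcategory of $J_{\gg_k}$-annihilated modules is closed under subobjects and quotients, this reduces to its closure under extensions, equivalently to the ideal $J_{\gg_k}$ being idempotent, $J_{\gg_k}^2=J_{\gg_k}$: a dévissage along the socle filtration (using $J_{\gg_k}\cdot\mathrm{soc}_{i+1}\subseteq\mathrm{soc}_i$) then forces $J_{\gg_k}\cdot M|_{\gg_k}=0$. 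Proving $J_{\gg_k}^2=J_{\gg_k}$ from the explicit description of $\Theta_{a_k|b_k}$ and $\Psi_{a_k|b_k}$ — e.g.\ exploiting that the Clifford relation $\eta_i\xi_i-\xi_i\eta_i=1$ (resp.\ $\eta_i\xi_i+\xi_i\eta_i=1$) lifts to an identity expressing $1$ through images of root vectors of $\gg_k$, so that elements of $J_{\gg_k}$ can be rewritten as sums of products of two elements of $J_{\gg_k}$ — is the step I expect to be the genuine obstacle, and essentially the only substantial ingredient beyond what is already available.

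Finally, the $\osp(2a|2b)$ case can be handled by running the identical argument with $(A,\Theta_{a|b})$ replaced by $(A_{ev},\Psi_{a|b})$, or alternatively deduced from the $\osp(2a+1|2b)$ case together with Theorem \ref{even}, which yields an equivalence $\widetilde{\mathcal W}_A^{fin}\simeq\widetilde{\mathcal W}_{A_{ev}}^{fin}$ compatible with weight-space decompositions (and, when $a=\infty$, this is exactly the compatibility forced by $\osp(2a+1|2b)\simeq\osp(2a|2b)$).
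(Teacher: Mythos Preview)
Your overall strategy—show that $J_\gg$ annihilates every object of $\mathcal B^{osc}_\gg$, so that pullback is essentially surjective—is exactly right, and the reduction to finite rank via $J_\gg=\varinjlim J_{\gg_k}$ is sound. But two steps do not go through as written.

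First, the parenthetical justification ``the composition factors of $M$ are annihilated by $J_\gg\supseteq J_{\gg_k}$'' concerns $\gg$-composition factors, whereas what you need is that every simple $\gg_k$-subquotient of $M|_{\gg_k}$ is annihilated by $J_{\gg_k}$. These are different objects and there is no a priori reason the latter inherit anything from the former. This can be repaired: decompose $M|_{\gg_k}$ along cosets of the $\gg_k$-root lattice; each piece is a bounded weight $\gg_k$-module of degree $\leq d(M)$, and for $k$ large Lemma~\ref{boundedideal} forces its simple constituents to have annihilator $J_{\gg_k}$, augmentation, or $\Ann V$---the latter two being excluded by the half-integrality of weights in $\supp M$ (Remark~\ref{weightcorrespondence}).

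Second, and more seriously, ``closure under extensions'' in the category of bounded weight $\gg_k$-modules is \emph{not} equivalent to $J_{\gg_k}^2=J_{\gg_k}$; idempotence is equivalent to closure under extensions in the category of \emph{all} $U(\gg_k)$-modules (consider $U/J^2$), which is strictly stronger. And your sketch for idempotence does not work: writing $j = (je_i)f_i+(jf_i)e_i - j r_i$ with $r_i=e_if_i+f_ie_i-1\in J_{\gg_k}$ only places the last term in $J_{\gg_k}^2$; the first two lie merely in $J_{\gg_k}\cdot U(\gg_k)=J_{\gg_k}$, since $e_i,f_i\notin J_{\gg_k}$. Whether $J_{\gg_k}$ is idempotent for $\osp(m|2n)$, $n\geq 2$, is not obvious and is not what the paper proves.

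The paper instead establishes closure under extensions in $\mathcal B^{osc}_{\gg_k}$ directly, by a relative cohomology computation: it shows $\Ext^1_{\gg_k,\gh_k}(M,N)\hookrightarrow\Ext^1_{\gg_{k,\bar 0},\gh_k}(M,N)$ for simple $M,N$ of spinor-oscillator type (Lemma~\ref{lem-restriction}), and then invokes the known equivalence over $\gg_{k,\bar 0}=\so\oplus\sp$ (trivial for $\so$, and \cite{GrS} for $\sp(2n)$ with $n\geq 2$; this is where $b\neq 1$ is used). This yields that pulled-back projectives are projective in $\mathcal B^{osc}_{\gg_k}$, whence every object there is a pullback; passage to infinite rank is then via Proposition~\ref{prop-cp}.
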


        As a first step we prove Theorem \ref{equivalence-Clifford} for  finite $a$ and $b$.
        \begin{lemma}\label{lem-restriction} Let  $\dim \,\gg<\infty$. Then the restriction map  $\Ext^1_{\gg,\gh}(M,N)\to \Ext^1_{\gg_{\bar 0},\gh}(M,N)$ is injective.
        \end{lemma}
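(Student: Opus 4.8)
The plan is to translate the statement into a vanishing of relative Lie superalgebra cohomology and then verify that vanishing using the explicit structure of the modules involved. An element of $\ker\!\left(\Ext^1_{\gg,\gh}(M,N)\to\Ext^1_{\gg_{\bar 0},\gh}(M,N)\right)$ is the class of a short exact sequence $0\to N\to E\to M\to 0$ of weight $\gg$-modules admitting a $\gg_{\bar 0}$-linear section $s\colon M\to E$; since $\gh\subseteq\gg_{\bar 0}$, such an $s$ automatically preserves weight spaces. Setting $c(x)(v):=x\cdot s(v)-s(x\cdot v)\in N$ for $x\in\gg_{\bar 1}$ and $v\in M$, the $\gg_{\bar 0}$-linearity of $s$ together with the defining relations of $\gg$ (via $[\gg_{\bar 0},\gg_{\bar 1}]\subseteq\gg_{\bar 1}$ and $[\gg_{\bar 1},\gg_{\bar 1}]\subseteq\gg_{\bar 0}$) force $c$ to be a homomorphism of $\gg_{\bar 0}$-modules $\gg_{\bar 1}\to\Hom_\C(M,N)$ subject to $x\cdot c(y)+(-1)^{\bar x\bar y}y\cdot c(x)=0$; adding to $s$ a map $\phi\in\Hom_{\gg_{\bar 0}}(M,N)$ (regarded as a map $M\to N\subseteq E$) changes $c$ by $x\mapsto x\cdot\phi$, and $E$ splits over $\gg$ precisely when $c$ is of this form. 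Hence the lemma amounts to
$$H^1\bigl(\gg,\gg_{\bar 0};\Hom_\C(M,N)\bigr)=0,$$
the relative cohomology being computed by the complex $\Hom_{\gg_{\bar 0}}(S^\bullet\gg_{\bar 1},\Hom_\C(M,N))$, whose $k$-th term is $\Hom_{\gg_{\bar 0}}(S^k\gg_{\bar 1}\otimes M,N)$; as $S^k\gg_{\bar 1}$ is finite dimensional, each such space is finite dimensional and can be computed one weight at a time.

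Since $M$ and $N$ lie in $\mathcal B^{osc}_{\gg}$, their composition factors are of spinor-oscillator type by Corollary \ref{cor-osc}, and a dévissage through the long exact sequences for $\Ext$ reduces matters to simple $M$ and $N$; by Theorem \ref{thm-m-twins}, $M_{\bar 0}\simeq S\otimes T$ and $M_{\bar 1}\simeq\Pi(S'\otimes T')$ for twin spinor $\so$-modules $S,S'$ and twin oscillator $\sp$-modules $T,T'$, and likewise for $N$. As a $\gg_{\bar 0}$-module $\gg_{\bar 1}\simeq V_m\otimes V_{2n}$ (the tensor product of the natural $\so(m)$- and $\sp(2n)$-modules), so each component of $C^1$ factors as $\Hom_{\so}(V_m\otimes S_i,S_j'')\otimes\Hom_{\sp}(V_{2n}\otimes T_i,T_j'')$; the inputs are that $V_m\otimes(\text{spinor})$ contains only the twin spinor, with multiplicity one, and $V_{2n}\otimes(\text{oscillator})$ only the oscillator of the opposite parity, with multiplicity one (and the degree-two terms are controlled the same way via $S^2V_m$, $\Lambda^2V_m$ and their symplectic counterparts). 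Combined with the parity reversal built into the twin structure, this makes $C^0$, $C^1$, $C^2$ and the differentials $d_0$ (coming from $x\cdot\phi$) and $d_1$ (encoding the bracket $S^2\gg_{\bar 1}\to\gg_{\bar 0}$) completely explicit, and one checks $\ker d_1=\operatorname{im}d_0$ by inspection — the one-dimensional drop in $\operatorname{im}d_0$ being accounted for by the fact that $\operatorname{pr}_{\bar 0}+\operatorname{pr}_{\bar 1}=\operatorname{id}_M$ is already $\gg$-linear.

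The step I expect to be the main obstacle is precisely this final bookkeeping: correctly identifying the multiplicities in $V_m\otimes(\text{spinor})$, $V_{2n}\otimes(\text{oscillator})$ and the corresponding degree-two products, and then tracking signs and which parity component is mapped where across all pieces of the complex. An alternative that may streamline it is to first pull $M$ and $N$ back to weight modules over $Cl(a|b)$ (respectively $Cl(a|b)_{ev}$) via $\Theta_{a|b}$ (respectively $\Psi_{a|b}$) and to compare $\Ext^1$ over the Clifford superalgebra with $\Ext^1$ over its even part, where the tensor-product decompositions of Lemma \ref{isomorphism_tensor} and Corollary \ref{even-iso} make the homological algebra more transparent; a third possibility is a Hodge–Laplace argument using the quadratic Casimir of $\gg$ on the relative complex, though verifying the requisite non-vanishing seems to need the same branching information anyway.
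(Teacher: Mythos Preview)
Your approach is essentially the paper's: reduce to the vanishing of $H^1(\gg,\gg_{\bar 0};\Hom_\C(M,N))_{\bar 0}$, reduce to simple $M,N$ via finite length, and then exploit the twin structure of Theorem~\ref{thm-m-twins}. The paper, however, sidesteps almost all of the bookkeeping you flag as the main obstacle: rather than computing branching multiplicities in $V_m\otimes(\text{spinor})$ and $V_{2n}\otimes(\text{oscillator})$ and tracking $d_1$, it simply observes from supports that $\Hom^0_{\gg_{\bar 0}}(\gg_{\bar 1}\otimes M,N)\neq 0$ forces $M\simeq N$, and in that single case exhibits an explicit basis $\varphi_0,\varphi_1$ of the two-dimensional $C^1$ together with preimages $\psi_0,\psi_1\in C^0$ under $d$, so $d$ is surjective onto $C^1$ and $H^1=0$ without ever touching $C^2$ or $d_1$.
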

        \begin{proof} We have to show that any exact sequence in $\mathcal B^{osc}_{\gg}$
          $$0\to N\to R\to M\to 0$$
          which splits over $\gg_{\bar 0}$ splits also over $\gg$. It suffices to show that $H^1(\gg,\gg_{\bar 0}; \Hom (M,N))_{\bar 0}=0$, where $\Hom$ stands for the homomorphisms of vector spaces disregarding the $\mathbb Z_2$-grading,   see \S 3.1 and \S 4.5 of  \cite{Fuks}. 
          Any indecomposable
          object in $\mathcal B^{osc}_{\gg}$ has finite length and therefore it is enough to prove that this cohomology vanishes for simple $M$ and $N$. Writing down the first three terms of the complex computing relative cohomology, we have
          $$0\to \Hom^0_{\gg_{\bar 0}}(M,N) \xrightarrow{d} \Hom^0_{\gg_{\bar 0}}(\gg_{\bar 1}\otimes M,N)\to \Hom^0_{\gg_{\bar 0}}(\Lambda^2\gg_{\bar 1}\otimes M,N)\to\dots, $$
   where $\Hom^0_{\gg_{\bar 0}}$ denotes  homomorphisms of $\gg_{\bar{0}}$-modules preserving the $ \mathbb Z_2$-grading.     Note that the second term of the complex does not vanish if and only if $\supp M_{\bar 1}\cap(\supp N_{\bar 0}+\Delta_{\bar 1})$ or $\supp M_{\bar 0}\cap( \supp N_{\bar 1}+\Delta_{\bar 1})$ is non-empty. Using
          Theorem \ref{thm-m-twins} we see that this can happen if and only if $M\simeq N$. In the latter case
          $$\Hom^0_{\gg_{\bar 0}}(\gg_{\bar 1}\otimes M,M)=\Hom^0_{\gg_{\bar 0}}(\gg_{\bar 1}\otimes M_{\bar 0},M_{\bar 1})\oplus \Hom^0_{\gg_{\bar 0}}(\gg_{\bar 1}\otimes M_{\bar 1},M_{\bar 0})=\mathbb C^2$$
          and
          $$\End^0_{\gg_{\bar 0}}(M)=\End^0_{\gg_{\bar 0}}(M_{\bar 0})\oplus\End^0_{\gg_{\bar 0}}(M_{\bar 1})=\mathbb C^2.$$
          Consider $\varphi_0\in \Hom^0_{\gg_{\bar 0}}(\gg_{\bar 1}\otimes M_{\bar 0},M_{\bar 1})$ and
          $\varphi_1\in \Hom^0_{\gg_{\bar 0}}(\gg_{\bar 1}\otimes M_{\bar 1},M_{\bar 0})$ defined by the formula
          $\varphi_i(g\otimes m)=gm$ where $g\in\gg_{\bar 1}$ and $m\in M_i$. Set
          $$\psi_i(m):=\begin{cases}m\ \text{if}\ m\in M_i\\ 0 \ \text{if}\ m\notin M_i\end{cases}.$$
            Then $\varphi_i=d(\psi_i)$. Hence $H^1(\gg,\gg_{\bar 0}; \End (M))_{\bar 0}=0$.
          \end{proof}

             For $\gg_{\bar{0}}=\gg_{\so}\oplus\gg_{\sp}$ we say that a simple module $Z$ 
               has \emph{spinor-oscillator type} if $Z\simeq S\otimes T$ for some  spinor $\gg_{\so}$-module $S$ and
               some  $\gg_{\sp}$-module $T$ of oscillator type. By $\mathcal B^{osc}_{\gg_{\bar 0}}$ we denote the category of ${\mathbb Z}_2$-graded bounded weight
               $\gg_{\bar 0}$-modules with simple constituents of spinor-oscillator type.
          \begin{corollary}\label{fincase} Theorem  \ref{equivalence-Clifford}  holds in the case $\dim \,\gg<\infty$ (and $b\neq 1$).
          \end{corollary}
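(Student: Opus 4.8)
The plan is to identify both sides with a module category over the associative superalgebra $A=Cl(a|b)$ (respectively $A_{ev}=Cl(a|b)_{ev}$) via pullback along the surjections $\Theta_{a|b}\colon U(\gg)\twoheadrightarrow A$ when $\gg=\osp(2a+1|2b)$ and $\Psi_{a|b}\colon U(\gg)\twoheadrightarrow A_{ev}$ when $\gg=\osp(2a|2b)$, both having kernel $J_\gg$. Pullback along a surjection is exact and fully faithful onto the full subcategory of modules killed by $J_\gg$; under the affine weight correspondence of Remark \ref{weightcorrespondence} it carries weight $A$-modules (respectively $A_{ev}$-modules) of finite weight multiplicity to bounded weight $\gg$-modules, and by Theorem \ref{thm-m-twins} it matches the simple objects of $\widetilde{\mathcal W}_A^{fin}$ (respectively $\widetilde{\mathcal W}_{A_{ev}}^{fin}$) with the simple objects of $\mathcal B^{osc}_\gg$. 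So the whole content reduces to showing that \emph{every} object of $\mathcal B^{osc}_\gg$ — not only each simple one — is annihilated by $J_\gg$. Since $\dim\gg<\infty$, every indecomposable object of $\mathcal B^{osc}_\gg$ has finite length, so $\mathcal B^{osc}_\gg$ is generated under extensions by its simple objects, and it suffices to prove that for simple modules $M,N$ of spinor-oscillator type the (automatically injective) pullback map $\Ext^1_A(\widetilde M,\widetilde N)\to\Ext^1_{\gg,\gh}(M,N)$ is surjective; here $\widetilde M,\widetilde N$ are the $A$-modules (respectively $A_{ev}$-modules) whose pullbacks are $M,N$.

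For the surjectivity I would squeeze $\Ext^1_{\gg,\gh}(M,N)$ between two computable groups. On the one hand, pullback gives an injection $\Ext^1_A(\widetilde M,\widetilde N)\hookrightarrow\Ext^1_{\gg,\gh}(M,N)$. On the other hand, Lemma \ref{lem-restriction} gives an injection $\Ext^1_{\gg,\gh}(M,N)\hookrightarrow\Ext^1_{\gg_{\bar 0},\gh}(M,N)$. Using $\gg_{\bar 0}=\gg_{\so}\oplus\gg_{\sp}$, the decomposition $M_{\bar 0}\simeq S\otimes T$, $M_{\bar 1}\simeq\Pi(S'\otimes T')$ of Theorem \ref{thm-m-twins}, and a K\"unneth-type splitting, the right-hand group is built from $\Ext^1$ between spinor $\so(m)$-modules and $\Ext^1$ between oscillator $\sp(2b)$-modules. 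The former vanish, these being finite-dimensional modules over a reductive Lie algebra; the latter are, by the classical theory of bounded weight $\sp(2b)$-modules (\cite{GrP}, \cite{Mat}; this is where $b\neq 1$ is used, $\sp(2)=\sl(2)$ being the exception), exactly the $\Ext^1$-groups in the category of weight modules over the even Weyl algebra $Cl(0|b)_{ev}$. Combining this with Corollary \ref{even-iso}, which presents $A_{\bar 0}$ and $(A_{ev})_{\bar 0}$ as tensor products of Clifford algebras and even Weyl algebras, identifies $\Ext^1_{\gg_{\bar 0},\gh}(M,N)$ with $\Ext^1$ in the corresponding category of $A_{\bar 0}$-modules (respectively $(A_{ev})_{\bar 0}$-modules). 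A direct computation with the module categories of Clifford and Weyl algebras (via Propositions \ref{description-of-simple} and \ref{indecomposable-Cl} and the block description of Corollary \ref{blocks-Cl}) should then give $\dim\Ext^1_A(\widetilde M,\widetilde N)=\dim\Ext^1_{\gg_{\bar 0},\gh}(M,N)$; since all three groups are finite-dimensional and the first injects into the second injects into the third, with the outer two of equal dimension, all three coincide, which is the required surjectivity.

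Once this is established, $\mathcal B^{osc}_\gg$ is exactly the full subcategory of $\bigl(U(\gg)/J_\gg\bigr)$-modules, that is of $A$-modules (respectively $A_{ev}$-modules), that are bounded weight modules, and by Remark \ref{weightcorrespondence} this subcategory is $\widetilde{\mathcal W}_A^{fin}$ (respectively $\widetilde{\mathcal W}_{A_{ev}}^{fin}$); the pullback functor is then the desired equivalence. The step I expect to be the main obstacle is the final $\Ext^1$ bookkeeping in the second paragraph: verifying that restriction to the even part loses no information, i.e.\ that $\dim\Ext^1_A(\widetilde M,\widetilde N)$ already equals $\dim\Ext^1_{\gg_{\bar 0},\gh}(M,N)$, so that the squeeze collapses. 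Lemma \ref{lem-restriction} is precisely the tool that reduces this to an accessible, essentially even-part computation, but carrying it out uniformly in $a$ and $b$ — and pinning down where the hypothesis $b\neq 1$ genuinely enters — is the delicate point.
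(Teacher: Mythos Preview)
Your overall strategy --- reduce to showing that $J_\gg$ annihilates every object of $\mathcal B^{osc}_\gg$, and control the passage from $\gg$ to $\gg_{\bar 0}$ via Lemma~\ref{lem-restriction} together with the known equivalence $\mathcal B^{osc}_{\sp(2b)}\simeq\widetilde{\mathcal W}_{D(b|0)_{ev}}^{fin}$ from \cite{GrS} --- is exactly the paper's. The gap is in the dimension squeeze itself: the equality $\dim\Ext^1_A(\widetilde M,\widetilde N)=\dim\Ext^1_{\gg_{\bar 0},\gh}(M,N)$ is \emph{false} in general. Theorem~\ref{even} (with $B=A_{\bar 0}$) identifies $\Ext^1_A(\widetilde M,\widetilde N)$ with $\Ext^1_{A_{\bar 0}}(\widetilde M_{\bar 0},\widetilde N_{\bar 0})$, hence via \cite{GrS} with $\Ext^1_{\gg_{\bar 0}}(M_{\bar 0},N_{\bar 0})$; call this dimension $d_0$. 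But in the $\mathbb Z_2$-graded category $\mathcal B^{osc}_{\gg_{\bar 0}}$ one has
\[
\Ext^1_{\gg_{\bar 0},\gh}(M,N)=\Ext^1_{\gg_{\bar 0},\gh}(M_{\bar 0},N_{\bar 0})\oplus\Ext^1_{\gg_{\bar 0},\gh}(M_{\bar 1},N_{\bar 1}),
\]
and the twin correspondence of Theorem~\ref{thm-m-twins} makes both summands have dimension $d_0$. So your chain reads $d_0\le\dim\Ext^1_{\gg,\gh}(M,N)\le 2d_0$, and the squeeze does not collapse. This is precisely the obstacle you flagged, but it is not merely bookkeeping: the outer dimensions genuinely differ by a factor of $2$.

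The paper circumvents this by working with projectives rather than simples. The indecomposable projectives $P(\mu)=A\otimes_{H_A}\Pi^{p(\mu)}\C_\mu$ of $\widetilde{\mathcal W}_A^{fin}$ are induced from $H_A$, and since $H_A\subset A_{\bar 0}$ one checks that their restriction to $A_{\bar 0}$ remains projective in $\widetilde{\mathcal W}_{A_{\bar 0}}^{fin}$. Via the equivalence $\mathcal B^{osc}_{\gg_{\bar 0}}\simeq\widetilde{\mathcal W}_{A_{\bar 0}}^{fin}$ their pullbacks are then projective in $\mathcal B^{osc}_{\gg_{\bar 0}}$, and Lemma~\ref{lem-restriction} upgrades this to projectivity in $\mathcal B^{osc}_\gg$. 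Since every object of $\mathcal B^{osc}_\gg$ is a quotient of such a projective, every object is a pullback. In your language: the factor-of-$2$ discrepancy vanishes exactly when $\widetilde M$ has $\Ext^1_{A_{\bar 0}}(\widetilde M,-)=0$, not merely $\Ext^1_A(\widetilde M,-)=0$ --- and the induced modules $P(\mu)$ enjoy this stronger property.
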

          \begin{proof} Note that if $b=0$ the statement is trivial since  $\mathcal B^{osc}_{\gg}$ is a semisimple category with objects that are 
            finite direct sums of (finite-dimensional) spinor modules. Next, for   $\gg=\mathfrak{sp}(2b)$ with $1<b<\infty$ the
            statement is proven in \cite{GrS}. Therefore,  if $\gg=\osp(2a+1|2b)$ (respectively, $\gg=\osp(2a|2b)$),  we have an equivalence of the categories
            $\mathcal B^{osc}_{\gg_{\bar 0}}$ and $\widetilde{\mathcal W}_{A_{\bar 0}}^{fin}$
          (respectively,     $\widetilde{\mathcal W}_{(A_{ev})_{\bar 0}}^{fin}$), where $\widetilde{\mathcal W}_{A_{\bar 0}}^{fin}$ (respectively, $\widetilde{\mathcal W}_{(A_{ev})_{\bar 0}}^{fin}$) is
 the category of $\mathbb Z_2$-graded weight $A_{\bar 0}$-modules (respectively, $(A_{ev})_{\bar 0}$-modules) with finite weight multiplicities.
 
            Let us prove that the pullback a projective object $P$ in  $\widetilde{\mathcal W}_A^{fin}$ (respectively,
            $\widetilde{\mathcal W}_{A_{ev}}^{fin}$) is projective in $\mathcal B^{osc}_{\gg}$. Since $P$ is induced from a finite-dimensional $H_A$-module,
            $P$ is projective in $\widetilde{\mathcal W}_{A_{\bar 0}}^{fin}$ (respectively,
            $\widetilde{\mathcal W}_{(A_{ev})_{\bar 0}}^{fin}$). By the above equivalence, the pullback of $P$ is projective in  $\mathcal B^{osc}_{\gg_{\bar 0}}$. Now
            Lemma \ref{lem-restriction}
          implies that $P$ is projective in $\mathcal B^{osc}_{\gg}$.

            Since any object $M$ in $\mathcal B^{osc}_{\gg}$ is a quotient of a projective module, $M$ is obtained by pullback from
            $\widetilde{\mathcal W}_A^{fin}$ (respectively, $\widetilde{\mathcal W}_{A_{ev}}^{fin}$)-module. The statement follows.
            \end{proof}
Next we recall the following statement.
\begin{proposition}[\cite{CP}, Corollary A.3] \label{prop-cp} Let $\gg=\displaystyle\lim_{\to}\gg_k$ be a direct limit of Lie superalgebras.
Let $Q= \displaystyle \lim_{\longrightarrow} Q_k$ and $R= \displaystyle \lim_{\longrightarrow} R_k$ be weight $\gg$-modules. Assume that $R$ has finite-dimensional weight spaces. 
Then $\Ext^1_{\gg_k,\gh_k}(Q_k, R_k) = 0$ for all $k >> 0$ implies  $\Ext^1_{\gg,\gh}(Q, R) = 0$.
 \end{proposition}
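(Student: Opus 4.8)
The statement is a continuity property of relative $\Ext^1$ along the direct system, and the natural route is through extensions rather than through resolutions. Recall that $\Ext^1_{\gg,\gh}(Q,R)$ is the group of equivalence classes of short exact sequences $0\to R\to E\to Q\to0$ of $\gg$-modules which split when restricted to $\gh$, i.e.\ split on each weight space; so it suffices to prove that every such $E$, with surjection $p\colon E\to Q$, admits a $\gg$-module section. Fixing an $\gh$-equivariant linear section $s\colon Q\to E$, the associated relative $1$-cocycle $c\in Z^1(\gg,\gh;\Hom_{\mathbb C}(Q,R))$, $c(x)(q)=x\cdot s(q)-s(x\cdot q)$, has the property that a $\gg$-section exists iff $c=\delta\phi$ for some $\phi\in\Hom_{\mathbb C}(Q,R)^{\gh}$. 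After identifying each $Q_k$ with its image, $Q=\bigcup_k Q_k$ with $Q_k$ a $\gg_k$-submodule, and likewise $R=\bigcup_k R_k$, $\gg=\bigcup_k\gg_k$. The plan is to produce $\phi$ as $\varinjlim_k\phi_k$, where $\phi_k\in\Hom_{\mathbb C}(Q_k,R)^{\gh_k}$ solves $\delta\phi_k=c|_{\gg_k}$ on $Q_k$ and $\phi_{k+1}|_{Q_k}=\phi_k$: from such a compatible family one gets that $\phi:=\varinjlim\phi_k$ is $\gh$-equivariant and, for $x\in\gg_k$ and $q\in Q_l$ with $l\ge k$, the equality $\delta\phi(x)(q)=c(x)(q)$ follows from $\phi|_{Q_l}=\phi_l$ and the level-$l$ relation; since the $Q_k$ exhaust $Q$, this gives $\delta\phi=c$.

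Existence of $\phi_k$ for $k\gg0$ means exactly that the pulled-back extension $0\to R\to p^{-1}(Q_k)\to Q_k\to0$ splits over $\gg_k$, i.e.\ that its class $\eta_k\in\Ext^1_{\gg_k,\gh_k}(Q_k,R)$ vanishes. Here the finite-dimensionality of the weight spaces of $R$ enters. Since $R=\varinjlim R_k$, for every weight $\mu$ there is $k_0(\mu)$ with $R_k^{\mu}=R^{\mu}$ for all $k\ge k_0(\mu)$, so each $\gg_k$-module $R/R_k$ is a weight module supported only on weights $\mu$ with $k_0(\mu)>k$. The exact sequence
$$\Ext^1_{\gg_k,\gh_k}(Q_k,R_k)\longrightarrow\Ext^1_{\gg_k,\gh_k}(Q_k,R)\longrightarrow\Ext^1_{\gg_k,\gh_k}(Q_k,R/R_k)$$
has vanishing left term for $k\gg0$ by hypothesis, so $\eta_k=0$ provided its image $\bar\eta_k$ in $\Ext^1_{\gg_k,\gh_k}(Q_k,R/R_k)$ vanishes. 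Now $\bar\eta_k$ is represented by the cocycle $x\mapsto\bigl(q\mapsto c(x)(q)\bmod R_k\bigr)$. Because $c$ is linear and $\gh$-equivariant and $\gg_k$ is finite-dimensional, $c(\gg_k)$ is spanned by finitely many maps $c(x_i)$, each of a single root degree $\alpha_i$; hence for $q\in Q_k^{\mu}$ one has $c(x_i)(q)\in R^{\mu+\alpha_i}$, and this is killed mod $R_k$ as soon as $k\ge k_0(\mu+\alpha_i)$. Working through the (possibly infinite) set $\supp Q_k$ weight by weight and using $\gh_k$-equivariance to peel off the contributions already annihilated, one shows that $\bar\eta_k$ is a coboundary in $C^1(\gg_k,\gh_k;\Hom(Q_k,R/R_k))$ for all large $k$; thus $\bar\eta_k=0$, so $\eta_k=0$ and $\phi_k$ exists.

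It remains to make the family $(\phi_k)$ compatible. The set of solutions of $\delta\phi_k=c|_{\gg_k}$ on $Q_k$ is a torsor under $\Hom_{\gg_k,\gh_k}(Q_k,R)$, and the obstruction to patching lives in $\varprojlim^1_k\Hom_{\gg_k,\gh_k}(Q_k,R)$; I would verify that this tower is Mittag--Leffler, using that after reducing $Q_k$ to its finitely generated $\gg_k$-submodules any $\gg_k$-homomorphism $Q_k\to R$ has finitely generated image, hence factors through some $R_l$ — which forces the images of the transition maps to stabilize. A standard diagram chase then yields a compatible family, and $\phi=\varinjlim\phi_k$ is the desired $\gg$-splitting. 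The genuine obstacle is the middle paragraph: because $\supp Q_k$ can be infinite one cannot trap the cocycle $c|_{\gg_k}$ inside a single $R_l$, so the finite-dimensionality of the weight spaces of $R$ has to be exploited separately on each weight space and then reassembled into a statement about the class $\eta_k$; once that is in place, the $\varprojlim^1$-step is comparatively soft.
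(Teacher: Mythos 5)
First, a point of reference: the paper does not actually prove this proposition --- it is imported verbatim from \cite{CP}, Corollary A.3, and used as a black box in the proof of Theorem \ref{equivalence-Clifford} --- so there is no in-paper argument to compare yours against, and your proposal has to be judged on its own terms.

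Your architecture is the natural one (relative cocycles for $\gh$-split extensions, splitting the restricted extensions $0\to R\to p^{-1}(Q_k)\to Q_k\to 0$ over $\gg_k$ for $k\gg 0$, then gluing via a ${\varprojlim}^1$ argument), but there is a genuine gap exactly where you flag one. To invoke the hypothesis through the exact sequence $\Ext^1(Q_k,R_k)\to\Ext^1(Q_k,R)\to\Ext^1(Q_k,R/R_k)$ you must prove that the image $\bar\eta_k$ of the restricted class vanishes, and your argument for this is the sentence ``working through $\supp Q_k$ weight by weight \dots one shows that $\bar\eta_k$ is a coboundary'': no primitive is constructed. Since $\supp Q_k$ may be infinite while the weight spaces of $R$ stabilize only one weight at a time, the representing cocycle $c|_{\gg_k}\bmod R_k$ is not eventually zero, and producing an $\gh_k$-equivariant $\psi\colon Q_k\to R/R_k$ with $\delta\psi=\bar c_k$ is precisely the nontrivial content of the proposition; it is also the only place where the finite-dimensionality of the weight spaces of $R$ could do real work, and as written you never use that hypothesis in a checkable way. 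Note too that you cannot sidestep the problem by replacing $R_k$ with a larger $R_l$, because the hypothesis controls only $\Ext^1_{\gg_k,\gh_k}(Q_k,R_k)$, not $\Ext^1_{\gg_k,\gh_k}(Q_k,R_l)$ for $l>k$.

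The gluing step is also under-argued. The splittings form torsors under $\Hom_{\gg_k,\gh_k}(Q_k,R)$, which sits inside an infinite product over $\supp Q_k$ of spaces $\Hom(Q_k^\mu,R^\mu)$ that need not be finite-dimensional (the weight spaces of $Q$ are not assumed finite-dimensional), so the Mittag--Leffler property is not automatic; and your justification --- that a $\gg_k$-homomorphism $Q_k\to R$ has finitely generated image and hence factors through some $R_l$ --- breaks down when $Q_k$ is not finitely generated, which is the typical situation for weight modules with infinite support. In short, the skeleton is right, but the two load-bearing steps are asserted rather than proven, so this does not yet constitute a proof.
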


We are now ready for 

\noindent{\it Proof of Theorem  \ref{equivalence-Clifford}.} We only need to consider the case  $\gg = \osp(2a+1|2b)$ or $\gg = \osp(2a|2b)$ where
$\dim \gg = \infty$. We fix an exhaustion 
$\gg = \displaystyle \lim_{\longrightarrow}  \gg_k$ for $\gg_k = \osp (2a_k+1| 2b_k)$ or  $\gg_k = \osp (2a_k| 2b_k)$,
where one of the sequences $a_k$ or $b_k$ may stabilize. 

Since our desired equivalence will be obtained simply by pullback via the homomorphisms $\Theta_{a|b}$ or $\Psi_{a|b}$, it suffices to show that every object in  $\mathcal B^{osc}_{\gg}$
is the pullback of some weight $A$-module (respectively, $A_{ev}$-module). For this, notice that Proposition \ref{prop-cp} implies that if $P =  \displaystyle \lim_{\longrightarrow} P_k$ is a direct limit of projective objects in  $\mathcal B^{osc}_{\gg_k}$, then $P$ is projective object in  $\mathcal B^{osc}_{\gg}$. Next, Theorem \ref{equivalence-Clifford} holds for $\gg_k$ and thus every $P_k$ is the pullback of a projective object in $\widetilde{\mathcal W}_{A_k}^{fin}$ for
$\gg = \osp(2a+1|2b)$ (respectively, $\widetilde{\mathcal W}_{(A_k)_{ev}}^{fin}$ for  $\gg = \osp(2a|2b)$). Since every object of $\mathcal B^{osc}_{\gg}$ is a quotient of some $P$ as above, we conclude that every object of  $\mathcal B^{osc}_{\gg}$ is the pullback of some object of $\widetilde{\mathcal W}_{A}^{fin}$ (respectively, $\widetilde{\mathcal W}_{A_{ev}}^{fin}$).  \hfill $\square$

       \begin{corollary}
                
 Theorem         \ref{equivalence-Clifford} shows that for $b\neq 1$ any indecomposable object $M$    of $\mathcal B^{osc}_{\gg}$   has a filtration similar to the filtration which exists on an indecomposable weight $A$-module with finite-dimensional weight spaces according 
to  Theorem \ref{filtration}.    \end{corollary}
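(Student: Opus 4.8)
The plan is to transport Theorem~\ref{filtration} along the equivalence of Theorem~\ref{equivalence-Clifford}. Put $A=Cl(a|b)$, and let $F$ be the pullback functor along $\Theta_{a|b}$ if $\gg=\osp(2a+1|2b)$, respectively along $\Psi_{a|b}$ if $\gg=\osp(2a|2b)$; by Theorem~\ref{equivalence-Clifford}, which applies since $b\neq 1$, $F$ is an equivalence of abelian categories $\widetilde{\mathcal W}_A^{fin}\to\mathcal B^{osc}_{\gg}$ in the first case and $\widetilde{\mathcal W}_{A_{ev}}^{fin}\to\mathcal B^{osc}_{\gg}$ in the second. An equivalence of abelian categories is exact, preserves indecomposability and simplicity, and sends a strictly increasing exhaustive chain of submodules with simple successive quotients to one of the same shape, commuting with the unions and intersections appearing in Theorem~\ref{filtration}. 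So it suffices to exhibit, for every indecomposable object of $\widetilde{\mathcal W}_A^{fin}$ (respectively of $\widetilde{\mathcal W}_{A_{ev}}^{fin}$), a filtration as in Theorem~\ref{filtration}, and then apply $F$.

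First I would strip the parity. A weight module over $A$ or over $A_{ev}$ decomposes canonically as $X=X_1\oplus\Pi X_2$ with $X_1,X_2$ preferred, and $X_1,X_2$ clearly retain finite-dimensional weight spaces. If $X$ is indecomposable then one summand is $0$, so $X$ is preferred or is $\Pi$ of a preferred indecomposable module of finite weight multiplicity; since $\Pi$ is an auto-equivalence preserving finite weight multiplicities, indecomposability and simplicity, it is enough to treat preferred modules, i.e. objects of $\cW_A$, respectively $\cW_{A_{ev}}$, with finite-dimensional weight spaces.

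For $\gg=\osp(2a+1|2b)$ the relevant algebra is $A=Cl(a|b)$ itself, and Theorem~\ref{filtration} applies verbatim. For $\gg=\osp(2a|2b)$ the module lives in $\cW_{A_{ev}}$, a category not directly covered by Theorem~\ref{filtration}; here I would invoke Theorem~\ref{even} for the surjection $\tau\colon Q_A\to\mathbb Z/2\mathbb Z$, $\sum_i c_i\zeta_i\mapsto\sum_i c_i\bmod 2$, whose associated subsuperalgebra is exactly $A_{ev}$. An indecomposable preferred $N\in\cW_{A_{ev}}$ has support in a single $Q_{A_{ev}}$-coset $\Gamma'$; with $\Gamma$ the $Q_A$-coset containing $\Gamma'$, Theorem~\ref{even} provides an equivalence $\cW_{A_{ev}}^{\Gamma'}\to\cW_A^\Gamma$ carrying $N$ to an indecomposable $\widetilde N\in\cW_A$. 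Theorem~\ref{filtration} then applies to $\widetilde N$, and transporting the resulting filtration back through the equivalence gives the one required for $N$.

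The point that genuinely requires care --- and which I expect to be the main obstacle --- is that Theorem~\ref{even} must carry the finite-weight-multiplicity condition from $\cW_{A_{ev}}$ to $\cW_A$, so that Theorem~\ref{filtration} is actually applicable to $\widetilde N$. The functor $R$ of Theorem~\ref{even} trivially preserves finite weight multiplicities, so everything rests on the induction functor $I(Y)=A\otimes_{A_{ev}}Y$, i.e. on the structure of $A$ as a right $A_{ev}$-module. Whenever an index $i>0$ is available --- which is automatic as soon as $a\geq 1$, in particular for every $\osp(2a|2b)$ with $a=\infty$ --- the element $\xi_i+\eta_i$ is $\mathbb Z$-homogeneous of odd degree with $(\xi_i+\eta_i)^2=1$, so $A=A_{ev}\oplus(\xi_i+\eta_i)A_{ev}$ is free of rank two over $A_{ev}$ with the extra generator lying in $A^{\zeta_i}\oplus A^{-\zeta_i}$; consequently each weight space of $I(Y)$ is a sum of two weight spaces of $Y$, and $I(Y)$ has finite weight multiplicities whenever $Y$ does. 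The cases with $a$ and $b$ both finite give $\dim\gg<\infty$, where indecomposables in $\mathcal B^{osc}_{\gg}$ already have finite length and the statement is immediate; the only remaining case is $a=0$, $b=\infty$, i.e. $\gg=\sp(\infty)$, which I would handle separately --- either by checking that the partially-ordered-set-of-cyclic-submodules argument in the proof of Theorem~\ref{filtration} runs without change inside $\cW_{Cl(0|\infty)_{ev}}$ (which shares all the features used: root spaces cyclic over $H_A$, simple modules multiplicity free and determined by their supports, blocks with at most countably many simples), or by appealing to the explicit description of simple bounded weight $\sp(\infty)$-modules in \cite{GrP}. Apart from this point, the whole argument is formal transport of structure along equivalences.
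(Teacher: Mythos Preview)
Your approach is correct and is exactly what the paper has in mind; the paper states this corollary without proof, treating it as an immediate consequence of Theorem~\ref{equivalence-Clifford} and Theorem~\ref{filtration}. You have correctly isolated the one point that genuinely needs an argument, namely that the induction functor $I$ in Theorem~\ref{even} preserves finite weight multiplicities, so that Theorem~\ref{filtration} can be applied after passing from $A_{ev}$ to $A$.

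Two small remarks. First, $\xi_i+\eta_i$ is not $\mathbb Z$-homogeneous (it is a sum of a degree~$1$ and a degree~$-1$ element); what you need, and what you in fact use, is only that it lies in $A_{odd}$ and in $A^{\zeta_i}\oplus A^{-\zeta_i}$. Second, the case $a=0$ does not require a separate treatment: for $j>0$ the odd generators satisfy $\eta_{-j}\xi_{-j}-\xi_{-j}\eta_{-j}=1$, so every $y\in A_{odd}$ can be written as $\eta_{-1}(\xi_{-1}y)-\xi_{-1}(\eta_{-1}y)$ with $\xi_{-1}y,\eta_{-1}y\in A_{ev}$, giving $A_{odd}=\xi_{-1}A_{ev}+\eta_{-1}A_{ev}$; exactly as in your argument for $i>0$, each weight space $I(N)^\mu$ with $\mu\in\Gamma''$ is then contained in the image of $N^{\mu-\zeta_{-1}}\oplus N^{\mu+\zeta_{-1}}$ and is therefore finite dimensional. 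So the argument is uniform in $a$ and $b$.
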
         
\begin{remark} For $b=1$ every module in $\mathcal B^{osc}_{\gg}$ has finite length.
  \hfill${\bigcirc}$    \end{remark}

     The following is our second main result about the category  $\mathcal B^{osc}_{\gg}$. Let $(\mathcal B^{osc}_{\gg_{\bar{0}}})_{\bar{0}}$ be the category of purely even bounded weight $\gg_{\bar{0}}$-modules with simple constituents of spinor-oscillator type. 
                
\begin{corollary} \label{ equivalence_osp} If $b>1$ then the category $\mathcal B^{osc}_{\gg}$ is equivalent to the category $(\mathcal B^{osc}_{\gg_{\bar{0}}})_{\bar{0}}$. The functor $\mathcal{E}:M\mapsto M_{\bar{0}}$ establishes an
  equivalence.
\end{corollary}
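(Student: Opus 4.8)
The plan is to transport $\mathcal{E}$ through the Clifford/Weyl picture and recognize it there as the functor ``take the even part'', which is governed by Theorem~\ref{even}. I will spell out the case $\gg=\osp(2a+1|2b)$; the case $\gg=\osp(2a|2b)$ is identical with $\Theta_{a|b}$ replaced by $\Psi_{a|b}$ and $Cl(a|b)$ replaced by its even subalgebra $Cl(a|b)_{ev}$. If $\gg=\osp(2a|2b)$ with $a=0$, then $\gg=\sp(2b)$ is purely even and the statement is vacuous, so in the orthosymplectic case we may assume $a\geq 1$; only $b\geq 1$ (which holds since $b>1$) and, in the even case, $a\geq 1$ will be used about the rank.

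Write $C=Cl(a|b)$ (resp.\ $C=Cl(a|b)_{ev}$) and $B=C_{\bar 0}$. By Theorem~\ref{equivalence-Clifford}, pullback along $\Theta_{a|b}$ (resp.\ $\Psi_{a|b}$) identifies $\mathcal B^{osc}_{\gg}$ with $\widetilde{\mathcal W}^{fin}_{C}$. Restricting the same homomorphism to $U(\gg_{\bar 0})$ gives a surjection $U(\gg_{\bar 0})\twoheadrightarrow B$ (this surjectivity is recorded in the proof of Theorem~\ref{thm-m-twins}); by Corollary~\ref{even-iso} the target $B$ is $Cl(a|0)\otimes D(b|0)_{ev}$ (resp.\ $Cl(a|0)_{ev}\otimes Cl(0|b)_{ev}$), so pullback along this surjection identifies the category of weight $B$-modules with finite weight multiplicities with $(\mathcal B^{osc}_{\gg_{\bar 0}})_{\bar 0}$ --- the equivalence established for $\gg_{\bar 0}=\gg_{\so}\oplus\gg_{\sp}$ in the proof of Corollary~\ref{fincase} (the orthogonal factor is semisimple, the symplectic factor being handled by \cite{GrS} for $b<\infty$ and \cite{GrP} for $b=\infty$). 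Now if $M\in\mathcal B^{osc}_{\gg}$ is the pullback of a weight $C$-module $X$, then $M_{\bar 0}=X_{\bar 0}$ as a vector space and $\gg_{\bar 0}$ acts on it through $U(\gg_{\bar 0})\twoheadrightarrow B$; hence, under the two identifications above, $\mathcal{E}$ corresponds to the functor $X\mapsto X_{\bar 0}$ from $\widetilde{\mathcal W}^{fin}_{C}$ to the category of finite-multiplicity weight $B$-modules.

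Thus it suffices to show that $X\mapsto X_{\bar 0}$ is an equivalence between these categories. The parity homomorphism $p\colon Q_C\to\mathbb Z/2\mathbb Z$ is surjective ($C$ has odd roots: take $\xi_{-1}$ if $C=Cl(a|b)$, and $\xi_{-1}\xi_{1}$ if $C=Cl(a|b)_{ev}$), so Theorem~\ref{even} applies with $\tau=p$ and $B=C_{\bar 0}$: for each $\sim$-block $\Gamma$ of $\mathcal W_C$ the functor $R\colon X\mapsto\bigoplus_{p(\mu)=\bar 0}X^{\mu}=X_{\bar 0}$ is an equivalence of $\mathcal W_C^{\Gamma}$ onto $\mathcal W_B^{\Gamma'}$, where $\Gamma=\Gamma'\sqcup\Gamma''$ is the partition into the two $Q_B$-cosets in $\Gamma$; running over all $\Gamma$, and splitting every weight module into its preferred summands to deal with the $\mathbb Z_2$-grading, assembles these into an equivalence $X\mapsto X_{\bar 0}$ from weight $C$-modules to weight $B$-modules. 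The point requiring genuine work is that this equivalence restricts to the finite-weight-multiplicity subcategories: $R$ obviously preserves finiteness, but its inverse $Y\mapsto C\otimes_B Y$ need not when $b=\infty$ (the right $B$-module $C_{\bar 1}$ fails to be finitely generated), so essential surjectivity onto the category of finite-multiplicity weight $B$-modules has to be obtained instead from the second paragraph --- namely from the fact that $\mathcal{E}$ carries $\mathcal B^{osc}_{\gg}$ into $(\mathcal B^{osc}_{\gg_{\bar 0}})_{\bar 0}$ (clear, since $M_{\bar 0}$ is bounded with $\gg_{\bar 0}$-constituents of spinor-oscillator type by Theorem~\ref{thm-m-twins}), the realization of every simple object of $(\mathcal B^{osc}_{\gg_{\bar 0}})_{\bar 0}$ as some $M_{\bar 0}$ via Theorem~\ref{thm-m-twins}(c), and a passage to direct limits of the kind used in the proof of Theorem~\ref{equivalence-Clifford}. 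Granting this, composing the three equivalences and invoking the identification of $\mathcal{E}$ with $X\mapsto X_{\bar 0}$ gives the corollary. I expect this finiteness bookkeeping --- reconciling Theorem~\ref{even} with boundedness in the presence of the large odd part $C_{\bar 1}$ --- to be the main obstacle, the rest being routine once the images of $\Theta_{a|b}$ and $\Psi_{a|b}$ on $U(\gg_{\bar 0})$ are pinned down via Corollary~\ref{even-iso}.
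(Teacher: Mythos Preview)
Your outline is exactly the paper's proof: the paper's two lines cite Theorem~\ref{equivalence-Clifford} and Theorem~\ref{even} with $B=A_{\bar 0}$, and you have correctly unpacked what this means and why $\mathcal E$ becomes the functor $X\mapsto X_{\bar 0}$ on the Clifford side.

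The one place you hedge --- whether the equivalence of Theorem~\ref{even} restricts to the finite-multiplicity subcategories --- is not a genuine obstacle, and the direct-limit workaround you sketch is unnecessary. For \emph{any} weight $Cl(a|b)$-module $X$ and any weight $\nu$, the relation $\eta_{-1}\xi_{-1}-\xi_{-1}\eta_{-1}=1$ forces at least one of $\xi_{-1}\colon X^\nu\to X^{\nu+\zeta_{-1}}$ or $\eta_{-1}\colon X^\nu\to X^{\nu-\zeta_{-1}}$ to be injective (the round-trips act by the scalars $\nu_{-1}+1$ and $\nu_{-1}$, which cannot both vanish). Since $\zeta_{-1}$ has odd parity, every weight multiplicity at a weight in $\Gamma''$ is bounded by one at a weight in $\Gamma'$; hence $R$ reflects finite multiplicity and the equivalence of Theorem~\ref{even} restricts. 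For $C=Cl(a|b)_{ev}$ with $a\geq 1$ the same trick works with the four odd, even-degree elements $\xi_1\xi_{-1}$, $\eta_{-1}\eta_1$, $\xi_{-1}\eta_1$, $\xi_1\eta_{-1}$: their round-trip scalars on $X^\nu$ are $(1-\nu_1)(\nu_{-1}+1)$, $\nu_1\nu_{-1}$, $\nu_1(\nu_{-1}+1)$, $(1-\nu_1)\nu_{-1}$, and a two-case check on $\nu_1\in\{0,1\}$ shows these cannot all vanish. The case $a=0$ is trivial, as you note. So the ``main obstacle'' dissolves and your argument is complete without any appeal to direct limits.
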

\begin{proof} The statement follows from Theorem \ref{equivalence-Clifford} and from the equivalence of categories established in Theorem \ref{even} for $B = A_{\bar{0}}$.

\end{proof}

         \begin{corollary} For $b\geq 2$ every non-semisismple block of the category of bounded $\gg$-modules is equivalent to a block of bounded $D(k|0)$- or $D(\infty|0)$-modules with integral weights. 
         \end{corollary}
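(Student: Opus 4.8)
The plan is to reduce, step by step, along the chain of equivalences already established in the paper. First I would dispose of the semisimple part: every bounded $\gg$-module decomposes as $M'\oplus M''$ with $M'\in\mathcal B^{osc}_{\gg}$ and $M''$ a finite direct sum of trivial and defining modules; since the latter are projective and injective in $\mathcal B_{\gg}$ and pairwise have disjoint supports, each of them spans a semisimple block. Consequently every non-semisimple block of $\mathcal B_{\gg}$ is a block of $\mathcal B^{osc}_{\gg}$.

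Next I would apply Theorem \ref{equivalence-Clifford}, which is available because $b\geq 2$ (so $b\neq1$): for $\gg=\osp(2a+1|2b)$ the category $\mathcal B^{osc}_{\gg}$ is equivalent, via pullback along $\Theta_{a|b}$, to $\widetilde{\mathcal W}_{Cl(a|b)}^{fin}$, and for $\gg=\osp(2a|2b)$ it is equivalent, via pullback along $\Psi_{a|b}$, to $\widetilde{\mathcal W}_{Cl(a|b)_{ev}}^{fin}$. Passing to the preferred subcategory costs nothing at the level of blocks (every object is $X_1\oplus\Pi X_2$ with the $X_i$ preferred and $\Pi$ an autoequivalence), so by Proposition \ref{blocks} our block corresponds to the block $\cW^{\Gamma}_{Cl(a|b)}$ (respectively, to a block of $\cW_{Cl(a|b)_{ev}}$), restricted to modules with finite weight multiplicities, for a suitable $\sim$-class $\Gamma$.

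Then I would invoke the block classification already in the excerpt. In the case $\gg=\osp(2a+1|2b)$, Corollary \ref{blocks-Cl}(c) says that every block of $\cW_{Cl(a|b)}$ is equivalent to the integral block $\cW^{Q_{D(c|0)}}_{D(c|0)}$ for some $c\leq\infty$. In the case $\gg=\osp(2a|2b)$, I would first apply Theorem \ref{even} with $B=A_{ev}$, $A=Cl(a|b)$, to identify each block of $\cW_{Cl(a|b)_{ev}}$ with a block of $\cW_{Cl(a|b)}$ (using that $Q_{A_{ev}}$ has index two in $Q_A$, so that the $Q_{A_{ev}}$-cosets $\Gamma'$ are exactly the pieces cut out of the $Q_A$-cosets $\Gamma$, and that $\cW^{\Gamma'}_{B}$ is a block since it is equivalent to the block $\cW^{\Gamma}_{A}$), and then apply Corollary \ref{blocks-Cl}(c) as before. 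All the functors involved — tensoring with a fixed multiplicity-free simple $Cl(b|0)$- or $D(b|0)$-module and its right adjoint $\Hom$-functor, together with the functors $R,I$ of Theorem \ref{even} (restriction to half the weight spaces and induction along an index-two subalgebra) — alter weight multiplicities only by bounded factors, hence restrict to equivalences of the corresponding finite-weight-multiplicity subcategories. This identifies our block with the integral-weight block in the category of bounded weight $D(c|0)$-modules (for such modules finite weight multiplicity and boundedness coincide, the simple constituents being multiplicity free), and since the block is non-semisimple one has $c\geq1$, i.e. $c=k\in\mathbb{Z}_{>0}$ or $c=\infty$, which is the assertion.

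The main obstacle is the bookkeeping in the penultimate step: the equivalences of Corollary \ref{blocks-Cl} and Theorem \ref{even} are stated for the full categories of preferred weight modules, and one must verify that they carry the finite-weight-multiplicity (equivalently, bounded) subcategories isomorphically onto one another — routine once one tracks that each functor changes multiplicities by at most a bounded multiple — and that, in the $\osp(2a|2b)$ case, every block of $\cW_{Cl(a|b)_{ev}}$ really does arise from Theorem \ref{even}. Everything else is a direct concatenation of results proved earlier in the paper.
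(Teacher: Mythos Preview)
Your argument is correct and is exactly the chain of reductions the paper has in mind; the corollary is stated in the paper without proof because it follows immediately from Theorem \ref{equivalence-Clifford}, Theorem \ref{even} (for the $\osp(2a|2b)$ case), and Corollary \ref{blocks-Cl}(c), together with the splitting-off of trivial and defining modules. Your handling of the finite-multiplicity bookkeeping is adequate: for the functor $I$ one can note concretely that for $A=Cl(a|b)$ and any $\nu\in\Gamma''$ the relations $\xi_1\eta_1+\eta_1\xi_1=1$ force $X^\nu\cong X^{\nu\pm\zeta_1}$ as vector spaces (choosing the sign so that $\nu\pm\zeta_1\in\gh_A^\vee$), and $\nu\pm\zeta_1\in\Gamma'$, so finite multiplicities on $\Gamma'$ imply finite multiplicities on all of $\Gamma$.
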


		The category of bounded weight $D(k|0)$-modules for finite $k$
		  is described, for example, in \cite{GrS}. For the case of $D(\infty|0)$ see  \cite{FGM}.

    \section {Simple bounded weight $\sl(\infty|\infty)$--modules } \label{sec-sl-bounded}

    We start by two lemmas concerning $\sl (m|n)$-modules for $m,n \in \Z_{\geq 0}$. Given a Lie superalgebra 
    $\gq \simeq \mathfrak{sl}(m|n)$ we fix the  simple roots of $\gq$ as
    $$\varepsilon_1-\varepsilon_2,\dots,\varepsilon_{m-1}-\varepsilon_m,\varepsilon_m-\delta_1,\delta_1-\delta_2,\dots,\delta_{n-1}-\delta_n,$$
    and let $\omega_1,\dots,\omega_{m-1},\omega_m,\omega_{m+1},\dots,\omega_{m+n-1}$ denote the dual basis (fundamental weights).
    There is an obvious embedding $\mathfrak{sl}(m)\subset\gq_{\bar 0}$, and  we consider $\omega_1,\dots,\omega_{m-1}$ also as fundamental weights of $\sl(m)$.

    \begin{lemma}\label{highestweight}  Let $\gq=\mathfrak{sl}(m|n)$ for   $m\geq 3$. Let 
      $M$ be a simple  bounded highest weight $\gq$-module with highest weight $\lambda$ and such that  $d(M)<m-1$. Assume that $M$ is not
      integrable over the simple ideal $\mathfrak{sl}(m)\subset\gq_{\bar 0}$. Then $\lambda=a\omega_1$ with $a\notin \mathbb Z_{\geq 0}$,
      or $\lambda=-(1+a)\omega_{k-1}+a\omega_k$ for  $2\leq k\leq m$.
   \end{lemma}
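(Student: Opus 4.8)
I would imitate the proof of Lemma~\ref{boundedideal}, with the simple ideal $\mathfrak{sl}(m)\subset\gq_{\bar 0}$ playing the role of the large-rank even ideal and with Lemma~\ref{finiterank} supplying the finite-rank input in place of Lemma~\ref{degreefinite}. Since the chosen Borel subsuperalgebra of $\gq$ restricts to the standard Borel of $\mathfrak{sl}(m)$, the highest weight vector $v_\lambda$ of $M=L_\gb(\lambda)$ is an $\mathfrak{sl}(m)$-highest weight vector of weight $\mu:=\lambda|_{\mathfrak{sl}(m)}$, and the cyclic $\mathfrak{sl}(m)$-module $N:=U(\mathfrak{sl}(m))v_\lambda$ is a highest weight module whose $\mathfrak{sl}(m)$-weight spaces coincide with $\gq$-weight spaces of $M$ (the $\delta$-part of the $\gq$-weight being pinned down by $\lambda$). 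Hence $N$, and so its simple quotient $L(\mu)$, is bounded with $d(L(\mu))\le d(N)\le d(M)<m-1$. The non-integrability hypothesis is used to guarantee that, after replacing $\gb$ by a Borel obtained through odd reflections if necessary, $N$ is \emph{infinite} dimensional, i.e.\ $\mu$ is not $\mathfrak{sl}(m)$-dominant integral; to see this one decomposes $M=\Lambda(\mathfrak{n}^-_{\bar 1})\cdot U(\mathfrak{n}^-_{\bar 0})v_\lambda$ by PBW, noting that $\mathfrak{n}^-_{\bar 1}\cong (V^*)^{\oplus n}$ as an $\mathfrak{sl}(m)$-module, so that integrability of $M$ over $\mathfrak{sl}(m)$ is governed by the $\mathfrak{sl}(m)$-submodules generated by the $\mathfrak{sl}(m)$-highest weight vectors of $M$.

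With $L(\mu)$ infinite dimensional I would apply Lemma~\ref{finiterank} to $\mathfrak{sl}(m)$ and the bounded primitive ideal $\Ann_{U(\mathfrak{sl}(m))}L(\mu)$ (whose quotient is infinite dimensional): since $d(L(\mu))<m-1=\operatorname{rk}\mathfrak{sl}(m)$, it follows that $d(L(\mu))=1$ and $\Ann_{U(\mathfrak{sl}(m))}L(\mu)=\Ann L(a\omega_1)$ or $\Ann L(a\omega_{m-1})$ for some $a\notin\mathbb Z_{\ge0}$. Thus $L(\mu)$ is a simple multiplicity free infinite-dimensional $\mathfrak{sl}(m)$-module, hence a highest weight member (relative to the standard Borel) of the associated coherent family of \cite{BBL}, \cite{Mat}. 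Enumerating these members --- the ``$\Lambda^{k-1}$-twists'' of the symmetric family --- shows that, up to reparametrisation of $a$,
$$\mu\in\{a\omega_1\}\cup\{-(1+a)\omega_{k-1}+a\omega_k:\ 2\le k\le m-1\}\cup\{a\omega_{m-1}\},$$
the last set being identified with the restriction of the case $k=m$ of the statement via $\omega_m|_{\mathfrak{sl}(m)}=0$.

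This pins down the $\varepsilon$-part of $\lambda$ but not its $\delta$-part, and to determine the latter I would run the odd-reflection bookkeeping of Lemma~\ref{boundedideal}. Starting from the distinguished Borel, whose only odd simple root is $\varepsilon_m-\delta_1$, one pushes this odd wall leftward through $\varepsilon_{m-1}-\varepsilon_m,\dots,\varepsilon_1-\varepsilon_2$; every Borel $\gb'$ so obtained again induces the standard positive system on $\mathfrak{sl}(m)$, and the associated restriction $\mu'=\lambda'|_{\mathfrak{sl}(m)}$ must again lie in the short list above. A typical reflection at $\varepsilon_i-\delta_1$ replaces $\lambda$ by $\lambda-(\varepsilon_i-\delta_1)$ and hence $\mu$ by $\mu-\bar\varepsilon_i=\mu-\omega_i+\omega_{i-1}$, while an atypical one leaves $\lambda$ (and $\mu$) fixed; matching the finitely many admissible values of $\mu'$ forces almost all these reflections to be atypical, and each atypical reflection is an equation $(\lambda^{(i)},\varepsilon_i-\delta_1)=0$ on the coefficients of $\lambda$. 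Solving this finite system --- case by case according to which listed value $\mu$ takes and where the (at most one or two) typical reflections occur --- shows that $\lambda$ has no free $\delta$-coefficients and equals $a\omega_1$ or $-(1+a)\omega_{k-1}+a\omega_k$. I expect this combinatorial case analysis to be the bulk of the write-up, but essentially routine.

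The genuine obstacle is closing the gap left by Lemma~\ref{finiterank}, i.e.\ excluding $d(L(\mu))=m-2$: there do exist bounded, but not multiplicity free, highest weight $\mathfrak{sl}(m)$-modules of degree $k$ for each $2\le k\le m-2$ (parabolic continuations built on the natural module of an $\mathfrak{sl}(k)$-Levi factor), so one must show that such a $\mu$ cannot be $\lambda|_{\mathfrak{sl}(m)}$ for a bounded $\gq$-module of degree $<m-1$. I would argue that applying the odd lowering operators $e_{\delta_1-\varepsilon_i}$ to the $\mathfrak{sl}(m)$-isotypic component of $M$ carrying $L(\mu)$ produces a single $\gq$-weight space of dimension $\ge m-1$, contradicting $d(M)<m-1$; combined with the reduction of the first paragraph this forces the multiplicity free alternative throughout, after which the arguments above apply.
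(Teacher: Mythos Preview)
Your overall strategy coincides with the paper's: restrict $\lambda$ to $\mathfrak{sl}(m)$, invoke Lemma~\ref{finiterank} together with the \cite{BBL} classification to put $\mu$ on a short list, and then run odd reflections to determine the $\delta$-part of $\lambda$. The paper also cites \cite{PSer3} (as you implicitly do) to pass from the annihilator statement of Lemma~\ref{finiterank} to the multiplicity-free conclusion.

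There is, however, a genuine gap in your odd-reflection step. You propose to reflect successively at $\varepsilon_m-\delta_1,\varepsilon_{m-1}-\delta_1,\dots,\varepsilon_1-\delta_1$, i.e.\ to vary the $\varepsilon$-index while keeping $\delta_1$ fixed. Each resulting atypicality equation $(\lambda^{(i)},\varepsilon_i-\delta_1)=0$ involves only the $\varepsilon$-coefficients (already determined by $\mu$) and the single coefficient of $\delta_1$. The coefficients of $\delta_2,\dots,\delta_n$ never appear, so your ``finite system'' cannot pin them down; the claim that ``$\lambda$ has no free $\delta$-coefficients'' does not follow. The paper instead reflects at $\varepsilon_m-\delta_1,\dots,\varepsilon_m-\delta_n$ (varying the $\delta$-index); in cases (1) and (3) the requirement that $\mu'$ stay on the list forces all of these to be atypical, yielding the equations $(\lambda,\varepsilon_m-\delta_j)=0$ for every $j$, which together with $\mu$ determine $\lambda$ completely. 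In case (2) the paper needs a second pass with $\varepsilon_{m-1}-\delta_1,\dots,\varepsilon_{m-1}-\delta_n$. Your scheme can be repaired by continuing the wall-pushing through $\delta_2,\dots,\delta_n$, but as written it is incomplete.

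On your final paragraph: you are right that, taken literally, Lemma~\ref{finiterank} leaves open $d(L(\mu))=m-2$ when the hypothesis is only $d(M)<m-1$. The paper applies Lemma~\ref{finiterank} directly without isolating this case, so the borderline is absorbed into the cited input from \cite{GrP}. Your proposed fix via odd lowering operators is not yet an argument: you would need to explain why the images of $e_{\delta_1-\varepsilon_i}$ applied to a weight space of $L(\mu)$ of maximal multiplicity are linearly independent in $M$, which is not automatic.
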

   \begin{proof}Denote by $\mu$ the weight of $\sl(m)$ obtained from $\lambda$ by restriction.
     By  Lemma \ref{finiterank}, $\Ann_{U(\mathfrak{sl}(m))}L(\mu)=\Ann_{_{U(\mathfrak{sl}(m))}}L(a\omega_1)$ or $\Ann_{U(\mathfrak{sl}(m))}L(\mu)= \Ann_{_{U(\mathfrak{sl}(m))}}L(a\omega_{m-1})$
     for some $a\notin\mathbb Z_{\geq 0}$. Since the primitive ideals $\Ann_{_{U(\mathfrak{sl}(m))}}L(a\omega_1)$ and  $\Ann_{_{U(\mathfrak{sl}(m))}}L(a\omega_{m-1})$ have degree $1$, the result of \cite{PSer3} mentioned before Lemma \ref{finiterank} shows that also $d (L(\mu)) = 1$. Therefore        Proposition 3.4 of \cite{BBL} implies that $\mu$ is one of the following
     weights:
     \begin{enumerate}
     \item $a\omega_1$ for $a\notin \mathbb Z_{\geq 0}$,
     \item$b\omega_{m-1}$ for  $b\notin \mathbb Z_{\geq 0}$,
       \item $-(1+a)\omega_{k-1}+a\omega_k$ for some   $2\leq k\leq m-1$ and arbitrary $a$.
       \end{enumerate}

       Let us deal first with the cases (1) and (3). Consider the odd reflections with respect to the roots $\varepsilon_m-\delta_1,\dots,\varepsilon_m-\delta_n$ of $\gq$.
       Since the restriction to $\mathfrak{sl}(m)$ of the highest weight of $M$ with respect to any reflected Borel subsuperalgebra must
       satisfy the same respective condition (1) or (3),
       all these reflections must be atypical. This is only possible if the restriction of $\lambda$ to the Cartan subalgebra of
       $\mathfrak{sl}(n)$ equals zero.
       Furthermore, we have $(\lambda,\varepsilon_m-\delta_1)=0$. This implies  $\lambda=a\omega_1$  or  $\lambda=-(1+a)\omega_{k-1}+a\omega_k$  for  $2\leq k\leq m-1$, respectively.

    Now let $\mu=b\omega_{m-1}$ as in (2). After performing all  odd reflections with respect to the
       roots $\varepsilon_m-\delta_1,\dots,\varepsilon_m-\delta_n$,
       we obtain a highest weight $\lambda'$ of $M$ such that its restriction to $\mathfrak{sl}(m)$ equals
       $c\omega_{m-1}$ and $b-c\in\mathbb Z_{\geq 0}$. Next, we perform odd reflections with respect to the roots
       $\varepsilon_{m-1}-\delta_1,\dots,\varepsilon_{m-1}-\delta_n$. By the same argument as in cases (1) and (3),  these latter reflections must be atypical.
       Therefore the restriction of
       $\lambda'$ to $\mathfrak{sl}(m)$ equals zero and $(\lambda',\varepsilon_{m-1}-\delta_1)=0$. In other words, $\lambda'=c\omega_{m-1}$ for some
       $c\notin \mathbb Z_{\geq 0}$. Finally,  passing via odd reflections to the original
       Borel subsuperalgebra yields $\lambda=b\omega_{m-1}+(1-b)\omega_{m}$. To finish the proof we set $a=1-b$.
        \end{proof}
   
  Recall the homomorphisms $\Upsilon^+_{m|n} : U(\sl (m|n))\to D(m|n)_{{0}}$ and $\Upsilon^-_{m|n} : U(\sl (m|n))\to D(n|m)_{{0}}$ from Section \ref{sebsec-conn}. Note that those homomorphims map the Cartan algebra of $\sl(m|n)$ to the subalgebra spanned by
        $u_i-u_j$ for all $i,j\neq 0, i\neq j$. 
          Moreover, the map $f$ induced by  $\Upsilon^+_{m|n} $  (respectively, $\Upsilon^-_{m|n}$) from 
          $\left(\mbox{Span}\, \{ u_i-u_j \; | \; i \neq j\}\right)^*$ to $\mathfrak h^*$ is linear, and is determined by the correspondence
             $\zeta_i \mapsto \varepsilon_i $, $\zeta_{-j} \mapsto \delta_j$
        (respectively, $\zeta_{-i} \mapsto \varepsilon_i$, $\zeta_{j} \mapsto \delta_j$).

   \begin{corollary}\label{impcor} Let $M$ be a bounded simple non-integrable $\gq=\mathfrak{sl}(m|n)$-module with  $d(M)<\min(m,n)-1$. Then $d=1$ and $\Ann_{U(\gq)} M$
     contains $\operatorname{ker}\Upsilon^-_{m|n}$ or $\operatorname{ker}\Upsilon^+_{m|n}$.
   \end{corollary}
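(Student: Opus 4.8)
The plan is to reduce the statement for $\mathfrak{sl}(m|n)$ to the highest weight situation handled by Lemma~\ref{highestweight} and then identify the annihilator via the explicit description of $\Upsilon^\pm_{m|n}$. First I would invoke Musson's theorem: since $M$ is a simple bounded weight $\gq$-module, its annihilator $\Ann_{U(\gq)}M$ equals $\Ann_{U(\gq)} L_\gb(\lambda)$ for some Borel subsuperalgebra $\gb$ of $\gq$ and some weight $\lambda$; by the result of \cite{PSer3} (recalled before Lemma~\ref{finiterank}) this simple highest weight module $L_\gb(\lambda)$ is itself bounded with $d(L_\gb(\lambda)) = d(M) = d < \min(m,n)-1$. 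Now I would observe that $M$ being non-integrable over at least one of the two simple ideals $\mathfrak{sl}(m), \mathfrak{sl}(n) \subset \gq_{\bar 0}$ forces $L_\gb(\lambda)$ to be non-integrable over that same ideal (integrability over a simple ideal of $\gq_{\bar 0}$ is an ideal-theoretic property detected by the annihilator). Up to swapping the roles of $\varepsilon$'s and $\delta$'s — which is exactly the symmetry interchanging $\Upsilon^+$ and $\Upsilon^-$ — assume non-integrability over $\mathfrak{sl}(m)$.

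Next I would apply Lemma~\ref{highestweight} (whose hypothesis $d(M) < m-1$ is weaker than our $d(M) < \min(m,n)-1$, hence satisfied, and likewise $m \geq 3$): it gives that, after choosing $\gb$ in the standard form, the highest weight is either $\lambda = a\omega_1$ with $a \notin \Z_{\geq 0}$, or $\lambda = -(1+a)\omega_{k-1} + a\omega_k$ for some $2 \leq k \leq m$. In particular the restriction of $\lambda$ to the Cartan subalgebra of $\mathfrak{sl}(n)$ vanishes and $\lambda$ is orthogonal to the odd simple root $\varepsilon_m - \delta_1$, so $L_\gb(\lambda)$ restricted to $\mathfrak{sl}(m)$ is a simple multiplicity free module annihilated by $\Ann_{U(\mathfrak{sl}(m))}L(a\omega_1)$ (or $L(a\omega_{m-1})$); since such ideals have degree $1$, again by \cite{PSer3} we get $d = 1$.

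The remaining and main point is to show $\ker\Upsilon^+_{m|n} \subseteq \Ann_{U(\gq)}M$. For this I would compute directly: the defining $D(m|n)_0$-module restricted along $\Upsilon^+_{m|n}$ is a simple weight $\gq$-module whose $\gh^*$-weights are obtained from weights $\mu \in \gh_A^\vee$ of $D(m|n)_0$ via the \emph{linear} map $f$ with $f(\zeta_i) = \varepsilon_i$, $f(\zeta_{-j}) = \delta_j$ described just before the corollary. By Proposition~\ref{integrable} and Theorem~\ref{Zero}, the simple weight $D(m|n)_0$-modules $Y(\mu)$ with $\mu_i \notin \Z_{\geq 0}$ or $\mu_i \in \Z_{<0}$ for all $i>0$ pull back precisely to simple $\gq$-modules that are non-integrable over $\mathfrak{sl}(m)$ with $d=1$; choosing $\mu$ so that $f$ sends $Y(\mu)$'s highest weight to $\lambda$ (possible because $f$ is surjective onto $\gh^*$ and the relevant $\mu_i$ can be taken non-integral, matching the form of $\lambda$ from Lemma~\ref{highestweight}), one obtains a simple $\gq$-module, annihilated by $\ker\Upsilon^+_{m|n}$, with the same annihilator as $M$ — because both have the same bounded primitive ideal of degree $1$ and, by \cite{PSer3}, are determined up to annihilator by that ideal together with their highest weight. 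Hence $\ker\Upsilon^+_{m|n} \subseteq \Ann_{U(\gq)}M$, as claimed. I expect the bookkeeping of matching $\mu$ to $\lambda$ under $f$ — in particular checking that the two families $a\omega_1$ and $-(1+a)\omega_{k-1}+a\omega_k$ are both realized — to be the one genuinely delicate step, though it is a finite case check rather than a deep obstruction.
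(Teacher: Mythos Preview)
Your overall plan coincides with the paper's: invoke Musson's theorem to replace $M$ by a highest weight module $L_\gb(\lambda)$ with the same annihilator, apply Lemma~\ref{highestweight} to pin down $\lambda$, and then show that $L_\gb(\lambda)$ itself is the pullback along $\Upsilon^+_{m|n}$ of a weight $D(m|n)_0$-module. The reduction steps and the deduction of $d=1$ are fine.

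The gap is in your final paragraph. You write that the pullback module and $M$ ``have the same bounded primitive ideal of degree $1$ and, by \cite{PSer3}, are determined up to annihilator by that ideal together with their highest weight.'' This is circular: you are trying to prove the annihilators coincide, so you cannot invoke equality of primitive ideals as a hypothesis, and \cite{PSer3} says nothing of the sort in any case. What you actually need is the simpler statement that the pullback of your chosen $A_0$-module is \emph{isomorphic} to $L_\gb(\lambda)$, whence their annihilators agree and contain $\ker\Upsilon^+_{m|n}$. But for that you must verify that your $Y(\mu)$ is a simple module whose pullback has a highest weight vector of weight exactly $\lambda$; your appeal to Theorem~\ref{Zero} and Proposition~\ref{integrable} does not do this, since for generic $\mu$ the module $Y(\mu)$ has no highest weight vector at all, and for the $\mu$ that would match $\lambda=-(1+a)\omega_{k-1}+a\omega_k$ you have not checked what $Y(\mu)$ actually looks like.

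The paper closes this gap by an explicit construction rather than by abstract matching: inside the $D(m|n)$-module $x_k^a\,\mathbb C[x_1^{\pm1},\dots,x_m^{\pm1},x_{-1},\dots,x_{-n}]$ it takes the vector $f_k:=x_1^{-1}\cdots x_{k-1}^{-1}x_k^{a}$, which has weight $a\omega_1$ for $k=1$ and $-(1+a)\omega_{k-1}+a\omega_k$ for $2\le k\le m$, lets $Y_k$ be the $D(m|n)_0$-submodule it generates, and checks directly that (a suitable simple quotient of) $Y_k$ pulls back to $L(\lambda)$ by verifying that any vector annihilated by all $x_{i-1}\partial_i$ is proportional to $f_k$. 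This is exactly the ``bookkeeping'' you flagged as delicate; it is the substance of the proof and cannot be replaced by the annihilator argument you sketched.
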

   \begin{proof} Without loss of generality we can assume that $M$ is not integrable over $\mathfrak{sl}(m)$. Then 
     $\Ann_{U(\gq)} M=\Ann_{U(\gq)} L(\lambda)$ where $\lambda$ is one of the weights in Lemma \ref{highestweight}. It suffices to show that $L(\lambda)$ is obtained
     by pullback from a weight $D(m|n)_0$-module. Consider the $D(m|n)$-module 
     $$F (\lambda):=x_k^a\mathbb C[x_1^{\pm 1},\dots,x_m^{\pm 1},x_{-1},\dots,x_{-n}],$$
     where $k=1, a \notin \Z_{\geq 0}$  if $\lambda = a\omega_1$, and $a \in \C$ if  $\lambda = -(1+a)\omega_{k-1}+a\omega_k$ for $2\leq k\leq m$.

     Let $f_k:=x_1^{-1}\dots x_{k-1}^{-1}x_k^a$ and let $Y_k$ denote the $D(m|n)_0$-submodule in $F  (\lambda)$ generated by $f_k$. Note that the weight of $f_k$
     equals
     $a\omega_1$ for $k=1$, and equals $-(1+a)\omega_{k-1}+a\omega_k$ for $2\leq k\leq m$. Then $Y_1$ is a simple $D(m|n)_0$-module and its
     pullback along $\Upsilon^+_{m|n}$ is isomorphic to $L(a\omega_1)$,  since by direct computation one can see that any vector annihilated by all $x_{i-1}\partial_{i}$ for  $2\leq i\leq m$ is proportional to $f_1$. For $k>1$, consider the $D(m|n)_0$-submodule $Z_k\subset Y_k$ 
     generated by $x_{i-1}\partial_{i}(f)$ for  $2\leq i\leq m$. Then $f\notin Z_k$ and hence $X_k:=Y_k/Z_k\neq 0$. Furthermore, the pullback
     along $\Upsilon^+_{m|n}$ of $X_k$ is
     isomorphic to $L(-(1+a)\omega_{k-1}+a\omega_k)$ (again because any vector annihilated by all $x_{i-1}\partial_{i}$ is proportional to $f_k$).
     The statement is proved.
     \end{proof} 

In the rest of this section $\gg =\sl (\infty | \infty)$ and $A = D(\infty | \infty)$. We fix an exhaustion $\gg = \limarr \gg_k$, where $\gg_k \simeq \sl(k|k)$. By $\gg^{\pm}$ we denote the ideals of $\gg_{\bar{0}}$ with respective roots $\varepsilon_i - \varepsilon_j$ and $\delta_i - \delta_j$, and we write $\Upsilon^\pm$ instead of $\Upsilon^\pm_{\infty | \infty}$.

     \begin{lemma}\label{limitsl} Let $M$ be a simple bounded weight $\gg$-module not integrable over $\gg^+$ or  $\gg^-$. Then
       $\Ann_{U(\gg)} M$ contains  $\operatorname{ker}\Upsilon^+$, or respectively $\operatorname{ker}\Upsilon^-$,  and therefore $M$ is multiplicity free.
          \end{lemma}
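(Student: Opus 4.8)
The plan is to follow the template of the proof of Proposition~\ref{limit}, using Corollary~\ref{impcor} as the finite-rank input in place of Lemma~\ref{boundedideal}. By the symmetry interchanging the $\varepsilon$- and $\delta$-indices (which swaps $\gg^+$ with $\gg^-$ and $\Upsilon^+$ with $\Upsilon^-$) it suffices to treat the case in which $M$ is not integrable over $\gg^+$ and to prove $\ker\Upsilon^+\subset\Ann_{U(\gg)}M$; the multiplicity-freeness will then come for free. First I would record two reductions. Since $\Upsilon^+=\varinjlim \Upsilon^+_{k|k}$ and filtered colimits are exact, $\ker\Upsilon^+=\bigcup_k\ker\Upsilon^+_{k|k}$, where $\ker\Upsilon^+_{k|k}=\ker\Upsilon^+\cap U(\gg_k)$ and these ideals increase with $k$; hence it is enough to show $\ker\Upsilon^+_{k|k}\cdot M=0$ for all $k\gg 0$. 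Second, since $M$ is a simple weight module, its non-integrability over $\gg^+$ is witnessed by a single root vector $e_{\varepsilon_p-\varepsilon_q}$ acting without local nilpotency, so for every $k\ge k_0:=\max(p,q,\,d(M)+2)$ the even ideal $\sl(k)\subset\gg_{k,\bar 0}$ spanned by the $\varepsilon_i-\varepsilon_j$ fails to act locally finitely on $M$.

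Now I would fix $k\ge k_0$ and a weight $\lambda\in\supp M$. As in the proof of Proposition~\ref{limit}, $M^\lambda$ is a simple module over $\bar U^{\gh}$, where $\bar U=U(\gg)/\Ann_{U(\gg)}M$, and $\bar U^{\gh}=\varinjlim\bar U_k^{\gh_k}$ with $\bar U_k=U(\gg_k)/\Ann_{U(\gg_k)}M$. Every simple $\bar U_k^{\gh_k}$-subquotient of $M^\lambda$ occurs as a weight space of a simple bounded weight $\gg_k\simeq\sl(k|k)$-module $M'$ annihilated by $\Ann_{U(\gg_k)}M$, with $d(M')\le d(M)<k-1$. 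For such $M'$ I would invoke Corollary~\ref{impcor}, applied to whichever of the two even $\sl(k)$-ideals acts non-locally-finitely on $M'$ (the case in which $M'$ is integrable over both ideals being handled directly via Lemma~\ref{estimate}, which forces the $\gg_{k,\bar 0}$-constituents of $M'$ to be among the trivial/defining modules), to conclude first that $d(M')=1$; passing to the direct limit exactly as in Proposition~\ref{limit} then yields $\dim M^\lambda=1$, so $M$ is multiplicity free. To identify the annihilator I would then use that $\sl(k)$ (the $\varepsilon$-ideal) does not act locally finitely on $M$ to ensure that the $M'$ relevant to $M^\lambda$ are non-integrable over this particular $\sl(k)$, so that Corollary~\ref{impcor} produces $\ker\Upsilon^+_{k|k}\subset\Ann_{U(\gg_k)}M'$ and not $\ker\Upsilon^-_{k|k}$ — the highest weights listed in Lemma~\ref{highestweight} being precisely those realized by pullback along $\Upsilon^+_{k|k}$. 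Collecting this over all $\lambda$ and passing to the limit gives $\ker\Upsilon^+_{k|k}\cdot M=0$ for all $k\gg 0$, hence $\ker\Upsilon^+\subset\Ann_{U(\gg)}M$.

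Finally, the concluding clause should be automatic: $\ker\Upsilon^+\subset\Ann_{U(\gg)}M$ means that $M$ is the pullback along $\Upsilon^+$ of a weight $D(\infty|\infty)_0$-module, and the subalgebra of $\gh$-invariants in $D(\infty|\infty)_0$ is $H_A=D(\infty|\infty)^0$, which is commutative by Lemma~\ref{rootsA}(b); thus $\bar U^{\gh}$ is commutative and every $M^\lambda$, being a simple module over it, is one-dimensional. The hard part will be the step buried in the middle paragraph: the restriction $M|_{\gg_k}$ is \emph{not} simple, so Corollary~\ref{impcor} cannot be quoted for $M$ directly. The real content is transferring non-integrability of $M$ over $\gg^+$ down to the finite-rank subquotients that actually control the $\gh_k$-invariant subalgebras $\bar U_k^{\gh_k}$ acting on each $M^\lambda$, and ruling out finite-rank subquotients integrable over the $\varepsilon$-ideal $\sl(k)$ (which would be tensor-module-like and cannot appear once $\sl(k)$ acts non-locally-finitely on $M$). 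The bookkeeping mirrors that of Proposition~\ref{limit}, but one must keep $\Upsilon^+$ and $\Upsilon^-$ consistently matched with non-integrability over $\gg^+$ and $\gg^-$ respectively.
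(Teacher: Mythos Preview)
Your strategy is more elaborate than the paper's. The paper does not route through the $\bar U^{\gh}$-machinery of Proposition~\ref{limit} at all: it simply fixes a nonzero weight vector $v\in M$, sets $M_k:=U(\gg_k)v$, invokes Corollary~\ref{impcor} for $k>d(M)$ to obtain $\ker\Upsilon^\pm_{k|k}\subset\Ann_{U(\gg_k)}M_k$, and passes to the direct limit using $\ker\Upsilon^\pm=\varinjlim\ker\Upsilon^\pm_{k|k}$ and $\Ann_{U(\gg)}M=\varinjlim\Ann_{U(\gg_k)}M_k$. Multiplicity-freeness is then deduced exactly as in your final paragraph, from the fact that simple weight $A_0$-modules are multiplicity free; so your separate derivation of $\dim M^\lambda=1$ via simple $\bar U_k^{\gh_k}$-constituents is not needed.

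You are right that the delicate point is applying Corollary~\ref{impcor}, which is stated for simple modules, to the cyclic but non-simple $M_k$; the paper is terse here. However, your proposed patch for the integrable case has a concrete error: Lemma~\ref{estimate} only asserts $d(L(\mu+\nu))\ge d(L(\mu))$ for simple finite-dimensional modules over a simple Lie algebra---it gives no classification of finite-dimensional $\sl(k|k)$-modules of degree $<k-1$ and certainly does not force their $\gg_{k,\bar 0}$-constituents to be trivial or defining (already over $\sl(k)$ every $S^aV$ and $\Lambda^aV$ is multiplicity free, yet for $a\ge 2$ neither is trivial nor defining). So this branch of your argument does not close. The paper's direct route via a single cyclic $M_k$ avoids having to enumerate abstract simple $\bar U_k$-modules; if you want to make that step fully rigorous, the issue to address is why the relevant $\gg_k$-subquotients inherit non-integrability over the $\varepsilon$-copy of $\sl(k)$ from $M$ (you correctly isolate this in your last paragraph), not a classification of integrable modules of small degree.
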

          \begin{proof} Let $v\in M$ be a nonzero weight vector and let $M_k:=U(\gg_k)v$. If $k>d(M)$ then
            Corollary \ref{impcor} implies that $\Ann_{U(\gg_k)} M_k$ contains $\operatorname{ker}\Upsilon^\pm_{k|k}$. Therefore
            $\Ann_{U(\gg)} M=\limarr \Ann_{U(\gg_k)} M_k$ contains $\operatorname{ker}\Upsilon^\pm=\limarr \operatorname{ker}\Upsilon^\pm_k $. Since every simple weight
            $A_0$-module  is multiplicity free, the second assertion follows. 
          \end{proof}

          \begin{remark} One may observe that Lemma \ref{limitsl} holds also for the Lie superalgebra $\mathfrak{sl}(\infty|n)$, $n\in\mathbb Z_{>0}$,
            where one replaces $\gg^+$ by the simple ideal $\sl(\infty)$ of  $\mathfrak{sl}(\infty|n)_{\bar 0}$ and $\Upsilon^+$ by  $\Upsilon^+_{\infty|n}$.
            \hfill${\bigcirc}$    \end{remark}
     

The simple bounded integrable $\gg$-modules have been classified in \cite{CP}, Theorem 5.9. Therefore, in order to classify all simple bounded weight $\gg$-modules it suffices to prove the following. 

          \begin{theorem}\label{thm:sl-simple} Let $M$ be a simple bounded non-integrable $\gg$-module. Then
              \begin{enumerate}
              \item[(a)] $M$ is multiplicity free.
              \item[(b)] $M$ is obtained from a simple weight $A_0$-module by pullback via precisely one of the homomorphisms
                $\Upsilon^+$ or $\Upsilon^-$, and accordingly either $\gg^-$ or $\gg^+$ acts integrably on $M$.
              \item[(c)] Pullback via $\Upsilon^\pm$  establishes a bijection between isomorphisms classes of simple, non-integrable over
                $\gg^\pm$, bounded
                $\gg$-modules and isomorphism classes of simple non-integrable weight $A_0$-modules.
                \end{enumerate}
          
              \end{theorem}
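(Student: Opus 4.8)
The plan is to deduce all three assertions from Lemma~\ref{limitsl}, Proposition~\ref{integrable} and Theorem~\ref{Zero}; the only point that needs a genuinely new argument is a dichotomy separating the roles of the two simple ideals $\gg^{+}$ and $\gg^{-}$. I would first show that a simple bounded weight $\gg$-module $M$ is non-integrable over exactly one of $\gg^{+}$, $\gg^{-}$. For \emph{at least one}: argue by contraposition, i.e.\ that a bounded weight $\gg$-module integrable over both $\gg^{+}$ and $\gg^{-}$ is integrable over $\gg$. Pick a nonzero weight vector $v$, so $M=U(\gg)v=\bigcup_{k}U(\gg_{k})v$; the $\gg^{-}\cap\gg_{k}$-submodule generated by $v$ is finite dimensional, applying $U(\gg^{+}\cap\gg_{k})$ to a basis of it again stays finite dimensional, and the remaining rank-one summand of $\gg_{k,\bar{0}}$ acts semisimply, so $U(\gg_{k,\bar{0}})v$ is finite dimensional; then $U(\gg_{k})v=\Lambda(\gg_{k,\bar{1}})\,U(\gg_{k,\bar{0}})v$ is finite dimensional because $\gg_{k,\bar{1}}$ is. For \emph{at most one}: if $M$ is non-integrable over $\gg^{+}$, Lemma~\ref{limitsl} gives $\ker\Upsilon^{+}\subseteq\Ann_{U(\gg)}M$, hence the $\gg$-action on $M$ factors through $\Upsilon^{+}$ and $M$ becomes an $A_{0}$-module with $A_{0}=D(\infty|\infty)_{0}$; by Proposition~\ref{integrable}(b) this module is integrable over the copy of $D(0|\infty)_{0}$ that, under the weight correspondence of Remark~\ref{weightcorrespondence}, corresponds to $\gg^{-}$, so $M$ is integrable over $\gg^{-}$.

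Part (a) is then immediate: by the dichotomy $M$ is non-integrable over $\gg^{+}$ or over $\gg^{-}$, so Lemma~\ref{limitsl} applies and $M$ is multiplicity free. For (b), suppose first that $M$ is non-integrable over $\gg^{+}$ (the other case being identical with $\Upsilon^{-}$ in place of $\Upsilon^{+}$). As above $\Ann_{U(\gg)}M\supseteq\ker\Upsilon^{+}$, so the $\gg$-action factors through $\Upsilon^{+}$, and since the $\gg$-submodules of $M$ are precisely its $A_{0}$-submodules, $M$ is the pullback along $\Upsilon^{+}$ of a simple weight $A_{0}$-module $Y$. By Step~1, $M$ is integrable over $\gg^{-}$, so it cannot also be the pullback along $\Upsilon^{-}$ of an $A_{0}$-module (such a pullback is, symmetrically, integrable over $\gg^{+}$). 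This establishes the ``precisely one of $\Upsilon^{+}$ or $\Upsilon^{-}$'' together with the stated integrability over $\gg^{\mp}$.

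For (c), fix the sign $+$ (the sign $-$ is symmetric). The forward assignment $M\mapsto Y$ is well defined by (b), and $Y$ is non-integrable as an $A_{0}$-module: by Proposition~\ref{integrable}(a) integrability of $Y=Y(\mu)$ is controlled solely by the entries $\mu_{i}$ with $i>0$, and under the weight correspondence these are exactly the entries governing integrability over $\gg^{+}$ (the entries $\mu_{j}$ with $j<0$ pertain only to $\gg^{-}$, over which every pullback is automatically integrable). Conversely, a simple non-integrable weight $A_{0}$-module $Y$ pulls back along $\Upsilon^{+}$ to a $\gg$-module $M$ which is simple (the submodules coincide), multiplicity free and hence bounded (the map on weights induced by $\Upsilon^{+}$ is injective and $Y$ is multiplicity free by Theorem~\ref{Zero}), and non-integrable over $\gg^{+}$ by the same comparison of integrability conditions. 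These two assignments are mutually inverse: one composite is the identity because the $\gg$-action via $\Upsilon^{+}$ recovers the $A_{0}$-action, the other because, by Theorem~\ref{Zero}, a simple weight $A_{0}$-module is determined up to isomorphism by its support (equivalently, by the $\approx_{0}$-class of $\mu$), and this datum is visible in the $\gg$-module structure of the pullback.

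The step I expect to be the main obstacle is exactly this last faithfulness claim in (c): one must recover the classifying $\approx_{0}$-class of $\mu$ from the pullback even though $\Upsilon^{\pm}$ need not be surjective onto $A_{0}$, so a priori distinct $A_{0}$-weights could yield isomorphic $\gg$-modules. The resolution is that the defining constraint $\mu_{j}\in\{0,1\}$ for the ``odd'' indices on $\gh^{\vee}_{A}$ rigidifies the otherwise merely affine weight correspondence of Remark~\ref{weightcorrespondence}, forcing the ambiguity (a global shift in the ``supertrace'' direction) to be absent on the relevant supports. A second, milder point is Step~1, namely that for bounded weight modules ``integrable over $\gg$'' coincides with ``integrable over $\gg^{+}$ and over $\gg^{-}$''; this is where the exhaustion $\gg=\varinjlim\gg_{k}$ with finite-dimensional odd parts $\gg_{k,\bar{1}}$ is essential.
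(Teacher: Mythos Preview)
Your overall strategy matches the paper's, and you have correctly identified the crux (non-surjectivity of $\Upsilon^{\pm}$) and its resolution (the constraint $\mu_j\in\{0,1\}$ on the odd indices). However, you have misplaced this difficulty, and as written your argument for (b) has a genuine gap. From $\ker\Upsilon^{+}\subseteq\Ann_{U(\gg)}M$ you only obtain that the $\gg$-action factors through the \emph{image} $C^{+}:=\Upsilon^{+}(U(\gg))$, and the paper explicitly notes $C^{+}\subsetneq A_{0}$. Concretely, $C^{+}$ contains all root vectors $x_i\partial_j$ and the differences $u_i-u_j$, but not the individual $u_i$; so your sentence ``since the $\gg$-submodules of $M$ are precisely its $A_0$-submodules, $M$ is the pullback along $\Upsilon^{+}$ of a simple weight $A_0$-module'' presupposes an $A_0$-structure on $M$ that has not yet been produced. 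The same conflation appears in the ``at most one'' step of your dichotomy, where you invoke Proposition~\ref{integrable}(b) for an $A_0$-module before one exists; there the conclusion is salvageable directly (the relation $(x_i\partial_j)^2=0$ for $i,j<0$ already holds in $C^{+}$), but in (b) it is not.

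The paper's proof of (b) is essentially devoted to filling exactly this gap: one must \emph{extend} the $C^{+}$-action to an $A_0$-action by specifying the missing eigenvalue, say of $u_{-1}$. The paper does this by defining, for $\kappa\in\supp M$ and $c\in\C$, a $\mathfrak{gl}(\infty|\infty)$-module $M(\kappa,c)$ via $u_{-1}v:=(c+\nu(u_{-1})-\kappa(u_{-1}))v$ on $M^{\nu}$, and then showing that the relation $u_i^2=u_i$ for $i<0$ (equivalently $w_i^3=w_i$ for $w_i=u_i-u_{-1}$) forces $\kappa(w_i)\in\{0,\pm1\}$ with a single nonzero value, so that a suitable choice $c\in\{0,1\}$ lands $\supp M(\kappa,c)$ inside $\gh^{\vee}_A$. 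This is precisely the ``rigidification by the $\{0,1\}$ constraint'' you describe in your final paragraph, but it belongs to the existence part of (b), not to the faithfulness claim in (c). Once (b) is established this way, (c) is short (the paper dispatches it in one line via Proposition~\ref{integrable}(b)); the uniqueness issue you worry about in (c) is absorbed into the same argument, since for a simple $A_0$-module the differences $u_i-u_j$ already separate weight spaces (any two weights in $\supp Y(\mu)$ differ by an element of $Q_{A_0}$, a finite sum, so they cannot differ by a nonzero constant in all coordinates), whence $C^{+}$-submodules and $A_0$-submodules coincide.
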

              \begin{proof} (a) follows directly from Lemma \ref{limitsl}.
              
(b)               Let $C^{\pm}$ denote the image of $\Upsilon^{\pm}$. It is easy to see that $C^{\pm} \subsetneq A_0$. Lemma \ref{limitsl} implies that every simple non-integrable weight $\gg$-module 
              is obtained by
                pullback from a simple $C^+$- or a $C^-$-module. Therefore, to prove (2) we need to show that a weight $\gg$-module obtained by pullback from a weight $C^{\pm}$-module is in fact obtained by pullback from the restriction of a weight $A_0$-module to $C^{\pm}$.
                It suffices to prove the statement for $C^+$, since the other case  follows by applying the obvious automorphism of $\gg$.
                
                Recall the basis $\{u_i\}_{i\in\mathbb Z}$ of $\gh_A$ introduced in Section 2.4. By a slight abuse of notation we denote by the same letter
                the preimage of $u_i$ in the Cartan subalgebra of $\mathfrak{gl}(\infty|\infty)$. Then $\{w_i=u_i-u_{-1}\mid i\neq -1\}$
                is a basis of the Cartan subalgebra of $\gg$. Let $N$ be a simple weight $\gg$-module, $\mu\in \supp N$ and $c\in\mathbb C$. Note that we can endow $N$ with a  $\mathfrak{gl}(\infty|\infty)$-module structure by setting      $u_{-1}v:=(c+\nu(u_{-1})-\mu(u_{-1}))v$ for every $v\in N^{\nu}$. We denote this  $\mathfrak{gl}(\infty|\infty)$-module by $N(\mu,c)$.

                We claim that if $M$ is the pullback of some simple weight  $C^+$-module, then we can find $\mu,c$ such that the
                $\mathfrak{gl}(\infty|\infty)$-module
                $N(\mu,c)$ is the pullback of some weight $A_0$-module. Clearly, we can assume that $M$ is not trivial. We pick some
                $\kappa\in \supp M$ such that $\kappa(w_i)\neq 0$ for some $i\leq -2$.
                One readily sees that the relation $w_i^3=w_i$ implies $\kappa(w_i)=\pm 1,0$ for $i\leq -2$. Next, we choose a negative $i$ such that
                $\kappa(w_i)\neq 0$. It easily follows from the linearity of $\kappa$ that $\kappa(w_j)=0$ or  $\kappa(w_j)=\kappa(w_i)$ for every negative $j$.
                Finally, we set $c=0$ if $\kappa(w_i)=1$ and $c=1$ if $\kappa(w_i)=-1$. Then $\supp M(\kappa,c)\subset \gh^\vee_{A}$ and, since the restriction
                of $M(\kappa,c)$ to $\gg$ is the pullback of some weight $C^+$-module, the $\mathfrak{gl}(\infty|\infty)$-module $M(\kappa,c)$ is the pullback
                of a weight $A_0$-module.

                (c) Follows from Proposition \ref{integrable}(b). 
                 \end{proof}
                 \begin{remark} It is  likely that Theorem \ref{thm:sl-simple} holds also for $\mathfrak{sl}(\infty|n)$.
                   \hfill${\bigcirc}$    \end{remark}
     
          \begin{remark} \label{rem-theta} Note that the definition of $\gh_A^{\vee}$ implies that if $M$ is the pullback of a weight $A_0$-module via $\Upsilon^+$ (respectively,  $\Upsilon^-$), then for $\sum_i a_i \varepsilon_i + \sum_j b_j \delta_j  \in \supp M$ we have $a_i \in \{ 0,1\}$  (respectively,  $b_j \in \{ 0,1\}$).
                   \hfill${\bigcirc}$    \end{remark}
     
      \begin{proposition}\label{supportisom} A simple bounded weight $\gg$-module $M$ is determined, up to isomorphism and a
        possible parity change, by $\supp M$.
      \end{proposition}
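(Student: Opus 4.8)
The plan is to deduce the statement from the classification already established. By Theorem \ref{thm:sl-simple} together with \cite{CP}, Theorem 5.9, a simple bounded weight $\gg$-module $M$ is either integrable, or the pullback of a simple weight $A_0$-module via exactly one of the homomorphisms $\Upsilon^{+}$, $\Upsilon^{-}$. The first step is to observe that $\supp M$ detects which of these situations occurs: $M$ is integrable over $\gg^{+}$ (respectively over $\gg^{-}$) if and only if $(\nu+\mathbb{Z}(\varepsilon_i-\varepsilon_j))\cap\supp M$ (respectively $(\nu+\mathbb{Z}(\delta_i-\delta_j))\cap\supp M$) is finite for all $\nu\in\supp M$ and all $i\neq j$. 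By Theorem \ref{thm:sl-simple}(b) the module $M$ is integrable exactly when it is integrable over both $\gg^{\pm}$, and if $M$ is not integrable then the unique side over which integrability fails determines, by Lemma \ref{limitsl}, whether $M$ is a pullback via $\Upsilon^{+}$ or via $\Upsilon^{-}$ (equivalently, this is visible in which of the two families of coordinates of the weights of $M$ takes values in $\{0,1\}$, see Remark \ref{rem-theta}).

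The second ingredient is that a simple weight $A_0$-module is determined up to isomorphism and parity by its support; this is contained in Theorems \ref{thm:weightA} and \ref{Zero}, which give $\supp Y(\mu)=\supp X(\mu)\cap(\mu+Q_{A_0})=\{\lambda\in\gh^{\vee}_A:\lambda\approx_{0}\mu\}$ (the full $\approx_{0}$-class of $\mu$), $Y(\mu)\simeq Y(\nu)$ iff $\mu\approx_{0}\nu$, and the fact that every simple weight $A_0$-module is $Y(\mu)$ or $\Pi Y(\mu)$ for some $\mu$. For integrable $M$ the proposition then follows from the explicit list in \cite{CP}, Theorem 5.9, since two modules on that list not related by a parity change have distinct supports. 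So suppose $M$ is non-integrable, say the pullback via $\Upsilon^{+}$ of $Y=Y(\mu)$. As $\gg$ acts on $M$ through $\Upsilon^{+}$, the $\lambda$-weight space of $Y$ is the $\rho(\lambda)$-weight space of $M$, where $\rho\colon\gh^{\vee}_A\to\gh^{*}$ sends a weight to the corresponding $\gg$-weight; by the formulas following Corollary \ref{impcor}, $\rho$ is the restriction to $\gh^{\vee}_A$ of a linear map $\gh^{*}_A\to\gh^{*}$ with kernel the line $\mathbb{C}e$ spanned by the weight $e=\sum_{i\neq0}\zeta_i$. Hence $\supp M=\rho(\supp Y)$, and since $Q_{A_0}$ consists of finitely supported weights while $\mathbb{C}e$ does not, $\rho$ is injective on each $\approx_{0}$-class. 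Thus $\supp M$ recovers $\supp Y$, hence $Y$ up to isomorphism and parity, hence (using Theorem \ref{thm:sl-simple}(c) and the fact that the pullback of $\Pi Y$ is $\Pi M$) the module $M$ up to isomorphism and parity --- provided no two distinct $\approx_{0}$-classes of weights of \emph{non-integrable} simple $A_0$-modules have equal image under $\rho$.

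This last provision is the heart of the argument, and I expect it to be the only delicate point. Suppose $\mu\not\approx_{0}\nu$, both $Y(\mu),Y(\nu)$ non-integrable, yet $\rho(\supp Y(\mu))=\rho(\supp Y(\nu))$. Pick $\lambda_0\in\supp Y(\mu)$ and $\lambda_1\in\supp Y(\nu)$ with $\rho(\lambda_0)=\rho(\lambda_1)$; then $\lambda_0-\lambda_1=ce$ for some $c\in\mathbb{C}$, and since $(\lambda_0)_{-j},(\lambda_1)_{-j}\in\{0,1\}$ for all $j>0$ we get $c\in\{-1,0,1\}$. If $c=0$ then $\lambda_0=\lambda_1$ lies in both supports, forcing $\mu\approx_{0}\nu$. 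If $c=\pm1$, then the negative-index coordinates of $\mu$ and of $\nu$ agree up to finitely many with the constants $1$ and $0$ in some order; using non-integrability of $Y(\mu)$ one finds, inside the $\approx_{0}$-class of $\mu$, a weight $\lambda$ obtained from $\lambda_0$ by flipping one negative-index coordinate between $0$ and $1$ and compensating by a unit change at a suitable positive index (the existence of such an index is exactly where non-integrability is used). Then the only weight $\lambda'$ in the $\approx_{0}$-class of $\nu$ with $\rho(\lambda')=\rho(\lambda)$ has a negative-index coordinate outside $\{0,1\}$, so $\lambda'\notin\gh^{\vee}_A$ and therefore $\rho(\lambda)\notin\rho(\supp Y(\nu))$, a contradiction. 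Everything else is a bookkeeping assembly of results already proved.
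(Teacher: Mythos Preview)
Your proposal is correct and follows essentially the same route as the paper: handle the integrable case via \cite{CP}, observe that $\supp M$ detects non-integrability and the sign of $\Upsilon^{\pm}$, and then for two non-integrable modules pulled back via the same $\Upsilon^{\pm}$ argue that the underlying $A_0$-supports can differ only by a multiple of the generator of $\ker\rho$, which the $\gh_A^{\vee}$-constraint forces to be $0$ or $\pm 1$, the latter being excluded by non-integrability. Your handling of the $c=\pm 1$ case (producing a weight in one support whose only $\rho$-preimage violates the $\{0,1\}$-constraint) is more explicit than the paper's, which simply asserts that the shifted supports would have to be singletons corresponding to trivial modules; but the content is the same.
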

      \begin{proof} Here we consider the case of non-integrable modules, and leave as an exercise to the reader to check our claim for integrable modules using the classification result 
      of \cite{CP}.
        Let us observe  that if $M$ and $N$ are not integrable, and one is obtained by pullback via $\Upsilon^+$ while the other is obtained by pullback via $\Upsilon^-$, then $M$ and $N$ cannot  have the same support.

Now, without loss of generality we can assume that $M$ and $N$ are obtained by pullback from simple weight $A_0$-modules $X$ and $Y$, respectively.
        Suppose that $\supp M=\supp N$ but $\supp X\neq\supp Y$. Then $\supp X=\supp Y\pm\tau$ where $\tau= \sum_{i>0}(\varepsilon_i-\delta_i)$.
        Since the supports of $X$ and $Y$ are subsets of $\gh_A^\vee$, this is only possible  if  $\supp X=\{0\}$, $\supp Y=\{\pm \tau\}$ or vice versa.
        Then, both $M$ and $N$ are necessarily trivial and we have a contradiction. Consequently, $\supp M=\supp N$ implies $\supp X=\supp Y$, and then the     $A_0$-modules $X$ and $Y$ are isomorphic up to parity change
     by Theorem \ref{Zero}(a). This completes the proof.
   \end{proof}

          Let $M^\pm(\mu)$ denote the simple weight $\gg$-module obtained by pullback from  the simple  weight $A_0$-module $Y(\mu)$ via $\Upsilon^\pm$.
          
    \begin{proposition}\label{intupsilon} Every multiplicity free simple weight $\gg$-module $M$ is isomorphic to the
        pullback of a
        simple weight $A_0$-module via $\Upsilon^+$ or $\Upsilon^-$. If $M$ is obtained by pullback via both $\Upsilon^+$ and $\Upsilon^-$, then $M$ is
        isomorphic to $V$, $\Pi V$, $V_*$, $\Pi V_*$, $\C$ or $\Pi \C$.
      \end{proposition}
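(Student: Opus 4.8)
The plan is to split according to whether $M$ is integrable over $\gg$ (i.e., a direct limit of finite-dimensional $\gg_k$-modules); since $M$ is multiplicity free it satisfies $d(M)=1$, so it is bounded and the machinery above applies in either case.

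If $M$ is not integrable over $\gg$, then $M$ is a simple bounded non-integrable $\gg$-module, and Theorem \ref{thm:sl-simple}(b) applies verbatim: $M$ is the pullback of a simple weight $A_0$-module via \emph{precisely one} of $\Upsilon^+$ and $\Upsilon^-$. In particular, such an $M$ is never obtained by pullback via both.

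If $M$ is integrable over $\gg$, I would invoke the classification of simple bounded integrable $\gg$-modules (\cite{CP}, Theorem 5.9): the multiplicity free ones are, up to a parity shift, precisely $\C$, $V$, $V_*$, and the exterior and symmetric powers $\Lambda^kV,S^kV,\Lambda^kV_*,S^kV_*$ for $k\geq 2$ (these powers are multiplicity free by the super decompositions of $S^k$ and $\Lambda^k$ recalled in Section \ref{prelim}, while every other simple bounded integrable $\gg$-module has degree $\geq 2$). Each of these is a pullback via $\Upsilon^+$ or $\Upsilon^-$ of a simple weight $A_0$-module: the trivial module $\C$, and $V$, $V_*$ come from the one-dimensional $A_0$-module, from the degree-one component $\C[x]_1$ of the defining $D(\infty|\infty)$-module, and from a module of the type $F(\mu)$ with $\mu_i=-1$ for $i>0$ (exactly as in the proof of Corollary \ref{impcor}); while $\Lambda^kV$ and $S^kV$ arise from the degree-$k$ component $\C[x]_k$ via $\Upsilon^+$ and via $\Upsilon^-$ respectively (up to parity), with $\Lambda^kV_*$, $S^kV_*$ treated analogously. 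This gives the first assertion in the integrable case.

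Finally, for the second assertion, suppose $M$ is obtained by pullback via both $\Upsilon^+$ and $\Upsilon^-$. By the first case $M$ must be integrable over $\gg$, hence one of the modules listed above; and by Remark \ref{rem-theta} the two pullback descriptions force \emph{every} coordinate of \emph{every} weight of $M$ to lie in $\{0,1\}$. Since for $k\geq 2$ each of $\Lambda^kV,S^kV,\Lambda^kV_*,S^kV_*$ has a weight with a coordinate of absolute value $2$, these are excluded, and we are left with $M\simeq V,\Pi V,V_*,\Pi V_*,\C$ or $\Pi\C$. I expect the main obstacle to be the integrable case: extracting from \cite{CP} exactly which integrable simple modules are multiplicity free, exhibiting each as a pullback with the correct parity (via Remark \ref{rem-theta} one reads off which of $\Upsilon^+$, $\Upsilon^-$ applies), and carrying out the attendant support and parity-change bookkeeping.
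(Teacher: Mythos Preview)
Your non-integrable case is fine and matches the paper. The gap is in the integrable case: your list of multiplicity free simple integrable $\sl(\infty|\infty)$-modules is wrong. The classification in \cite{CP} (Theorem~5.9) does \emph{not} give only the finite powers $S^kV$, $\Lambda^kV$, $S^kV_*$, $\Lambda^kV_*$. Rather, besides the six modules $\C,\Pi\C,V,\Pi V,V_*,\Pi V_*$, it produces four \emph{infinite families} $S_{\mathcal A}^{\infty}V$, $\Lambda_{\mathcal A}^{\infty}V$, $S_{\mathcal A}^{\infty}V_*$, $\Lambda_{\mathcal A}^{\infty}V_*$, each parametrised by a sequence $\mathcal A=((a_n,b_n))_{n\geq 1}$ with $0<a_1\leq a_2\leq\cdots$ and $b_n\in\{0,1\}$; here $S_{\mathcal A}^{\infty}V:=\varinjlim\,\Pi^{b_n}S^{a_n}V_n$. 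The finite powers you list are merely the constant-sequence members of these families, so your argument covers only a sliver of the integrable case.

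The paper handles the full families as follows. Using that the four families are interchanged by outer automorphisms of $\gg$, it suffices to treat $S_{\mathcal A}^{\infty}V$. One writes down the explicit weight
\[
\mu_{\mathcal A}:=\sum_{i>0}\bigl(b_i\varepsilon_i+(a_i-a_{i-1}-b_i)\delta_i\bigr)\in\gh_A^{\vee}
\]
and verifies directly that $\supp S_{\mathcal A}^{\infty}V=\supp M^-(\mu_{\mathcal A})$; Proposition~\ref{supportisom} then forces $S_{\mathcal A}^{\infty}V\simeq M^-(\mu_{\mathcal A})$ or $\Pi M^-(\mu_{\mathcal A})$. Your approach of exhibiting each module as a concrete piece of $\C[x]$ or $F(\mu)$ does not obviously extend to these direct limits with growing $a_n$, which is why the support comparison is the cleaner route. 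Likewise, your argument for the second assertion (ruling out $k\geq 2$ via a coordinate of absolute value $\geq 2$) would need to be checked across the entire families $S_{\mathcal A}^{\infty}V$, etc., not just the finite powers.
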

      \begin{proof} By Theorem \ref{thm:sl-simple} all non-integrable simple bounded $\gg$-modules are pullbacks of $A_0$-modules via $\Upsilon^+$ or $\Upsilon^-$, and hence are multiplicity free. Therefore it suffices to check the
        statement for
        integrable multiplicity free modules. 
        
     Theorem 5.9 in  \cite{CP} implies that, in addition to the six modules $V$, $\Pi V$, $V_*$, $\Pi V_*$, $\C$, $\Pi \C$ there are four families of multiplicity free simple integrable $\gg$-modules $S_{\mathcal A}^{\infty} V$, $S_{\mathcal A}^{\infty} V_*$, $\Lambda_{\mathcal A}^{\infty} V$, $\Lambda_{\mathcal A}^{\infty} V_*$. If one observes that all other three families are obtained from  $S_{\mathcal A}^{\infty} V$ by a twist from a proper automorphism of $\sl (\infty |\infty)$, it remains to check that any simple module of the form  $S_{\mathcal A}^{\infty} V$ is isomorphic to $M^-(\mu_{\mathcal A})$ or  $\Pi M^-(\mu_{\mathcal A})$  for a weight $\mu_{\mathcal A} \in \gh_A^{\vee}$.

Recall from \cite{CP} that $\mathcal A$ is a sequence of pairs $(a_n,b_n)$ where $a_1 \leq a_2 \leq \cdots$ is a sequence of positive integers and $b_n \in \{0,1 \}$ with the condition that $b_n=b_{n+1}$ if    $a_n=a_{n+1}$. Moreover,  $S_{\mathcal A}^{\infty} V$ is defined as the direct limit $\limarr \Pi^{b_n} S^{a_n} V_n$ where $V_n$ is the natural $\sl (n|n)$-module. Let 
$$
\mu_{\mathcal A} := \sum_{i>0} \left( b_i\varepsilon_i  + (a_i - a_{i-1} - b_i)\delta_i \right),
$$
where we set $a_0 = 0$. Then a direct verification shows that $\supp S_{\mathcal A}^{\infty} V = \supp M^-(\mu_{\mathcal A})$.       Since a simple multiplicity free weight $\gg$-module is determined by its support up to isomorphism and a possible application by $\Pi$, we conclude that $S_{\mathcal A}^{\infty} V \simeq  M^-(\mu_{\mathcal A})$ if the weight space $ \left(S_{\mathcal A}^{\infty} V\right)^{\mu_{\mathcal A}} $ has parity equal to $p(\mu_{\mathcal A})$, and  $S_{\mathcal A}^{\infty} V \simeq \Pi  M^-(\mu_{\mathcal A})$ otherwise. In fact, the parity of the weight space  $ \left(S_{\mathcal A}^{\infty} V\right)^{\mu_{\mathcal A}} $ depends only on $b_1$: the weight space $ \left(S_{\mathcal A}^{\infty} V\right)^{\mu_{\mathcal A}} $  is purely even for $b_1 =0$ and purely odd for $b_1 =1$.

Finally, the fact that each of the six modules $V$, $\Pi V$, $V_*$, $\Pi V_*$, $\C$, $\Pi \C$ is obtained by pullback  via both $\Upsilon^+$ and $\Upsilon^-$ is straightforward.

        \end{proof}

      \begin{proposition} \label{restriction}  If $M$ is a simple bounded weight $\gg$-module then $M$ is semisimple as
      a  $\gg_{\bar 0}$-module.
      \end{proposition}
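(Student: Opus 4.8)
The idea is to transfer the question to the associative algebra $A_0=D(\infty|\infty)_0$ and to use its tensor structure. By Proposition \ref{intupsilon} (every simple bounded weight $\gg$-module is multiplicity free, by Theorem \ref{thm:sl-simple}(a) in the non-integrable case and by \cite{CP} in the integrable case), $M$ is the pullback of a simple weight $A_0$-module $Y=Y(\mu)$ along one of $\Upsilon^{+},\Upsilon^{-}$. Applying the automorphism of $\gg$ that interchanges $\gg^{+}$ and $\gg^{-}$, I may assume the pullback is along $\Upsilon^{+}$, so that $\gg^{-}$ acts integrably on $M$. Since $\Upsilon^{+}$ is a homomorphism of superalgebras, $\Upsilon^{+}(U(\gg_{\bar 0}))\subseteq (A_0)_{\bar 0}$, and by Lemma \ref{associative} applied to the associative superalgebra $A_0$ the simple module $Y$ restricts to the semisimple $(A_0)_{\bar 0}$-module $Y_{\bar 0}\oplus Y_{\bar 1}$. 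Hence it suffices to prove that $Y_{\bar 0}$ and $Y_{\bar 1}$ are semisimple over the subalgebra $\Upsilon^{+}(U(\gg_{\bar 0}))$, i.e.\ as $\gg_{\bar 0}$-modules.

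Now $\gg_{\bar 0}=(\gg^{+}\oplus\gg^{-})\rtimes\CC w$, where $\gg^{+}$ and $\gg^{-}$ are commuting simple ideals and $w$ is one extra toral generator; under $\Upsilon^{+}$ the ideals $\gg^{+}$ and $\gg^{-}$ land in the mutually commuting subalgebras $D(\infty|0)_{0}$ and $D(0|\infty)_{0}$ of $A_0$ coming from the decomposition $D(\infty|\infty)\simeq D(\infty|0)\otimes D(0|\infty)$ of Lemma \ref{isomorphism_tensor}, while $\Upsilon^{+}(w)\in\gh_A=\gh_{D(\infty|0)}\oplus\gh_{D(0|\infty)}$ acts with degree zero on both factors. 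Using Proposition \ref{integrable}(b) (integrability of $Y$ over $D(0|\infty)_{0}$, hence over $\gg^{-}$), Lemma \ref{semisimplicity} (semisimplicity of $\cW_{D(0|\infty)}$), and the explicit realization of $Y(\mu)$ inside $X(\mu)$ built from $F^{+}(\mu)$ and $F^{-}(\mu)$ (Lemma \ref{indecomposable}, Proposition \ref{description-of-simple}), I would show that the restriction of each $Y_{\bar\epsilon}$ to $D(\infty|0)_{0}\otimes D(0|\infty)_{0}$ decomposes as a direct sum of outer tensor products $Z'\boxtimes Z''$, where $Z'$ is a simple weight $D(\infty|0)_{0}$-module (one of the simple bounded weight $\sl(\infty)$-modules of \cite{GrP}, possibly non-integrable) and $Z''$ is a simple weight $D(0|\infty)_{0}$-module. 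Concretely, the $F^{+}$-part of $X(\mu)$, restricted to $D(\infty|0)_{0}$, is semisimple with pairwise non-isomorphic simple summands (its degree-graded components), and the $F^{-}$-part decomposes similarly by Lemma \ref{semisimplicity}.

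Finally, each summand $Z'\boxtimes Z''$ is a simple, hence semisimple, $\gg_{\bar 0}$-module: it is already simple over the ideal $\gg^{+}\oplus\gg^{-}$, being an outer tensor product of a simple $\gg^{+}$-module and a simple $\gg^{-}$-module, and $w$ acts diagonally through $\gh_{D(\infty|0)}\oplus\gh_{D(0|\infty)}$ and so preserves the summand. Therefore $Y_{\bar 0}|_{\gg_{\bar 0}}$ and $Y_{\bar 1}|_{\gg_{\bar 0}}$ are semisimple, and with them $M|_{\gg_{\bar 0}}$. I expect the main obstacle to be the decomposition asserted in the second paragraph: since $M$ need not be integrable over $\gg^{+}$, one cannot invoke an integrability/semisimplicity theorem on that side and must instead read the required direct-sum decomposition off the explicit description of $Y(\mu)$ and of the simple bounded weight $\sl(\infty)$-modules, in particular the fact that the degree-graded components of the Weyl module $F^{+}(\mu)$ are pairwise non-isomorphic simple multiplicity-free weight modules.
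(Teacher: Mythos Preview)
Your reduction has a genuine gap at the very first step. You claim that every simple bounded weight $\gg$-module is multiplicity free and therefore, by Proposition~\ref{intupsilon}, is a pullback of an $A_0$-module. This is false: for a Young diagram $\lambda$ that is neither a single row nor a single column, the integrable module $\mathbb S_\lambda V$ is a simple bounded weight $\gg$-module with $d(\mathbb S_\lambda V)=\dim Z_\lambda>1$ (Proposition~\ref{finrankintegrablesl}; see also the proof of Lemma~\ref{extensions}(f)). Theorem~\ref{thm:sl-simple}(a) gives multiplicity freeness only in the non-integrable case, and \cite{CP} does not assert it for integrable modules in general. The paper deals with this by disposing of the integrable case in one line: a bounded integrable $\gg_{\bar 0}$-module is semisimple by \cite[Theorem~3.7]{PSer2}. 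Your argument needs this separate treatment; as written, it does not cover $\mathbb S_\lambda V$.

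For the non-integrable case, your approach is in the right spirit but considerably more elaborate than necessary, and the part you yourself flag as the ``main obstacle'' is where it gets shaky. You want to read off a tensor decomposition of $Y(\mu)$ from $F^{+}(\mu)\otimes F^{-}(\mu)$, but $Y(\mu)$ sits inside $X(\mu)$, not inside $F(\mu)$, and the degree components of $F^{+}(\mu)$ are \emph{not} simple over $D(\infty|0)_0$ when some $\mu_i\in\mathbb Z$. The paper avoids all of this: it simply grades $Y(\mu)$ by the $\mathbb Z$-valued coset map $Q_{A_0}\to Q_{A_0}/Q'$, where $Q'$ is the sublattice generated by $\varepsilon_i-\varepsilon_j$ and $\delta_i-\delta_j$, writes $Y(\mu)=\bigoplus_{n\in\mathbb Z}Y(\mu)^n$, and observes that each $Y(\mu)^n$ is a simple module over the image of $U(\gg_{\bar 0})$ by exactly the same argument that proves Theorem~\ref{Zero}(a). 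No tensor factorization, no analysis of $F^{\pm}(\mu)$, is required.
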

      \begin{proof} The statement is clear for integrable modules since every bounded integrable $\gg_{\bar 0}$-module is semisimple by Theorem 3.7 in \cite{PSer2}.
        Therefore, without loss of generality we can assume that $M$ is isomorphic to $M^{\pm}(\mu)$.  Consider the lattice $Q_{(A_0)_{\bar{0}}}$ with generators $\varepsilon_i -\varepsilon_j, \delta_i - \delta_j$. Set
        $$Y(\mu)^n:=\bigoplus_{\nu\in \mu+n(\varepsilon_1-\delta_1) + Q_{(A_0)_{\bar{0}}}}Y(\mu)^\nu.$$
        Then $Y(\mu)^n$ is a simple $(A_0)_{\bar 0}$-module and $Y(\mu)=\bigoplus_{n\in\mathbb Z}Y(\mu)^n$. Obviously, the semisimplicity of $Y(\nu)$ over $(A_0)_{\bar 0}$ implies semisimplicity of $M$ over $\gg_{\bar 0}$.
        
        \end{proof}
 
        \begin{theorem}\label{primitivesl}
          (a)  Let $\gg=\mathfrak{sl}(\infty)$. The following ideals are all bounded primitive  ideals of $U(\gg)$:
          $$\Ann_{U(\gg)} \mathbb S_\lambda V ,\;  \Ann_{U(\gg)} \mathbb S_\lambda V_*, \; \ker\Upsilon^+, \; \ker \Upsilon^-.$$

          (b) Let $\gg=\mathfrak{osp}(2a+1|2b)$ (respectively, $\gg=\mathfrak{osp}(2a|2b)$)  with at  least one of $a$ and $b$ equal infinity. Then
          $U(\gg)$ has exactly three bounded        primitive ideals: the augmentation ideal, $\Ann_{U(\gg)} V$, and $\ker\Theta_{a|b}$ (respectively, $\ker\Psi_{a|b}$).
          \end{theorem}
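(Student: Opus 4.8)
The plan is to treat the two parts separately, since they use the results assembled earlier in rather different ways.

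\textbf{Part (a).} First I would recall that a bounded primitive ideal of $U(\gg)$ for $\gg = \sl(\infty)$ must, by definition, annihilate a simple bounded weight $\gg$-module. By the classification of simple bounded weight $\sl(\infty)$-modules in \cite{GrP} (together with the results of \cite{C}), every such module is either a Schur power $\mathbb S_\lambda V$ or $\mathbb S_\lambda V_*$, or a multiplicity free module of ``oscillator type''; but the multiplicity free simple bounded weight $\sl(\infty)$-modules are exactly the pullbacks of simple weight $A_0$-modules via $\Upsilon^{\pm}$, where now $A = D(\infty|\infty)$ restricted appropriately, so their annihilators contain $\ker\Upsilon^+$ or $\ker\Upsilon^-$. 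Conversely $\ker\Upsilon^{\pm}$ is a primitive ideal by Corollary \ref{primcor}, and it is bounded since the defining $A_0$-module pulled back via $\Upsilon^{\pm}$ is a bounded (indeed multiplicity free) weight $\gg$-module. The annihilators $\Ann_{U(\gg)}\mathbb S_\lambda V$ and $\Ann_{U(\gg)}\mathbb S_\lambda V_*$ are primitive essentially by definition and are bounded because $\mathbb S_\lambda V$ is bounded (its degree is computed from $Z_\lambda$ as in Proposition \ref{finrankintegrablesl}, passing to the limit). So the statement of (a) is really the assertion that this list exhausts the bounded primitive ideals; the content is that these ideals are pairwise distinct and that no other simple bounded module contributes a new annihilator, which follows from the support descriptions (distinct supports force distinct annihilators in the relevant cases, and $\mathbb S_\lambda V$ versus $\mathbb S_\mu V$ for $\lambda\neq\mu$ have distinct annihilators by the finite-rank theory lifted to the limit).

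\textbf{Part (b).} Here I would argue: let $I \subset U(\gg)$ be a bounded primitive ideal, so $I = \Ann_{U(\gg)} M$ for some simple bounded weight $\gg$-module $M$. By Corollary \ref{cor-osc}, either $M \simeq V,\Pi V,\mathbb C,\Pi\mathbb C$, or $M$ is of spinor-oscillator type, i.e.\ annihilated by $J_\gg = \ker\Psi_{a|b}$ (resp.\ $\ker\Theta_{a|b}$). In the first group of cases $\Ann M$ is the augmentation ideal (for $\mathbb C$ and $\Pi\mathbb C$, which have the same annihilator) or $\Ann_{U(\gg)} V$ (for $V$ and $\Pi V$, which have the same annihilator since $\Pi$ does not change the annihilator). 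In the spinor-oscillator case, $J_\gg \subseteq I$; I would then invoke the fact that $J_\gg$ is itself primitive — it is $\ker\Psi_{a|b}$ (resp.\ $\ker\Theta_{a|b}$) and is the annihilator of the (simple) pullback of the defining module $\Lambda[\xi]_{ev}$ (resp.\ $\Lambda[\xi]$), as established in Section 2.2. Since $U(\gg)/J_\gg$ embeds in $Cl(a|b)_{ev}$ (resp.\ $Cl(a|b)$), which by the classification in Theorem \ref{thm:weightA} and Proposition \ref{description-of-simple} acts on all its simple weight modules with the same (trivial) two-sided kernel — more precisely, $J_\gg$ is a maximal among the annihilators in question because any simple weight $Cl(a|b)_{ev}$-module is faithful over $Cl(a|b)_{ev}/(\text{its kernel})$ and the relevant kernel is $0$ — we get $I = J_\gg$. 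Finally the three ideals are pairwise distinct: the augmentation ideal has $U(\gg)/I$ one-dimensional, $\Ann V$ has infinite-dimensional but ``small'' quotient with $d=1$ associated to the natural module, and $J_\gg$ is the annihilator of the spinor-oscillator module which is not isomorphic to $V,\Pi V,\mathbb C,\Pi\mathbb C$, hence has a different annihilator.

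\textbf{Main obstacle.} The delicate point in (b) is the claim that $J_\gg$ is \emph{maximal} among bounded primitive ideals other than the augmentation and natural-module ideals — equivalently, that a simple spinor-oscillator $\gg$-module has annihilator exactly $J_\gg$ and not something strictly larger. This requires knowing that $Cl(a|b)_{ev}$ (resp.\ $Cl(a|b)$) acts faithfully modulo a common kernel on the family of simple preferred weight modules, which should follow from Proposition \ref{faithful} and Corollary \ref{primcor} together with the tensor-decomposition results (Corollary \ref{even-iso}, Proposition \ref{description-of-simple}): one reduces faithfulness of the $A_{ev}$-action to the $D(c|0)$ factors, where it is handled by the support being Zariski-dense in the relevant polynomial algebra exactly as in the proof of Proposition \ref{faithful}. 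For (a) the main subtlety is simply marshalling the classification of \cite{GrP} in the precise form needed and checking that the multiplicity free modules there coincide with the $\Upsilon^{\pm}$-pullbacks; I would phrase this by noting that the ideal $\ker\Upsilon^{\pm}$ is primitive (Corollary \ref{primcor}) and that, by the argument of Lemma \ref{limitsl} applied with $n=0$ to one factor, every non-integrable simple bounded $\sl(\infty)$-module has annihilator containing it.
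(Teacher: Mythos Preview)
Your treatment of part (b) is sound and in fact more careful than the paper's one-line citation of Corollary~\ref{cor-osc}. You correctly isolate the nontrivial point: one must know that every simple spinor-oscillator module has annihilator \emph{equal to} $J_\gg$, not merely containing it. The cleanest way to see this is that $Cl(a|b)$ and $Cl(a|b)_{ev}$ are simple associative (super)algebras (as direct limits of simple finite-rank Clifford--Weyl algebras), so every nonzero module over them is faithful; your route through Proposition~\ref{faithful} is aimed at $D(\infty|\infty)_0$ rather than $Cl(a|b)_{ev}$, but the underlying Zariski-density idea is the same and can be transplanted.

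Part (a), however, is off target. Despite the printed ``$\mathfrak{sl}(\infty)$'', the theorem is about $\gg=\mathfrak{sl}(\infty|\infty)$: the section fixes this convention explicitly, and $\ker\Upsilon^\pm$ are by definition ideals of $U(\mathfrak{sl}(\infty|\infty))$, not of $U(\mathfrak{sl}(\infty))$. Your argument invokes the classification in \cite{GrP} and \cite{C} for the Lie algebra $\mathfrak{sl}(\infty)$, which does not apply here. The paper's proof instead assembles the super ingredients: Corollary~\ref{primcor} gives primitivity of $\ker\Upsilon^\pm$; Theorem~\ref{thm:sl-simple} says every simple bounded non-integrable $\mathfrak{sl}(\infty|\infty)$-module is a pullback via $\Upsilon^\pm$; the integrable ones are handled by \cite{CP} (yielding the Schur powers $\mathbb S_\lambda V$, $\mathbb S_\lambda V_*$ and the infinite symmetric/exterior powers, the latter again being pullbacks via $\Upsilon^\pm$ by Proposition~\ref{intupsilon}); and Proposition~\ref{supportisom} is used to match modules to annihilators. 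Your phrase ``$A=D(\infty|\infty)$ restricted appropriately'' suggests you sensed the mismatch, but the argument as written does not engage with the super classification and so does not prove (a).
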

          \begin{proof}  (a) follows from Corollary \ref{primcor}, Theorem \ref{thm:sl-simple}, Proposition \ref{supportisom}, and the classification of simple bounded integrable $\gg$-modules in \cite{CP}.
            (b) follows from Corollary \ref{cor-osc}.
            \end{proof}

          \begin{lemma}\label{extensions} Let $\mathcal B$ denote the category of bounded weight $\gg$-modules, and let $M,N \in \mathcal B$. Denote by $Q_{\gg}$ the root lattice of $\gg$.
            \begin{enumerate}
            \item[(a)]  $\Ext_{\mathcal B}^1(M,N)=\Ext_{\mathcal B}^1(N,M)$.
              \item[(b)] If $M$ is simple and $\Ext_{\mathcal B}^1(M,N)\neq 0$, then $\supp M\subset \supp N+Q_{\gg}$.
             \item[(c)] $\Ext_{\mathcal B}^1(M^{\pm} (\mu),\Pi M^{\pm} (\nu)) =  0$ for all $\mu,\nu$.
            \item[(d)] If $\Ext_{\mathcal B}^1(M^+(\mu),M^+(\nu))\neq 0$, then $\mu-\nu\in Q_{A_0}$ or at least one of $M^+(\mu)$ and $M^-(\nu)$ is trivial.
            \item[(e)] If $\Ext_{\mathcal B}^1(M^+(\mu),M^-(\nu))\neq 0$, then at least one of $M^+(\mu)$ and $M^-(\nu)$ is isomorphic to $V$, $V_*$, $\C$.
              \item[(f)] If $M$ and $N$ are simple and $d(M)>1$ then $\Ext_{\mathcal B}^1(M,N)=0$.              \end{enumerate}           
          \end{lemma}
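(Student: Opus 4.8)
The plan is to reduce each statement to a computation involving weights, supports, and (for the fine statements (d)--(f)) the structure theory of weight $A_0$-modules developed in Section \ref{sec-clif-weyl} together with the finite-rank results behind Corollary \ref{impcor}. For part (a), the symmetry $\Ext^1_{\mathcal B}(M,N) = \Ext^1_{\mathcal B}(N,M)$ I would obtain from a duality functor on $\mathcal B$: the restricted dual $M \mapsto M^\vee := \bigoplus_\lambda (M^\lambda)^*$ composed with a Chevalley-type antiautomorphism of $\gg$ preserves boundedness and weight multiplicities, sends simple modules to simple modules, and on each simple bounded weight module one checks (using that simples are determined by support up to parity, Proposition \ref{supportisom}, and that the dual of $M^\pm(\mu)$ is again of the form $M^\pm(\nu)$) that it is an involution fixing isomorphism classes up to parity; then $\Ext^1(M,N) \cong \Ext^1(N^\vee,M^\vee) \cong \Ext^1(N,M)$. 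Part (b) is the standard support/central-character-free argument: a nonsplit extension $0 \to N \to E \to M \to 0$ with $M$ simple forces $E$ to be generated over $\gg$ by a preimage of a weight vector of $M$, so $\supp M \subset \supp E \subset \supp N + Q_\gg$ by applying root vectors. Part (c) is the $\Z_2$-graded analogue of Lemma \ref{pure}: a module in $\mathcal B$ is preferred versus its parity shift, and $M^\pm(\mu)$ and $\Pi M^\pm(\nu)$ lie in different ``parity classes'', so any extension between them splits as $\Z_2$-graded modules — more precisely, the morphisms in the category of all weight modules preserve the grading and an extension of a preferred module by a shifted preferred module has its preferred part splitting off.

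For part (d), I would use that $M^+(\mu) = \Upsilon^{+*} Y(\mu)$ and $M^+(\nu) = \Upsilon^{+*} Y(\nu)$ are pullbacks of simple weight $A_0$-modules. If both are nontrivial, a nonsplit extension in $\mathcal B$ restricts to an extension of the underlying weight $\gg$-modules; combined with part (b) we get $\supp M^+(\mu) \subset \supp M^+(\nu) + Q_\gg$. Now I invoke Remark \ref{rem-theta} (the $\varepsilon_i$-coordinates of the support lie in $\{0,1\}$) and the explicit description of $\supp X(\mu) = \supp Y(\mu)$ via the relation $\approx$ and $\approx_0$ (Theorems \ref{thm:weightA}(c), \ref{Zero}(b)) to conclude that the supports of two such modules can differ by an element of $Q_\gg$ only if they already differ by an element of $Q_{A_0}$ — the point being that the extra generators $\varepsilon_i - \delta_i$ of $Q_\gg / Q_{A_0}$ cannot be absorbed without leaving $\gh_A^\vee$ unless one of the modules is supported on $\{0\}$ or $\{\pm\tau\}$, i.e. is trivial (as in the proof of Proposition \ref{supportisom}). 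Part (e) is the same analysis applied to a mixed extension $0 \to M^-(\nu) \to E \to M^+(\mu) \to 0$: such an $E$ would be non-integrable over neither $\gg^+$ nor $\gg^-$ alone unless it is very small, and by Lemma \ref{limitsl} a bounded simple module non-integrable over $\gg^+$ is annihilated by $\ker\Upsilon^+$ while one non-integrable over $\gg^-$ is annihilated by $\ker\Upsilon^-$; the only bounded modules that are pullbacks via both are $V,\Pi V, V_*, \Pi V_*, \C, \Pi\C$ by Proposition \ref{intupsilon}, and a support computation (using part (b) again) pins the mixed extension down to the cases listed.

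For part (f), the hard part, I would argue: if $M$ is simple bounded with $d(M) > 1$, then by Theorem \ref{thm:sl-simple}(a) $M$ must be integrable over $\gg^+$ or $\gg^-$ — say over $\gg^+$ — since all non-integrable simples are multiplicity free hence of degree $1$. So $M$ is a Schur-power module $\mathbb S_\lambda$-type module from the classification in \cite{CP}, with $|\lambda| > 1$. An extension $0 \to N \to E \to M \to 0$ in $\mathcal B$ restricts to $\gg_{\bar 0} = \gg^+ \oplus \gg^- \cplus \C$; by Proposition \ref{restriction} and Theorem 3.7 in \cite{PSer2}, all bounded integrable $\gg_{\bar 0}$-modules are semisimple, so the sequence splits over $\gg_{\bar 0}$, and then one must show the $\gg$-extension is controlled by a relative cohomology group $H^1(\gg, \gg_{\bar 0}; \Hom(M,N))_{\bar 0}$ that vanishes, exactly as in Lemma \ref{lem-restriction} but now in the infinite-rank setting; for this I would either pass to the limit over $\gg_k \simeq \sl(k|k)$ using Proposition \ref{prop-cp} together with the finite-rank vanishing (which holds because for $d(M_k) > 1$ the relevant $\Hom^0_{\gg_{\bar 0}}(\gg_{\bar 1} \otimes M, N)$-term is forced to vanish by a support/weight comparison — the odd part $\gg_{\bar 1}$ has weights $\pm(\varepsilon_i - \delta_j)$ and tensoring a degree-$>1$ Schur-power support with these cannot land in the support of $N$), or invoke directly that such $M$ and $N$ lie in a semisimple block, which is the expected outcome. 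The main obstacle is precisely this last cohomology vanishing for the infinite-rank $\gg$ when $d(M) > 1$: one must rule out self-extensions and extensions among the various Schur-power families, and the cleanest route is the direct-limit argument via Proposition \ref{prop-cp} reducing to the finite-rank $\Ext^1$-vanishing, which in turn follows from the degree bound forcing the first term of the relative-cohomology complex to be zero.
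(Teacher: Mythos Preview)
Your treatment of (a)--(d) is fine and matches the paper. The real content is in (e) and (f), and there your proposal has a genuine gap.

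For (e), your plan is to say that a non-split extension $E$ of $M^+(\mu)$ by $M^-(\nu)$ would be ``non-integrable over neither $\gg^+$ nor $\gg^-$'' and then invoke Lemma \ref{limitsl} and Proposition \ref{intupsilon}. But both of those results are about \emph{simple} modules; $E$ is not simple, so neither applies. Falling back on (b) only gives $\supp M^+(\mu)\subset \supp M^-(\nu)+Q_\gg$, which is far too coarse to force $M^\pm$ to be $V$, $V_*$ or $\C$. The paper's argument supplies the missing idea: for a Lie subsuperalgebra $\gk\subset\gg$, the set $\Gamma_{\gk}N$ of $\gk$-locally finite vectors is a $\gg$-submodule. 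Choosing a root $\alpha$ of $\gg^-$ with $\gg_\alpha$ acting freely on $M^-(\nu)$, non-splitting forces $\Gamma_{\gg_\alpha}E=0$, so $\gg_\alpha$ acts freely on \emph{every} weight vector of $E$. Pushing weights of $M^+(\mu)$ along $\alpha$ into $\supp M^-(\nu)$ then gives, via Remark \ref{rem-theta}, that every weight of $M^+(\mu)$ has all coordinates in $\{0,1\}$, which immediately singles out $V$, $V_*$, $\C$.

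For (f), you propose a relative-cohomology/direct-limit argument, and you correctly flag the cohomology vanishing as the obstacle. This route is heavier and, as stated, incomplete: you would first need to know that the extension splits over $\gg_{\bar 0}$ (Proposition \ref{restriction} gives semisimplicity of $M$ and $N$ separately over $\gg_{\bar 0}$, not of $E$), and then carry out a finite-rank $H^1$ computation you have not done. The paper avoids all of this by reusing the $\Gamma_{\gg_\alpha}$ trick from (e): if $d(M)>1$ then $M\simeq \mathbb S_\lambda V$ (or a variant) with $\lambda$ not a single row or column; by \cite{CP} the other term $N$ must be non-integrable, and the locally-finite-vectors argument forces $(\theta,\alpha^\vee)\in\{0,\pm 1\}$ for all weights $\theta$ of $M$ and all roots $\alpha$ of $\gg^\pm$, contradicting $d(M)>1$. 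So the single technique you are missing --- that $\Gamma_{\gk}$ is a $\gg$-submodule and hence constrains supports in any non-split extension --- is what makes both (e) and (f) go through cleanly.
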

          \begin{proof} (a) We consider the (contravariant) functor of contragredient duality ${\cdot}^\vee$ on the category $\mathcal B$.
            Then $M^\vee\simeq M$, $N^\vee\simeq N$ and
            $$\Ext_{\mathcal B}^1(M,N)=\Ext_{\mathcal B}^1(N^\vee,M^\vee)=\Ext_{\mathcal B}^1(N,M).$$

            (b) Let $0 \to M \to R \to N \to 0$ represent a nonzero element of $\Ext_{\mathcal B}^1(M,N)$. Then, for some weight vector $v \in N$, the image of $M$ in $R$ is a submodule of $U(\gg)v'$ where $v'$ is a preimage of $v$ in $R$ of weight $\kappa$. Then $\supp M\subset  \kappa +Q_{\gg} \subset \supp N+Q_{\gg}$.

(c) follows from comparing the parity of weight spaces of the modules $M^{\pm} (\mu)$ with the parity of the weight spaces of the modules  $\Pi M^{\pm} (\nu)$.

            (d) follows from (b).

            (e)            For a
            $\gg$-module $M$ and a Lie subsuperalgebra $\gk$ of $\gg$ we denote by 
            $\Gamma_{\gk}M$ the set of locally finite $\gk$-vectors, i.e.,
            $$\Gamma_{\gk}M :=  \{ m \in M \; | \; \dim \, \mbox{span} \{m, km, k^2m,...\} <\infty \; \forall k \in \gk \}.
            $$
             The superspace $\Gamma_{\gk}M$ is a $\gg$-submodule of $M$. This is established for Lie algebras in particular in Theorem 8.2 in \cite{PH}, and the proof for Lie superalgebras is the same.
             
            By the semisimplicity result in \cite{CP}, at least one of $M^+(\mu)$ and $M^-(\nu)$ can be assumed non-integrable. Moreover, by (a), the statement is symmetric with respect to $M^+(\mu)$ and $M^-(\nu)$. 
                      Without loss of generality, assume that $M^-(\nu)$ is not integrable. Consider a non-split exact sequence
                      $$0\to M^-(\nu)\to N\to M^+(\mu)\to 0.$$
                                      Since $\Gamma_{\gg^+}M^-(\nu)=M^-(\nu)$, there exists a root $\alpha$ of $\gg^-$ such that $\gg_{\alpha}$ acts freely on $M^-(\nu)$. If  $\Gamma_{\gg_{\alpha}}N\neq 0$ then
                                       $\Gamma_{\gg_{\alpha}}N$ is a submodule of $N$ which does not coincide with $M^-(\nu) $, i.e., the  sequence splits.  Consequently, $\Gamma_{\gg_{\alpha}}N=0$. Hence, for any
                      $\theta\in\supp M^+(\mu)$ we have $\theta+n\alpha\in\supp M^{-}(\nu)$ for  $n\geq 2$. Thus, we get that $\theta_i\in\{0,1\}$ for all $i>0$, and therefore for all $i$ by Remark \ref{rem-theta}. This is possible if and only if
                      $M^-(\nu)$ is isomorphic to $V$, $V_*$ or $\C$.

            (f) If $d(M)>1$ then using Theorem 5.9 in \cite{CP} and Proposition \ref{finrankintegrablesl} one can verify that 
            $M$ is isomorphic to $\mathbb S_\lambda V $, $\Pi \mathbb S_\lambda V $,  $\mathbb S_\lambda V_*$, or $\Pi \mathbb S_\lambda V_*$ for some Young diagram $\lambda$ with more than one row or more than one column.
            Assume 
            $M=\mathbb  S_\lambda V $ and $\Ext^1(M,N)\neq 0$. Then the semisimplicity result  (Theorem 6.1) in \cite{CP}  implies that $N$ is not integrable.
            Consider a non-split exact sequence $$0\to N\to R\to M\to 0.$$ Suppose  $N \simeq M^- (\nu)$
            for some $\nu$. The argument in the proof of (e) can be easily modified to show that  $(\theta,\alpha^\vee)\in \{\pm 1,0\}$ for any weight $\theta$ of $M$ and any root $\alpha$ of $\gg^+$.
            This implies that $\lambda$ consists of a single column, and hence
            $d(M)=1$. Similarly, if $N\simeq M^+(\nu)$ one proves that $\lambda$ consists of a single row and $d(M)=1$.
           \end{proof}

   \end{document}